\documentclass[11pt]{article}

\usepackage[margin=1.0in]{geometry}
\usepackage[utf8]{inputenc}            % allow utf-8 input
\usepackage[T1]{fontenc}               % use 8-bit T1 fonts
\usepackage[dvipsnames]{xcolor}
\usepackage[
    colorlinks=true,
    linkcolor=RoyalPurple,   % Color for cross-references, e.g., \ref, \eqref
    citecolor=Blue,     % Color for citations, e.g., \cite
    urlcolor=Violet     % Color for URLs
]{hyperref}
\usepackage[mathscr]{euscript}
\usepackage[T1]{fontenc}
\usepackage{multirow}
\usepackage{graphicx} % Required for inserting images

\usepackage{url}                        % simple URL typesetting
\usepackage{booktabs}                   % professional-quality tables
\usepackage{amsfonts}                   % blackboard math symbols
\usepackage{nicefrac}                   % compact symbols for 1/2, etc.
\usepackage{bbm}
\usepackage{microtype}                  % microtypography
\usepackage{algorithm}
\usepackage{algpseudocode}
\usepackage[dvipsnames,table]{xcolor} % colored text
\usepackage{lmodern}
\usepackage{comment} %comment environment
\usepackage{tikz}
\usepackage[numbers, sort]{natbib}
\usepackage{amsmath, amssymb, amsfonts}
\usepackage{amsthm, thmtools, thm-restate}
\usepackage{mathtools, nccmath, extarrows}
\usepackage[shortlabels]{enumitem} %resume enumerate
\usepackage{subcaption}
\usepackage{bm}
\usepackage{scalerel}
\usepackage{wrapfig}

\usepackage{stackengine}
\mathtoolsset{showonlyrefs} % Only add number equations if they are referenced

\stackMath

\DeclareFontEncoding{LS1}{}{}
\DeclareFontSubstitution{LS1}{stix}{m}{n}

\makeatletter
  \newcommand*\textmathversion{\csname textmv@\math@version\endcsname}
  \newcommand*\textmv@normal{m}
  \newcommand*\textmv@bold{b}
\makeatother

\newtheorem{theorem}{Theorem}[section]
\newtheorem{proposition}[theorem]{Proposition}
\newtheorem{lemma}[theorem]{Lemma}
\newtheorem{corollary}[theorem]{Corollary}
\newtheorem{claim}[theorem]{Claim}

\theoremstyle{definition}

\newtheorem{assumption}{Assumption}

\newtheorem{example}[theorem]{Example}

\usepackage[most]{tcolorbox}
\usepackage{varwidth}
 \newtcbox{\mybox}{colback=blue!5,
	colframe=blue!30!black, center, enhanced, varwidth upper}
\definecolor{pil}{RGB}{200, 0, 55}
\newtcbtheorem[use counter=theorem, number within=section]{THM}{Theorem}%
{	colframe=blue!30!gray, fonttitle=\bfseries,
	colback=blue!5,  fontupper=\itshape, top=1mm,bottom=1mm, boxrule=0.5pt}{Theorem}

\newcommand{\pil}[1]{{\color{pil}{[P: #1]}}}

\usepackage{bm}
\usepackage[most]{tcolorbox}

\newcommand{\cD}{\mathcal{D}}

\newcommand{\cF}{\mathcal{F}}

\newcommand{\cH}{\mathcal{H}}

\newcommand{\cL}{\mathcal{L}}
\newcommand{\cM}{\mathcal{M}}

\newcommand{\cP}{\mathcal{P}}

\newcommand{\cS}{{\mathcal{S}}}

\newcommand{\N}{\mathbf{N}}
\newcommand{\RR}{{\mathbf R}}
\newcommand{\R}{\RR}
\newcommand{\NN}{{\mathbf N}}

\newcommand{\xs}{x^\star}

\newcommand{\Lxj}{L^{x}_{tj}}
\newcommand{\Lyj}{L^{y}_{tj}}
\newcommand{\Lxjd}{L^{x}_{\delta j}}
\newcommand{\Lyjd}{L^{y}_{\delta j}}
\newcommand{\Lxjdo}{\overline{L^{x}_{\delta j}}}
\newcommand{\Lyjdo}{\overline{L^{y}_{\delta j}}}

\newcommand{\cDl}{\cD_{\delta}}
\newcommand{\indexM}{{\scriptscriptstyle \cM}}
\newcommand{\lambdamin}{\operatorname{\lambda_{min}}}
\newcommand{\lambdamax}{\operatorname{\lambda_{max}}}

\newcommand{\dist}{\operatorname{dist}}
\newcommand{\range}{\operatorname{range}}

\newcommand{\argmin}{\operatornamewithlimits{argmin}}

\newcommand{\dom}{\operatorname{dom}}

\newcommand{\epi}{\operatorname{epi}}

\newcommand{\prox}{\operatorname{prox}}
\newcommand{\cl}{\operatorname{cl}}

\DeclarePairedDelimiter{\dotp}{\langle}{\rangle}
\def\shortdisplay{\setlength{\abovedisplayskip}{5pt}%
	\setlength{\belowdisplayskip}{5pt}%
	\setlength{\abovedisplayshortskip}{2pt}%
	\setlength{\belowdisplayshortskip}{2pt}}

\let\oldselectfont\selectfont
\def\selectfont{\oldselectfont\shortdisplay}

\definecolor{block-gray}{gray}{0.93}
\newtcolorbox{myquote}{colback=block-gray,
	,grow to right by=-10mm,grow to left by=-10mm,
	boxrule=0pt,boxsep=0pt
}

\newcommand{\sub}{\cF}
\newcommand{\zs}{z^\star}
\newcommand{\zstilde}{\underline{z}^\star}
\newcommand{\ys}{y^\star}
\newcommand{\sol}{\mathcal{S}^\star}
\newcommand{\sols}{\mathcal{S}^\star_L}

\newcommand{\ut}{\underline{t}}
\newcommand{\ot}{\overline{t}}

\newcommand{\ballt}[2]{\mathbf{B}_{#2}(#1)}
\newcommand{\ballP}[2]{\mathbf{B}^P_{#2}(#1)}

\newcommand{\cballt}[2]{\overline{\mathbf{B}}_{#2}(#1)}
\newcommand{\cballP}[2]{\overline{\mathbf{B}}^P_{#2}(#1)}
\newcommand{\ball}{\mathbf{B}}
\newcommand{\cball}{\overline{\mathbf{B}}}

\newcommand{\indexB}{B}
\newcommand{\indexBs}{B_a}
\newcommand{\indexBw}{B_d}
\newcommand{\indexN}{N}

\newcommand{\rate}{\frac{1}{2}}
\newcommand{\irate}{2}

\newcommand{\cPo}{\overline{\cP}}
\newcommand{\cPe}{\cP_{\varepsilon}}
\newcommand{\cSn}{\mathcal{S}^n}
\newcommand{\cSnp}{\mathcal{S}^n_+}
\newcommand{\cSnpp}{\mathcal{S}^n_{++}}
\newcommand{\bRR}{\overline{\RR}}
\newcommand{\lambdaminp}{\operatorname{\lambda^+_{min}}}

\newcommand{\step}{p}

\newcommand{\wh}[1]{\widehat{#1}}
\newcommand{\YY}{\mathcal{Z}}
\newcommand{\YN}{\YY_0}
\newcommand{\Bt}{\mathcal{Z}_t}
\newcommand{\eps}{\varepsilon}
\DeclareMathOperator{\relint}{relint}

\newcommand{\Lxt}[1]{\overline{L_{j}^x}(#1)}
\newcommand{\Lyt}[1]{\overline{L_{j}^y}(#1)}
\newcommand{\Lx}{\overline{L_{j}^x}}
\newcommand{\Ly}{\overline{L_{j}^y}}

\newtheorem{innercustomthm}{Assumption}
\newenvironment{customassumption}[1]
{\renewcommand\theinnercustomthm{#1}\innercustomthm}
{\endinnercustomthm}

\newcommand{\proj}{{\rm proj}}

\begin{document}

% \title{Active Set Identification and Rapid Local Convergence for Primal-Dual Degenerate Problems}
\title{Active set identification and rapid convergence for degenerate primal-dual problems}
% \title{Degenerate primal-dual problems: \\ active set identification and rapid convergence}
%\title{PDHG convergence for degenerate partially smooth problems}
\author{
  Mateo D\'\i az\thanks{Department of Applied Mathematics and Statistics and Mathematical Institute for Data Science, Johns Hopkins University, Baltimore, MD 21218, USA. MD was partially supported by NSF awards CCF 2442615 and DMS 2502377.}
\and Pedro Izquierdo Lehmann\footnotemark[1]
\and Haihao Lu\thanks{Sloan School of Management, MIT, Cambridge, MA 02142, USA. HL is partially supported by AFOSR Grant No. FA9550-24-1-0051 and ONR Grant No. N000142412735.} \and Jinwen Yang\thanks{Department of Statistics, University of Chicago, Chicago, IL 60637, USA. JY is partially supported by AFOSR Grant No. FA9550-24-1-0051.}}

\date{}

\maketitle

\begin{abstract}
Primal-dual methods for solving convex optimization problems with functional constraints often exhibit a distinct two-stage behavior. Initially, they converge towards a solution at a sublinear rate. 
%Then, after a certain point, all iterates identify the set of active constraints, and the convergence enters a faster local linear regime. 
Then, after a certain point, the method identifies the active set---in the sense that all subsequent iterates share the same active constraints---and the convergence enters a faster local linear regime.
Theory characterizing this phenomenon spans over three decades. However, most existing work only guarantees eventual identification of the active set and relies heavily on nondegeneracy conditions, such as strict complementarity, which often fail to hold in practice. We characterize mild conditions on the problem geometry and the algorithm under which this phenomenon provably occurs. Our guarantees are entirely nonasymptotic and, importantly, do not rely on strict complementarity. Our framework encompasses several widely-used algorithms, including the proximal point method, the primal-dual hybrid gradient method, the alternating direction method of multipliers, and the extragradient method. %, and optimistic gradient method.

\end{abstract}
%\tableofcontents

% All the content of the paper goes here, this will update both the arxiv and neurips templates.
\if 0
- Intro

    - Contributions
    - Related Work
    - Outline
    - Notation
\fi
\section{Introduction}
\label{sec:intro}
% \todo{Fix ball notation. \checkmark}\\
% \todo{Indexing $\tilde x_{k}.$ \checkmark}\\
% \todo{Make sure everything looks nice with $r=1/2.$ \checkmark}\\
% \todo{Decide what to do about $\Lxj.$ \checkmark}\\ 
% \todo{Get rid of periods at the end of each section title. \checkmark}\\
% \todo{Fix $\indexM$ everywhere. \checkmark}\\
% \todo{Get rid of $\theta$ everywhere in the body paper. \checkmark}\\
% \todo{Make sure $\|A||_2$ and $\|A\|_{\rm{op}}$ are consistent \checkmark}\\

\noindent We study convex optimization problems of the form 
\begin{equation}
\label{eq:primalProblem-intro}
\min_{x\in\R^n}  f(x)\qquad
    \text{s.t.}  \qquad g_j(x)\leq 0 \quad \text{for all }j \in \{1, \dots, m\}\, ,
\end{equation} 
where $f\colon \RR^n \to \RR$ and $g_j\colon \RR^n \to \RR$ are convex functions. 
% convex--concave minimax problems of the form  
% \begin{equation}
% \label{eq:minimaxProblem}
% \min_{x\in\RR^n}\max_{y\in\RR^{m}_+} \cL(x,y) 
% \qquad \text{with} \qquad 
% \cL(x, y) = f(x) + \langle y, G(x) \rangle,
% \end{equation}  
Primal-dual first-order methods, which simultaneously solve \eqref{eq:primalProblem-intro} and its dual, often display a marked two-stage behavior: initially, their iterates converge sublinearly towards a solution; then, after a finite number of iterations, the iterates identify the set of active constraints and their convergence switches to linear.  
This phenomenon is ubiquitous in practice, and it is exhibited by several first-order methods. To illustrate this phenomenon, Figure~\ref{fig:intro-example} shows the performance of three popular algorithms---the primal-dual hybrid gradient method (PDHG), the alternating direction method of multipliers (ADMM), and the extragradient method (EGM)---on a simple two-dimensional quadratic program with four constraints; see details in Appendix~\ref{app:intro}.  %All converge to a degenerate solution. 
% $
%     \min_{x\in \RR^2} c^Tx+x^TQx
%     \text{ s.t. } Ax\leq b \in \RR^6
% $
% where
% $$
%     c=\begin{bmatrix}
%         0 \\ -1
%     \end{bmatrix},\quad
%     Q = UDU^\top,\quad
%     D=\begin{bmatrix}
%         1 & 0\\
%         0 & 0
%     \end{bmatrix},\quad
%     U \text{ orth.},\quad
%     A=\begin{bmatrix}
%         1 & 1/\kappa\\
%         -1 & 1/\kappa\\
%         0 & 1\\
%         -\alpha & 1
%     \end{bmatrix},\quad
%     b=\begin{bmatrix}
%          1 \\ 1 \\ \kappa-\delta \\ \kappa-\delta\left(1-\frac{\alpha}{\kappa}\right)
%     \end{bmatrix},\quad
%    \delta,\kappa>0,\ \alpha\in[0,\kappa]\,.
% $$ 

 %   - Primal-dual algorithms often exhibit a two stage behavior  To illustrate this behavior let us consider a simple quadratic programming problem. 

    There is extensive work on this phenomenon, from algorithm-specific analyses \cite{lee2012manifold, liang2015activity, liang2017activity, liang2017local, liang2018local} to geometric accounts of the structure that drives it for broad families of algorithms~\cite{burke1988identification, wright1993identifiable, lemarechal2000lagrangian, lewis2002active, hare2007identifying, lewis2022partial}. Although technical, the core idea is simple and directly relevant here, so we briefly recall it. It is convenient to reformulate \eqref{eq:primalProblem-intro} as an equivalent minimax problem \begin{equation}
\label{eq:minimaxProblem}
\min_{x\in\RR^n}\max_{y\in\RR^{m}} \cL(x,y)
\qquad \text{with} \qquad 
\cL(x, y) = f(x) - \iota_{\RR_{+}^{m}}(y) + \langle y, G(x) \rangle
\end{equation}  
here $\iota_{\RR^{m}_{+}}$ denotes the indicator of the nonnegative orthant and $G\colon \RR^n \rightarrow \RR^m$ is the map with $i$th component $(G(x))_i = g_i(x).$
    In turn, a pair $z^\star=(x^\star,y^\star)$ solves \eqref{eq:minimaxProblem} if, and only if,
\[
0 \in \cF(z^\star) 
\qquad \text{with} \qquad 
\cF(z):=
\begin{bmatrix}
\partial_x \cL(z)\\[2pt]
\partial_y \left(-\cL(z)\right)
\end{bmatrix},
\]
where $\partial_x \cL$ and $\partial_y (-\cL)$ denote convex subdifferentials in $x$ and $y$, respectively. Suppose we had an iterative algorithm that generates a sequence $z^k=(x^k,y^k)$ for which there exists a sequence of saddle subdifferentials $\xi^k\in\cF(z^k)$ satisfying
\begin{equation}\label{eq:convergence-intro}
(z^k,\xi^k)\to (z^\star,0)\qquad\text{as}\qquad k\to\infty.
\end{equation}
\begin{wrapfigure}{r}{0.4\textwidth}
  \begin{center}
      \vspace{-1cm}
    \includegraphics[width=0.4\textwidth]{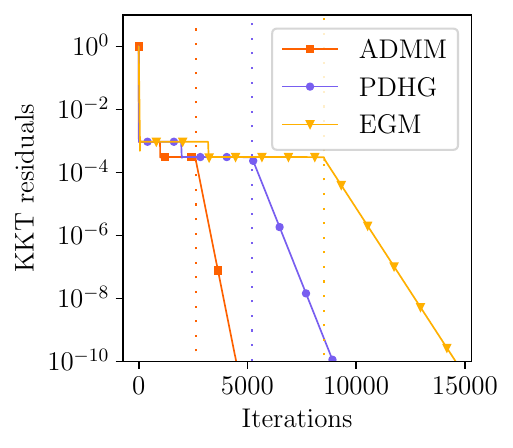}
    \vspace{-1cm}
  \end{center}
  \caption{Distance to solution versus iteration count for several algorithms applied to a degenerate QP. The vertical dotted lines represent the last iteration at which the active set changed.}%\hnote{Change this example to one with a different convergence property.}}
  \label{fig:intro-example}
\end{wrapfigure}
Many methods satisfy this requirement, including the proximal point method (PPM), PDHG, ADMM, and EGM.
Under strict complementarity, the primal-dual space decomposes locally into a manifold $\cM$ passing through $z^\star$---encoding the active constraints---and its complement $\cM^{\rm{c}}$. On $\cM$, the function $\cL$ is smooth; off $\cM$, the subdifferentials are bounded away from zero. Hence, small subdifferentials can arise only on $\cM.$ Consequently, if $\xi^k\to 0$, the iterates must eventually enter $\cM$, yielding finite identification.  Once on $\cM$, the algorithm effectively solves a smooth problem on a manifold; under a local error bound, the algorithm exhibits linear convergence.

Although impressive in scope, this body of work suffers from two limitations. First, existing results rely on the strict complementarity of the limit solution, which can only be verified a posteriori. In practice, algorithms frequently converge to degenerate solutions that violate strict complementarity---as occurs for all algorithms on the problem in Figure~\ref{fig:intro-example}. Second, most existing results guarantee only eventual active set identification without explicit finite-time bounds. These limitations motivate the central question of this work.

\mybox{\it\centering What problem and algorithmic properties enable nonasymptotic, finite-time identification and subsequent linear convergence without requiring strict complementarity?}

To answer this question, we identify a set of mild properties that covers a broad range of algorithms and problems.  Algorithmically, we require: $(i)$ certain structure on the dual updates, ensuring feasibility, and $(ii)$ convergence to a solution, in the sense of \eqref{eq:convergence-intro}, with a sublinear decay of the saddle subgradient norm. Once again, these conditions hold for major primal-dual algorithms, such as PPM, PDHG, ADMM, and EGM. On the problem side, we require $G(\cdot)$ being Lipschitz and an error bound on the saddle subdifferential: there exists $\alpha>0$ such that
\begin{equation}\label{eq:metric-subregularity-intro}
        \alpha\dist_2(z,\sol)\leq \dist_2(0,\sub(z))\qquad\text{for all } z\in\cD\,,
\end{equation}
where $\sol$ is the solution set and $\cD$ is a set containing all iterates of the algorithm. This inequality is known as \emph{metric subregularity} and has been widely studied in variational analysis \cite{ioffe2000metric, rockafellar1998variational}. It generalizes quadratic-growth conditions \cite{drusvyatskiy2013tilt,drusvyatskiy2013second} and holds broadly; for instance, it holds for linear programming (LP) problems where \(\alpha\) can be bounded via the Hoffman constant \cite{hoffman2003approximate, applegate2023faster}.

To state our nonasymptotic bounds, we need to introduce three key ingredients. The first ingredient is the correct notion of an identifiable set. When strict complementarity does not hold, we can still identify a set $\cM$, but unlike before, this set is no longer a manifold; instead, it is a union of manifolds. In particular, assuming the algorithm converges to $z^\star,$ we define the set
\begin{equation}
    \cM =\Big\{(x,y)\in\RR^n \times \RR^m_+\mid\,G(x)_\indexN<0,\,\,y_{\indexN}= 0,\, \text{ and  }\,y_{\indexBs} > 0\big\}
\end{equation}
with $\indexN := \{i \in [m] : g_i(x^\star) < 0\}$ the set of \emph{non-active} constratints and $\indexBs := \{i \in [m] : g_i(x^\star) = 0\, \text{ and }\, y_i > 0\}$ the set of \emph{active} constraints. When strict complementary holds, $ \indexN \cup \indexBs = [m]$, which ensures  $\cM$ is a manifold (in fact, a subspace). When it does not hold, the set of \emph{degenerate} constraints $\indexBw :=[m]\setminus(\indexN \cup \indexBs) = \{i \in [m] : g_i(x^\star) = 0\, \text{ and }\, y_i = 0\}$ is non-empty and $\cM$ can be described as a union of manifolds indexed by subsets of $\indexBw$, %Then 
\begin{equation}
    \cM =\bigcup_{T\subseteq \indexBw} \Big\{(x,y)\in\RR^n \times \RR^m_+\mid\,G(x)_\indexN<0,\,\,y_{\indexN}= 0\, , y_{T}=0\, ,y_{\indexBw\setminus T}>0 \text{ and  }\,y_{\indexBs} > 0\big\}.
\end{equation}
%is a union of exponentially many manifolds.

% But in general, it is not a manifold. %When it does not hold, % In the degenerate case,
% $\cM$ fails to be a manifold.  %distinguish whether the dual variables of weakly active constraints, i.e., $g_i(x^\star) = 0$ with $y_i = 0$, are zero or not.

The second ingredient is the \emph{radius of active-set stability}, defined as the smallest perturbation
%within $\YY:=\RR^n\times\RR^m_+$ 
that flips the sign of some entry of $G(x)_{\indexN}$ or $y_{\indexBs}$; formally,
\begin{equation}
\delta := \sup\Big\{\, t>0 \mid G(x)_{\indexN}<0 \ \text{and}\ y_{\indexBs}>0 \ \text{for all } (x,y)\in \ballt{\zs}{t}\Big\}. % \ballt{\zs}{t}\cap\YY\Big\}.
\end{equation}
Intuitively, $\delta$ measures how deeply $z^\star$ lies in $\cM$; larger $\delta$ means a wider margin by which the active inequalities are satisfied. In turn, once the iterates enter $\ballt{\zs}{\delta}$, they remain there and, after a few further steps, land on $\cM$. %Hence, the radius $\delta$ controls the speed at which identification happens; a larger ball yields faster identification.  %that the number of iterations needed to identify to manifold $\cM$ is equal to
\mybox{ We show that\,  $\left(\max\{1, \frac{1}{\alpha}\}\frac{8 \dist\left(z^0, \sol\right)}{\delta} \right)^{2} + \texttt{const} $ \,\, iterations suffice to identify $\cM$,}
%where $\alpha$ is the modulus in \eqref{eq:metric-subregularity-intro}
\noindent where $\alpha$ is the metric subregularity modulus in \eqref{eq:metric-subregularity-intro} and \texttt{const} is a small constant depending on algorithmic tuning. The quadratic term captures the time to reach the $\delta$-ball; the additive constant accounts for the final steps to lock onto $\cM$.

The third and final ingredient is a notion of `restricted' metric subregularity as opposed to the `global' notion. It is well known that when~\eqref{eq:metric-subregularity-intro} holds, convergence of first-order algorithms gets boosted from sublinear to linear; paralleling what happens with gradient descent on strongly-convex smooth function. However, the rate depends on the modulus $\alpha$, and when $\alpha$ is small, the linear rate can be impractically slow. Crucially, once identification kicks in,  it suffices to enforce metric subregularity in a neighborhood of the identifiable set: replace $\cD$ in \eqref{eq:metric-subregularity-intro} by $\cM$ intersected with a $\delta$-ball around the solution.  Let $\alpha_{\indexM}$ denote the corresponding restricted modulus; this constant governs the post-identification linear rate.
\mybox{After $\cM$ is identified, $\mathcal{O}\left(\frac{1}{\alpha_{\indexM}^2}\log\left(\frac{1}{\alpha_{\indexM}\varepsilon}\right)\right)$ iterations suffice to find a $z$ with $\dist(z, \sol) \leq \varepsilon.$}
\noindent Importantly, $\alpha_{\indexM}$ can be orders of magnitude larger than the global metric subregularity modulus and this convergence rate holds even when $\cM$ fails to be a manifold, thereby explaining the second-phase speed-up and yielding nonasymptotic guarantees without requiring strict complementarity.
 %Although a bit more technical, we also show that the constant $\alpha$ appearing in the identifiability bound can also be improved.
% In a nutshell, our work explains the nonasymptotic behavior of primal-dual first-order methods applied to~\eqref{eq:primalProblem-intro} without the need to impose strict complementarity.

\paragraph*{Outline.} The rest of this section is devoted to related work. Section~\ref{sec:prelims} briefly summarizes the necessary background. In Section~\ref{sec:assumptions}, we describe the problem and algorithmic classes that we consider, and in Section~\ref{section:algorithms}, we verify that several popular algorithms fall within the algorithmic class we study. Section~\ref{sec:guarantees} presents our general guarantees. Section~\ref{sec:experiments} closes the paper with numerical experiments supporting our theory. Lengthy, technical proofs are deferred to the appendix.

\subsection*{Related literature}

Our work is closely related to that of the last two authors \cite{lu2024geometry}, who studied the central question of this work for the particular case of PDHG applied to linear programs (LPs). Our answer builds on their ideas but requires substantial changes.  Two main obstacles to this generalization are: $(i)$ the LP analysis is based on polyhedral geometry, which is unavailable for general convex problems with functional constraints; and $(ii)$ \cite{lu2024geometry} leverages the explicit PDHG updates, whereas here we distill the basic algorithmic conditions that still ensure the two-stage behavior. The second obstacle necessitates more delicate arguments to treat algorithms such as ADMM, which lack several of the structural properties enjoyed by PDHG.

\paragraph{Convex-concave primal-dual algorithms.}
Convex-concave saddle-point problems and their associated primal-dual algorithms have been extensively studied for decades. The saddle-point problems are also studied as instances of general variational inequalities. In 1976, Rockafellar’s seminal work introduced the proximal point method (PPM)~\cite{rockafellar1976monotone} for solving monotone variational inequalities, while Korpelevich proposed the extragradient method (EGM) for solving convex–concave saddle-point problems~\cite{korpelevich1976extragradient}. In~\cite{nemirovski2004prox}, Nemirovski showed that EGM, as a mirror-prox instance, can be viewed as an approximation of PPM. For saddle-point problems with bilinear interaction terms, algorithmic development has been especially rich: the primal–dual hybrid gradient (PDHG) method~\cite{zhu2008efficient,chambolle2011first,chambolle2016ergodic} and the alternating direction method of multipliers (ADMM)~\cite{glowinski1975approximation,boyd2011distributed} are widely used in practice. More recently, it has been established that PDHG and ADMM can also be interpreted as approximations of PPM~\cite{he2012convergence,lu2023unified}.

\paragraph{Finite time identification.} Finite-time identification of active sets refers to an algorithm’s capability to identify the underlying manifold or active constraints in finite iterations~\cite{burke1988identification,wright1993identifiable, hare2004identifying}. This behavior is often analyzed under the framework of partial smoothness~\cite{hare2004identifying,lewis2013partial,lewis2022partial} and through the closely related $\mathcal{VU}$-decomposition perspective developed by~\cite{lemarechal2000lagrangian, mifflin2000mathcalvu}. Roughly speaking, a function is partly smooth relative to a manifold if it is smooth along the manifold while being sharply nonsmooth in directions transverse to it. This structure, combined with strict complementarity, enables characterizations of identification and the subsequent fast local convergence of algorithms. For example, manifold identification for dual averaging has been established in~\cite{lee2012manifold}, while forward–backward splitting methods have been shown to achieve finite-time identification under similar assumptions~\cite{liang2014local,liang2017activity}. In the context of primal-dual methods, finite-time active-set identification and local convergence of PDHG and ADMM are analyzed in~\cite{liang2017local,liang2018local, applegate2023faster, xiong2024accessible}. Recent work \cite{applegate2024infeasibility} showed that finite time identification and fast convergence also occur for infeasible linear programming problems, albeit with respect to an auxiliary feasible problem that characterizes the direction in which the iterates diverge. However, these guarantees hinge on {nondegeneracy} of the limiting solution---an assumption that is difficult to certify a priori and is frequently violated in applications~\cite{fadili2018sensitivity,fadili2019model}. 
To our knowledge, there are two notable exceptions.  First, the line of work initiated by \cite{wright2002modifying, wright2003constraint,oberlin2006active}, which develops modified sequential quadratic programming schemes designed to recover two-stage behavior even in the presence of degeneracy. In contrast, our analysis applies to standard first-order methods without any degeneracy-handling modifications. Second, the work  \cite{fadili2018sensitivity} develops a sensitivity and identification theory for (degenerate) mirror-stratifiable convex functions. Our work, instead, is not concerned with sensitivity and does not rely on mirror-stratifiability.

\paragraph{Metric subregularity and growth conditions.}
Metric subregularity imposes a linear error bound on the saddle subdifferential~\cite{ioffe1979necessary,dontchev2004regularity,dontchev2009implicit}, serving as a unifying regularity condition across diverse problem classes. Originally introduced in the early works of Robinson~\cite{robinson1981some}, metric subregularity is closely related to notions such as calmness and error bounds~\cite{henrion2002calmness,drusvyatskiy2018error}. Many structured problems, including those involving piecewise linear–quadratic models such as Lasso and support vector machines, naturally satisfy subregularity on compact domains~\cite{robinson1981some,zheng2014metric}. Motivated by these applications, recent research has studied how first-order methods behave under this assumption. In convex minimization, metric subregularity of the subdifferential has been shown to be equivalent to quadratic growth conditions, leading to linear convergence~\cite{drusvyatskiy2018error}. Similar developments extend to saddle-point and primal–dual settings, such as for PDHG and ADMM~\cite{yuan2020discerning,lu2022infimal}.

\section{Preliminaries}\label{sec:prelims}

In this section, we review the notation and necessary background in linear algebra and convex analysis. We defer the interested reader to the monographs \cite{borwein2006convex, rockafellar1997convex}. %, ryu2022large, beck2017first}.
%\paragraph{Linear Algebra.}
We use the symbols $\NN$ and $\RR$ to denote the set of natural (without zero) and real numbers, respectively. Further, we use $\bRR$ to denote $\RR \cup \{+\infty\}.$ We denote the set of nonnegative reals as $\RR_{+}.$ We endow $\RR^{n}$ with the standard dot product $\dotp{x,y} = x^{\top} y$ and its induced norm $\|x\|_{2}=\sqrt{\dotp{x, x}}$. The symbols $\ballt{ z}{t}$ and $\cballt{z}{t}$ denote the open and closed ball of radius $t$ centered at \(z\), respectively, and we label $\ball :=\ballt{0}{1}$ and $\cball := \cballt{0}{1}$. We use $\cSn$ to denote the set of $n \times n$ symmetric matrices. For a given $M \in \cSn$ we denote its eigenvalues by $\lambda_{1}(M) \geq \dots \geq \lambda_{n}(M)$. We write \(\lambdamax(M)\) for the largest eigenvalue of $M$ and \(\lambdaminp(M)\) for the smallest nonzero eigenvalue of $M$.
We use
$\cSnp=\{M\in\cSn:\lambda_{n}(M)\geq 0\}$ and $\cSnpp=\{M \in\cSn:\lambda_{n}(M)>0\}$
to denote positive semidefinite (PSD) and positive definite (PD) matrices, respectively. The symbol $\kappa(M)=\lambda_1(M)/\lambda_n(M)$ denotes the condition number of $M$.  The symbol $M^{\dagger}$ denotes the pseudoinverse of $M$.
% The symbols $\cSnp$ and $\cSnpp$ denote the set of positive semidefinite and positive definite matrices, repectively; i.e., $M \in \cSn$ such that $\lambda_{n}(M) \geq 0$ or with strict inequality. The set of matrices for which this inequality holds strictly is denoted by $\cSnpp.$
Any $M \in \cSnp$ induces a semi-inner product $\dotp{x, y}_{M} = x^{\top}M y$ and a semi-norm $\|x\|_{M} = \sqrt{\dotp{x, x}_{M}}$ that induces a pseudodistance. The symbol $\cball_t^M(z)$ denotes the set of points whose \(M\)-pseudodistance to \(z\) is less than or equal to \(t\). Similarly, the symbol $\ball_t^M(z)$ denotes the set of points whose \(M\)-pseudodistance to \(z\) is strictly less than \(t\). Abusing notation, we define the $M$-pseudo-distance from a point $x$ to a set $Q \subseteq \RR^n$ via
$$
\dist_{M}(x, Q) := \inf_{y \in Q} \| x - y \|_M. $$
When $M$ corresponds to the identity, we drop the subindex and simply write $\dist$. We define the Euclidean projection onto a closed set $Q$ as
$$\proj(x) := \argmin_{y \in Q} \| x - y \|_2. $$
% For an arbitrary matrix $M \in \RR^{n \times m},$ we use $\sigma_{1}(M)\geq \dots\geq\sigma_{\min\{m,n\}}(M)$ to denote its singular values.

%\paragraph{Convex Analysis.}
    % In this paper we will focus on convex, closed and proper (CPP) functions, as they are amenable to analysis and to algorithm implementation. A function \(f:\RR^n\to\R\cup\{+ \infty\}\) can be characterized as CPP by looking at its {\em epigraph}, which defined as
    % \[
        % \epi(f)=\{(x,t)\in\RR^{d+1}:f(x)\leq t\}
    % \]
% When the epigraph of \(f\) is closed, we say \(f\) is closed. When it is nonempty, we say \(f\) is proper. And when it is convex, we say \(f\) is convex.
Consider a function $f \colon \RR^{n} \to \bRR$. We let $\epi(f)=\{(x,t)\in\RR^{d+1}:f(x)\leq t\}$ be the epigraph of $f$. The function is proper if the epigraph is nonempty. Analogously, we say that $f$ is closed (resp. convex) if its epigraph is closed (resp. convex). Given a convex, closed set $Q \subseteq \RR^{n},$ we define its indicator function as \begin{equation}
  \iota_{Q}(x) := \begin{cases}0 & \text{if }x \in Q, \\
  +\infty & \text{otherwise.}\end{cases}
  \end{equation}
For a closed, convex, proper function $f\colon \RR^{d}\rightarrow \bRR$ and a point $x\in \RR^{n}$, the \emph{convex subdifferential} of $f$ at $x$, denoted by $\partial f(x)$, corresponds to the set of vectors $g$ satisfying
 \[
         f(z)\geq f(x)+\langle g,z-x\rangle \qquad \text{for all} \ z \in \RR^{m}.
      \]
    % A relation, set-valued mapping, multi-valued function or operator \(R\) on a set \(S\) is a subset of \(S\times S\). If \(R(x)\equiv \{y\in S:(x,y)\in R\}\) is a singleton or empty for all \(x\in S\), then \(R\) is {\em single-valued} and represents a function with domain \(\{x\in S:R(x)\neq\emptyset\}\).
    % A relation that is of special interest in convex analysis the {\em subdifferential} of a function \(f:\RR^n\to\R\cup\{+ \infty\}\):
    % \[
        % \partial f(x)=\{g\in R^{n}:\forall z\in \RR^n\, f(x)\geq f(x)+\langle g,z-x\rangle\}
    % \]
      % When \(f\) is CCP, \(\partial f(x)\) is a nonempty closed and convex set for any \(x\in\dom (f)\).
      Similarly, for a function \(\cL\colon\RR^{n}\times \RR^{m}\to\RR\cup\{+\infty\}\) which is convex in its first component and concave in its second component, the \emph{saddle subdifferential} at a point $(x, y) \in \RR^{n}\times \RR^{m}$ is given by
 \begin{equation}\label{eq:saddle-subdifferential}        \cF (x,y)=\begin{bmatrix}\partial_x\cL(x,y)\\\partial_y(-\cL(x,y))\end{bmatrix},
    \end{equation}
    where, for any fixed \(y\in\RR^m\) we use  \(\partial_x\cL(x,y)\) to denote the subdifferential of the function \(\cL(\cdot,y)\) at \(x\) and an analogous definition follows for $\partial_{y}$. %When \(\cL\) is CCCP \(F(z)\) is a nonempty closed and convex set for any \(z\in\RR^{n+m}\).

\section{Setting and algorithms}\label{sec:assumptions}

In this section, we formalize the setting we study, state assumptions, and present a general algorithmic template. Rather than focusing on a single method, we analyze a meta-algorithm satisfying mild conditions. At the end of this section, we show that many popular methods fit this template.

%In this section, we formally introduce the setting and assumptions we cover as well as a general templae of the algorithms we consider. Instead of studying a concrete method, we consider a meta algorithm satisfying certain assumptions, we shall see in Section~\ref{section:algorithms} that many popular methods fit our template.
\label{section:settings}
\subsection{Problem class}
\label{section:problem}
Our departing point is a pair of primal-dual problems of the form
% where \(f\colon\R^n\to\R\) is a convex function and $G\colon\R^n\to\R^m$ as a place holder for $G(\cdot) = \left(g_1(\cdot), \dots, g_m(\cdot)\right)^\top$ where $g_j\colon \RR^d \rightarrow \RR$ is a convex function for all $j \in [m]$. This formulation is equivalent to a pair of primal and dual optimization problems given by
\begin{equation}
\label{eq:primalProblem}
p^\star = \begin{cases}\displaystyle\min_{x\in\R^n} & f(x)\\
    \text{s.t.} &  g_j(x)\leq 0 \quad \text{for all }j \in \{1, \dots, m\}\, ,
    \end{cases} \qquad \text{and} \qquad     q^\star =\begin{cases}\displaystyle\max_{y\in\R^m} & h(y)\\
    \text{s.t.} & y\geq 0\, ,
    \end{cases}
\end{equation} 
where the functions $f$ and $g_{j}$ are assumed to be convex, and \(h(y) = \min_{x \in \RR^{n}} f(x) + \dotp{y, G(x)}\) with $G$ the map whose entries are given by $G(x)_{i} = g_{i}(x)$. This formulation subsumes linear and quadratic programming and extends well beyond these classes.
%While the dual problem is defined in terms of as
%\begin{equation}
%\label{eq:dualProblem}
%\end{equation}
%where \(h\colon\RR^m\to\R\) is given by $h(y) = \min_{x \in \RR^d} \cL(x,y)$. %For instance, defining \(h(y)=\min_{x\in\RR^n}\cL(x,y)\) we have that \(h=f_D+\iota_{C}\), where \(C=\{y\in\RR^m:G_D(y)\leq 0,\, y\geq 0\}\). 
% Furthermore, from the max-min formulation for the dual problem and the form of \(\cL\) we have the dual constraint \(y\geq 0\), from which we can assume without loss of generality that \((G_D(y))_k=-y_k\) for all \(k\in [m]\). 
%Our assumptions over the elements involved in the above problems are shown in Assumption~\ref{assumption:problem}.
Primal-dual optimal solutions \(\sol\) are given by the set of pairs \((x,y)\in\R^{n+m}\) satisfying
\begin{equation}
\label{eq:primalDualSolutions}
    \begin{aligned}
            f(x) -h(y) &\leq 0 && \text{(Zero duality gap)}\\
            G(x) &\leq 0 && \text{(Primal feasibility)}\\
            y&\geq 0 && \text{(Dual feasibility)}\,. 
    \end{aligned}
\end{equation}
As mentioned in the introduction~\eqref{eq:primalProblem} is equivalent to the minimax problem \eqref{eq:minimaxProblem}. For any \(z\in\RR^{n+m}\) let \(\sub(z)\subseteq \RR^{n+m}\) denote the saddle subdifferential~\eqref{eq:saddle-subdifferential} of the constrained Lagrangian function \(\cL(x,y)=f(x)-\iota_{\R^m_+}(y) + \dotp{y, G(x)}\).
% , that is
% \[
%     \sub(z)=\begin{bmatrix}
%         \partial f(x)+\displaystyle\sum_{j=1}^my_j\partial g_j(x)\\
%         -G(x)+N_{\RR_+^m}(y)
%     \end{bmatrix}
%     %\quad\text{where}\quad J_G(x)=\begin{bmatrix}\partial g_1(x) & \dots & \partial g_m(x)\end{bmatrix}
% \]
By the saddle point theorem for convex optimization~\cite{rockafellar1997convex}, when the functions \(f\) and \(g_j\) are convex we have \(\sub^{-1}(0)=\sol\). 

We will impose a standard set of assumptions on the primal-dual problem.

\begin{assumption}
\label{assumption:problem}
    Problem \eqref{eq:minimaxProblem} satisfies the following two conditions.
    \begin{enumerate}
        \item{\bf (Convexity)} The functions \(f\) and \(g_j\) are convex for all $j \in [m]$.
        \item {\bf (Existence of solutions)} The set of primal-dual solutions \(\sol\) is nonempty.
    \end{enumerate}
\end{assumption}
These conditions are standard. The existence of primal-dual solutions is equivalent to strong duality \(p^\star=d^\star\) with primal-dual attainment, which is implied by constraint qualification conditions such as the Slater condition.
\begin{assumption}[{\bf Metric sub-regularity}]\label{assumption:metric-subregularity}   For any radius \(t>0\) there exists \(\alpha>0\) such that
     \[
        \alpha\dist_2(z,\sol)\leq \dist_2(0,\sub(z))\qquad\text{for all } z\in\sol+ t \ball\,.
     \]
\end{assumption}

 The assumption asserts an error bound for the saddle subdifferential---known as metric subregularity in the variational analysis literature~\cite{ioffe2000metric, rockafellar1998variational}---that is intimately related to quadratic-growth behavior and the stability of solutions \cite{drusvyatskiy2013second, drusvyatskiy2013tilt}. The assumption holds for a broad class of primal-dual problems; in particular, all LP problems satisfy it~\cite{hoffman2003approximate, applegate2023faster}.
%Metric subregularity is tightly linked to error bounds \cite{hoffman2003approximate} and generalizes quadratic-growth-type conditions \cite{drusvyatskiy2013tilt,drusvyatskiy2013second}. It holds broadly; for instance, it holds for linear programs where \(\alpha\) can be bounded via the Hoffman constant \cite{applegate2023faster}.

%one is not standard but ubiquitous in convex optimization: it is equivalent to uniform local quadratic growth at the primal-dual solution set. 
%For instance, any quadratic programming problem satisfies the latter condition. Furthermore, in Appendix~\ref{appendix:proof-relaxedAssumption} we show that if \(\sol\) is bounded, then the uniformity of the condition can be relaxed: just point-wise metric sub-regularity over \(\sol\) suffices. 

\subsection{Algorithmic template}
    \label{section:metaAlgorithm}
    In this section, we introduce the meta-algorithm we analyze. To solve \eqref{eq:primalProblem}, the meta-algorithm maintains two sequences: the main iterates \(z^k=(x^k,y^k)\) and the auxiliary iterates \(\tilde z^k=(\tilde x^k,\tilde y^k)\). That is, the method is initialized at some \(z^0\)
    % \(z^0= (x^{0}, y^{0}) = \tilde z^{0}\)
   % (potentially equal) \(z^0\) and \(\tilde z^{0}\)
    and  updates
%we an iterative method starting from a fixed initial iterate \(z^0\) = \(x^0, y^0\). %We allow the algorithm to have intermediate iterates; we consider a generic class of primal-dual algorithms for primal-dual problems with iterate update
    \begin{equation}
    \label{eq:algorithm}
        z^{k+1},\tilde{z}^{k+1}\leftarrow \texttt{PrimalDualStep}(z^{k}). 
    \end{equation}
    In a nutshell, \(\tilde z^k\) is an intermediate update that we will use to describe our assumptions; yet for several algorithms it is trivially equal to $z^k$. We also suppose that each algorithm comes equipped with a PD matrix $P\in \cS_{++}^{n + m}$ and its associated norm $\|z\|_P:= \sqrt{z^\top P z}$, which dictates a natural geometry to measure progress of the algorithm. %To motivate our meta-algorithm and assumptions, the following section introduces several examples matching our template.
    
    %As an illustrative example, consider the primal-dual hybrid gradient method (PDHG) \cite{chambolle2011first}, which applied to \eqref{eq:minimaxProblem} with linear constraints --- i.e., $G(x) = Ax-b$ for a matrix $A\in\RR^{ m\times n}$ and a vector $b\in\RR^{m}$---reduces to
%\begin{equation}\label{eq:fake-pdhg}
%              x^{k+1}\leftarrow\prox_{\eta f}(x^k-\eta A^\top y^{k}), \quad  y^{k+1}\leftarrow \proj_{\RR^m_+}(y^k+\eta A\tilde{x}^{k}),  \quad\text{and}\quad \tilde{x}^{k+1}\leftarrow 2x^{k+1}-x^k,
 %   \end{equation}
 %\begin{equation}\label{eq:fake-pdhg}
 %             x^{k+1}\leftarrow\prox_{\eta f}(x^k-\eta A^\top y^{k}),  \quad \tilde{x}^{k+1}\leftarrow 2x^{k+1}-x^k,
%\quad\text{and}\quad  y^{k+1}\leftarrow \proj_{\RR^m_+}(y^k+\eta A\tilde{x}^{k+1}),     \end{equation}
 %   where $\eta > 0$ is the stepsize and we trivially let $\tilde y^{{k+1}} = y^{k+1}$. For this update, $\tilde x^k$ is simply an intermediate extrapolation update. In turn, when $\eta< \|A\|_{\rm{op}}^{-1} $, one can show \cite{lu2022infimal} that
 %   \begin{equation}
 %   \label{eq:pdhg-rate} 
 %   \|z^{k+1} - z^k\|_P \leq \frac{\dist_P(z^0, \cS^\star)}{\sqrt{k}}\qquad \text{with} \qquad P = \begin{pmatrix}
 %                   \frac{1}{\eta}I & -A^T\\
 %                   -A & \frac{1}{\eta} I
 %   \end{pmatrix} \, .
 % \end{equation}
            
Next, we introduce three assumptions on the meta-algorithm \eqref{eq:algorithm}. Assumption~\ref{assumption:asymptoticIdentification} is necessary for eventual identification; Assumption~\ref{assumption:rapidLocalConvergence} is required for local linear convergence; and Assumption~\ref{assumption:finiteTimeIdentification}, in tandem with the first two, delivers nonasymptotic identification. We shall see in Section~\ref{section:algorithms} that all of these assumptions hold for several popular algorithms. % such as the proximal point method(PPM), PDHG, the alternating direction method of multipliers (ADMM), the extragradient method (EG). All assumptions depends on the PSD matrix \(P\).
For intuition, the reader might take \(P=I\), in which case \(\|\cdot\|_{P}=\|\cdot\|_{2}\); this choice is realized by a number of methods.
     \begin{assumption}%[Basic identification]
     \label{assumption:asymptoticIdentification}
        There exists a positive definite matrix $P\in \cSnpp$ such that the following three hold.
        \begin{itemize}
            \item[$(i)$] (\textbf{Convergence}) For any initial iterate \(z^0\in\RR^{n+m}\) there exists \(\zs\in\sol\) such that 
            \[
                \max\left\{\|z^k-\zs\|_P,\|\tilde{z}^k-\zs\|_P\right\}\to 0 \quad \text{as} \quad k \rightarrow \infty\,.
            \] 
            %In particular, there exists \(R>0\) such that \(z^k,\tilde{z}^k\in\ballP{\zs}{R}\) for all \(k\in\NN_0\).
          \item[$(ii)$] (\textbf{Dual update}) %Denote \(z^{k+1}=(x^k,y^k)\), \(\tilde{z}^{k+1}=(\tilde{x}^k,\tilde{y}^k)\) as primal-dual variables. Then
                There exists \(\eta>0\) ({stepsize}) such that dual update takes the form
            \[
                y^{k+1}=\proj_{\RR^{m}_+}(y^k+\eta G(\tilde{x}^{k+1}))\,.
            \]
            %Further, we have $\tilde{z}^k \in \YY :=\RR^n\times\RR^m_+.$
            \item[\((iii)\)] ({\bf $P$-Lipschitzness}) For each \(t>0\) and $j \in [m]$ there exist $\Lxj, \Lyj > 0$ such that  %in $\ballP{\zs}{t}$ 
            $$
            g_j(x) - g_j(x') \leq \Lxj\|z-z'\|_P \qquad \text{and} \qquad |y_j - y_j'| \leq \Lyj \|z - z'\|_P
            $$ 
            for $z = (x, y), z' = (x', y') \in\ballP{\zs}{t}.$ 
            %\YY\cap\ballP{\zs}{t}.$
            \end{itemize}
     \end{assumption}
     %Assumption~\ref{assumption:asymptoticIdentification} is needed for asymptotic identification (Theorem~\ref{theorem:asymptoticIdentification}). 
     %\pil{Say why we mention the range of $P$ here too.} \MD{No, here let's just delete it. This Assumption is modified in the appendix, Assumption 4 isn't.}
     Let us comment on these conditions. The first condition essentially states that the algorithm converges to an optimal solution point-wisely. The second condition is ubiquitous in primal-dual algorithms tackling \eqref{eq:primalProblem} as we shall see in the following section. The third condition, although not standard, holds automatically provided the functions \(g_j\) are Lipschitz continuous since in Euclidean spaces all norms are equivalent. We state it in terms of the $P$-norm since it is a natural norm to state our guarantees. In the appendix, we state a weaker, more technical assumption that only requires $P$ to be positive semidefinite, which is crucial to cover ADMM, whose associated $P$ is singular. In what follows, $P$ denotes the same matrix as in Assumption~\ref{assumption:asymptoticIdentification}. We use the range of $P$ in the statement of the next assumption, which under Assumption~\ref{assumption:asymptoticIdentification} is trivially the whole space (since $P$ is positive definite); we keep this form to remain compatible with the semidefinite (singular) case handled in the appendix. % to analyze %Although $P$ is PD for most methods, for ADMM this matrix singular, and so to accomodate to this case, we adopt this slightly more delicate assumption. %, e.g., convex.
     %The following assumption is needed for linear convergence (Theorems~\ref{theorem:linearConvergence} and~\ref{theorem:linearConvergenceFast}). 
     \begin{assumption}%[Linear Convergence]
     \label{assumption:rapidLocalConvergence}
        The following two hold.
        \begin{itemize}
            \item[\((i)\)] (\textbf{Subdifferential sublinear rate}) There exists \(\gamma>0\) such that
            \begin{align*}
                \dist_{P^{\dagger}}(0,\sub(z^k)\cap\range(P)) \leq \frac{\gamma\dist_P(z^0,\sol)}{\sqrt{k}} \qquad \text{for any \(k\in\NN\)}\,.
            \end{align*}
             \item[\((ii)\)] (\textbf{Closedness to solutions}) There exists \(\tau>0\) such that \(\dist_2(z^k,\sol)\leq \tau\) for all \(k\in\NN\).
        \end{itemize}
     \end{assumption}

The first assumption states essentially that the subgradient $\sub(z^k)$ converges to $0$ at a sublinear rate, and the second assumption states the iterates stay in a bounded region from the optimal solution set. These conditions are common among first-order methods. In turn, a simple consequence of this assumption and metric subregularity is linear convergence. The next result formalizes this statement. This is a well-known guarantee, we include it for completeness; its proof appears in Appendix~\ref{appendix:proof-linearConvergence}.
     \begin{proposition}
         \label{theorem:linearConvergence}
          Suppose Assumptions~\ref{assumption:problem}, \ref{assumption:metric-subregularity}, and~\ref{assumption:rapidLocalConvergence} hold. Let $z^{k}$ be the \(k\)th iterate generated by update \eqref{eq:algorithm}. Then,
          \begin{equation}
    \label{eq:alphaG}
    0<\alpha_G:=\inf_{z\in\cD}\frac{\dist_2(0,\sub(z))}{\dist_2(z,\sol)} \qquad \text{with} \qquad 
    \cD=\sol+\tau\ball\,.
    %\cD=\left( \sol+\tau\ball \right)\cap\YY\,.
    \end{equation}
          Further, for any $k \in \NN$ we have
        \[
           \dist_2(z^k,\sol)\leq \sqrt{e}\nu \exp\left(-\rate \frac{k}{\left\lceil e\nu^{\irate}\right\rceil}\right)\dist_{2}(z^0,\sol)\quad\text{where}\quad \nu=\frac{\gamma\lambdamax(P)}{\alpha_G}.
        \]
        \end{proposition}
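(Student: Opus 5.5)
The plan is a restart argument. The sublinear subgradient decay of Assumption~\ref{assumption:rapidLocalConvergence}$(i)$ is combined with metric subregularity on $\cD$ to show that the distance to $\sol$ contracts by a fixed factor every $\lceil e\nu^2\rceil$ iterations.

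\textbf{Step 1: positivity of $\alpha_G$.} This follows directly from Assumption~\ref{assumption:metric-subregularity} applied with radius $t=\tau$. It gives some $\alpha>0$ with $\alpha\dist_2(z,\sol)\le\dist_2(0,\sub(z))$ on $\sol+\tau\ball=\cD$, so $\alpha_G\ge\alpha>0$. By Assumption~\ref{assumption:rapidLocalConvergence}$(ii)$ every iterate satisfies $z^k\in\cD$ (modulo the closed/open ball boundary, which is handled by enlarging $t$ slightly). Hence
\[
\alpha_G\dist_2(z^k,\sol)\le\dist_2(0,\sub(z^k)) .
\]

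\textbf{Step 2: norm comparison.} Since $P$ is positive definite, $\range(P)$ is the whole space and $\dist_{P^\dagger}(0,\sub(z^k))\ge\lambdamax(P)^{-1/2}\dist_2(0,\sub(z^k))$. We also have $\dist_P(z^0,\sol)\le\lambdamax(P)^{1/2}\dist_2(z^0,\sol)$. Combining these with Step 1 and Assumption~\ref{assumption:rapidLocalConvergence}$(i)$ gives
\[
\dist_2(z^k,\sol)\le \frac{\gamma\lambdamax(P)}{\alpha_G}\frac{\dist_2(z^0,\sol)}{\sqrt k}=\frac{\nu}{\sqrt k}\dist_2(z^0,\sol).
\]

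\textbf{Step 3: restarting.} The algorithm is time-homogeneous: the step map \eqref{eq:algorithm} depends only on $z^k$. Therefore the bound of Step 2 applies to the tail sequence started at any $z^{j}$. The same is true of Assumption~\ref{assumption:rapidLocalConvergence}, which is stated for an arbitrary initial iterate, and $\tau$ still bounds the tail. Set $N=\lceil e\nu^2\rceil$; then $\nu/\sqrt N\le e^{-1/2}$. Iterating over blocks of length $N$ gives $\dist_2(z^{\ell N},\sol)\le e^{-\ell/2}\dist_2(z^0,\sol)$.

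\textbf{Step 4: intermediate indices.} For general $k=\ell N+r$ with $0\le r<N$:
\begin{itemize}
\item if $r\ge1$, apply Step 2 from $z^{\ell N}$ to get a factor $\nu/\sqrt r\le\nu$;
\item if $r=0$, the factor is $1\le\nu$ whenever $\nu\ge1$.
\end{itemize}
This yields $\dist_2(z^k,\sol)\le \nu e^{-\ell/2}\dist_2(z^0,\sol)$. Since $\ell\ge k/N-1$, we get $e^{-\ell/2}\le\sqrt e\,e^{-k/(2N)}$, which is the claimed bound.

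\textbf{Main obstacle.} The delicate point is the restart in Step 3. We must make sure the constant $\gamma$ and the containment in $\cD$ apply uniformly to tails started at $z^{\ell N}$. This needs the reading that Assumption~\ref{assumption:rapidLocalConvergence} holds for every initial point with the same $\gamma$ and $\tau$ (or at least along the trajectory). A secondary point is the case $\nu<1$, where the factor for $r=0$ exceeds $\nu$. This can be handled either by noting that $\alpha_G$ may be replaced by any smaller positive constant, so we can assume $\nu\ge1$ without loss of generality, or by checking that in this regime the bound is trivially implied.
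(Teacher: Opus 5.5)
Your argument is essentially the paper's restart argument. The paper also applies Assumption~\ref{assumption:rapidLocalConvergence}$(i)$ to the trajectory restarted at $z^{k-\rho}$, with $\rho=\lceil e\nu^2\rceil$, so the reading you flag as the main obstacle is the one the paper uses too. The difference is bookkeeping. The paper contracts the residual $\dist_{P^\dagger}(0,\sub(z^k)\cap\range(P))$ by $e^{-1/2}$ per block and converts to $\dist_2(z^k,\sol)$ through metric subregularity only once, at the final iterate. That is why the factor $\nu$ appears there with no condition on $\nu$. You instead contract $\dist_2(\cdot,\sol)$ block by block.

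That difference exposes a real hole in your remainder case $r=0$ when $\nu<1$.
\begin{itemize}
\item \emph{The WLOG remedy is invalid.} The right-hand side $\sqrt e\,\nu\exp(-k/(2\lceil e\nu^2\rceil))$ is nondecreasing in $\nu$. Replacing $\alpha_G$ by a smaller constant therefore proves a weaker inequality than the one stated.
\item \emph{The regime is not vacuous.} For PPM, $P=\eta^{-1}I$ and $\gamma=1$ give $\nu=1/(\eta\alpha_G)$, which is below $1$ for large $\eta$.
\item \emph{The bound is not trivially implied.} If $\nu<e^{-1/2}$ then $N=1$, so every $k$ has $r=0$. Your chain then gives only $e^{-k/2}\dist_2(z^0,\sol)$, which exceeds the target $\sqrt e\,\nu e^{-k/2}\dist_2(z^0,\sol)$.
\end{itemize}

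The repair is simple: write $k=\ell N+r$ with $r\in\{1,\dots,N\}$, i.e.\ $\ell=\lceil k/N\rceil-1$. The last block then always goes through your Step~2 from $z^{\ell N}$, which gives the factor $\nu/\sqrt r\le\nu$. The inequality $\ell\ge k/N-1$ still holds, so $\dist_2(z^k,\sol)\le \nu e^{-\ell/2}\dist_2(z^0,\sol)\le \sqrt e\,\nu e^{-k/(2N)}\dist_2(z^0,\sol)$ for every $\nu$. The paper's Case~1, which uses $n\rho\le k\le(n+1)\rho$, needs the same adjustment to guarantee $k-n\rho\ge 1$.

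A smaller point: enlarging $t$ in Assumption~\ref{assumption:metric-subregularity} yields a possibly smaller modulus, not $\alpha_G$. It therefore does not justify the inequality with $\alpha_G$ at iterates where $\dist_2(z^k,\sol)=\tau$. The paper simply takes $z^k\in\cD$ from Assumption~\ref{assumption:rapidLocalConvergence}$(ii)$.
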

     The rate depends on the global metric subregularity modulus in \eqref{eq:alphaG}. In practice, this constant is often small, especially for badly conditioned problems, yielding impractically slow convergence. Thus, it is common to observe an first stage of active set identification; the next assumption is key in deriving bounds on this stage.
     
      %The following assumption, together with Assumptions~\ref{assumption:asymptoticIdentification},~\ref{assumption:rapidLocalConvergence}, is needed for finite time identification (Theorems~\ref{theorem:finiteTimeIdentification} and~\ref{theorem:finiteTimeIdentificationLinear}).
    \begin{assumption}%[Finite Time Identification]
    \label{assumption:finiteTimeIdentification} The following two conditions hold.
    \begin{itemize}
        \item[$(i)$] (\textbf{Sublinear rate}) There exists \(\gamma>0\) such that the following inequality holds:
        \begin{align*}
            \max\left\{\|z^{k+1}-z^{k}\|_P,\,\|z^k-\tilde{z}^k\|_P\right\} \leq \frac{\gamma\dist_P(z^0,\sol)}{\sqrt{k}}\qquad\text{for all}\quad k\in\NN_0\,.
        \end{align*}
        % \jnote{Is this $\gamma$ essentially the $\gamma$ in the previous assumption?}
        
        \item[$(ii)$] ({\bf Star non-expansiveness})
        % ({\bf Non-espansiveness towards solutions})
        For any $\zstilde\in\sol$ the following inequality holds
        \[
            \|\zs-\zstilde\|_P\leq \|z^{k}-\zstilde\|_P \qquad \text{for all}\quad  k\in\NN_0\, .
        \]
    \end{itemize}
    \end{assumption}
    The next section shows that all these requirements are satisfied by several algorithms. %As alluded to in \eqref{eq:pdhg-rate}, standard results \cite{lu2022infimal, ryu2022large} show that PDHG satisfies this assumption. Indeed, the sublinear decay of successive differences follows from the so-called firm nonexpansiveness of its update operator. The same argument extends to many other iterative schemes; in Section~\ref{section:algorithms} we will come back to more  examples.

\subsection{Instantiations of the meta-algorithm}%\pil{Instances?}}
\label{section:algorithms}
In this section, we show that four classic algorithms for solving minimax problems satisfy the assumptions defining the meta-algorithm described in Section~\ref{section:metaAlgorithm}. The proofs of all results in this section are deferred to Appendix~\ref{appendix:algorithms}.

\paragraph{Proximal Point Method (PPM).} Fix $\eta > 0,$ PPM~\cite{rockafellar1976monotone} solves~\eqref{eq:minimaxProblem} by iteratively updating
\begin{equation}\label{alg:PPM}
    \displaystyle(x^{k+1},y^{k+1})\leftarrow \arg\min_{x\in\RR^n}\max_{y\in\RR^m_+}f(x)+\langle y,G(x)\rangle + \frac{1}{2\eta}\|x-x^k\|_2^2-\frac{1}{2\eta}\|y-y^k\|_2^2 \ .
\end{equation}
For this method, we trivially take $\tilde z_k = z_k$. We highlight that solving \eqref{alg:PPM} might be just as hard as solving the original problem~\eqref{eq:primalProblem}, and so, in most situations, this is not a practical algorithm. However, it serves as a clean canonical baseline for our framework. The next result shows that PPM satisfies all our assumptions; we defer its proof to Appendix~\ref{appendix:PPM}.
\begin{proposition}
\label{prop:ppm}
     Fix any stepsize $\eta > 0$ and set the auxiliary iterates to \(\tilde{z}^k=z^k\). Then, PPM satisfies Assumptions~\ref{assumption:asymptoticIdentification},~\ref{assumption:rapidLocalConvergence}, and~\ref{assumption:finiteTimeIdentification} with $P=\eta^{-1}I$ and $\gamma=1$.
\end{proposition}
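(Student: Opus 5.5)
Write $T$ for the PPM map $z^k\mapsto z^{k+1}$. The plan is to view PPM as the resolvent of the maximal monotone operator $\sub$ and then run the standard firmly-nonexpansive machinery in the norm $\|\cdot\|_P$ with $P=\eta^{-1}I$. The first-order optimality conditions of the strongly-convex-strongly-concave subproblem \eqref{alg:PPM} read
\[
\frac{1}{\eta}(z^k-z^{k+1})\in\sub(z^{k+1}).
\]
Because $\cL$ is closed convex-concave, $\sub$ is maximal monotone (Rockafellar), so $T=(I+\eta\sub)^{-1}$ is single valued and firmly nonexpansive. Its fixed points are exactly $\sub^{-1}(0)=\sol$. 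With this in hand, the assumptions are checked one at a time.

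\emph{Assumption~\ref{assumption:asymptoticIdentification}.}
\begin{itemize}
\item Part $(i)$: convergence of $z^k=\tilde z^k$ to some $\zs\in\sol$ is the classical Rockafellar result, which holds since $\sol\neq\emptyset$ by Assumption~\ref{assumption:problem}. It comes from Fej\'er monotonicity $\|z^{k+1}-\bar z\|\le\|z^k-\bar z\|$ for every $\bar z\in\sol$, together with $\|z^{k+1}-z^k\|\to0$ and closedness of the graph of $\sub$.
\item Part $(ii)$: fix $x=x^{k+1}$ in the inner problem. The maximization over $y\in\RR^m_+$ of $\langle y,G(x^{k+1})\rangle-\frac1{2\eta}\|y-y^k\|^2$ has solution $y^{k+1}=\proj_{\RR^m_+}(y^k+\eta G(x^{k+1}))$. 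This is the required form, since $\tilde x^{k+1}=x^{k+1}$. To justify fixing $x$, use that the saddle point of the subproblem exists and is unique, and that $y^{k+1}$ maximizes $\cL(x^{k+1},\cdot)$ minus the proximal term.
\item Part $(iii)$: as the paper remarks, this follows from local Lipschitz continuity of the finite convex functions $g_j$ on bounded balls. The $y$ part is immediate, with $\Lyj=\sqrt\eta$.
\end{itemize}

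\emph{Assumption~\ref{assumption:finiteTimeIdentification}.}
\begin{itemize}
\item Part $(ii)$: this is Fej\'er monotonicity passed to the limit. For $\zstilde\in\sol$ the sequence $\|z^k-\zstilde\|_P$ is nonincreasing, hence $\|\zs-\zstilde\|_P=\lim_j\|z^j-\zstilde\|_P\le\|z^k-\zstilde\|_P$.
\item Part $(i)$: the term $\|z^k-\tilde z^k\|_P$ vanishes. For the other term, firm nonexpansiveness gives, for $\bar z=\proj_{\sol}(z^0)$ in the $P$-norm,
\[
\|z^{k+1}-\bar z\|_P^2+\|z^{k+1}-z^k\|_P^2\le\|z^k-\bar z\|_P^2 .
\]
Summing yields $\sum_{i=0}^{k}\|z^{i+1}-z^i\|_P^2\le\dist_P(z^0,\sol)^2$. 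Nonexpansiveness of $T$ gives $\|z^{i+2}-z^{i+1}\|\le\|z^{i+1}-z^i\|$, so the increments are monotone. Therefore $(k+1)\|z^{k+1}-z^k\|_P^2\le\dist_P(z^0,\sol)^2$, which is the bound with $\gamma=1$. The case $k=0$ is covered by reading the right side as $+\infty$, or directly from $\|z^1-z^0\|_P\le\dist_P(z^0,\sol)$.
\end{itemize}

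\emph{Assumption~\ref{assumption:rapidLocalConvergence}.}
\begin{itemize}
\item Part $(ii)$: Fej\'er monotonicity gives $\dist_2(z^k,\sol)\le\dist_2(z^0,\sol)=:\tau$.
\item Part $(i)$: the element $\xi^k=\eta^{-1}(z^{k-1}-z^k)\in\sub(z^k)$ lies in $\range(P)=\RR^{n+m}$. Since $P^\dagger=\eta I$ and $\|\cdot\|_P^2=\eta^{-1}\|\cdot\|_2^2$,
\[
\|\xi^k\|_{P^\dagger}=\sqrt\eta\,\eta^{-1}\|z^{k-1}-z^k\|_2=\|z^k-z^{k-1}\|_P\le\frac{\dist_P(z^0,\sol)}{\sqrt k}
\]
by the previous bound. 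This gives $\gamma=1$.
\end{itemize}

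The main obstacle is the rigorous identification of PPM with the resolvent, together with the dual projection formula in $(ii)$. This requires existence and uniqueness of the subproblem saddle point and maximal monotonicity of $\sub$ with the indicator term included. I would cite Rockafellar's results on saddle functions for this step, and treat the rest as the routine computations above.
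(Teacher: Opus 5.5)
Your proof is correct and follows essentially the paper's argument: PPM is the resolvent step $P(z^k-z^{k+1})\in\sub(z^{k+1})$ with $P=\eta^{-1}I$, firm nonexpansiveness and Fej\'er monotonicity give convergence, star non-expansiveness and boundedness, the first-order conditions of the inner maximization give the projected dual step, and local Lipschitzness of the $g_j$ gives item $(iii)$. The only differences are minor: you derive the $1/\sqrt{k}$ rate yourself (firm nonexpansiveness, monotone increments, and the explicit subgradient $\eta^{-1}(z^{k-1}-z^k)$) where the paper cites Lu--Yang, and your $\Lyj=\sqrt{\eta}$ is the sharp constant for $\|\cdot\|_P=\eta^{-1/2}\|\cdot\|_2$, whereas the paper states $\Lyj=\eta$, which does not hold when $\eta<1$.
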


\paragraph{Primal Dual Hybrid Gradient (PDHG).} Assume the constraints in~\eqref{eq:primalProblem} are affine, namely $G(x)=Ax - b$ for $A\in\RR^{m\times n}$ and $b\in\RR^m$. Fix $\eta>0$,  PDHG~\cite{chambolle2011first} solves the corresponding minimax problems \eqref{eq:minimaxProblem} by iteratively updating
\begin{equation*}
    \begin{aligned}
        & x^{k+1}\leftarrow\prox_{\eta f}(x^k-\eta A^\top y^{k})\\
        & \tilde{x}^{k+1}\leftarrow 2x^{k+1}-x^k\\
        & y^{k+1}\leftarrow \proj_{\RR^m_+}(y^k+\eta A\tilde{x}^{k+1})\ . %\\
       % & \tilde{x}^{k+1} \leftarrow y^{k+1}\ .
    \end{aligned}
\end{equation*}
PDHG is used extensively for inverse problems arising in imaging \cite{benning2018modern, fan2019brief} and large-scale linear programming \cite{applegate2021practical, applegate2025pdlp}. 
The next proposition shows that PDHG satisfies all our assumptions; the proof is deferred to Appendix~\ref{appendix:pdhg}.
\begin{proposition}
\label{prop:pdhg}
Fix a stepsize $\eta > 0$ satisfying \(\eta<\|A\|_{\rm{op}}^{-1}\) and set the dual auxiliary iterates to be \(\tilde{y}^k=y^{k}\). Then, PDHG satisfies Assumptions~\ref{assumption:asymptoticIdentification},~\ref{assumption:rapidLocalConvergence}, and~\ref{assumption:finiteTimeIdentification} with $P=\begin{bmatrix}
        \frac{1}{\eta}I & -A^\top\\
        -A & \frac{1}{\eta} I
    \end{bmatrix}$ and $\gamma=1$.
\end{proposition}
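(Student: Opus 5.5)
The plan is to write PDHG as a preconditioned proximal point method, $0 \in \sub(z^{k+1}) + P(z^{k+1}-z^k)$, and then read off each assumption from the resulting firm nonexpansiveness in the $P$-norm. The auxiliary iterate is $\tilde z^{k+1} = (\tilde x^{k+1}, y^{k+1})$.

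\emph{PPM form.} The $x$-update gives
$$\frac{1}{\eta}(x^k - x^{k+1}) - A^\top y^k \in \partial f(x^{k+1}).$$
Adding $A^\top y^{k+1}$ to both sides yields an element of $\partial_x\cL(z^{k+1})$, namely
$$\frac{1}{\eta}(x^k-x^{k+1}) + A^\top(y^{k+1}-y^k).$$
The $y$-update is $y^{k+1} = \proj_{\RR^m_+}(y^k + \eta(Ax^{k+1}-b) + \eta A(x^{k+1}-x^k))$. Its optimality condition gives
$$\frac{1}{\eta}(y^k - y^{k+1}) + A(x^{k+1}-x^k) \in -(Ax^{k+1}-b) + N_{\RR^m_+}(y^{k+1}) = \partial_y(-\cL)(z^{k+1}).$$
Together these say $-P(z^{k+1}-z^k)\in\sub(z^{k+1})$ with the stated $P$. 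Since $\eta\|A\|_{\rm op}<1$, a Schur complement argument shows $P\succ 0$.

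\emph{Standard consequences.} Because $\sub$ is maximal monotone, the map $T=(P+\sub)^{-1}P$ is firmly nonexpansive in $\|\cdot\|_P$. Hence for every $\zstilde\in\sol$,
$$\|z^{k+1}-\zstilde\|_P^2 \le \|z^k-\zstilde\|_P^2 - \|z^{k+1}-z^k\|_P^2 .$$
Several facts follow.
\begin{itemize}
\item Fejér monotonicity, together with Opial's lemma, gives convergence $z^k\to\zs\in\sol$. Since $\tilde x^k = 2x^k - x^{k-1}$, also $\tilde z^k\to \zs$. This is Assumption~\ref{assumption:asymptoticIdentification}(i).
\item Because $\|z^k-\zstilde\|_P$ is nonincreasing and converges to $\|\zs-\zstilde\|_P$, we get star non-expansiveness, Assumption~\ref{assumption:finiteTimeIdentification}(ii).
\item Taking $\zstilde$ to be the $P$-projection of $z^0$ onto $\sol$ gives $\dist_2(z^k,\sol)\le \lambda_{\min}(P)^{-1/2}\dist_P(z^0,\sol)$. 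This supplies $\tau$ for Assumption~\ref{assumption:rapidLocalConvergence}(ii).
\item Telescoping gives $\sum_k\|z^{k+1}-z^k\|_P^2\le \dist_P(z^0,\sol)^2$. Firm nonexpansiveness also makes the step length $\|z^{k+1}-z^k\|_P$ nonincreasing, so $\|z^{k+1}-z^k\|_P\le \dist_P(z^0,\sol)/\sqrt{k+1}$.
\end{itemize}

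\emph{Subgradient rate.} Using the element $\xi=-P(z^k-z^{k-1})\in\sub(z^k)$, we get
$$\dist_{P^{-1}}(0,\sub(z^k))\le \|P(z^k-z^{k-1})\|_{P^{-1}} = \|z^k-z^{k-1}\|_P\le \dist_P(z^0,\sol)/\sqrt{k}.$$
This is Assumption~\ref{assumption:rapidLocalConvergence}(i) with $\gamma=1$.

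\emph{Remaining items.} Assumption~\ref{assumption:asymptoticIdentification}(ii) holds by inspection: $\tilde x^{k+1}=2x^{k+1}-x^k$ and $G(\tilde x^{k+1}) = A\tilde x^{k+1}-b$, so the $y$-update is exactly $\proj_{\RR^m_+}(y^k+\eta G(\tilde x^{k+1}))$. Part (iii) follows from linearity of $G$, $\|a_j\|\,\|x-x'\|\le \|a_j\|\lambda_{\min}(P)^{-1/2}\|z-z'\|_P$, and the analogous bound for $y$.

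\emph{Main obstacle.} The delicate part is $\|z^k-\tilde z^k\|_P\le \dist_P(z^0,\sol)/\sqrt{k}$ with constant $\gamma=1$. Here $z^k-\tilde z^k = (x^{k-1}-x^k, 0)$. Let $u=(x^{k-1}-x^k,0)$ and $v=z^{k-1}-z^k$. Then
$$\|u\|_P^2=\eta^{-1}\|x^k-x^{k-1}\|^2,$$
and this must be compared with $\|v\|_P^2$. I would try to show that the $x$-block of $\|v\|_P^2$ dominates, using $\eta\|A\|_{\rm op}<1$. If the direct comparison fails, I would fall back on the monotonicity inequality between consecutive iterates, $\langle \xi^{k}-\xi^{k-1}, z^k-z^{k-1}\rangle\ge 0$, which is the extra structure available here. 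The last resort is to index-shift and use the bound at $k-1$, absorbing the resulting $\sqrt{k/(k-1)}$ factor.
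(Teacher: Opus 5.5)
There is a genuine gap, at exactly the step you flag as the main obstacle, and it cannot be closed: the bound $\|z^k-\tilde z^k\|_P\le \dist_P(z^0,\sol)/\sqrt{k}$ is false with $\gamma=1$. The rest of your proposal is correct and follows the paper's route, with direct derivations where the paper cites \cite{lu2023unified} and \cite{lu2022infimal}. That covers the PPM-form inclusion, firm nonexpansiveness, Fejér/Opial convergence, star non-expansiveness, $\tau$, the $\dist_{P^{-1}}$ bound with $\gamma=1$, and the Lipschitz constants.

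Your first strategy is to show that the $x$-block $\eta^{-1}\|x^k-x^{k-1}\|^2$ is dominated by $\|z^k-z^{k-1}\|_P^2$. This is precisely the unproved one-line claim $\|z^k-\tilde z^k\|_P\le\|z^k-z^{k-1}\|_P$ on which the paper's proof rests, and it fails. Take $n=m=1$, $f\equiv 0$, $A=1$, $b=0$, $\eta=0.9$, $z^0=(0,1)$.
\begin{itemize}
\item The solution set is $\sol=(-\infty,0]\times\{0\}$.
\item PDHG gives $x^1=-0.9$, $\tilde x^1=-1.8$, and $y^1=\max\{0,1-1.62\}=0$, so $z^1\in\sol$.
\item Hence $\|z^1-\tilde z^1\|_P^2=\eta^{-1}(0.9)^2=0.9$.
\item By contrast, $\|z^1-z^0\|_P^2=\dist_P(z^0,\sol)^2=\eta^{-1}-\eta\approx 0.211$.
\end{itemize}
So already at $k=1$ the required inequality fails by a factor of about $2$ in norm. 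In this example it fails whenever $\eta\|A\|_{\rm op}>1/\sqrt{2}$. Because this is an actual PDHG trajectory, no monotonicity argument between consecutive iterates can rescue it. Your index-shifting fallback also cannot yield $\gamma=1$.

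What is true is an elementary bound. Write $z^k-z^{k-1}=(u,w)$ and minimize over $\|w\|$:
\[
\|z^k-z^{k-1}\|_P^2\ \ge\ \eta^{-1}\|u\|^2-2\|A\|_{\rm op}\|u\|\,\|w\|+\eta^{-1}\|w\|^2\ \ge\ \eta^{-1}\bigl(1-\eta^2\|A\|_{\rm op}^2\bigr)\|u\|^2 .
\]
Therefore
\[
\|z^k-\tilde z^k\|_P=\eta^{-1/2}\|u\|\le \bigl(1-\eta^2\|A\|_{\rm op}^2\bigr)^{-1/2}\,\frac{\dist_P(z^0,\sol)}{\sqrt{k}} .
\]
Your argument thus establishes the proposition with $\gamma=(1-\eta^2\|A\|_{\rm op}^2)^{-1/2}$ in Assumption~\ref{assumption:finiteTimeIdentification}(i), with $\gamma=1$ everywhere else. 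This is the correct statement; the claimed $\gamma=1$ cannot be proved.
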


\paragraph{Alternating Direction Method of Multipliers (ADMM).} As with PDHG, suppose that the constraints are affine \(G(x)=Ax-b\), \(A\in\RR^{m\times n}\) and \(b\in\RR^m\). Fix $\eta >0$, ADMM~\cite{glowinski1975approximation,boyd2011distributed} solves \eqref{eq:minimaxProblem} by iteratively updating
\begin{equation*}
    \begin{aligned}
        & u^{k+1}\leftarrow \argmin_{u\in\R^m}\left(\iota_{\RR^{m}_+}(u)+\langle y^k,u\rangle+\frac{\eta}{2}\|Ax^{k}+u-b\|^2)\right)\\
        & y^{k+1}\leftarrow y^{k}+\eta(Ax^{k}-b+u^{k+1})\\
        & x^{k+1}\leftarrow \argmin_{x\in\R^n}\left(f(x)+\langle y^{k+1},Ax\rangle +\frac{\eta}{2}\|Ax+u^{k+1}-b\|^2\right) \ .
    \end{aligned}
\end{equation*}
ADMM underpins widely used quadratic and convex programming solvers such as OSQP and SCS~\cite{stellato2020osqp, ocpb:16}. The analysis of ADMM is more subtle. Indeed, unlike the other algorithms we consider, ADMM does not satisfy %satisfy $P$-Lipschitzness (i.e., part (iii) of 
Assumption \ref{assumption:asymptoticIdentification} with a strictly positive definite $P$. Nevertheless, it does satisfy a weaker version of it with $P$ positive semidefinite, namely Assumption~\ref{assumption:asymptoticIdentificationWeak}. %, which we include in . 
As alluded to before, this version is more technical, and so we deferred it to the appendix. Nonetheless, all the convergence guarantees we establish hold with either assumption. The next proposition shows that ADMM satisfies this slightly modified set of assumptions; the proof is deferred to Appendix~\ref{appendix:ADMM}.%, with the difference that the Lipschitz bounds are now required only for the indices \(\indexN\). 
%See Appendix~\ref{appendix:algorithms} for more detail on Assumption~\ref{assumption:asymptoticIdentificationWeak}. 
\begin{proposition}
\label{prop:admm}
Fix any stepsize $\eta> 0$ and set the iterates of ADMM to be $z^k= \tilde{z}^k = (x^k,y^k)$. Assume there exists \(\tau>0\) such that \(\dist_2(z^k,\sol)\leq \tau\) for all \(k\in\NN\). Then, ADMM satisfies Assumptions~\ref{assumption:asymptoticIdentificationWeak},~\ref{assumption:rapidLocalConvergence}, and~\ref{assumption:finiteTimeIdentification} with $P=\begin{bmatrix}
\eta A^\top A & A^\top\\
A & \frac{1}{\eta}I
\end{bmatrix}$ and $\gamma=1$.
\end{proposition}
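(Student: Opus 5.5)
The plan is to recast ADMM as a preconditioned proximal point method in the $(x,y)$ variables, with the slack $u$ eliminated. With $u$ gone, every required property should follow from firm nonexpansiveness of the resulting operator in the seminorm $\|\cdot\|_P$, for
$P=\begin{bmatrix}\eta A^\top A & A^\top\\ A & \frac{1}{\eta}I\end{bmatrix}$.
I will reindex if necessary so that the pair $z^k=(x^k,y^k)$ is the one produced by a single sweep. This is the standard viewpoint of \cite{he2012convergence,lu2023unified}. I do not know the exact form of Assumption~\ref{assumption:asymptoticIdentificationWeak}, so the plan for that part is tentative.

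\textbf{Step 1: eliminate $u$ and verify the dual-update form.} The $u$-subproblem is separable, so $u^{k+1}=\proj_{\RR^m_+}\big(-(Ax^k-b)-y^k/\eta\big)$. Substituting this into the $y$-update gives
$y^{k+1}=y^k+\eta(Ax^k-b)+\eta u^{k+1}=\proj_{\RR^m_+}(y^k+\eta(Ax^k-b))$.
This is exactly the dual update of Assumption~\ref{assumption:asymptoticIdentification}$(ii)$ with the constraint evaluated at the previous primal point. I will index the iterates so that this primal point plays the role of $\tilde x^{k+1}$ while $\tilde z^k=z^k$, which is why the statement can take $\tilde z^k=z^k$. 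Next I will write the optimality conditions of the $x$-step and of the projection. Using $u^{k+1}=(y^{k+1}-y^k)/\eta-(Ax^k-b)$ to remove $u$, I aim to show
\[
-P(z^{k+1}-z^k)\in\sub(z^{k+1}),
\]
possibly after the index shift. This is the key inclusion, and checking it is a direct block computation that produces the matrix $P$ above.

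\textbf{Step 2: derive the assumptions from the inclusion.} Since $\sub$ is maximal monotone and $P\succeq 0$, the inclusion gives, for every $\zstilde\in\sol$,
\[
\|z^{k+1}-\zstilde\|_P^2\le\|z^k-\zstilde\|_P^2-\|z^{k+1}-z^k\|_P^2.
\]
\begin{itemize}
\item Telescoping this inequality, together with monotonicity of $\|z^{k+1}-z^k\|_P$ (the standard PPM argument), yields $\|z^{k+1}-z^k\|_P\le \dist_P(z^0,\sol)/\sqrt{k}$. This is Assumption~\ref{assumption:finiteTimeIdentification}$(i)$ with $\gamma=1$; the term $\|z^k-\tilde z^k\|_P$ vanishes.
\item The element $-P(z^{k+1}-z^k)$ of $\sub(z^{k+1})$ lies in $\range(P)$. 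Since $\|Pv\|_{P^\dagger}=\|v\|_P$, the same bound gives Assumption~\ref{assumption:rapidLocalConvergence}$(i)$. Assumption~\ref{assumption:rapidLocalConvergence}$(ii)$ is exactly the hypothesis $\dist_2(z^k,\sol)\le\tau$.
\item For star nonexpansiveness, Fejér monotonicity makes $\|z^k-\zstilde\|_P$ nonincreasing in $k$. Its limit equals $\|\zs-\zstilde\|_P$ once convergence in the $P$-seminorm is known, which gives Assumption~\ref{assumption:finiteTimeIdentification}$(ii)$.
\end{itemize}

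\textbf{Step 3: convergence (the main obstacle).} Because $P$ is singular, Fejér monotonicity controls only the components $Ax$ and $y$ (up to the kernel of $P$), not $x$ itself. The hypothesis $\dist_2(z^k,\sol)\le\tau$ gives boundedness, so I will take a cluster point and show it is a solution using closedness of the graph of $\sub$ and $P(z^{k+1}-z^k)\to 0$. The standard Opial-type argument in the seminorm then gives $\|z^k-\zs\|_P\to 0$, and the dual iterates converge in the usual sense because $P$ restricted to the $y$-block is definite. I expect the weak assumption to ask only for convergence in $\|\cdot\|_P$ together with Lipschitz bounds for the relevant indices. The Lipschitz bounds should hold because $g_j(x)=a_j^\top x-b_j$ depends on $x$ only through $Ax$, and $\|Ax-Ax'\|$ together with $\|y-y'\|$ is controlled by $\|z-z'\|_P$ on $\range(P)$. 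Carefully matching this to the precise statement of Assumption~\ref{assumption:asymptoticIdentificationWeak} is the step I expect to need the most care.
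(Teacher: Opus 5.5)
Your key inclusion is correct, and no index shift is needed. With $z^k=(x^k,y^k)$, the $u$-step gives $-u^{k+1}\in N_{\RR^m_+}(y^{k+1})$. Substituting $Ax^{k+1}+u^{k+1}-b=A(x^{k+1}-x^k)+\eta^{-1}(y^{k+1}-y^k)$ into the $x$-step and into the $y$-row then gives exactly $P(z^k-z^{k+1})\in\sub(z^{k+1})$. Shifting to pair $x^k$ with $y^{k+1}$ would break this, because that pair does not determine $u^{k+1}$.

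Your dual-update form $y^{k+1}=\proj_{\RR^m_+}(y^k+\eta(Ax^k-b))$ is exactly what the paper derives in its Step 3. It is also how Proposition~\ref{prop:identificationFinalStep} actually uses the dual update.

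Your route genuinely differs from the paper's and is more direct. The paper lifts to $w=(u,x,y)$ with $\widehat P$ and imports $\widehat P(w^k-w^{k+1})\in\widehat{\sub}(w^{k+1})$ from \cite{lu2023unified}. It must then transfer back via $\|w-\bar w\|_{\widehat P}=\|z-\bar z\|_P$, the identity $\dist_{\widehat P}(w^0,\widehat{\cS}^\star)=\dist_P(z^0,\sol)$, and a comparison of the two subdifferential distances. Your inclusion, together with $\|Pv\|_{P^\dagger}=\|v\|_P$, yields Assumptions~\ref{assumption:rapidLocalConvergence}$(i)$ and~\ref{assumption:finiteTimeIdentification} at once.

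The genuine gap is Assumption~\ref{assumption:asymptoticIdentificationWeak}, which is exactly where the singularity of $P$ bites, and your heuristic for it is wrong. The Lipschitz conditions concern arbitrary points of $\cballP{\zs}{t}$, not points of $\range(P)$. Since $\ker P=\{(x,y):y+\eta Ax=0\}$, moving along $(v,-\eta Av)$ leaves $\|z-\zs\|_P$ unchanged while changing $A_jx$ and $y$ arbitrarily. So neither $g_j$ nor $y$ is $P$-Lipschitz. This also refutes your claim that $y^k$ converges because $P$ is definite on the $y$-block.

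Two ideas are missing:
\begin{itemize}
\item \emph{For $(iii)$:} you need a one-sided bound, only for $j\in\indexN$, on $\YY=\RR^n\times\RR^m_+$. Since $\|z-\zs\|_P=\|\sqrt\eta A(x-\xs)+\eta^{-1/2}(y-\ys)\|_2$, the facts $\ys_j=0$ and $y_j\ge 0$ give $g_j(x)-g_j(\xs)\le\eta^{-1/2}\|z-\zs\|_P$. Two-sided bounds hold only when $z-\wh z\in\range(P)$.
\item \emph{For $(iv)$:} you must take $\step=1$. The point is that $y^1=\proj_{\RR^m_+}(y^0+\eta(Ax^0-b))$ depends on $z^0$ only through $y^0+\eta Ax^0$. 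Moreover, $\|(y^0+\eta Ax^0)-(\wh y^0+\eta A\wh x^0)\|_2=\sqrt\eta\|z^0-\wh z^0\|_P$. Hence $|y^1_j-\wh y^1_j|\le\sqrt\eta\|z^0-\wh z^0\|_P$, and $y^1_j\le\sqrt\eta\|z^0-\zs\|_P$ for $j\in\indexN$ because $\eta g_j(\xs)<0$.
\end{itemize}

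Your Step 3 also has a flaw. The bound $\dist_2(z^k,\sol)\le\tau$ does not make $z^k$ bounded when $\sol$ is unbounded; for example, $x^k$ can drift along $\ker A$ when the $x$-subproblem has non-unique minimizers. Fej\'er monotonicity only bounds $\sqrt\eta Ax^k+\eta^{-1/2}y^k$, so a cluster point of $z^k$ need not exist. The paper instead gets convergence of the $\range(P)$-components directly from firm nonexpansiveness in $\|\cdot\|_P$ (Proposition~\ref{prop:pFirmlyNonExpansive}, a Krasnosel'skii--Mann argument on $\range(P)$). That argument also produces the limit $\zs\in\range(P)$ required by Assumption~\ref{assumption:asymptoticIdentificationWeak}$(i)$.
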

This proposition supposes that Assumption~\ref{assumption:rapidLocalConvergence} \((ii)\) holds true. This is the case, for instance, when \(A\) has full rank or when the solution set \(\sol\) is bounded~\cite{boyd2011distributed}.%\pil{This result is not explicit in the previous reference, but follows from a simple argument using the results of the reference}. The authors are unaware if this assumption holds in general.

\paragraph{Extragradient Method (EGM).} Assume the Lagrangian function \(\bar{\cL}(x,y)=f(x)+\langle y, G(x)\rangle\), with \(f\) and \(G\) as in \eqref{eq:primalProblem-intro}, is $L$-smooth, i.e., differentiable with $L$-Lipschitz gradients. Fix $\eta>0$, EGM~\cite{korpelevich1976extragradient} solves~\eqref{eq:minimaxProblem} by iteratively updating
\begin{equation}
    \begin{aligned}
        & \tilde{x}^{k+1}\leftarrow x^k-\eta(\nabla f(x^k)+J_G(x^k)^\top y^k)\\
        & \tilde{y}^{k+1}\leftarrow \proj_{\R_+^m}(y^k+\eta G(x^k))\\
        & x^{k+1}\leftarrow x^k-\eta(\nabla  f(\tilde{x}^{k+1})+J_G(\tilde{x}^{k+1})^\top\tilde{y}^{k+1})\\
        & y^{k+1}\leftarrow \proj_{\R_+^m}(y^k+\eta G(\tilde{x}^{k+1})) \ ,
    \end{aligned}
\end{equation}
EGM adds an extrapolation step to the vanilla gradient descent-ascent method to ensure convergence. In turn, it can be EGM interpreted as an approximation of PPM~\cite{nemirovski2004prox}. EGM is used across a broad range of modern minimax and saddle-point applications, including games, machine learning, and imaging \cite{quoc2008extragradient, 
chavdarova2019reducing,lou2025accurate}. The next proposition shows that EGM satisfies all our assumption; the proof is deferred to Appendix~\ref{appendix:EGM}.

\begin{proposition}
\label{prop:EGM}
     Fix $\eta < 1/L$. Then, EGM satisfies Assumptions~\ref{assumption:asymptoticIdentification},~\ref{assumption:rapidLocalConvergence}, and~\ref{assumption:finiteTimeIdentification} with $P=I$ and $\gamma=\frac{3}{\sqrt{1-(\eta L)^2}}$.
\end{proposition}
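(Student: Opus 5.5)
The plan is to write EGM as a projected extragradient iteration for the monotone, $L$-Lipschitz operator $F(z)=(\nabla_x\bar\cL(z),-\nabla_y\bar\cL(z))$ over the closed convex set $\YY=\RR^n\times\RR^m_+$:
\[
\tilde z^{k+1}=\proj_{\YY}\bigl(z^k-\eta F(z^k)\bigr),\qquad z^{k+1}=\proj_{\YY}\bigl(z^k-\eta F(\tilde z^{k+1})\bigr).
\]
Two items are immediate from this form. The last line of EGM is exactly the dual update in Assumption~\ref{assumption:asymptoticIdentification}$(ii)$. For $(iii)$, each $g_j$ is $C^1$, hence Lipschitz on bounded balls, and $|y_j-y_j'|\le\|z-z'\|_2$ with $P=I$.

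Everything else rests on the standard extragradient energy inequality. For any $\zstilde\in\sol$, combine the projection inequalities for the two steps with monotonicity of $F$, the fact that $\zstilde$ solves the variational inequality, and $\|F(z^k)-F(\tilde z^{k+1})\|\le L\|z^k-\tilde z^{k+1}\|$. This gives
\[
\|z^{k+1}-\zstilde\|_2^2\le\|z^k-\zstilde\|_2^2-(1-\eta^2L^2)\|z^k-\tilde z^{k+1}\|_2^2 .
\]
From this inequality I would derive the remaining items in order.
\begin{enumerate}
\item It gives Fejér monotonicity, so $\|z^k-\zstilde\|_2$ is nonincreasing for every $\zstilde\in\sol$ and therefore dominates its limit. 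Once convergence $z^k\to\zs$ is known, this limit is $\|\zs-\zstilde\|_2$, which is star non-expansiveness.
\item Summing the inequality gives $\sum_k\|z^k-\tilde z^{k+1}\|^2\le\dist(z^0,\sol)^2/(1-\eta^2L^2)$, so consecutive differences vanish.
\item For convergence: by continuity of $F$ and of the projection, every cluster point is a fixed point of the EGM map, hence lies in $\sol$. Opial's argument together with Fejér monotonicity then gives $z^k\to\zs$, and also $\tilde z^k\to\zs$ because $\|z^{k-1}-\tilde z^k\|\to0$.
\item Assumption~\ref{assumption:rapidLocalConvergence}$(ii)$ holds with $\tau=\dist(z^0,\sol)$, again by Fejér monotonicity.
\end{enumerate}

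For the subgradient bound, the optimality condition of the second projection shows that
\[
\xi^{k+1}:=F(z^{k+1})-F(\tilde z^{k+1})+\eta^{-1}(z^k-z^{k+1})\in\sub(z^{k+1}).
\]
By nonexpansiveness of the projection, $\|z^{k+1}-\tilde z^{k+1}\|\le\eta\|F(z^k)-F(\tilde z^{k+1})\|\le\eta L\|z^k-\tilde z^{k+1}\|$. Hence $\|z^{k+1}-z^k\|\le(1+\eta L)\|z^k-\tilde z^{k+1}\|$ and $\|\xi^{k+1}\|\lesssim\|z^k-\tilde z^{k+1}\|$. Similarly, $\|z^k-\tilde z^k\|\le\|z^k-z^{k-1}\|+\|z^{k-1}-\tilde z^k\|$. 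Every quantity in Assumptions~\ref{assumption:rapidLocalConvergence}$(i)$ and~\ref{assumption:finiteTimeIdentification}$(i)$ is therefore at most a small constant (at most $3$ after normalization) times the residual $r_k:=\|z^k-\tilde z^{k+1}\|$ or $r_{k-1}$. This is where the constant $3/\sqrt{1-(\eta L)^2}$ should come from.

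The main obstacle is turning the summability $\sum_k r_k^2\le\dist(z^0,\sol)^2/(1-\eta^2L^2)$ into a \emph{last-iterate} bound $r_k\lesssim\dist(z^0,\sol)/\sqrt{k}$, which the assumptions require for every $k$ rather than for the best iterate. If $r_k$ were nonincreasing, we would get $k\,r_k^2\le\sum_{i<k}r_i^2$ and be done. My first attempt would therefore be to prove monotonicity of $r_k$, or of a closely related residual such as $\|z^{k+1}-z^k\|$ or the natural residual $\|z^k-\proj_{\YY}(z^k-\eta F(z^k))\|$, using cocoercivity-type estimates for the projection together with monotonicity of $F$ applied between consecutive iterates. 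If exact monotonicity fails, I would fall back on the known constrained last-iterate analysis of extragradient (a potential combining $\|F(z^k)+n^k\|$ with a normal-cone term, shown to be nonincreasing for $\eta<1/L$). I would then absorb the comparison between that potential and $r_k$ into the factor $3$.
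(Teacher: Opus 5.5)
Your proposal is correct and rests on the same nontrivial ingredient as the paper; it differs mainly in how much it derives by hand. The paper gets Assumptions~\ref{assumption:rapidLocalConvergence}$(i)$ and~\ref{assumption:finiteTimeIdentification}$(i)$ by citing the last-iterate bound of \cite{cai2022finite}, and it cites the same work for boundedness and Fej\'er monotonicity. It then shows that cluster points lie in $\sol$ via the closed graph of $\sub$ plus vanishing subgradients, and that there is only one cluster point via the usual Fej\'er argument. You instead derive Fej\'er monotonicity, $\tau=\dist(z^0,\sol)$, star non-expansiveness, convergence (zeros of the natural residual plus Opial) and the inclusion $\xi^{k+1}\in\sub(z^{k+1})$ directly from the extragradient energy inequality. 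This is more self-contained and isolates the last-iterate rate as the only external input.

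For that input you do not need monotonicity of $r_k$. The quantity that \cite{cai2022finite} prove nonincreasing for $\eta<1/L$ over arbitrary closed convex sets is the tangent residual $\dist_2(0,\sub(z^k))$, and plugging it into your own estimates reproduces the constant exactly. Indeed, $\eta\,\dist_2(0,\sub(z^{k+1}))\le(1+\eta L+\eta^2L^2)\,r_k\le 3r_k$ together with $\sum_k r_k^2\le\dist_2(z^0,\sol)^2/(1-\eta^2L^2)$ gives $\eta\,\dist_2(0,\sub(z^k))\le 3\dist_2(z^0,\sol)/\sqrt{k(1-\eta^2L^2)}$. The reverse comparison $r_k\le\eta\,\dist_2(0,\sub(z^k))$ holds because $z^k=\proj_{\YY}(z^k+\eta c)$ for every $c\in N_{\YY}(z^k)$, with $\YY=\RR^n\times\RR^m_+$. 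It transfers the rate to $\|z^{k+1}-z^k\|\le(1+\eta L)\,r_k$ and $\|z^k-\tilde z^k\|\le\eta L\,r_{k-1}$.

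One caveat on your phrase ``after normalization.'' With $P=I$, these bounds control $\eta\,\dist_2(0,\sub(z^k))$ and $\tfrac12\|z^{k+1}-z^k\|_2$, not the unscaled quantities. So the factors $\eta^{-1}$ and $2$ must be absorbed into $\gamma$ rather than hidden. The paper's citation carries exactly the same slack.
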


\if 0  
- Guarantees 
    - Finite identification
    - Local rapid convergence
\fi

\section{Guarantees}
\label{sec:guarantees}
In this section, we present our main theoretical results. Section~\ref{section:manifoldIdentification} provides our identification guarantees. Section~\ref{section:localRapidConvergence} shows that the local geometry of the problem around the limit solution is better conditioned than the full problem, which yield faster linear convergence. Section~\ref{section:constantComparison} compares the metric subregularity moduli associated to the global and the local problems.
\subsection{Finite time identification}
\label{section:manifoldIdentification}
Next, we state our finite-time identification results. We express these results in terms of the $P$-norm $\|z\|_P = \sqrt{z^\top P z}$, which differs slightly from our narrative in the introduction where for simplicity we used the Euclidean norm, i.e., $P = I$. %requiring us to slightly modify some of the notions from the introduction, which used the Euclidean norm. 
To start, recall that the set we identify depends on the active set at the solution we converge to \(\zs\). In particular, it depends on the index sets
% \begin{align*}
%     &\indexN=\{j\in[m]:g_j(\xs)<0\},\quad
%     % \indexB=\{j\in[m]:g_j(\xs)=0\}\,.
%     \indexBs=\{j\in[m]:g_j(\xs)=0,\,\ys_j>0\}, \quad \text{ and}\\
%     & \hspace{3cm}\indexBw=\{j\in[m]:g_j(\xs)=0,\,\ys_j=0\}\,.
% \end{align*}
\begin{equation}
\label{eq:indexSets}
    \begin{aligned}
        \indexN&=\{j\in[m]:g_j(\xs)<0\},\\%\quad
        % \indexB=\{j\in[m]:g_j(\xs)=0\}\,.
      \indexBs&=\{j\in[m]:g_j(\xs)=0,\,\ys_j>0\}, %\quad
                \text{ and}\\
         \indexBw&=\{j\in[m]:g_j(\xs)=0,\,\ys_j=0\}\,.
    \end{aligned}
\end{equation}
We call \(\indexN\) the set of {\em nonactive} indices, \(\indexBs\) the set of {\em active} indices and \(\indexBw\) the set of {\em degenerate} indices.\footnote{The indices in $\indexBs$ and $\indexBw$ also go under the names of strongly and weakly active in the literature \cite{oberlin2006active}.} Further, we use the placeholder $\indexB = \indexBs \cup \indexBw$. We say the solution \(\zs\) is {\em degenerate} if it does not satisfy strict complementarity, that is, if \(\indexBw\neq\emptyset\). %We remark that our theory is indifferent to whether the solution \(\zs\) is degenerate or not.

\begin{figure}[t]
    \centering
\includegraphics[width=0.9\linewidth]{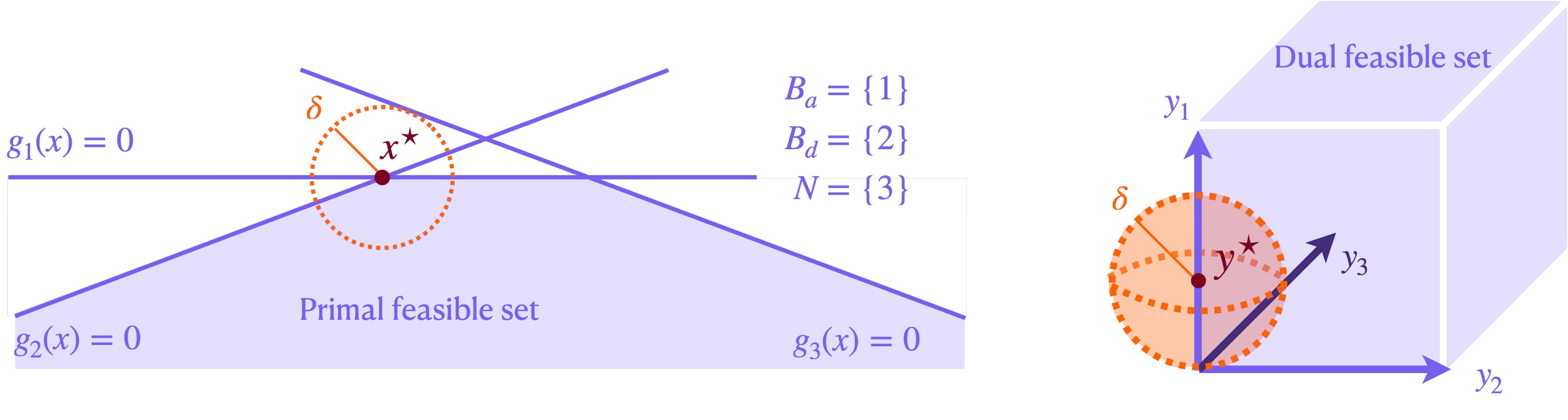}
\vspace{-8pt}
    \caption{Illustration of the radius of active-set stability \eqref{eq:degeneracy-metric}.}
    \label{fig:radius}
\end{figure}
\noindent With this index partition, we define the identifiable set as
\begin{equation}
\label{eq:identifiableSet}
    \cM=\Big\{(x,y)\in\R^n\times\R^m_+%\YY
    \mid G(x)_\indexN<0,\,y_{\indexN}= 0, \,\,\text{and}\,\, y_{\indexBs} > 0\Big\}.
    %&=\bigcup_{I\in 2^{\indexB}}\{(x,y)\in\R^{n+m}:\,G(x)_\indexN<0,\,y_{\indexN\cup I}=0,\, y_{\indexB\backslash I}>0\}
\end{equation}
% Recall $\YY = \RR^{n}\times \RR^{m}_{+}$. 
In light of Assumption~\ref{assumption:asymptoticIdentification}, this is the effective domain of our algorithms. The weakly active indices do not otherwise enter the definition; for $(x,y)\in\cM$ they are only required to satisfy $y_{\indexBw}\ge 0$. Consequently, if $\indexBw\neq\emptyset$, the set $\cM$ need not be a manifold---it has a border along ${y_i=0:i\in\indexBw}$.
% Note that, for any \((x,y)\in\cM\),
% \[
%     \indexN\subseteq \{j\in[m]: y_j=0\}\subseteq\indexN\cup\indexBw\quad\text{and}\quad  \indexBs\subseteq \{j\in[m]: y_j>0\}\subseteq\indexBs\cup\indexBw\,.
% \]
% Thus, identifying the set \(\cM\) implies identifying \(\indexN^c=\indexBs\cup \indexBw\) and \(\indexBs^c= \indexN\cup\indexBw\). This means identifying the sets \(\indexN\) and \(\indexBs\) up to the uncertainty given by the degenerate indices \(\indexBw\).
% To determine a sufficient condition for points belonging to the active set \(\cM\),
We define the {\em radius of active-set stability} as the size of the smallest perturbation of $\zs$, with respect to the \(P\)-seminorm, that violates one  constraints \(G_\indexN(x)<0\) and $y_{\indexBs} > 0$; formally
\begin{equation}
\label{eq:degeneracy-metric}
    \delta=\sup \Big\{t\in(0,\infty)\,\,\big|\, \,\text{For all } (x,y)\in\ballP{\zs}{t}%\ballP{\zs}{t}\cap\YY 
    \text{ we have }\,G_\indexN(x) < 0, \text{ and } y_{\indexBs} > 0\Big\}\,,
\end{equation}
% As shown in Appendix~\ref{appendix:manifoldIdentification}, under Assumption~\ref{assumption:asymptoticIdentification}, the magnitude \(\delta\) is non zero.
The  radius of  active-set stability quantifies how internal the point $\zs$ is with respect to the set $\cM$,  modulo the requirement $y_{N} = 0.$ Indeed, we have \(\ballP{\zs}{\delta}\cap \YN\subseteq \cM\) (Proposition~\ref{prop:manifoldInclusion} in Appendix~\ref{appendix:proof-asymptoticIdentification}), where
%\(
\begin{equation}
\label{eq:YN}
    \YN=\{(x,y)\in\R^n\times\R^m_+%\YY
    : y_\indexN=0\}\,.%\pil{\text{I changed this definition to an equation to be able to reference it}}
\end{equation}
%\)
% how close to degeneracy the non-degenerate constraints are: smaller values imply them being closer to degeneracy.  Furthermore, as shown in Appendix~\ref{appendix:manifoldIdentification}, for any \(\gamma<\delta\) we have \(\ballP{\zs}{\gamma}\cap \YN\subseteq \cM\), where
% \[
    % \YN=\{(x,y)\in\YY: y_\indexN=0\}\,.
% \]
% This inclusion is crucial for our argument; it gives a sufficient condition for belonging to \(\cM\).
% \begin{equation}
% \label{eq:degenerancyMetric}
%     \delta=\sup\{t\in(0,\infty)\,|\,\forall (x,y)\in\ballP{\zs}{t}\cap\YY:\,G_\indexN(x)\leq 0\}\,,
% \end{equation}
% \begin{wrapfigure}{r}{0.5\textwidth}
%   \vspace{-0.9cm}
%   \begin{center}
%     \includegraphics[width=0.5\textwidth]{sections/distancedegeneracy.png}
%   \end{center}
%   \vspace{-0.2cm}
%   \caption{Depection of \(\delta\) (dual not shown).}
%   \label{fig:distanceToDegeneracy}
% \end{wrapfigure}
%  As shown in Appendix~\ref{appendix:manifoldIdentification}, under Assumption~\ref{assumption:asymptoticIdentification}, the magnitude \(\delta\) is non zero. The {\em degeneracy radius} \(\delta\) quantifies how close to degeneracy the non-degenerate constraints are: smaller values imply them being closer to degeneracy.  Furthermore, as shown in Appendix~\ref{appendix:manifoldIdentification}, for any \(\gamma<\delta\) we have \(\ballP{\zs}{\gamma}\cap \YN\subseteq \cM\), where
% \[
%     \YN=\{(x,y)\in\YY: y_\indexN=0\}\,.
% \]
% This inclusion is crucial for our argument; it gives a sufficient condition for belonging to \(\cM\).
The following result shows that converging primal-dual algorithms with a projected gradient ascent dual update eventually identify this intersection. We defer its proof to Appendix~\ref{appendix:proof-asymptoticIdentification}.

\begin{theorem}
    \label{theorem:asymptoticIdentification}
    Suppose Assumptions~\ref{assumption:problem} and~\ref{assumption:asymptoticIdentification} hold. Let $z^k$ be the \(k\)th iterate generated by update \eqref{eq:algorithm}. Then, there exists \(K\in\NN\) such that \(z^k\in \ballP{\zs}{\delta}\cap\YN\subseteq\cM\) for all \(k\geq K\).
\end{theorem}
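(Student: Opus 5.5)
We will prove three facts: $\delta>0$; the iterates eventually stay in $\ballP{\zs}{\delta}$; and eventually $y^k_\indexN=0$. The inclusion $\ballP{\zs}{\delta}\cap\YN\subseteq\cM$ is Proposition~\ref{prop:manifoldInclusion}, which we take as given. For it, note that any $(x,y)\in\ballP{\zs}{\delta}\cap\YN$ satisfies $G_\indexN(x)<0$ and $y_{\indexBs}>0$ by definition of $\delta$, together with $y\ge 0$ and $y_\indexN=0$.

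\emph{Positivity of $\delta$.} Fix any $t_0>0$ and use the constants $L^x_{t_0j},L^y_{t_0j}$ from Assumption~\ref{assumption:asymptoticIdentification}$(iii)$. For $z\in\ballP{\zs}{t}$ with $t\le t_0$ we get
\[
g_j(x)\le g_j(\xs)+L^x_{t_0j}t \quad\text{for } j\in\indexN, \qquad y_j\ge \ys_j-L^y_{t_0j}t \quad\text{for } j\in\indexBs.
\]
We therefore choose
\[
t<\min\Big\{t_0,\ \min_{j\in\indexN}\frac{-g_j(\xs)}{L^x_{t_0j}},\ \min_{j\in\indexBs}\frac{\ys_j}{L^y_{t_0j}}\Big\},
\]
which is positive because the index sets are finite. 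Hence $\delta>0$.

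\emph{Eventual entry into a smaller ball.} By Assumption~\ref{assumption:asymptoticIdentification}$(i)$ both $z^k$ and $\tilde z^k$ converge to $\zs$ in $\|\cdot\|_P$. So for any $\rho\in(0,\delta)$ there is $K_1$ such that $z^k,\tilde z^k\in\ballP{\zs}{\rho}$ for all $k\ge K_1$. We also pick $\rho\le t_0$ so the Lipschitz constants above remain valid.

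\emph{Eventual vanishing of $y^k_\indexN$.} This is the main step. Set $c:=\min_{j\in\indexN}(-g_j(\xs))>0$. Choose $\rho$ so small that $g_j(\tilde x)\le -c/2$ for all $j\in\indexN$ whenever $\tilde z\in\ballP{\zs}{\rho}$, which the Lipschitz bound allows. By the dual update in Assumption~\ref{assumption:asymptoticIdentification}$(ii)$, for $k\ge K_1$ and $j\in\indexN$,
\[
y^{k+1}_j=\max\{0,\ y^k_j+\eta g_j(\tilde x^{k+1})\}\le \max\{0,\ y^k_j-\eta c/2\}.
\]
Each such coordinate therefore decreases by at least $\eta c/2$ per step until it hits $0$, and once at $0$ it stays there. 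The coordinates $y^k_j$ are bounded, since $z^k$ lies in a $P$-ball and $P\succ 0$; so after at most $\lceil 2\max_j y^{K_1}_j/(\eta c)\rceil$ further steps we have $y^k_\indexN=0$ for all subsequent $k$.

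Alternatively, $y^k_j\to\ys_j=0$ for $j\in\indexN$ by complementary slackness, but the decrement argument is what gives exact zeros. We also have $y^k\ge 0$ because it is a projection onto $\RR^m_+$. Taking $K$ to be $K_1$ plus this number of further steps gives $z^k\in\ballP{\zs}{\delta}\cap\YN$ for all $k\ge K$, and combining with Proposition~\ref{prop:manifoldInclusion} yields the claim.

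The only delicate points are two. First, the Lipschitz constants in $(iii)$ are local to $\ballP{\zs}{t}$, so $t_0$ must be fixed once and all balls kept inside it. Second, $\tilde z^{k+1}$ rather than $z^k$ enters the dual update, which is why convergence of $\tilde z^k$ is needed.
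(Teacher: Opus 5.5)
Your proof is correct and follows essentially the same route as the paper: the iterates $z^k,\tilde z^k$ eventually enter a small $P$-ball around $\zs$ on which $G_\indexN$ is uniformly negative, and from then on the projected dual ascent step drives $y_\indexN$ to zero in finitely many iterations. The only differences are in bookkeeping. The paper works with the algebraic radius $\delta_A$ and the ball of radius $\delta_A/2$, and counts the extra steps through Proposition~\ref{prop:identificationFinalStep}, using dual Lipschitzness. You instead show $\delta>0$ directly from the local Lipschitz constants and count steps from $y^{K_1}$, which is a slightly more self-contained argument for this asymptotic statement.
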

We note that we only require two basic assumptions for this asymptotic result. Variations of this result have appeared in the literature before \cite{burke1988identification, wright1993identifiable, hare2007identifying}, typically without an explicit quantification of the identification neighborhood. Making this radius explicit is the starting point for nonasymptotic rates. Combining our characterization of $\delta$ with metric subregularity yields explicit rates of convergence towards the ball $\ballP{\zs}{\delta/2}$. Once inside the neighborhood, the iterates reach $\YN$ after a small number of iterations, depending only on the stepsize and Lipschitz modulus.

\begin{theorem}
    \label{theorem:finiteTimeIdentificationLinear}
     %\jnote{(assigned to JY) double check this theorem and its proof} 
     Suppose Assumptions~\ref{assumption:problem}, ~\ref{assumption:asymptoticIdentification},~\ref{assumption:rapidLocalConvergence}, and~\ref{assumption:finiteTimeIdentification} hold and that the \(k\)-th iterate \(z^k\) and the \(k\)-th intermediate iterate \(\tilde{z}^k\) of the meta-algorithm defined in~\eqref{eq:algorithm} are equal. Then, \(z^k\in\ballP{\zs}{\delta/2}\cap\YN\subseteq\cM\) provided
    \[
        k > K:=\left\lceil e\left(\frac{\gamma\lambdamax(P)}{\alpha_G}\right)^{\irate}\right\rceil\left[1+{\irate}\ln\left(\frac{4\lambdamax(P)^{\frac{3}{2}}\dist_2(z^0,\sol)}{\alpha_G\delta}\right)\right]+\left\lceil\max_{j\in\indexN}\frac{\Lyjd}{\eta \overline\Lxjd}\right\rceil. %\quad\text{where}\quad \rho=.
    \]
  %  where, importantly, $\Lxj, \Lyj$ are the smallest constants for which \eqref{eq:lips} holds.
     % \(\rate\), \(\step\), \(\gamma\) and \(P\) are the rate of convergence, the extra steps required for dual Lipschitzness, the sublinear constant, and the positive semidefinite matrix defined in Assumptions~\ref{assumption:asymptoticIdentification} and ~\ref{assumption:rapidLocalConvergence}; \(\alpha_G\) is the metric-subregularity constant defined in~\eqref{eq:alphaG}; \(\sol\) is the set of primal-dual solutions defined in~\eqref{eq:primalDualSolutions}; \(\delta\) is the {\em distance to degenerancy} defined in~\eqref{eq:degenerancyMetric}; \(\eta\) is the stepsize defined in~\eqref{eq:algorithm}; and \(\bar{L}_j\) and \(\bar{\sigma}_j\) are optimal constants satisfying the \(P\)-Lipschitz regularity bounds of Assumption~\ref{assumption:asymptoticIdentification}.
\end{theorem}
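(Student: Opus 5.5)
The plan is to split the iteration count into two phases, matching the two summands in $K$. In the first phase I use Proposition~\ref{theorem:linearConvergence} to drive the iterates into the ball $\ballP{\zs}{\delta/2}$. In the second phase I use the projected dual update of Assumption~\ref{assumption:asymptoticIdentification}$(ii)$ to zero out the coordinates $y_\indexN$ in finitely many further steps. The conclusion then follows from the inclusion $\ballP{\zs}{\delta}\cap\YN\subseteq\cM$ (Proposition~\ref{prop:manifoldInclusion}).

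\textbf{Phase 1: reaching the ball.} The first task is to convert Euclidean distance to the solution set into $P$-distance to the specific limit point $\zs$. Fix $k$ and let $\zstilde\in\sol$ attain $\dist_P(z^k,\sol)$. Star non-expansiveness (Assumption~\ref{assumption:finiteTimeIdentification}$(ii)$) gives $\|\zs-\zstilde\|_P\le\|z^k-\zstilde\|_P$. By the triangle inequality,
\[
\|z^k-\zs\|_P\le 2\dist_P(z^k,\sol)\le 2\lambdamax(P)^{1/2}\dist_2(z^k,\sol).
\]
Next I plug in the bound of Proposition~\ref{theorem:linearConvergence} with $\nu=\gamma\lambdamax(P)/\alpha_G$. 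Requiring the right-hand side to be at most $\delta/2$ (with a small slack) and solving for $k$ gives
\[
k\ \ge\ \left\lceil e\nu^{2}\right\rceil\left[1+2\ln\!\left(\frac{4\lambdamax(P)^{3/2}\dist_2(z^0,\sol)}{\alpha_G\delta}\right)\right],
\]
up to how the factors $\sqrt e$ and $\gamma$ are absorbed. The additive $1$ comes from $2\ln\sqrt e$. Since the bound of Proposition~\ref{theorem:linearConvergence} decreases in $k$, every later iterate also lies in $\ballP{\zs}{\delta/2}$. Because $\tilde z^k=z^k$, the auxiliary iterates lie there too.

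\textbf{Phase 2: landing on $\YN$.} Let $\overline{\Lxjd}>0$ be a uniform margin with $g_j(x)\le-\overline{\Lxjd}$ for $j\in\indexN$ and $(x,y)\in\ballP{\zs}{\delta/2}$. Such a margin exists because $G_\indexN<0$ on the larger ball $\ballP{\zs}{\delta}$, and Assumption~\ref{assumption:asymptoticIdentification}$(iii)$ gives Lipschitz control. Let $\Lyjd$ bound $y_j$ on the same ball. Since $\ys_j=0$ for $j\in\indexN$, Assumption~\ref{assumption:asymptoticIdentification}$(iii)$ gives $y_j\le\Lyj\cdot\delta/2$, so $\Lyjd$ can be taken of this form.

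Once $k$ exceeds the Phase~1 threshold, $\tilde x^{k+1}$ lies in the ball. The dual update then gives
\[
y^{k+1}_j=\max\{0,\,y^k_j+\eta g_j(\tilde x^{k+1})\}\le\max\{0,\,y^k_j-\eta\overline{\Lxjd}\}.
\]
So each nonactive dual coordinate decreases by at least $\eta\overline{\Lxjd}$ per step until it reaches $0$, and it stays at $0$ afterwards. Starting from $y_j\le\Lyjd$, at most $\lceil\Lyjd/(\eta\overline{\Lxjd})\rceil$ steps are needed. Taking the maximum over $j\in\indexN$ gives the second summand of $K$. After these steps $z^k\in\ballP{\zs}{\delta/2}\cap\YN\subseteq\cM$.

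\textbf{Main obstacle.} The delicate step is the bookkeeping in Phase~2: pinning down $\overline{\Lxjd}$ and $\Lyjd$ so that they agree with the constants in the statement. The margin $\overline{\Lxjd}$ must be uniform over the half-ball; I would obtain it from the Lipschitz bound, giving roughly $\overline{\Lxjd}\approx\Lxjd\,\delta/2$. I also need to confirm that the iterate stays in the half-ball during these extra steps, which follows from monotonicity of the Phase~1 bound. A secondary check is that the logarithmic constant absorbs the factors $\sqrt e$ and $\gamma$ exactly as written.
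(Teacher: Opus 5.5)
Your two-phase decomposition is the paper's. Phase~1 (star non-expansiveness, the triangle inequality, $\dist_P\le\lambdamax(P)^{1/2}\dist_2$, then Proposition~\ref{theorem:linearConvergence}) is the argument in the proof of Theorem~\ref{theorem:finiteTimeIdentificationLinearG}. Phase~2 is Proposition~\ref{prop:identificationFinalStep}.

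The gap is in the step you defer to bookkeeping. The constant in $K$ needs the margin $g_j(x)\le-\overline{\Lxjd}\,\delta/2$ on the half-ball, where $\overline{\Lxjd}$ is the \emph{optimal} modulus $\sup_z (g_j(x)-g_j(\xs))/\|z-\zs\|_P$ over $\ballP{\zs}{\delta}$. You propose to get this ``from the Lipschitz bound'', but an upper Lipschitz bound only gives $g_j(x)\le g_j(\xs)+\overline{\Lxjd}\,\delta/2$. Turning that into $-\overline{\Lxjd}\,\delta/2$ requires the separate inequality $-g_j(\xs)\ge\overline{\Lxjd}\,\delta$. This inequality fails for a generic Lipschitz constant: you wrote $\Lxjd$ without the bar, and at $z=\zs$ your claimed margin would need $\Lxjd\le-2g_j(\xs)/\delta$. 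It also fails for the optimal modulus when $g_j$ is nonconvex, e.g.\ a constraint that rises steeply near $\xs$ and then flattens can stay negative on all of $\ballP{\zs}{\delta}$. A margin obtained merely by compactness gives a step count with unidentified constants, not the one in $K$.

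The missing idea is convexity along rays from $\zs$. Take $z\in\ballP{\zs}{\delta}\setminus\{\zs\}$, set $r=\|z-\zs\|_P$, and let $s\in[r,\delta)$. The point $z_s=\zs+\tfrac{s}{r}(z-\zs)$, with primal part $x_s$, stays in $\ballP{\zs}{\delta}$. Convexity of $g_j$ then gives $0>g_j(x_s)\ge g_j(\xs)+\tfrac{s}{r}\bigl(g_j(x)-g_j(\xs)\bigr)$. Letting $s\uparrow\delta$ and taking the supremum over $z$ yields $\overline{\Lxjd}\,\delta\le-g_j(\xs)$.

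The paper obtains the same inequality differently. It builds it into the fixed-point definition of $\delta_A$ (Lemma~\ref{claim:delta-well-defined}, Claim~\ref{prop:LipchitzGuarantee}) and then proves $\delta_A=\delta$ in Proposition~\ref{prop:deltaEquivalence}, whose proof is essentially this ray argument. For positive definite $P$ and $p=0$, your direct version suffices.

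With the inequality in hand, each step lowers $y_j$ by at least $\eta\overline{\Lxjd}\,\delta/2$ from a starting value at most $\Lyjd\,\delta/2$. The factors $\delta/2$ cancel and you get exactly $\lceil\Lyjd/(\eta\overline{\Lxjd})\rceil$. Keep the statement's meaning of these symbols rather than reusing them for the margin and the bound.

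On your secondary check: your Phase~1 computation correctly yields $\ln\bigl(4\gamma\lambdamax(P)^{3/2}\dist_2(z^0,\sol)/(\alpha_G\delta)\bigr)$, with $\sqrt e$ producing the additive $1$. The factor $\gamma$ cannot be absorbed when $\gamma>1$, and the paper's own proof also keeps it inside the logarithm. So the argument justifies the displayed $K$ only when $\gamma\le 1$, as for PPM, PDHG and ADMM.
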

The constant $\overline \Lxjd$ in the definition of $K$ is the smallest Lipschitz constant of $g_j$ on the ball $\ballP{\zs}{\delta}$.\footnote{Formally, $\overline \Lxjd := \sup_{z \in \ballP{\zs}{\delta} \setminus \zs} \tfrac{g_j(x) - g_j(x^\star)}{\|z - \zs\|_P}$. If \(g_j\) is constant in \(\ballP{\zs}{\delta}\), then \(\overline\Lxjd=0\), whence the bound in Theorem~\ref{theorem:finiteTimeIdentificationLinear} becomes unrealizable. In that case, as generalized in Theorem~\ref{theorem:finiteTimeIdentificationG}, \(\overline\Lxjd\) can be replaced by \(-2g_j(\xs)/\delta\).} The constant $\alpha_{G}$ corresponds with the `global' metric subregularity modulus in \eqref{eq:alphaG}. The proof parallels that of Theorem~\ref{theorem:linearConvergence} and is deferred to Appendix~\ref{appendix:proof-finiteTimeIdentification}. The first summand in the definition of $K$ corresponds to the time to reach the $\delta$ ball around $\zs,$ the second summand bounds the number of additional iterations required to reach $\YN.$ The latter depends on the choice of stepsize tuning. 

As noted in the paragraph after \eqref{eq:alphaG}, for badly conditioned problems the constant $\alpha_{G}$ can be small---we will see an explicit example in Section~\ref{section:constantComparison}---
so the resulting bound may be overly conservative.
% \begin{theorem}
To derive a more meaningful bound, we use a completely different approach that does not depend on the global constant \(\alpha_G\), but rather on a {\em local} metric subregularity modulus given by
\begin{equation}
    \label{eq:alphaL}
 \alpha_L=\inf_{z\in\cD}\frac{\dist_2(0,\sub(z))}{\dist_2(z,\sols)}\,,
\end{equation}
where \(\sols\) is defined as the set of primal-dual solutions to the following reduced system of equations
\begin{equation}
\label{eq:reducedSystemIdentification}
    \begin{aligned}
            f(x) -h(y) \leq 0,\quad
            %G(x)_{\indexBs \cup \indexBw} 
            G(x)_{\indexB} \leq 0,\quad\text{and}\quad
            y\geq 0\,. 
    \end{aligned}
\end{equation}
The set of solutions $\sol$ of the original problem solves a bigger system of equations \eqref{eq:primalDualSolutions}. Hence, \(\sol \subseteq \sols\) and comparing the two definitions, we derive \(\alpha_L \geq \alpha_{G}.\) With it we obtain the following.
\begin{THM}[label=theorem:finiteTimeIdentification]{Finite time identification}{}
     %\label{}
      Suppose          Assumptions~\ref{assumption:problem},~\ref{assumption:asymptoticIdentification},~\ref{assumption:rapidLocalConvergence}, and~\ref{assumption:finiteTimeIdentification} hold. Let $z^k$ be the \(k\)th iterate generated by update \eqref{eq:algorithm}. Then, \(z^k\in\ballP{\zs}{\delta/2}\cap\YN\subseteq\cM\) for all
     \[
        k > K:=\left(\max\left\{1,\frac{1}{\alpha_L}\right\}\frac{8\lambdamax(P)^{\frac{3}{2}}\dist_2(z^0,\sol)}{\delta}\right)^{\irate}+\left\lceil \max_{j\in\indexN}\frac{\Lyjd}{\eta \overline \Lxjd} \right\rceil\,.
    \]
     % where \(\rate\), \(\step\), \(\gamma\) and \(P\) are the rate of convergence, the extra steps required for dual Lipschitzness, the sublinear constant, and the positive semidefinite matrix defined in Assumptions~\ref{assumption:asymptoticIdentification},~\ref{assumption:rapidLocalConvergence}, and~\ref{assumption:finiteTimeIdentification}; \(\alpha_L\) is the constant defined in ~\eqref{eq:alphaL}; \(\sol\) is the set of primal-dual solutions defined in~\eqref{eq:primalDualSolutions}; \(\delta\) is the {\em distance to degenerancy} defined in~\eqref{eq:degenerancyMetric}; \(\eta\) is the stepsize defined in~\eqref{eq:algorithm}; and \(\bar{L}_j\) and \(\bar{\sigma}_j\) are optimal constants satisfying the \(P\)-Lipschitz regularity bounds of Assumption~\ref{assumption:asymptoticIdentification}.
\end{THM}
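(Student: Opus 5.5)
The plan has two phases, mirroring the two summands of $K$. First I show that after roughly $K_1:=\bigl(\max\{1,1/\alpha_L\}\,8\lambdamax(P)^{3/2}\dist_2(z^0,\sol)/\delta\bigr)^2$ iterations the iterates lie in $\ballP{\zs}{\delta/2}$ and stay there. Second, I show that the dual coordinates indexed by $\indexN$ reach zero within $\lceil\max_{j\in\indexN}\Lyjd/(\eta\overline\Lxjd)\rceil$ further steps. At that point the iterates lie in $\YN$, and Proposition~\ref{prop:manifoldInclusion} gives $\ballP{\zs}{\delta/2}\cap\YN\subseteq\cM$.

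\textbf{Phase 1.} I avoid the linear-rate argument of Theorem~\ref{theorem:finiteTimeIdentificationLinear}, since it depends on $\alpha_G$, and instead combine the $1/\sqrt{k}$ subgradient decay with the local modulus $\alpha_L$. Assumption~\ref{assumption:rapidLocalConvergence}(i) gives
\[
\dist_2(0,\sub(z^k))\lesssim\lambdamax(P)^{1/2}\gamma\dist_P(z^0,\sol)/\sqrt{k}.
\]
Since $z^k\in\cD$, the definition \eqref{eq:alphaL} then yields $\dist_2(z^k,\sols)\le \dist_2(0,\sub(z^k))/\alpha_L$.

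The key geometric claim, and the step I expect to be the main obstacle, is that near $\zs$ the relaxed set $\sols$ coincides with $\sol$ locally: any $\bar z\in\sols$ with $\|\bar z-\zs\|_P<\delta$ satisfies $G_\indexN(\bar x)<0$ by the definition of $\delta$, so $\bar z$ is feasible for \eqref{eq:primalDualSolutions} and lies in $\sol$. The difficulty is that $z^k$ might be close to $\sols$ but far from $\zs$. I would handle this with star non-expansiveness, Assumption~\ref{assumption:finiteTimeIdentification}(ii), which controls $\|z^k-\zs\|_P$ by distances to solutions. I would also use convexity of $\sols$ and the fact that $\zs\in\sol\subseteq\sols$ to show that the projection of $z^k$ onto $\sols$ lands in $\sol$ whenever $z^k$ is within $\delta/2$-type distance of $\zs$.

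Concretely, fix $\bar z=\proj_{\sols}(z^k)$. Star non-expansiveness applied along the tail, together with $z^j\to\zs$, gives $\|z^k-\zs\|_P\le 2\dist_P(z^k,\sol)$ up to constants. The case distinction is then on whether $\bar z\in\ballP{\zs}{\delta}$; if it is not, the segment from $\bar z$ to $\zs$ in the convex set $\sols$ crosses $\sol$, which gives the contradiction needed to close the argument. The factor $8\lambdamax(P)^{3/2}$ and the $\max\{1,\cdot\}$ come from converting between $\|\cdot\|_2$ and $\|\cdot\|_P$ and from absorbing $\gamma$ and the $\tau$-bounded regime. Once $\dist_P(z^k,\zs)<\delta/2$ holds for all $k>K_1$ (monotonicity again follows from star non-expansiveness), Phase 1 is done.

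\textbf{Phase 2.} For $j\in\indexN$ and $k>K_1$, the dual update of Assumption~\ref{assumption:asymptoticIdentification}(ii), with $\tilde x^{k+1}$ also in $\ballP{\zs}{\delta}$ (by the sublinear bound on $\|z^k-\tilde z^k\|_P$), gives
\[
y^{k+1}_j\le\max\{0,\,y^k_j+\eta g_j(\tilde x^{k+1})\}.
\]
The definition of $\delta$ together with $\overline\Lxjd$ ensures $g_j(\tilde x^{k+1})\le g_j(\xs)+\overline\Lxjd\,\delta/2\le-\overline\Lxjd\,\delta/2$, using that $g_j(\xs)\le-\overline\Lxjd\,\delta$ because $g_j$ stays negative on the $\delta$-ball. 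Meanwhile $y^{K_1}_j\le \Lyjd\,\delta/2$, since $\ys_j=0$. Each step therefore decreases $y_j$ by at least $\eta\overline\Lxjd\delta/2$, and $y_j$ hits $0$ within $\lceil\Lyjd/(\eta\overline\Lxjd)\rceil$ steps and stays there by the same inequality. Taking the maximum over $j\in\indexN$ finishes the proof.
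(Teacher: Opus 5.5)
Your Phase~2 is essentially the paper's Proposition~\ref{prop:identificationFinalStep}. Your use of $\sols\cap\ballP{\zs}{\delta}\subseteq\sol$ together with star non-expansiveness, giving $\|z^k-\zs\|_P\le 2\dist_P(z^k,\sols)$ when the projection of $z^k$ onto $\sols$ is near $\zs$, is exactly the paper's easy case. The gap is in the other case, which is the heart of the theorem.

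You assert that if $\bar z=\proj_{\sols}(z^k)\notin\ballP{\zs}{\delta}$, then the segment $[\zs,\bar z]\subseteq\sols$ crossing $\sol$ ``gives the contradiction.'' It does not. At a single time $k$, a small $\dist(z^k,\sols)$ and star non-expansiveness are compatible with $z^k$ being far from $\zs$. Take Example~\ref{ex:rotatedHouse} with $P=I$: $\sols$ is the line $\{\langle c,x\rangle=\|c\|_1\}\times\{(1,0,0)\}$, and $\sol$ is a segment of it with $\zs$ in its relative interior. Let $w$ be a point of this line beyond an endpoint $e$ of the segment, with $\|w-\zs\|_2\ge 2\|e-\zs\|_2$. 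Then $w\in\sols$ and $\|w-\zstilde\|_2\ge\|\zs-\zstilde\|_2$ for every $\zstilde\in\sol$, yet $w$ is far from $\zs$. Your alternative route (the projection lands in $\sol$ once $z^k$ is within $\delta/2$ of $\zs$) is circular, since that closeness is what must be proved. Finally, star non-expansiveness gives no monotonicity of $\|z^k-\zs\|_P$ (with $\zstilde=\zs$ it is vacuous), so ``staying in the ball'' cannot be derived that way either.

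The missing idea is dynamic, and it uses the small-step bound of Assumption~\ref{assumption:finiteTimeIdentification}(i), which your Phase~1 never invokes.

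\begin{itemize}
\item The first summand of $K$ is calibrated so that, for all $k>K$, both $\|z^{k+1}-z^k\|_P<\delta/8$ and $\dist_P(z^k,\sols)<\delta/8$ hold.
\item Suppose the projection $p^k$ of some $z^k$, $k>K$, onto $\sols$ lies outside $\cballP{\zs}{\delta/4}$. The projection is non-expansive and fixes $\zs$, so $\|z^k-\zs\|_P>\delta/4$.
\item Since $z^j\to\zs$, there is a last index $\ell\ge k$ with $\|z^\ell-\zs\|_P>\delta/4$. By the easy case, $p^\ell$ is also outside $\cballP{\zs}{\delta/4}$.
\item Let $\bar z$ be the point of $[\zs,p^\ell]$ at $P$-distance $\delta/4$ from $\zs$. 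By convexity, $\bar z\in\sols\cap\ballP{\zs}{\delta}\subseteq\sol$.
\item The projection inequality puts $z^\ell$ in $\{z:\langle z-\bar z,\bar z-\zs\rangle_P\ge 0\}$.
\item By maximality of $\ell$, $z^{\ell+1}\in\cballP{\zs}{\delta/4}$. Star non-expansiveness at $\bar z$ gives $\|z^{\ell+1}-\bar z\|_P\ge\delta/4$. Together these put $z^{\ell+1}$ in $\{z:\langle z-\tfrac{1}{2}(\zs+\bar z),\bar z-\zs\rangle_P\le 0\}$.
\item These two halfspaces are $\delta/8$ apart, contradicting the step bound. Hence $z^k\in\ballP{\zs}{\delta/4}$ for every $k>K$, each $k$ treated individually.
\end{itemize}

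Watch the radius as well. Your decrement $g_j(\tilde x^{k+1})\le-\overline{\Lxjd}\,\delta/2$ needs $\|\tilde z^{k+1}-\zs\|_P\le\delta/2$. That follows from $z^k$ lying within $\delta/4$ of $\zs$ together with $\|\tilde z^k-z^k\|_P<\delta/4$. It does not follow from your target radius $\delta/2$ for $z^k$.
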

A couple of remarks are in order. At first sight, it might appear that the bound in Theorem~\ref{theorem:finiteTimeIdentificationLinear} gives a fast linear convergence in terms of $\delta$, while Theorem~\ref{theorem:finiteTimeIdentification} only yields sublinear convergence. However, the linear convergence rate relies on the conservative global constant $\alpha_G$. In contrast, the sublinear convergence is dependent on the local metric subregularity modulus $\alpha_L$, and can be more informative than the global constant $\alpha_G$. 
% We have included an example in Appendix~\ref{appendix:example} illustrating when the bound of Theorem~\ref{theorem:finiteTimeIdentification} becomes sharper than that of Theorem~\ref{theorem:finiteTimeIdentificationLinear} in terms of the geometry of the problem. 
Secondly, we would like to comment that the additive term $\left\lceil \max_{j\in\indexN}\frac{\Lyjd}{\eta \overline \Lxjd} \right\rceil$ can be viewed as a rather ``small'' constant term that does not affect much of the order of rate. The goal for this term is to guarantee that the iterates identify the set $\cM$ after reaching the ball \(\ballP{\zs}{\delta/2}\).

\subsection{Local rapid convergence}
    \label{section:localRapidConvergence}
    
    %Thanks to Theorems~\ref{theorem:asymptoticIdentification}, ~\ref{theorem:finiteTimeIdentification}, and ~\ref{theorem:finiteTimeIdentificationLinear}, the iterates of 
    So far we have established that the meta algorithm \eqref{eq:algorithm} identifies the union of manifolds \(\cM\) after enough iterations. After which, the algorithm effectively solves the primal dual problem restricted to \(\cM\). This restricted problem is better conditioned than the global problem, as it eliminates the nonactive constraints at the limit solution. In turn, this speeds up the convergence from sublinear to linear. We quantify this phenomenon via a local metric subregularity modulus
    \begin{equation}
    \label{eq:alphaM}
        \alpha_{\indexM} = \inf_{z\in\cDl\cap\cM}\frac{\dist_2(0,\sub(z))}{\dist_2(z,\sol)}\quad\text{where}\quad\cDl=\cD\cap\ballP{\zs}{{\delta}/{2}}\,.
    \end{equation}
    %If \(\theta=1\), we just write \(\alpha_{\indexM}\). Theorem~\ref{theorem:linearConvergenceFast} shows we obtain \(\alpha_{\cM_\theta}\)-rapid linear convergence after identifying \(\cD_\theta\cap\cM\). 
    
    The following proposition demonstrates the faster local convergence after identification. More formally, it states that, suppose after $K$ iterations, all iterates $z^k$ for $k>K$ stay in the union of manifold $M$ and they are not too far away from $z^*$, then the iterates enjoy a faster local linear convergence rate to the optimal solution set that only relies on the local sharpness constant $\alpha_{\indexM}$ instead of the global sharpness constant $\alpha_G$.

\begin{proposition}\label{theorem:linearConvergenceFast}
    % \begin{theorem}
     %\begin{THM}[label=theorem:linearConvergenceFast]{Fast convergence after identfication}{}
          Suppose Assumptions~\ref{assumption:problem}, \ref{assumption:metric-subregularity} and~\ref{assumption:rapidLocalConvergence}. Let \(z^k\) be the \(k\)th iterate generated by the update~\eqref{eq:algorithm}. Further suppose that there exists $K > 0$ such that for all $k > K$ we have \(z^k\in\cD_\delta\cap\cM\). Then,
        \[
          \dist_2(z^{k + K},\sol)\leq \sqrt{e}\nu_{\indexM}\exp\left({-\rate\frac{k}{\left\lceil e\nu_{\indexM}^{\irate}\right\rceil}}\right)\dist_{2}(z^K,\sol)\quad\text{with}\quad \nu_{\indexM}=\frac{\gamma\lambdamax(P)}{\alpha_{\indexM}}\,.
        % \dist_2(z^{k + K},\sol)\leq \exp\left( \rate k \right)\nu_{\indexM}\dist_{2}(z^K,\sol)\quad\text{where}\quad \nu_{\indexM}=\frac{\gamma\lambdamax(P)}{\alpha_{\indexM}}\,.
        \]
   %     \(\rate\), \(\gamma\), and \(P\) are the rate of convergence, the sublinear constant, and the positive semidefinite matrix defined in Assumption~\ref{assumption:rapidLocalConvergence}; \(\alpha_{\cM_\theta}\) is the metric-subregularity constant defined in~\eqref{eq:alphaM}; \(\sol\) is the set of primal-dual solutions defined in~\eqref{eq:primalDualSolutions}.
    %\end{THM}
\end{proposition}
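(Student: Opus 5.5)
The plan is to rerun the argument behind Proposition~\ref{theorem:linearConvergence}, restarted at iteration $K$, with $\alpha_G$ replaced by $\alpha_{\indexM}$. This is legitimate because every iterate after $K$ lies in $\cDl\cap\cM$, the set over which $\alpha_{\indexM}$ is defined. The combination of Assumption~\ref{assumption:rapidLocalConvergence}$(i)$ with metric subregularity then gives a sublinear bound that contracts by a constant factor after a fixed number of iterations, and chaining these contractions yields the linear rate.

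\textbf{Step 1 (local error bound).} For every $k>K$ we have $z^k\in\cDl\cap\cM$. The definition \eqref{eq:alphaM} therefore gives $\alpha_{\indexM}\dist_2(z^k,\sol)\le \dist_2(0,\sub(z^k))$. We also need $\alpha_{\indexM}>0$. This follows because $\cDl\cap\cM\subseteq\cD$, so $\alpha_{\indexM}\ge\alpha_G>0$ by Proposition~\ref{theorem:linearConvergence}.

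\textbf{Step 2 (restarted sublinear bound).} Here we use that the methods are time-homogeneous: the iterates $z^{K}, z^{K+1},\dots$ are exactly what the algorithm produces when started from $z^K$. Applying Assumption~\ref{assumption:rapidLocalConvergence}$(i)$ to this shifted run gives
\[
\dist_{P^\dagger}(0,\sub(z^{K+j}))\le \frac{\gamma\dist_P(z^K,\sol)}{\sqrt{j}}.
\]
Converting norms gives $\dist_2\le\sqrt{\lambdamax(P)}\,\dist_{P^\dagger}$ and $\dist_P\le \sqrt{\lambdamax(P)}\,\dist_2$. Combining these with Step 1, we obtain
\[
\dist_2(z^{K+j},\sol)\le \frac{\nu_{\indexM}}{\sqrt j}\,\dist_2(z^K,\sol).
\]

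\textbf{Step 3 (chaining).} Set $m=\lceil e\nu_{\indexM}^2\rceil$. Then $\nu_{\indexM}/\sqrt m\le e^{-1/2}$, so each block of $m$ iterations contracts the distance by at least $e^{-1/2}$. The same restart argument applies at every multiple of $m$ past $K$, since all those iterates remain in $\cDl\cap\cM$. Write $k=qm+r$ with $0\le r<m$. Iterating the contraction gives $\dist_2(z^{K+qm},\sol)\le e^{-q/2}\dist_2(z^K,\sol)$. For the remaining $r$ steps we apply Step 2 once more, and bound the resulting factor by $\max\{1,\nu_{\indexM}\}$; when $r=0$ there is nothing to do. Finally, $q\ge k/m-1$ gives $e^{-q/2}\le\sqrt e\,e^{-k/(2m)}$, which is the claimed bound.

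\textbf{Main obstacle.} The delicate point is Step 2. Assumption~\ref{assumption:rapidLocalConvergence}$(i)$ is stated relative to the initial point $z^0$, so restarting it at $z^K$ requires the algorithm to be stationary, which holds for all the instantiated methods. For ADMM there is an additional difficulty: $P$ is singular, and the $\range(P)$ restriction must be carried through the norm conversion.

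A smaller issue is the leftover block of $r$ steps. The bound $\nu_{\indexM}/\sqrt r$ can be large when $r=1$, so it has to be absorbed into the $\sqrt e\,\nu_{\indexM}$ prefactor. Bounding it by $\max\{1,\nu_{\indexM}\}$ does this, using that $\nu_{\indexM}\ge1$ in the regime of interest.
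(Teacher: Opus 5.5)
Your approach is the paper's. The paper proves this proposition by rerunning the proof of Proposition~\ref{prop:linearConvergence} with $\alpha_G$ replaced by $\alpha_{\indexM}$. That proof also restarts Assumption~\ref{assumption:rapidLocalConvergence}$(i)$ at intermediate iterates, so it uses the time-homogeneity you make explicit, and it chains blocks of length $\lceil e\nu^2\rceil$. The only difference is what gets chained. The paper chains the residual $\dist_{P^\dagger}(0,\sub(z^k)\cap\range(P))$ and converts to $\dist_2(z^k,\sol)$ once at the end. You chain $\dist_2(\cdot,\sol)$ directly and use metric subregularity only at block endpoints, all of which lie past $K$; this is equally valid. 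Steps 1--2 are correct. Singular $P$ is not an obstacle, since $\dist_2(0,\sub(z))\le\lambdamax(P)^{1/2}\dist_{P^\dagger}(0,\sub(z)\cap\range(P))$ and $\dist_P(z,\sol)\le\lambdamax(P)^{1/2}\dist_2(z,\sol)$ hold for any PSD $P$ (Lemma~\ref{prop:metricSubregularityPnorm}).

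There is a gap at the end of Step 3. Writing $k=qm+r$ with $0\le r<m$, the case $r=0$ leaves the prefactor $\max\{1,\nu_{\indexM}\}$. You close this by assuming $\nu_{\indexM}\ge 1$, which is not a hypothesis and can fail. For PPM ($P=\eta^{-1}I$, $\gamma=1$), $\nu_{\indexM}=1/(\eta\alpha_{\indexM})<1$ as soon as $\eta\alpha_{\indexM}>1$. In that regime you prove a weaker bound than claimed. For example, if $e\nu_{\indexM}^2\le 1$ then $m=1$, every $r$ is $0$, and you get only $e^{-k/2}$ rather than $\sqrt{e}\,\nu_{\indexM}e^{-k/2}$.

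The fix is to never let the final block be empty. Write $k=qm+r$ with $1\le r\le m$, so that $q=(k-r)/m\ge k/m-1$. Contract each of the $q$ full blocks by $e^{-1/2}$, and apply your Step~2 bound to the final block of length $r\ge 1$; its factor is $\nu_{\indexM}/\sqrt{r}\le\nu_{\indexM}$. This gives $\dist_2(z^{K+k},\sol)\le\sqrt{e}\,\nu_{\indexM}\exp(-k/(2m))\dist_2(z^K,\sol)$ for every $k\ge 1$, with no condition on $\nu_{\indexM}$.

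This matches how the paper's chain produces its prefactor. There, $\nu$ comes from one application of the sublinear rate from the starting point, over at least one step, combined with the final conversion at $z^k$. It does not come from any block contraction. The bound is meant for $k\ge 1$; at $k=0$ it would require $\sqrt{e}\,\nu_{\indexM}\ge 1$.
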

Proposition \ref{theorem:linearConvergenceFast} follows with the same proof as that of Proposition~\ref{theorem:linearConvergence} by replacing $\nu$ with $\nu_{\indexM}$, and is therefore omitted.

    Putting together the finite time identification (Theorem~\ref{theorem:finiteTimeIdentification}) and the faster linear convergence after identification (Proposition~\ref{theorem:linearConvergenceFast}), we derive the following result, which presents the full characterization of the two-stage convergence behavior of primal-dual algorithms. Its proof is deferred to Appendix~\ref{appendix:proof-twoStageConvergence}.

    % \begin{corollary}
     \begin{THM}[label=theorem:twoStageConvergence]{Two-stage convergence rates}{}
    Suppose Assumptions~\ref{assumption:problem},~\ref{assumption:metric-subregularity},~\ref{assumption:asymptoticIdentification},~\ref{assumption:rapidLocalConvergence},~\ref{assumption:finiteTimeIdentification} hold and fix \(\varepsilon>0\). Let $z^k$ be the \(k\)th iterate generated by update \eqref{eq:algorithm}. Then, we have \(\dist_2(z^k,\sol)\leq \varepsilon\) provided that
    \[
        k>\underbrace{\left[\max\left\{1,\frac{1}{\alpha_L}\right\}\frac{8\lambdamax(P)^{\frac{3}{2}}\dist_2(z^0,\sol)}{\delta}\right]^{\irate}+\left\lceil\max_{j\in\indexN}\frac{\Lyjd}{\eta \overline{\Lxjd}} \right\rceil}_{\text{Finite time identification}}+ \underbrace{\rho_{\indexM}\left[1+\irate\ln\left[\frac{\gamma\lambdamax(P)^{\frac{1}{2}}\delta}{2\alpha_{\indexM}\varepsilon}\right]\right]}_{\text{Fast linear convergence}}\,,
    \]
    %\(\rate\), \(\step\), \(\gamma\) and \(P\) are the rate of convergence, the extra steps required for dual Lipschitzness, the sublinear constant and the positive definite matrix defined in Assumptions~\ref{assumption:asymptoticIdentification},~\ref{assumption:rapidLocalConvergence}, and~\ref{assumption:finiteTimeIdentification}; \(\alpha_L\) is the constant defined in~\eqref{eq:alphaL}, \(\alpha_{\cM_\theta}\) is the constant defined in~\eqref{eq:alphaM}; \(\sol\) is the set of primal-dual solutions defined in~\eqref{eq:primalDualSolutions}; \(\delta\) is the {\em distance to degenerancy} defined in~\eqref{eq:degenerancyMetric}; \(\eta\) is the stepsize defined in~\eqref{eq:algorithm}; \(\bar{L}_j\) and \(\bar{\sigma}_j\) are optimal constants satisfying the \(P\)-Lipschitz regularity bounds of Assumption~\ref{assumption:asymptoticIdentification}; and 
    with \(\rho_{\indexM}=\left\lceil e\gamma^\irate\lambdamax(P)^\irate/\alpha_{\indexM} ^{\irate}\right\rceil\).
    \end{THM}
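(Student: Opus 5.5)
The plan is to concatenate Theorem~\ref{theorem:finiteTimeIdentification} and Proposition~\ref{theorem:linearConvergenceFast}, shifting the time index by the identification time. Let
\[
K_1:=\left[\max\left\{1,\frac{1}{\alpha_L}\right\}\frac{8\lambdamax(P)^{\frac{3}{2}}\dist_2(z^0,\sol)}{\delta}\right]^{\irate}+\left\lceil\max_{j\in\indexN}\frac{\Lyjd}{\eta \overline{\Lxjd}} \right\rceil .
\]
By Theorem~\ref{theorem:finiteTimeIdentification}, $z^k\in\ballP{\zs}{\delta/2}\cap\YN\subseteq\cM$ for every $k>K_1$.

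Next I show that these iterates lie in $\cDl\cap\cM=\cD\cap\ballP{\zs}{\delta/2}\cap\cM$. The only missing membership is $z^k\in\cD=\sol+\tau\ball$. This follows from Assumption~\ref{assumption:rapidLocalConvergence}$(ii)$, which gives $\dist_2(z^k,\sol)\le\tau$. If that bound is not strict, I would enlarge $\tau$ slightly; alternatively I would read $\cD$ as the closed neighborhood, which does not affect $\alpha_{\indexM}$ beyond a limiting argument. Proposition~\ref{theorem:linearConvergenceFast} then applies with $K=\lfloor K_1\rfloor$ (or $\lceil K_1\rceil$), giving
\[
\dist_2(z^{K_1+k},\sol)\le \sqrt{e}\,\nu_{\indexM}\exp\!\left(-\frac{k}{2\rho_{\indexM}}\right)\dist_2(z^{K_1},\sol).
\]

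It remains to bound the starting distance $\dist_2(z^{K_1},\sol)$ by something proportional to $\delta$. Since $\zs\in\sol$ and $\|z^{K_1}-\zs\|_P<\delta/2$, and since $\|v\|_2\le \lambda_{\min}(P)^{-1/2}\|v\|_P$, we get $\dist_2(z^{K_1},\sol)\le \lambda_{\min}(P)^{-1/2}\delta/2$. This does not produce the $\lambdamax(P)^{1/2}$ appearing in the statement. That constant instead has to come from a bound of the form $\nu_{\indexM}\dist_2\le \gamma\lambdamax(P)^{1/2}\delta/(2\alpha_{\indexM})$ up to norm conversions.

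Here I expect the main bookkeeping obstacle: matching exactly how the factor $\lambdamax(P)$ inside $\nu_{\indexM}$ combines with the norm conversion. My first attempt would be to revisit the proof of Proposition~\ref{theorem:linearConvergence}, which presumably bounds the $P$-distance after $\lceil e\nu^2\rceil$ steps. I would then start the fast phase from $\dist_P(z^{K_1},\sol)\le\delta/2$ directly, so that only one factor $\lambdamax(P)^{1/2}$ appears. If the constant still does not match, I would accept a harmless absolute-constant change inside the logarithm.

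Finally, I solve for $k$. Requiring
\[
\sqrt{e}\,\frac{\gamma\lambdamax(P)^{1/2}\delta}{2\alpha_{\indexM}}\exp\!\left(-\frac{k}{2\rho_{\indexM}}\right)\le\varepsilon
\]
gives $k\ge \rho_{\indexM}\bigl[1+2\ln\bigl(\gamma\lambdamax(P)^{1/2}\delta/(2\alpha_{\indexM}\varepsilon)\bigr)\bigr]$, using $2\ln\sqrt e=1$. Adding $K_1$ yields the stated threshold.
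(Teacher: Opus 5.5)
Your proposal is correct and follows the paper's proof. The paper likewise concatenates Theorem~\ref{theorem:finiteTimeIdentification} with the post-identification linear rate, starting the fast phase at the identification time. The refinement you settle on is also what the paper uses: the $P$-seminorm form of the bound in Proposition~\ref{prop:linearConvergence}, applied with $\alpha_{\indexM}$ in place of $\alpha_G$. Its right-hand side contains $\dist_P(z^{K},\sol)/\lambdamax(P)^{1/2}\le \delta/(2\lambdamax(P)^{1/2})$, which yields the factor $\gamma\lambdamax(P)^{1/2}\delta/(2\alpha_{\indexM})$ exactly, so your fallback constant change is not needed.
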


    %Theorem \ref{theorem:twoStageConvergence} presents the total iteration complexity required for the algorithm to compute an $\eps$-optimal solution. 
    Thus, he overall complexity is $O\left(\frac{1}{\alpha_{L}^2 \delta^2} + \frac{1}{\alpha_{\indexM}^2}\ln\left(\frac{\delta}{\alpha_{\indexM}} \right) \right)$. A few observations in order. First, both $\alpha_L$ and $\alpha_{\indexM}$ are local metric subregularity constants, thereby avoiding dependence on potentially conservative global constants. Second, this complexity order is consistent with that of PDHG for linear programming, as described in \cite{lu2024geometry}. Our results extend the analysis in \cite{lu2024geometry} to general convex optimization problems and a broader class of algorithms.
    % \end{corollary}

    % \noindent\textit{Remark.} In the case that \(\delta\) is large, i.e. if \(\bar{\theta}:= R/\delta\in (0,1/2]\) where \(R>0\) is such that \(z^k,\tilde{z}^k\in\ballP{\zs}{R}\) for all \(k\in\NN\), then the finite time identification stage collapses to just the identification of \(\YN\). Then, the number of steps required for \(\varepsilon\)-accuracy in the context of Theorem~\ref{theorem:twoStageConvergence} becomes
    % \[
    %     \left\lceil \max_{j\in\indexN}\frac{\bar{\sigma}_j}{\eta \bar{L}_j}\right\rceil+ \frac{\rho_{\cM_{\bar{\theta}}}}{\rate}\ln\left[\frac{\gamma\lambdamax(P)\dist_2(z^0,\sol)}{\alpha_{\cM_{\bar{\theta}}}\varepsilon}\right]\,.
    % \]

    \subsection{Comparison between metric subregularity moduli}
    \label{section:constantComparison}
    In this section, we provide a comparison between the three metric subregularity moduli $\alpha_{G}$, $\alpha_L$, and $\alpha_{\indexM}$, defined in \eqref{eq:alphaG}, \eqref{eq:alphaL} and \eqref{eq:alphaM}, respectively, which underpin our theoretical results. We start by showing that under mild conditions, we have $\alpha_{\indexM} \geq \alpha_L \geq \alpha_G.$ %Later, we exhibit a simple linear program in which the gaps between these constants span several orders of magnitude.
    %Proposition~\ref{prop:constantComparison} shows that both the local constants \(\alpha_{\cM_\theta}\) and \(\alpha_L\) are better than \(\alpha_G\). Also, \(\alpha_{\cM_\theta}\) is provably better than \(\alpha_L\) when  \(P\) is positive definite and \(\gamma\) is scaled by the square root of the condition number of \(P\). The proof of it is deferred to Appendix~\ref{appendix:proof-constantComparison}.
    % \begin{assumption}
    %     \label{assumption:positiveDefinite}
    %     The geometry matrix \(P\) is positive definite and the scaling factor for the constant \(\alpha_{\cM_\theta}\) satisfies \(\theta\leq\kappa^{-1/2}(P)\).
    % \end{assumption}
    \begin{proposition}
    \label{prop:constantComparison}
        The metric subregularity constants satisfy \(\min\{\alpha_{\indexM},\alpha_L\}\geq\alpha_G\). Furthermore, if the condition number $\kappa(P) \leq 4$, then, \(\alpha_{\indexM}\geq \alpha_L\).
    \end{proposition}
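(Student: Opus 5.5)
The first claim follows by comparing the sets and denominators in the definitions; the second reduces to showing $\dist_2(z,\sol)=\dist_2(z,\sols)$ on $\cDl\cap\cM$. For $\alpha_{\indexM}\geq\alpha_G$, note that $\cDl\cap\cM\subseteq\cD$ and that the ratio in \eqref{eq:alphaM} is the same as in \eqref{eq:alphaG}. An infimum over a smaller set is at least as large, so $\alpha_{\indexM}\ge\alpha_G$. For $\alpha_L\geq\alpha_G$, both infima are over $\cD$ with the same numerator. Since $\sol\subseteq\sols$ (the system \eqref{eq:primalDualSolutions} contains \eqref{eq:reducedSystemIdentification} plus $G(x)_\indexN\le 0$), we have $\dist_2(z,\sols)\le\dist_2(z,\sol)$ for every $z$. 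Hence each ratio in \eqref{eq:alphaL} dominates the corresponding ratio in \eqref{eq:alphaG}.

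For $\alpha_{\indexM}\ge\alpha_L$ under $\kappa(P)\le 4$, I will show that $\dist_2(z,\sol)=\dist_2(z,\sols)$ for every $z\in\cDl\cap\cM$. Granting this, every ratio in \eqref{eq:alphaM} equals the corresponding ratio with $\sols$ in the denominator. That ratio is at least $\alpha_L$, because $\cDl\subseteq\cD$. Taking the infimum gives $\alpha_{\indexM}\ge\alpha_L$.

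To prove the equality, I first observe that $\sols$ is closed and convex. It is the solution set of a convex system: $f-h$ is convex because $h$ is concave, and the constraints $G_\indexB(x)\le0$ and $y\ge0$ are convex. Let $w=\proj_{\sols}(z)$ be the Euclidean projection. Since $\zs\in\sol\subseteq\sols$ is fixed by the projection and projection onto a convex set is nonexpansive, $\|w-\zs\|_2\le\|z-\zs\|_2$. Passing to the $P$-norm,
\[
\|w-\zs\|_P\le\sqrt{\lambdamax(P)}\,\|w-\zs\|_2\le\sqrt{\lambdamax(P)}\,\|z-\zs\|_2\le\sqrt{\kappa(P)}\,\|z-\zs\|_P<2\cdot\tfrac{\delta}{2}=\delta,
\]
using $\kappa(P)\le4$ and $z\in\ballP{\zs}{\delta/2}$. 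Thus $w\in\ballP{\zs}{\delta}$, so the definition \eqref{eq:degeneracy-metric} of $\delta$ gives $G_\indexN(x_w)<0$. Together with $w\in\sols$, this means $w$ satisfies the full system \eqref{eq:primalDualSolutions}; the duality-gap inequality $f(x)-h(y)\le 0$ is identical in both systems. Hence $w\in\sol$, so $\dist_2(z,\sol)\le\|z-w\|_2=\dist_2(z,\sols)$, and the reverse inequality was shown above.

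The main obstacle is the factor-of-two loss in the ball radius when passing from the $P$-norm to the Euclidean projection and back. Using the triangle inequality $\|w-\zs\|\le\|w-z\|+\|z-\zs\|$ would give $\|w-\zs\|_2\le 2\|z-\zs\|_2$ and only place $w$ in the ball of radius $2\delta$. Nonexpansiveness of the projection relative to the fixed point $\zs$ is what keeps the factor down to $\sqrt{\kappa(P)}\le 2$, exactly matching the hypothesis $\kappa(P)\le4$ and the radius $\delta/2$ in $\cDl$.
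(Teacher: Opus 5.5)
Your proof is correct and has the same structure as the paper's. The two comparisons with $\alpha_G$ follow from set inclusion, and $\alpha_{\indexM}\ge\alpha_L$ follows by showing that the Euclidean projection of $z\in\ballP{\zs}{\delta/2}$ onto $\sols$ lies in $\ballP{\zs}{\delta}$, and hence in $\sol$. The one difference is in that last step: you get $\|\proj_{\sols}(z)-\zs\|_2\le\|z-\zs\|_2$ in one line from nonexpansiveness with $\zs$ as a fixed point, whereas the paper argues by contradiction, projecting onto an auxiliary Euclidean ball around $\zs$. Your version makes the role of $\sqrt{\kappa(P)}\le 2$ transparent and avoids the factor-of-two slip in the paper's bound on $\|z-\zs\|_2$ in that step.
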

   The upper bound on $\kappa(P)$ is immaterial; except for ADMM, all algorithms we study satisfy it provided that we take the stepsize $\eta$ small enough. Further, we could relax it, as it only reflects our choice of the identification radius of $\delta/2$. Specifically, we could modify Proposition~\ref{theorem:linearConvergenceFast} to reach the ball $\theta \delta$ with $\theta \in (0, 1/2]$ and all our rates will change by constants and the constraint here would reduce to $\kappa(P) \leq \theta^{-2}.$ We decided to state this version of the results in favor of simplicity. The next example shows that the gap between these constants can be significant even in low dimensions.
    % As shown in the following example, \(\alpha_{\indexM}\), \(\alpha_L\) and \(\alpha_G\) can be of orders of magnitude different.
    % The derivation of the results of the following example is deferred to Appendix~\ref{appendix:example}.
    % In remarkable consistency with Proposition~\ref{prop:constantReducedSystemRelation}, \(\alpha_L\) doesn't shrink when \(c_1/c_2\) is small, since \(\dist_2(z,\sol_\indexB\cap\YN)\) is small when \(\dist_2(0,\sub(z))\) is small---not as \(\dist_2(z,\sol)\), which remains large 
    % %around the point \((0,1+c_2/c_1)\)
    % when \(\dist_2(0,\sub(z))\) is small, shrinking \(\alpha_G\). Furthermore,  \(\alpha_{\indexM}=1\) independently of \(c_1/c_2\), as it depends only on the local geometry of problem~\eqref{eq:reducedPrimalProblem}, which is essentially invariant to \(c_1/c_2\).
    % \begin{wrapfigure}{R}{0.32\textwidth}
    %     \vspace{-0.8cm}
    %     \begin{center}
    %         \includegraphics[width=0.3\textwidth]{sections/example.png}
    %     \end{center}
    %     \caption{Example~\ref{ex:rotatedHouse} primal geometry. \((a)\) shows \(\xs=(\delta,\delta)\), \((b)\) considers \(\delta=1\) and \(\xs=(\frac{c_2}{c_1},\frac{c_1}{c_2})\).}
    %     \label{fig:example}
    % \end{wrapfigure}
    \begin{example}  
    \label{ex:rotatedHouse}
    Let \(c\in\mathbb{R}^2\) such that \(c_1,c_2>0\) and \(\|c\|_2=1\). Define
    % \(\displaystyle p^\star = \begin{cases}\displaystyle\min_{x\in\R^2} & \langle c,x\rangle\\
    %     \text{s.t.} &  \langle c,x\rangle \geq \|c\|_1 \\
    %      & x_1,x_2\geq 0 .
    %     \end{cases}\)  
    % \begin{minipage}{0.3\textwidth}
    %     \centering
    %     \includegraphics[width=\textwidth]{sections/example.png}
    %     \captionof{figure}{Example~\ref{ex:rotatedHouse} primal geometry.}
    %      \label{fig:example}
    % \end{minipage}
    \begin{equation}
        \label{eq:example}
        \min_{x\in\R^2} \langle c,x\rangle \qquad 
        \text{s.t.} \qquad \langle c,x\rangle \geq \|c\|_1 \quad
         \text{and} \quad x_1,x_2\geq 0 .
    \end{equation}
            \begin{figure}[!t]
    \centering
    \begin{subfigure}[t]{0.31\textwidth}
        \includegraphics[width=\textwidth]{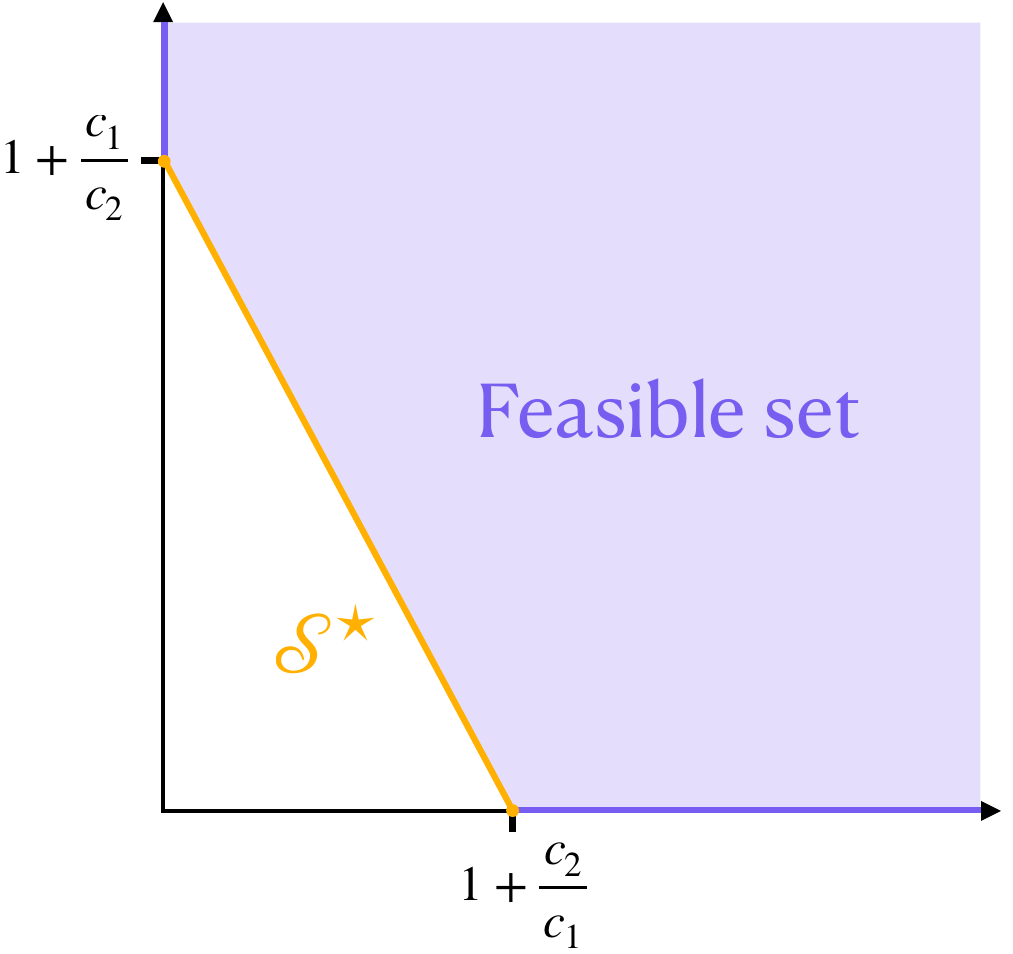}
        \caption{Primal geometry.}
        \label{fig:exampleSetting}
    \end{subfigure}\hspace{2cm}
    \begin{subfigure}[t]{0.325\textwidth}
        \includegraphics[width=\textwidth]{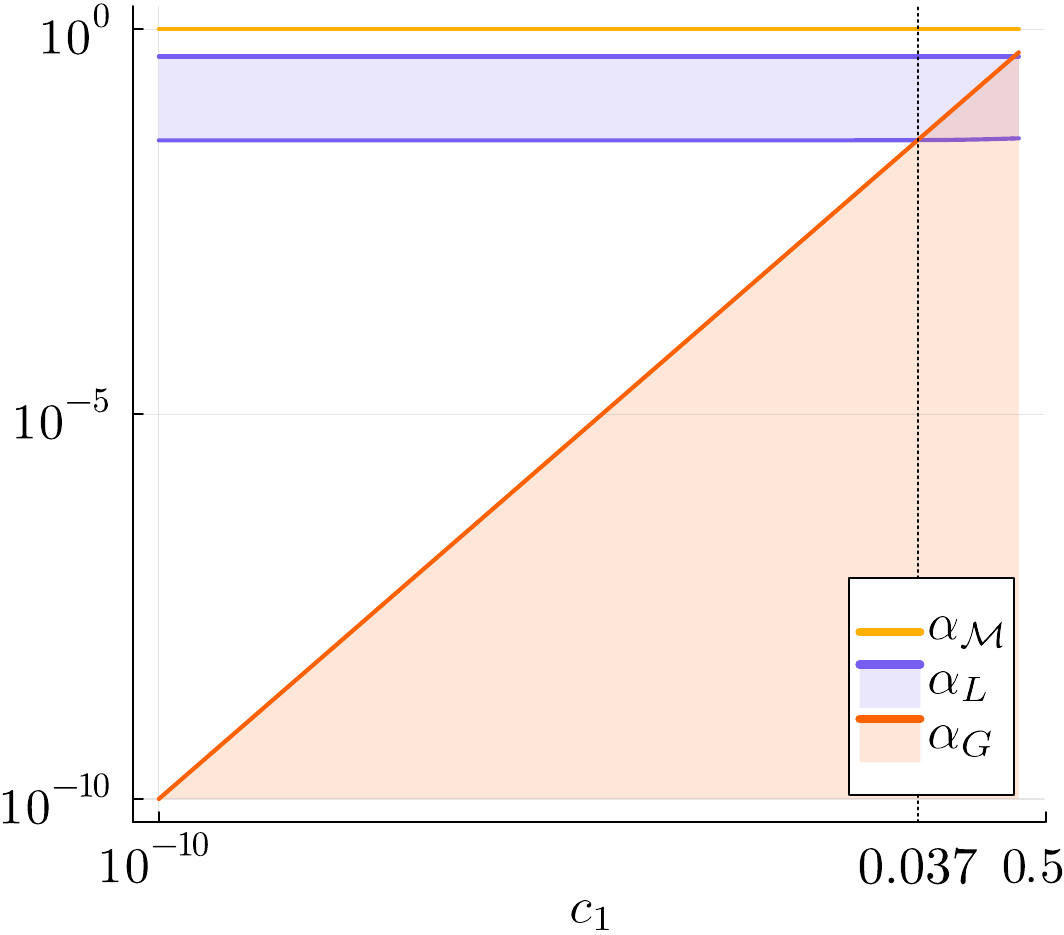}
        \caption{Bounds on \(\alpha_{\indexM}\), \(\alpha_L\), \(\alpha_G\).}
        \label{fig:constantComparison}
    \end{subfigure}
    \caption{Example~\ref{ex:rotatedHouse}. The left plot shows the feasible set and solutions $\sol.$ The right plot displays the regions where the metric subregularity moduli could land versus $c_1$ (we take $c_1 \leq c_2).$
    }
    \label{fig:example}
\end{figure}
    % \begin{figure}[!ht]
    %     \centering
    %     \includegraphics[width=\textwidth]{figures/example_constants_4.pdf}
    %     \caption{Example~\ref{ex:rotatedHouse}. \((a)\) Primal geometry. \((b)\) Regions of possibility of \(\alpha_{\indexM}\), \(\alpha_L\), and \(\alpha_G\), for \(\tau=2\), \(\wh{\delta}=1\), and varying \(c\).}
    %     \label{fig:example}
    % \end{figure}
    % For \(\zs=(\delta,\delta,1,0,0)\) (Fig.~\ref{fig:example} \((a)\)) we have \(\alpha_{\indexM}=1\) while
    %     \begin{itemize}
    %         \item[\((i)\)]\textbf{(Near degeneracy)}: \(\alpha_L\leq \frac{2\delta}{\sqrt{1+\delta^2}}=O(\delta)\).
    %         \item[\((ii)\)]\textbf{(Iterates far from solution)}: \(\alpha_L\leq \frac{2}{\sqrt{1+R^2}}=O(\frac{1}{R})\).
    %         \item[\((iii)\)]\textbf{(Almost L.I. gradients)}: \(\alpha_G\leq \min\{c_1,c_2\}\).
    %     \end{itemize}
        % \[
        %     \alpha_G\leq \min\left\{\frac{2\delta}{\sqrt{1+\delta^2}},\frac{2}{\sqrt{1+R^2}},\min\{c_1,c_2\}\right\},\quad\alpha_L\leq \left\{\frac{2\delta}{\sqrt{1+\delta^2}},\frac{2}{\sqrt{1+R^2}}\right\}\quad\text{and}\quad \alpha_{\indexM}=1\,.
        % \]
        \noindent We show in Appendix~\ref{appendix:example}, using a blend of theoretical reductions and numerics, that when \(\tau=2\), \(\zs\in\relint(\sol)\) and \(P=I\), we have
        \[
             \alpha_G\leq\min\{c_1,c_2\},\quad0.037\leq\alpha_L\leq 0.44,\quad\text{and}\quad\alpha_{\indexM}=1.
        \]
        Figure~\ref{fig:constantComparison} shows that for small values of \(\min\{c_1,c_2\}\), the  global modulus \(\alpha_G\) is orders of magnitude smaller than the local moduli \(\alpha_L\) and \(\alpha_{\indexM}\). %We see that even in this simple, low-dimensional example, these the `local' notions of conditioning $\alpha_L$ and $\alpha_{\indexM}$ can be orders of magnitude larger than the global one $\alpha_G$. 
    \end{example}

\section{Experiments}
\label{sec:experiments}
In this section, we present numerical results to verify our major theoretical findings, i.e., finite-time identification with subsequent linear convergence, even in presence of degeneracy. In particular, we run EGM, PDHG and ADMM over linear programming (LP), convex quadratic programming (QP), and convex quadratically constrained quadratic programming (QCQP) instances. PPM is excluded as its update rule does not have closed-form solution on these instances. The code for reproducing these experiments is available at
%\[
\begin{quote} \centering
    %\text{
    \url{https://github.com/pizqleh/degenerate-active-set}.%}
\end{quote}
%\]
%\jnote{Does convex QCQP satisfy metric subregularity?} \md{YOLO, Jinwen}
% To verify our theoretical results, we perform numerical experiments for the algorithms PDHG, ADMM, and EGM over LP, QP, and QCQP instances. The proximal point method, although analyzed in section~\ref{section:algorithms}, was not implemented because its updates don't have a closed-form solution. Our results demonstrate the two-stage behavior at every instance, provided that we initialize the algorithms sufficiently far from the solution set, i.e., outside of the rapid convergence region. Furthermore, whether the instance is degenerate or not, the second stage is always triggered by the identification of the set~\eqref{eq:identifiableSet}. 
% The experiments were run on a MacBook Pro with an Apple M1 chip and 16 GB of RAM. Then,  the instances for the experiments were selected from the small to medium-sized problems (in the order of thousands of constraints and variables) of the MIPLIB 2017, Maros-Meszaros, and QPLIB datasets.
\paragraph{Experiment setup.} We use a MacBook Pro with an Apple M1 chip and 16 GB of RAM for all experiments. We test the following three classes of problems. 
\begin{itemize}[leftmargin=10pt]
    \item[] \textbf{LP.} The instances are obtained by constructing root-node linear relaxations of mixed-integer programs from \textsc{MIPLIB}~2017, which we write in standard form
    \begin{equation*}
    \begin{aligned}&\displaystyle\min_{x\in\R^n} \langle c,x\rangle\quad\text{s.t.}\quad  Ax\leq b\,.
    \end{aligned}
    \end{equation*} 
    We initialize algorithms either at zero or at a random point of a sphere of radius \(10^3\), if zero is already in the rapid convergence region.
    
    \item[] \textbf{Convex QP.} We select instances from Maros-Meszaros datasets written in standard form
    \begin{equation*}
    \begin{aligned}&\displaystyle\min_{x\in\R^n} \langle c,x\rangle+\frac{1}{2}\langle x,Qx\rangle\quad\text{s.t.}\quad  Ax\leq b\,.
    \end{aligned}
    \end{equation*} 
    We initialize either at zero or at a random point of the sphere of radius \(10^{4}\), if zero is already in the rapid convergence region. 
    
    \item[] \textbf{Convex QCQP.} We consider instances from QPLIB dataset written in standard form
    \begin{equation*}
    \displaystyle\min_{x\in\R^n} \langle c^0,x\rangle+\frac{1}{2}\langle x,Q^0x\rangle\quad\text{s.t.}\quad  \langle c^k,x\rangle+\frac{1}{2}\langle x,Q^kx\rangle \leq b^k\quad\text{for}\quad k\in\{1,\dots m\}\,.
    \end{equation*} 
    We only consider EGM for this class, since all the other algorithms cannot handle quadratic constraints. We initialize EGM either at the suggested initial point \(z^0_{\text{QPLIB}}\) by QPLIB or, if that is in the rapid convergence region, at a random point of a sphere of radius \(10^2\) centered at \(z^0_{\text{QPLIB}}\).
\end{itemize}
We set the iteration limit to \(10^6\) for all experiments. We use the stepsize \(0.99\cdot \|A\|_{\rm{op}}^{-1}\) for all algorithms solving LPs and QPs.\footnote{This step size ensures convergence for PDHG and ADMM, but not necessarily for EGM. Nevertheless, EGM still converges in the examples we tested.}  For QCQP, we use the stepsize \((\|A\|_{\rm{op}}+\sum_{k=0}^m\|Q^k\|_{\rm{op}})^{-1}\), where \(A\) is the matrix whose \(k\)th row is \(c^k\).\footnote{This step size does not ensure convergence for EGM either, but it works in our tested examples.}
\begin{figure}[t]
    \centering
    \begin{subfigure}[t]{0.325\textwidth}
        \includegraphics[width=\textwidth]{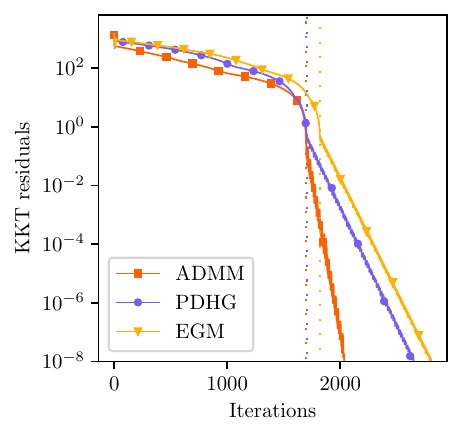}
        \caption{\texttt{stein9inf}}
        \label{fig:LP1}
    \end{subfigure}
    \begin{subfigure}[t]{0.325\textwidth}
        \includegraphics[width=\textwidth]{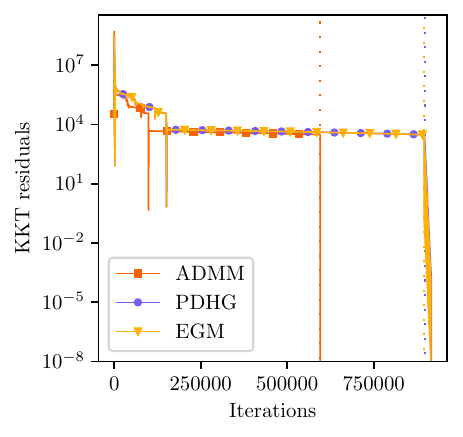}
        \caption{\texttt{gt2}}
        \label{fig:LP2}
    \end{subfigure}
     \begin{subfigure}[t]{0.325\textwidth}
        \includegraphics[width=\textwidth]{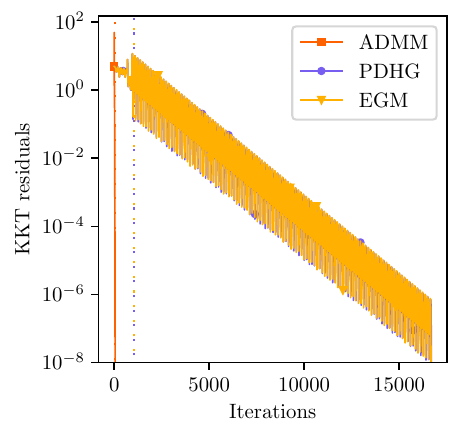}
        \caption{\texttt{noswot}}
        \label{fig:LPdegen}
    \end{subfigure}\\
    \caption{Root-node relaxation of MIPLIB 2017 linear programs. Vertical dotted lines indicate active set identification. Figures~\ref{fig:LP2} and~\ref{fig:LPdegen} all methods converge to degenerate solutions. In~\ref{fig:LPdegen}, ADMM identifies \(\cM\) and converges in just a couple of iterations.
    }
    \label{fig:LPExperiments}
\end{figure}
\paragraph{Convergence criterion.} We report the KKT residual as progress measure, namely,
\[
    \operatorname{KKT}(x,y)=\left\|\begin{bmatrix}(f(x)-h(y))_+\\
    (G(x))_+\\
    (-y)_+
    \end{bmatrix}
    \right\|_2\,.
\]
The KKT residual penalizes the deviations from the KKT system~\eqref{eq:primalDualSolutions}, whence it is zero if and only if \(z=(x,y)\in\sol\). We terminate the algorithms when the KKT residual of their iterates is no larger than tolerance \(10^{-8}\). We denote \(\bar{k}\) as the index of the last iteration of the generated sequence, and we set the converging optimal solution \(\zs=z^{\bar{k}}\). % Since \(\sol\) is unknown, for our results we show plots of \(\operatorname{KKT}(x^k,y^k)\) instead of \(\dist_2(z^k,\sol)\).

\paragraph{Active set identification and degeneracy.}
In order to account for numerical inaccuracies, we define the approximate identifiable set as
\[
    \cM^\eps=\big\{(x,y)\in\R^n\times\R^m_+\,|\, G(x)_{\indexN^\eps}<-\eps,\, |y_{\indexN^\eps}|<\eps,\ \text{and }y_{\indexBs^\eps}>\eps\big\}\,,
\]
where \(\indexN^\eps=\{j\in[m]:g_j(x^{\bar{k}})<-\eps,\,|y_j^{\bar{k}}|<\eps\}\) and \( \indexBs^\eps=\{j\in[m]:y_j^{\bar{k}}>\eps\}\).
% \begin{align*}
%     \indexN^\eps&=\{j\in[m]:g_j(x^{\bar{k}})<-\eps,\,|y_j|<\eps\}\\
%     \indexBs^\eps&=\{j\in[m]:|g_j(x^{\bar{k}})|<\eps,\,y^{\bar{k}}>\eps\},\ \text{and}\\
%     \indexBw^\eps&=\{j\in[m]:|g_j(x^{\bar{k}})|<\eps,\, |y_j^{\bar{k}}|<\eps\}\,.
% \end{align*}
% for each \(k\in [\bar{k}]\), the set of candidate nonactive and active indices at iteration $k$ as
% \begin{equation}
% \label{eq:indexSetsK}
%     \begin{aligned}
%         \indexN^k&=\{j\in[m]:g_j(x^k)<-\varepsilon,\, |y_j|<\varepsilon\}\quad\text{and}\quad
%       \indexBs^k&=\{j\in[m]:\,y^k_j>\varepsilon\}\,,
%     \end{aligned}
% \end{equation}
% respectively. 
Here, we set the numerical tolerance to \(\varepsilon=10^{-10}\).
% We then consider the approximate identifiable set as
% \[
%     \cM^\eps=\big\{(x,y)\in\R^n\times\R^m_+\,|\, G(x)_{\indexN^\eps}<-\eps,\, |y_{\indexN^\eps}|<\eps,\ \text{and }y_{\indexBs^\eps}>\eps\big\}
% \]
We define the iteration at which the algorithm identifies the active set as
\[
    k^\star=\{\min k\in[\bar{k}]:z^\ell \in\cM^\eps\quad \text{for all}\quad\ell\geq k\}\,.
\]
% \[
%     k^\star=\min\{k\in[\bar{k}]:\indexN^{\bar{k}}\subseteq \indexN^\ell\quad\text{and}\quad \indexBs^{\bar{k}}\subseteq \indexBs^\ell\quad\text{for all}\quad \ell\geq k\}\,.
% \]
The limiting solution \(z^{\bar{k}}\) is declared degenerate if the index set \(\indexBw^\eps=\{j\in[m]:|g_j(x^{\bar{k}})|<\eps,\, |y_j^{\bar{k}}|<\eps\}\) is nonempty.

\begin{figure}[!t]
    \centering
    \begin{subfigure}[t]{0.325\textwidth}
        \includegraphics[width=\textwidth]{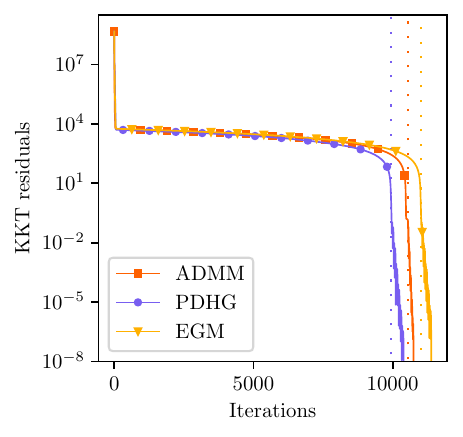}
        \caption{\texttt{HS76}}
        \label{fig:QP1}
    \end{subfigure}
    \begin{subfigure}[t]{0.325\textwidth}
        \includegraphics[width=\textwidth]{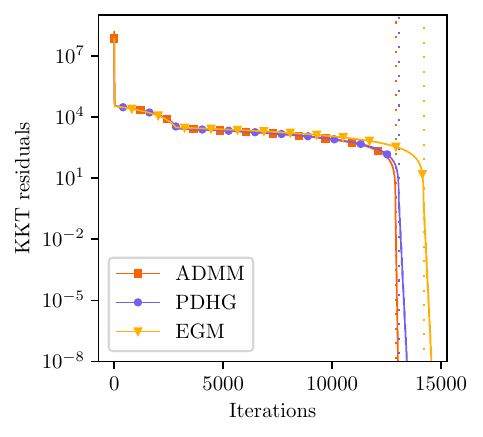}
        \caption{\texttt{ZECEVIC2}}
        \label{fig:QP2}
    \end{subfigure}
     \begin{subfigure}[t]{0.325\textwidth}
        \includegraphics[width=\textwidth]{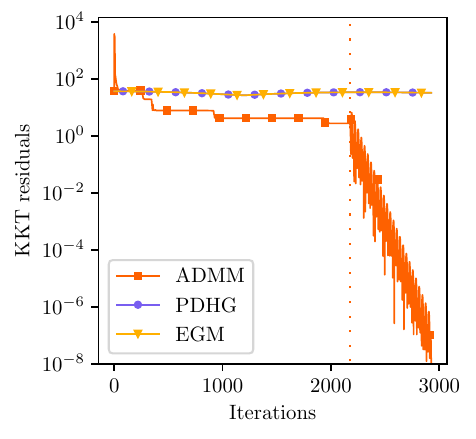}
        \caption{\texttt{QRECIPE}}
        \label{fig:QPdegen}
    \end{subfigure}\\
    \caption{Maros-Meszaros convex quadratic programs. Vertical dotted lines indicate active set identification. In~\ref{fig:QPdegen}, the only algorithm that converged after \(10^5\) iterations was ADMM; PDHG and EGM showed very slow progress.
    }
    \label{fig:QPExperiments}
\end{figure}

\begin{figure}[!t]
    \centering
    \begin{subfigure}[t]{0.325\textwidth}
        \includegraphics[width=\textwidth]{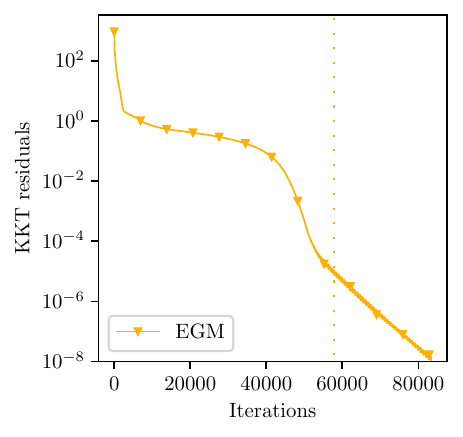}
        \caption{\texttt{7579}}
        \label{fig:QCQP1}
    \end{subfigure}
    \begin{subfigure}[t]{0.325\textwidth}
        \includegraphics[width=\textwidth]{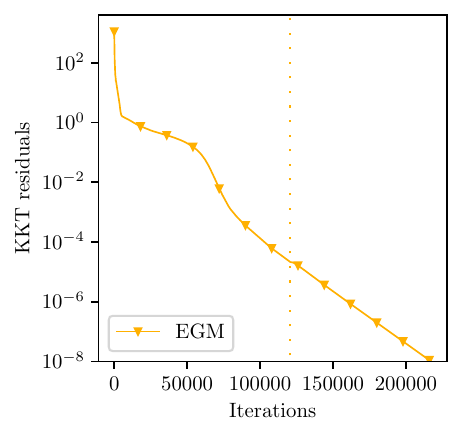}
        \caption{\texttt{3678}}
        \label{fig:QCQP2}
    \end{subfigure}\\
    %  \begin{subfigure}[t]{0.325\textwidth}
    %     \includegraphics[width=\textwidth]{figures/experiments/QP/degenerate/QP_MM_36_KKT.pdf}
    %     \caption{\texttt{QRECIPE}}
    %     \label{fig:QPdegen}
    % \end{subfigure}\\
    \caption{QPLIB convex quadratic programs with quadratic constraints. Vertical dotted lines indicate active set identification. In both~\ref{fig:QCQP1} and~\ref{fig:QCQP2} the transition toward fast linear convergence begins prior to identification, and locks in to a stable linear rate once identification occurs.
    }
    \label{fig:QCQPExperiments}
\end{figure}
\paragraph{Results.} Figure~\ref{fig:LPExperiments} and \ref{fig:QPExperiments}  present the behavior of EGM, PDHG and ADMM on LPs and QPs respectively, while Figure~\ref{fig:QCQPExperiments} displays the behavior of EGM when applied to convex QCQP instances (note that PDHG and ADMM are not applicable to QCQPs as their constraints are not affine). 
As shown, all algorithms under consideration exhibit the expected two-stage behavior across all instances: the iterates initially converge sublinearly toward a solution, and after identification (marked by the vertical dotted lines in the figures), they transition to a much faster linear convergence regime. This behavior persists even when the algorithms converge to a degenerate solution. In particular, in Figures~\ref{fig:LP2}, \ref{fig:LPdegen}, \ref{fig:QPdegen}, and~\ref{fig:QCQP2}, the convergent methods approach a degenerate solution. 
% As shown, all algorithms of consideration exhibit the expected two-stage behavior across all instances: the iterates initially have a sublinear convergence towards a solution, and after identification (i.e., vertical dotted lines in figures), they enjoy convergence with a much faster linear rate. This behaviour is exhibit even when algorithms converge to a degenerate solution. In particular, in Figures~\ref{fig:LP2}, ~\ref{fig:LPdegen}, \ref{fig:QPdegen}, and both subfigures in Figure~\ref{fig:QCQPExperiments}, the (converging) algorithms converge to a degenerate solution.
These empirical observations support the theoretical results from Section \ref{sec:guarantees}.

\section*{Acknowledgments}
We thank Robert M. Freund and Stephen Wright for insightful conversations and pointers to relevant related work.
\bibliographystyle{abbrvnat}
\bibliography{references}
%=================================================================
\appendix
\section{Missing details from Section~\ref{sec:intro}}\label{app:intro}
   \begin{wrapfigure}{r}{0.33\textwidth}
  \begin{center}
      \vspace{-0.8cm}
    \includegraphics[width=0.33\textwidth]{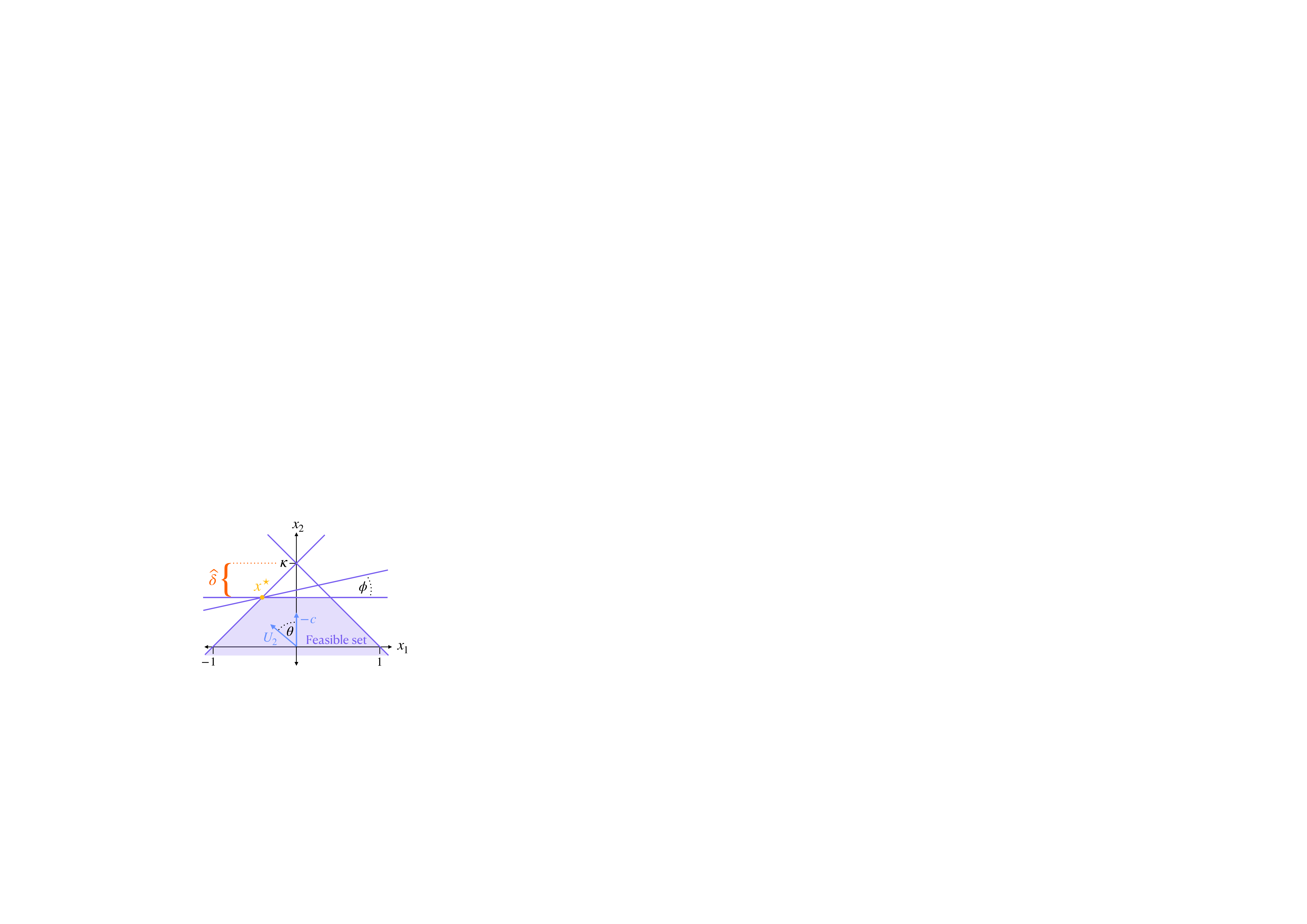}
    \vspace{-0.7 cm}
  \end{center}
  \caption{Primal geometry. The constant \(\wh{\delta}\) is a proxy for the radius of active set stability \(\delta\). The constant \(\zeta\) in~\eqref{eq:intro-example-constraints} satisfies \(\zeta = \arctan(\phi)\). The vector \(U_2\) is the second column of \(U\) and satisfies \(\ker(Q)=\text{span}\{U_2\}\).}
  \label{fig:intro-example-appendix}
\end{wrapfigure}
In this section, we show the details of the QP from Figure~\ref{fig:intro-example}. The QP is written in standard form \begin{equation*}
    \begin{aligned}&\displaystyle\min_{x\in\R^n} \langle c,x\rangle+\frac{1}{2}\langle x,Qx\rangle\quad\text{s.t.}\quad  Ax\leq b\,
    \end{aligned}
    \end{equation*} 
    where the objective is given by
    \begin{align*}
    & c=\left[\begin{array}{rr}
            0\\
            -1
        \end{array}\right]\quad\text{and}\quad Q=UDU^T,\quad\text{where}\quad D=\begin{bmatrix}
        1 & 0\\
        0 & 0
        \end{bmatrix}\\&\qquad \qquad\qquad\text{and} \quad U=\begin{bmatrix}
        \cos(\theta) & -\sin(\theta)\\
        \sin(\theta) & \cos(\theta)
        \end{bmatrix}\,,
    \end{align*}
    with \(\theta=\pi/64\), and the constraints are defined by
    \begin{equation}
    \label{eq:intro-example-constraints}
      A=\left[\begin{array}{rr}
            1 & 1/\kappa\\
            -1 & 1/\kappa\\
            0 & 1\\
            -\zeta & 1
        \end{array}\right]\quad\text{and}\quad b= \left[\begin{array}{c}
            1\\
            1\\
            \kappa-\wh{\delta}\\
            k-\wh{\delta}\left(1-\frac{\zeta}{\kappa}\right)
        \end{array}\right]\,,
    \end{equation}
    \clearpage
    \noindent with \(\zeta=1/6\), \(\kappa=1/2\), and \(\wh{\delta}=1/2^{10}\). To solve this QP, we initialize all algorithms at zero, and we use the following stepsizes: for PDHG, we use \(0.99\|A\|_{\rm{op}}^{-1}\); for ADMM, we use \(2\cdot 0.99\|A\|_{\rm{op}}^{-1}\); and, for EGM, we use \(0.99\sqrt{(\|Q\|_{\rm{op}}+\|A\|_{\rm{op}})^2+\|A\|_{\rm{op}}^2}^{-1}\). These choices ensure the convergence of their respective algorithms. The criteria for declaring `convergence to the active set' and `degeneracy' are the same as in Section~\ref{sec:experiments}. The convergence tolerance is set to \(10^{-10}\), while the active-set and degeneracy tolerances are set to \(10^{-8}\). The solution \((\xs,\ys)\) to which the algorithms converge satisfies
    \[
        A\xs-b=(-3.906\cdot 10^{-3}, 0, 0, 0)\quad\text{and}\quad\ys= (0,         0,         0.863, 0.135)\,.
    \]
    One can verify that \(\indexBw=\{2\}\neq\emptyset\), hence this solution is degenerate.
    
\section{Missing proofs from Section~\ref{section:settings}}

In this section, we present the missing proofs from Section~\ref{section:settings}.

\subsection{Proof of Proposition~\ref{theorem:linearConvergence}}
\label{appendix:proof-linearConvergence}
  In this section, we derive the following slightly more general version of Proposition~\ref{theorem:linearConvergence}. 
  \begin{proposition}[Generalization of Proposition~\ref{theorem:linearConvergence}]
    \label{prop:linearConvergence}
          Suppose Assumptions~\ref{assumption:problem}, \ref{assumption:metric-subregularity}, and~\ref{assumption:rapidLocalConvergence} hold. Let $z^{k}$ be the \(k\)th iterate generated by update \eqref{eq:algorithm}. Then,
          \begin{equation}
    0<\alpha_G:=\inf_{z\in\cD}\frac{\dist_2(0,\sub(z))}{\dist_2(z,\sol)} \qquad \text{with} \qquad \cD= \sol+\tau\ball\,.%\left( \sol+\tau\ball \right)\cap\YY\,.
    \end{equation}
          Further, for any $k \in \NN$ we have
       \begin{equation}
          \label{eq:ay-ay-ay} \max\left\{\dist_2(z^k,\sol),\frac{\dist_P(z^k,\sol)}{\lambdamax(P)^\frac{1}{2}}\right\}\leq \sqrt{e}\nu \exp\left(-\rate \frac{k}{\left\lceil e\nu^{\irate}\right\rceil}\right)\min\left\{\dist_{2}(z^0,\sol),\frac{\dist_P(z^0,\sol)}{\lambdamax(P)^{\frac{1}{2}}}\right\}\,,
        \end{equation}
        where \(\nu=\gamma\lambdamax(P)/\alpha_G\).
        \end{proposition}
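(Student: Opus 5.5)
First I would establish $\alpha_G>0$. Assumption~\ref{assumption:metric-subregularity} with radius $t=\tau$ gives some $\alpha>0$ such that $\alpha\dist_2(z,\sol)\le\dist_2(0,\sub(z))$ on $\sol+\tau\ball$. For $z\notin\sol$ the ratio defining $\alpha_G$ is then at least $\alpha$. Points of $\sol$ are excluded or treated with the convention $0/0$, since there both sides vanish. Hence $\alpha_G\ge\alpha>0$. By Assumption~\ref{assumption:rapidLocalConvergence}(ii), every iterate lies in $\cD$, so $\alpha_G\dist_2(z^k,\sol)\le\dist_2(0,\sub(z^k))$ for all $k$.

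Next I would convert the subdifferential rate into a contraction. Since $P$ is positive definite, $\range(P)$ is the whole space and $\|v\|_2\le\lambdamax(P)^{1/2}\|v\|_{P^{-1}}$. Assumption~\ref{assumption:rapidLocalConvergence}(i) then gives
\[
\alpha_G\dist_2(z^k,\sol)\le \lambdamax(P)^{1/2}\frac{\gamma\dist_P(z^0,\sol)}{\sqrt k}.
\]
Using $\dist_P\le\lambdamax(P)^{1/2}\dist_2$ and $\dist_2\le \dist_P/\lambdamin(P)^{1/2}$, I would put everything in the quantity $D_k:=\max\{\dist_2(z^k,\sol),\dist_P(z^k,\sol)/\lambdamax(P)^{1/2}\}$. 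The target is a bound of the form $D_k\le \frac{\nu}{\sqrt{k}}\min\{\dist_2(z^0,\sol),\dist_P(z^0,\sol)/\lambdamax(P)^{1/2}\}$ with $\nu=\gamma\lambdamax(P)/\alpha_G$. The $\dist_P$ part of $D_k$ is bounded by $\dist_2$, so controlling $\dist_2(z^k,\sol)$ suffices. The $\min$ on the right requires bounding $\dist_P(z^0,\sol)$ both by itself and by $\lambdamax(P)^{1/2}\dist_2(z^0,\sol)$, which is fine. The extra factor of $\lambdamax(P)^{1/2}$ should be absorbed into $\nu$.

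Then I would restart the bound. The algorithm is autonomous, as in \eqref{eq:algorithm}, so the rate holds from any iterate taken as the initial point. Assumption~\ref{assumption:rapidLocalConvergence} is stated for arbitrary starts, and I would interpret it as applying to the tail sequence. With $m=\lceil e\nu^2\rceil$, after $m$ steps $\nu/\sqrt m\le e^{-1/2}$, so $D_{(j+1)m}\le e^{-1/2}D_{jm}$. Iterating gives $D_{jm}\le e^{-j/2}D_0$. For general $k=jm+r$ with $0\le r<m$, the one-shot bound started at $jm$ gives $D_k\le\max\{1,\nu/\sqrt r\}D_{jm}$, which is at most $\nu D_{jm}$ up to the trivial $r=0$ case; this uses $\nu\ge1$ or a suitable handling. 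Finally, $e^{-j/2}\le \sqrt e\,e^{-k/(2m)}$ because $j\ge k/m-1$, which produces the factor $\sqrt e\nu\exp(-\tfrac12 k/\lceil e\nu^2\rceil)$.

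The main obstacle is the norm bookkeeping, specifically keeping a single quantity $D_k$ that both contracts and matches the $\min$ on the right-hand side with exactly $\nu=\gamma\lambdamax(P)/\alpha_G$. A second difficulty is justifying the restart, since the rate constant refers to $\dist_P(z^0,\sol)$ of the current start and $\tau$ must remain valid for all later iterates.
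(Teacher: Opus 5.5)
Your argument is the paper's restart argument with the tracked quantity swapped. The shared ingredients are blocks of length $\lceil e\nu^2\rceil$, the rate of Assumption~\ref{assumption:rapidLocalConvergence}\,$(i)$ restarted at the start of each block, and metric subregularity on $\cD$ via Assumption~\ref{assumption:rapidLocalConvergence}\,$(ii)$. The paper restarts at $z^{k-\rho}$ in exactly this way, so your restart worry is no larger than in the paper. The paper contracts the residual $R_k:=\dist_{P^\dagger}(0,\sub(z^k)\cap\range(P))$, via $R_k\le\frac{\gamma}{\sqrt\rho}\dist_P(z^{k-\rho},\sol)\le\frac{\nu}{\sqrt\rho}R_{k-\rho}\le e^{-1/2}R_{k-\rho}$. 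It converts to distances only once, at the end, through \eqref{eq:metricSubregularityLinearConvergence3}. That single conversion is the only source of the factor $\nu$, and since $z^0$ enters only through $\dist_P(z^0,\sol)$, the $\min$ comes for free. You contract distances instead and must extract $\nu$ from the last partial block. As written, two details fall short of \eqref{eq:ay-ay-ay}.

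\textbf{The case $r=0$.} This case is not trivial, and $\nu\ge1$ (or even $\nu\ge e^{-1/2}$) fails in general. For PPM, $P=\eta^{-1}I$ and $\gamma=1$ give $\nu=1/(\eta\alpha_G)$, which is small for large $\eta$. Then $\lceil e\nu^2\rceil=1$, every $k$ has $r=0$, and $D_k\le e^{-k/2}D_0$ misses the factor $\sqrt e\,\nu<1$ demanded by \eqref{eq:ay-ay-ay}.

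\textbf{The $\min$ on the right.} Iterating $D_{(j+1)m}\le e^{-1/2}D_{jm}$ ends at $D_0=\dist_2(z^0,\sol)$. This is the \emph{larger} entry of the $\min$, since $\dist_P\le\lambdamax(P)^{1/2}\dist_2$. So you would only recover Proposition~\ref{theorem:linearConvergence}, not the form with $\dist_P(z^0,\sol)/\lambdamax(P)^{1/2}$ that is used later (e.g.\ in the proof of Theorem~\ref{theorem:twoStageConvergence}).

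\textbf{The repair.} Track $E_j:=\dist_P(z^{jm},\sol)/\lambdamax(P)^{1/2}$ and always end with a nonempty block. The one-shot bound consumes only the $P$-distance of its starting point, so $E_{j+1}\le D_{(j+1)m}\le\frac{\nu}{\sqrt m}E_j\le e^{-1/2}E_j$. Write $k=jm+r$ with $1\le r\le m$, i.e.\ $j=\lceil k/m\rceil-1\ge k/m-1$. Then
\[
D_k\le\frac{\nu}{\sqrt r}E_j\le\nu\, e^{-j/2}E_0\le\sqrt e\,\nu\,e^{-k/(2m)}\,\frac{\dist_P(z^0,\sol)}{\lambdamax(P)^{1/2}},
\]
which is \eqref{eq:ay-ay-ay}. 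The paper's proof implicitly needs the same convention, $k-n\rho\ge1$, in its last use of Assumption~\ref{assumption:rapidLocalConvergence}\,$(i)$.

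Finally, you never actually use $\dist_2\le\dist_P/\lambdamin(P)^{1/2}$. Without it, and with $P^{-1}$ replaced by $P^\dagger$ on $\range(P)$, your argument also covers singular $P$, as needed for ADMM.
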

  Notice that in this convergence statement, we simultaneously bound the $P$-seminorm and the $\ell_2$-norm, which recovers the original statement. We prove this slightly stronger statement since it will be used later on in other proofs. Further, as Lemma~\ref{prop:normEquivalence} below shows the minimum on the right-hand-side of \eqref{eq:ay-ay-ay} is always attained by the $P$-seminorm term. We included this redundancy as it makes it clear that this statement generalizes Proposition~\ref{theorem:linearConvergence}.

  Before we delve into the proof of this result, we derive two auxiliary lemmas that we will use. The first lemma establishes the equivalence between the $P$-seminorm and the $\ell_2$-norm on the range of $P$. The second lemma shows that `Euclidean' metric subregularity implies $P$-norm metric subregularity. 
  %We start by showing the equivalence between the %The following proposition shows that the 
  %\(P\) semi norm and \(\ell_2\) norms are equivalent over the range of $P$. We will use this fact repetitively in the proof.  %(\range(P)\).

  %\jnote{discuss if this is a proposition or a fact. should we merge some propositions into proofs?}
  
  \begin{lemma}
    \label{prop:normEquivalence}
        For any positive semidefinite matrix \(P\in\cS^{n+n}_+\) and point \(z\in\range(P)\), the following inequalities hold:
        \begin{align*}
            \sqrt{\lambdaminp(P)}\|z\|_2&\leq \|z\|_{P}\leq \sqrt{\lambdamax(P)}\|z\|_2\\
            \frac{1}{\sqrt{\lambdamax(P)}}\|z\|_2&\leq \|z\|_{P^\dagger}\leq \frac{1}{\sqrt{\lambdaminp(P)}}\|z\|_2
        \end{align*}
    \end{lemma}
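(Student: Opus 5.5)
The plan is to diagonalize $P$ and restrict to its range. Since $P\in\cS_+$ is symmetric, write $P=\sum_{i=1}^r \lambda_i u_iu_i^\top$. Here $\lambda_1\ge\dots\ge\lambda_r>0$ are the nonzero eigenvalues, so that $\lambda_1=\lambdamax(P)$ and $\lambda_r=\lambdaminp(P)$, and $\{u_i\}_{i=1}^r$ is an orthonormal basis of $\range(P)$. The pseudoinverse is then $P^\dagger=\sum_{i=1}^r \lambda_i^{-1}u_iu_i^\top$, with the same range.

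Take any $z\in\range(P)$ and expand it as $z=\sum_{i=1}^r c_iu_i$ with $c_i=\dotp{u_i,z}$. By orthonormality of the $u_i$, the three quantities become weighted sums of the same squared coefficients:
\[
\|z\|_2^2=\sum_i c_i^2,\qquad \|z\|_P^2=z^\top Pz=\sum_i\lambda_i c_i^2,\qquad \|z\|_{P^\dagger}^2=\sum_i\lambda_i^{-1}c_i^2.
\]

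The first chain follows by bounding each weight $\lambda_i$ between $\lambda_r$ and $\lambda_1$, which gives
\[
\lambdaminp(P)\|z\|_2^2\le\|z\|_P^2\le\lambdamax(P)\|z\|_2^2 .
\]
The second chain follows by bounding each weight $\lambda_i^{-1}$ between $\lambda_1^{-1}$ and $\lambda_r^{-1}$, which gives
\[
\lambdamax(P)^{-1}\|z\|_2^2\le\|z\|_{P^\dagger}^2\le\lambdaminp(P)^{-1}\|z\|_2^2 .
\]
Taking square roots of both chains yields the claim.

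The only point requiring care is the lower bounds. They genuinely need $z\in\range(P)$: a component of $z$ in $\ker(P)$ contributes to $\|z\|_2$ but to neither seminorm, so the inequality would fail there. The eigen-expansion over the range alone handles this automatically. No other obstacle is expected.
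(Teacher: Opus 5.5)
Your proof is correct and follows essentially the same route as the paper. The paper writes $P=U\Sigma U^\top$ with $U$ an orthonormal basis of $\range(P)$, then bounds $\|z\|_2=\|\Sigma^{-1/2}\Sigma^{1/2}U^\top z\|_2$ against $\|z\|_P=\|\Sigma^{1/2}U^\top z\|_2$ using the extreme diagonal entries of $\Sigma$; this is the matrix form of your eigen-expansion $z=\sum_i c_iu_i$, and you additionally write out the $P^\dagger$ chain that the paper only says follows analogously.
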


    \begin{proof}
         Define \(p=\dim(\range(P))\), \(Q=\proj_{\range(P)}\in\R^{(n+m)\times (n+m)}\) and \(U\in\R^{(n+m)\times p}\) such that \(Q=UU^\top \) and \(U^\top U=I\). Notice that since \(P\) is symmetric, \(P=U\Sigma U^\top \) where \(\Sigma\in\RR^{p\times p}\) is a diagonal matrix with non-negative entries. For any $z \in \range(P)$ we have
        \[
            \|z\|_2^2=\|Qz\|_2^2= z^\top Q^\top Qz=z^\top UU^\top UU^\top z=z^\top UU^\top z=\|U^\top z\|^2=\|\Sigma^{-\frac{1}{2}}\Sigma^{\frac{1}{2}}U^\top z\|_2^2,
        \]
        where we used the invariance under projection of $z$.
        On the other hand,
        \begin{align*}
            \|\Sigma^{\frac{1}{2}}U^\top z\|_2^2=\|P^{\frac{1}{2}}z\|_2^2=\langle P^{\frac{1}{2}}z,P^{\frac{1}{2}}z\rangle=\langle z,Pz\rangle=\|z\|_P^2.
        \end{align*}
        Also, by definition
        \begin{align*}
            \lambdamax(\Sigma^{-\frac{1}{2}})^2&=\lambdamax(\Sigma^{-1})=\frac{1}{\lambdaminp(\Sigma)}=\frac{1}{\lambdaminp(P)}, \quad \text{and}\\ \lambdaminp(\Sigma^{-\frac{1}{2}})^2&=\lambdaminp(\Sigma^{-1})=\frac{1}{\lambdamax(\Sigma)}=\frac{1}{\lambdamax(P)}.
        \end{align*}
        Applying these identities in tandem with Cauchy-Schwarz gives
        \begin{align*}
            \|z\|_2^2&=\|\Sigma^{-\frac{1}{2}}\Sigma^{\frac{1}{2}}U^\top z\|_2^2\leq\lambdamax(\Sigma^{-\frac{1}{2}})^2\|\Sigma^{\frac{1}{2}}U^\top z\|_2^2=\frac{1}{\lambdaminp(P)}\|z\|_P^2, \quad \text{and}\\
            \|z\|_2^2&=\|\Sigma^{-\frac{1}{2}}\Sigma^{\frac{1}{2}}U^\top z\|_2^2\geq\lambdaminp(\Sigma^{-\frac{1}{2}})^2\|\Sigma^{\frac{1}{2}}U^\top z\|_2^2=\frac{1}{\lambdamax(P)}\|z\|_P^2.
        \end{align*}
        The result for \(\|z\|_{P^\dagger}\) follows analogously.
    \end{proof}

    % The following Lemma shows that if the saddle sub-differential intersects \(\range(P)\), then we can translate the metric sub-regularity condition in the \(\ell_2\) norm to a metric sub-regularity condition from the \(\ell_2\) norm to the \(P^\dagger\) seminorm.
    % \begin{lemma}
    %     \label{lemma:PDaggerInequality}
    %     Under Assumption~\ref{assumption:metricSubRegularity}, for any positive semidefinite matrix \(P\) and point \(z\in(\sol\cap\cballP{\zs}{R})+\cballt{0}{\tau}\), the following inequality holds
    %     \[
    %         \alpha\dist_2(z,\sol)\leq \sqrt{\lambdamax(P)}\dist_{P^\dagger}(0,\sub(z)\cap\range(P))\,.
    %     \]
    % \end{lemma}

    % \begin{proof}
    %     Considering that \(0\in\range(P)=\range(P^\dagger)\) we have
    %     \begin{align*}
    %         \alpha \dist_2(z,\sol)&\leq \dist_2(0,\sub(z)) & \text{(Assumption~\ref{assumption:metricSubRegularity})}\\
    %         &\leq \dist_2(0,\sub(z)\cap\range(P)) & \text{(\(\sub(z)\cap\range(P)\subseteq\sub(z)\))}\\
    %         &\leq \sqrt{\lambdamax(P)}\dist_{P^\dagger}(0,\sub(z)\cap\range(P)) & \text{(Proposition~\ref{prop:normEquivalence})}
    %         % &=\dist_{P^\dagger}(0,\sub(z)) & \text{(\(\sub(z)\cap\range(P)\neq\emptyset\))}
    %     \end{align*} 
    % \end{proof}

%    The following proposition shows that we can translate the metric sub-regularity constants in the \(\ell_2\) norm to metric sub-regularity constants from the \(P\) seminorm to the \(P^\dagger\) seminorm.
    \begin{lemma}
        \label{prop:metricSubregularityPnorm}
        Let \(P\in\cS_+^{n+m}\) be a positive semidefinite matrix, \(S\subseteq\RR^{n+m}\) a set, and \(z\in\mathbb{R}^{n+m}\) such that \(\alpha \dist_2(z,S)\leq \dist_2(0,\sub(z))\) for some \(\alpha>0\). Then,
        \begin{equation*}
            \frac{\alpha}{\lambdamax(P)^{\frac{1}{2}}} \max\left\{\dist_w(z, S), \frac{\dist_P(z,S)}{\lambdamax(P)^{\frac{1}{2}}}\right\}\leq   \dist_{P^\dagger}(0,\sub(z)\cap\range(P))\,.
        \end{equation*}
        % Then, there exists \(\beta>0\) such that
        % \[
        %     \beta\dist_P(z,S)\leq \dist_{P^\dagger}(0,\sub(z)\cap\range(P))\,.
        % \]
        % In particular, we can take \(\beta=\lambdamax(P)^{-1}\alpha\).
    \end{lemma}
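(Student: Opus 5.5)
The plan is to reduce everything to the Euclidean hypothesis $\alpha\dist_2(z,S)\leq\dist_2(0,\sub(z))$ by two elementary norm comparisons. I read $\dist_w$ in the statement as $\dist_2$.

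\emph{Step 1 (trivial case).} If $\sub(z)\cap\range(P)=\emptyset$, the right-hand side is an infimum over the empty set, hence $+\infty$, and there is nothing to prove. So assume the intersection is nonempty.

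\emph{Step 2 (bound the Euclidean term).} Take any $\xi\in\sub(z)\cap\range(P)$. Since $\xi\in\range(P)$, Lemma~\ref{prop:normEquivalence} gives
\[
\|\xi\|_2\leq\sqrt{\lambdamax(P)}\,\|\xi\|_{P^\dagger}.
\]
Because $\xi\in\sub(z)$, we also have $\dist_2(0,\sub(z))\leq\|\xi\|_2$. Combining these with the hypothesis yields
\[
\alpha\dist_2(z,S)\leq\sqrt{\lambdamax(P)}\,\|\xi\|_{P^\dagger}.
\]
Taking the infimum over such $\xi$ gives
\[
\frac{\alpha}{\lambdamax(P)^{1/2}}\dist_2(z,S)\leq\dist_{P^\dagger}\bigl(0,\sub(z)\cap\range(P)\bigr).
\]

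\emph{Step 3 (absorb the $P$ term).} For every vector $w$, not only those in $\range(P)$, we have
\[
\|w\|_P^2=w^\top Pw\leq\lambdamax(P)\|w\|_2^2 .
\]
Applying this to $w=z-s$ for each $s\in S$ and taking the infimum gives $\dist_P(z,S)\leq\lambdamax(P)^{1/2}\dist_2(z,S)$. Hence the maximum in the statement is attained by (or bounded by) $\dist_2(z,S)$, and the claim follows from Step 2.

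There is no real obstacle here. The only care needed is to apply the $P^\dagger$ comparison of Lemma~\ref{prop:normEquivalence} only to elements of $\range(P)$, which is exactly why the statement intersects $\sub(z)$ with $\range(P)$. The $P$-seminorm bound, by contrast, holds globally and needs no range restriction.
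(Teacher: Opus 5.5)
Your proof is correct and follows the paper's argument: the Euclidean term is handled by the same chain $\alpha\dist_2(z,S)\le\dist_2(0,\sub(z))\le\dist_2(0,\sub(z)\cap\range(P))\le\lambdamax(P)^{1/2}\dist_{P^\dagger}(0,\sub(z)\cap\range(P))$, and the $P$-term is absorbed using $\dist_P(z,S)\le\lambdamax(P)^{1/2}\dist_2(z,S)$. The only difference is in how you get this last bound: you use $w^\top Pw\le\lambdamax(P)\|w\|_2^2$ directly, whereas the paper first projects $z-\bar z$ onto $\range(P)$ and then invokes Lemma~\ref{prop:normEquivalence}. Your route is slightly more direct, and reading $\dist_w$ as $\dist_2$ is the intended interpretation.
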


    \begin{proof}
        Let \(\bar{z}\in S\) be arbitrary. Then,
        \begin{align*}
            \|z-\bar{z}\|_2&\geq\|\proj_{\range(P)}(z-\bar{z})\|_2 & \text{(Non-expansiveness)}\\
            &\geq \lambdamax(P)^{-\frac{1}{2}}\|\proj_{\range(P)}(z-\bar{z})\|_P & \text{(Lemma~\ref{prop:normEquivalence})}\\
            &=\lambdamax(P)^{-\frac{1}{2}}\|z-\bar{z}\|_P\,,
        \end{align*}
        where for the last line we used that $\proj_{\range(P)} P\, \proj_{\range(P)} = P.$
        Since \(\bar{z}\in S\) is arbitrary, it follows that \(\dist_P(z, S)\leq\lambdamax(P)^{\frac{1}{2}}\dist_2(z, S)\). On the other hand,
        \begin{equation}
            \begin{aligned}
                \alpha \dist_2(z, S)&\leq \dist_2(0,\sub(z)) & \text{(By assumption)}\\
                &\leq \dist_2(0,\sub(z)\cap\range(P)) & \text{(\(\sub(z)\cap\range(P)\subseteq\sub(z)\))}\\
                &\leq \lambdamax(P)^{\frac{1}{2}}\dist_{P^\dagger}(0,\sub(z)\cap\range(P))\, . & \text{(Lemma~\ref{prop:normEquivalence})}
            \end{aligned}
        \end{equation}
        The result follows immediately by combining these inequalities.
        % Gathering everything up, we conclude that \(\lambdamax(P)^{-1}\alpha\dist_P(z, S)\leq \dist_{P^\dagger}(0,\sub(z)\cap\range(P))\).
    \end{proof}

    We are ready to prove a generalization of Proposition~\ref{theorem:linearConvergence} under our regularity assumptions.
    % \begin{theorem}[\ref{theorem:linearConvergence}]
    %     Under Assumptions~\ref{assumption:problem} and~\ref{assumption:rapidLocalConvergence}, for any  \(k\in\NN\) the \(k\)-th iterate \(z^k\) of the meta algorithm defined in~\eqref{eq:algorithm} satisfies
    %     \[
    %        \dist_2(z^k,\sol)\leq e^{-\frac{k\rate}{\rho}}\nu\dist_{2}(z^0,\sol)\quad\text{where}\quad \nu=\frac{\gamma\lambdamax(P)}{\alpha_G},\quad\rho=\left\lceil e\nu^{\irate}\right\rceil\,,
    %     \]
    %     \(\rate\), \(\gamma\), and \(P\) are the rate of convergence, the sublinear constant, and the positive semidefinite matrix defined in Assumption~\ref{assumption:rapidLocalConvergence}; \(\alpha_G\) is the metric-subregularity constant defined in~\eqref{eq:alphaG}; \(\sol\) is the set of primal-dual solutions defined in~\eqref{eq:primalDualSolutions}.
    % \end{theorem}
    
    \begin{proof}[Proof of Proposition~\ref{prop:linearConvergence}]
    % \jnote{TODO: discuss notation, to be consistent with assumptions? $c$ vs $\gamma$, $\alpha_G$, $\alpha_P$ etc.}
    % Note that
    % \begin{align*}
    %     \lambdamax(P)^{-\frac{1}{2}}\alpha_G\dist_2(z^k,\sol)&\leq \lambdamax(P)^{-\frac{1}{2}}\dist_{2}(0,\sub(z^k))\\
    %     &\leq \lambdamax(P)^{-\frac{1}{2}}\dist_{2}(0,\sub(z^k)\cap\range(P))\\
    %     &\leq \dist_{P^\dagger}(0,\sub(z^k)\cap\range(P))\\
    %     &=\dist_{P^\dagger}(0,\sub(z^k))
    % \end{align*}
    Let us start by establishing a few consequences of metric subregularity. From Assumption~\ref{assumption:metric-subregularity} we obtain \(\alpha_G>0\). Furthermore, from Assumption~\ref{assumption:rapidLocalConvergence} \((ii)\) we have \(z^k\in\cD\) for all \(k\in\NN\). Then, for any \(k\in\NN\) we have
    $
        \alpha_G\dist_2(z^{k},\sol)\leq \dist_2(0,\sub(z^k)).
    $
    % \begin{equation}
    % \label{eq:metricSubregularityLinearConvergence2}
    % \begin{aligned}
    %     \alpha_G\dist_2(z^{k},\sol)&\leq \dist_2(0,\sub(z^k)) & \text{(By definition of \(\alpha_G\)~\eqref{eq:alphaG})}\\
    %     &\leq\dist_2(0,\sub(z^k)\cap\range(P)) & \text{(\(\sub(z^k)\cap\range(P)\subseteq\sub(z^k)\))}\\
    %     &\leq \lambdamax(P)^{\frac{1}{2}}\dist_{P^\dagger}(0,\sub(z^k)\cap\range(P)) & \text{(Proposition~\ref{prop:normEquivalence})}.
    % \end{aligned}
    % \end{equation}
    Hence, invoking Lemma~\ref{prop:metricSubregularityPnorm}, yields that for any \(k\in\NN\) we have
    % \begin{equation}
    %     \label{eq:metricSubregularityLinearConvergence}
    %     \alpha_G\dist_P(z^{k},\sol)\leq \lambdamax(P)\dist_{P^\dagger}(0,\sub(z^k)\cap\range(P))\,.
    % \end{equation}
    % Merging~\eqref{eq:metricSubregularityLinearConvergence2} and~\eqref{eq:metricSubregularityLinearConvergence} we obtain
    \begin{equation}
    \label{eq:metricSubregularityLinearConvergence3}
        \alpha_G\max\left\{\dist_2(z^k,\sol),\frac{\dist_P(z^k,\sol)}{\lambdamax(P)^{\frac{1}{2}}}\right\}\leq \lambdamax(P)^{\frac{1}{2}}\dist_{P^\dagger}(0,\sub(z^k)\cap\range(P))\,.
    \end{equation}

    Fix the integer \(\rho=\lceil e\nu^2\rceil\) where $\nu = \gamma \lambda_{\max}(P)/\alpha_G$. The strategy to prove this result is simple: we show that after $\rho$ consecutive iterations, the distance from $0$ to $\sub(z^k) \cap \range(P)$ contracts by a constant factor, and, then, we relate this back to the distance from the iterates to the solution set.
    Consider two cases.

    \paragraph{Case 1.} First suppose that \(k \geq \rho\) and let \(n\in\NN\) such that
    % \begin{equation}
    % \label{eq:nBounds}
       $ n\rho\leq k \leq (n+1)\rho.$
      % \end{equation}
      % \md{Everything holds if $n \geq 1$, but not with $n = 0$.}
    Hence,
    \begin{align*}
        \dist_{P^\dagger}^{\irate}(0,\sub(z^k)\cap \range(P)) &\leq \frac{\gamma^{\irate}}{\rho}\dist_{P}^{\irate}(z^{k-\rho},\sol) & \text{(Assumption~\ref{assumption:rapidLocalConvergence} \((i)\))}\\
        % & = e^{-1}(\lambdamax(P)^{-1}\alpha_G)^{\irate}\lceil e\nu^{\irate}\rceil\lceil e\nu^{\irate}\rceil^{-1}\dist_{P}^{\irate}(z^{k-\rho},\sol) & \text{(\(\nu,\rho\) definition)}\\
        &\leq e^{-1}(\lambdamax(P)^{-1}\alpha_G)^{\irate}\dist_{P}^{\irate}(z^{k-\rho},\sol)& \text{(\(\nu,\rho\) definition)}\\
        &\leq e^{-1}\dist_{P^\dagger}^{\irate}(0,\sub(z^{k-\rho})\cap\range(P))\,.&\text{(From~\eqref{eq:metricSubregularityLinearConvergence3})}
    \end{align*}
    By recursively applying the same argument $n$ times, we obtain
    \begin{align*}
       \dist_{P^\dagger}^{\irate}(0,\sub(z^{k})\cap\range(P))&\leq e^{-n}\dist_{P^\dagger}^{\irate}(0,\sub(z^{k-n\rho})\cap\range(P))\\
       &\leq e^{-n}\gamma^{\irate}\dist_P^{\irate}(z^0,\sol) & (\text{Assumption~\ref{assumption:rapidLocalConvergence} \((i)\))}\\
        &\leq e^{1-\frac{k}{\rho}}\gamma^{\irate}\dist_P^{\irate}(z^0,\sol) & \text{(\(n\geq k/\rho -1\))}\\
        &\leq e^{1-\frac{k}{\rho}}\gamma^{\irate}\lambdamax(P)\dist_2^{\irate}(z^0,\sol)\,. & \text{(Lemma~\ref{prop:normEquivalence})}
    \end{align*}
    Taking a square root at both sides of the above inequality, we obtain
    \begin{equation}
    \label{eq:saddleBoundLinearConvergence}
        \dist_{P^\dagger}(0,\sub(z^{k})\cap\range(P))\leq \sqrt{e}\exp\left(-\frac{k}{2\rho}\right)\gamma\lambdamax(P)^{\frac{1}{2}}\min\left\{\frac{\dist_P(z^0,\sol)}{\lambdamax(P)^{\frac{1}{2}}},\dist_2(z^0,\sol)\right\}\,.
    \end{equation}
    The result follows by combining~\eqref{eq:metricSubregularityLinearConvergence3} and~\eqref{eq:saddleBoundLinearConvergence}.

    \paragraph{Case 2.} Suppose $k< \rho$, applying the same rationale as before we derive
    \begin{align*}
        \dist_{P^\dagger}^{\irate}(0,\sub(z^k)\cap \range(P)) \leq \frac{\gamma^2}{k}\dist_{P}^{\irate}(z^{0},\sol) \leq\frac{\gamma^2}{k} \lambdamax(P)\dist_{2}^{\irate}(z^{0},\sol).
      \end{align*}
      Combining these inequalities with the fact that $\frac{1}{k}\leq 1 \leq e \exp(-1) \leq e \exp(-k/\rho)$ yields that \eqref{eq:saddleBoundLinearConvergence} holds. Once more invoking \eqref{eq:metricSubregularityLinearConvergence3} yields the stated bound, which completes the proof.
    % On the other hand, considering that \(0\in\range(P)=\range(P^\dagger)\), we have
    % \begin{equation}
    %     \begin{aligned}
    %         \alpha_G \dist_2(z^k,\sol)&\leq \dist_2(0,\sub(z^k)) & \text{(\(\alpha_G\) definition~\eqref{eq:alphaG})}\\
    %         &\leq \dist_2(0,\sub(z^k)\cap\range(P)) & \text{(\(\sub(z^k)\cap\range(P)\subseteq\sub(z^k)\))}\\
    %         &\leq \lambdamax(P)^{\frac{1}{2}}\dist_{P^\dagger}(0,\sub(z^k)\cap\range(P))\,. & \text{(Proposition~\ref{prop:normEquivalence})}
    %     \end{aligned}
    % \end{equation}
    %     Gathering up, we conclude that \(\dist_2(z^k,\sol)\leq e^{-\frac{k\rate}{\rho}}\lambdamax(P)\alpha_G^{-1}\gamma\dist_{2}(z^0,\sol)\).
    \end{proof}

    \subsection{Missing proofs from Section~\ref{section:algorithms}}
\label{appendix:algorithms}
In this section, we prove that the PPM, the ADMM, the PDHG method, and the EGM satisfy the assumptions of the meta-algorithm introduced in Section~\ref{section:metaAlgorithm}.
We begin by weakening  Assumption~\ref{assumption:asymptoticIdentification}. This is required for the analysis of ADMM, which involves a positive semidefinite (PSD) matrix \(P\) instead of a positive definite (PD) matrix. Recall that \(z^{k}=(x^k,y^k)\) and  \(\tilde{z}^{k}=(\tilde{x}^k,\tilde{y}^k)\) denote the main and auxiliary iterates of the meta-algorithm~\eqref{eq:algorithm}.

 \begin{customassumption}{3*}[Weak Asymptotic Identification Conditions]
 \label{assumption:asymptoticIdentificationWeak}
        There exists a positive semidefinite matrix \(P\in\cS_+^{n+m}\) and a convex set \(\YY\subseteq \R^{n+m}\) with \(\R^n\times\R^m_+\subseteq \YY\) such that the following hold.
        \begin{itemize}
            \item[$(i)$] (\textbf{Convergence}) For any initial iterate \(z^0\in\RR^{n+m}\) there exists \(\zs\in\range(P)\) such that
            \[
                \max\{\|z^k-\zs\|_P,\|\tilde{z}^k-\zs\|_P\}\to 0\,.
            \]

            \item[$(ii)$] (\textbf{Dual update}) There exists \(\eta>0\) ({stepsize}) such that the dual update has the form
            \[
                y^{k+1}=\proj_{\RR^{m}_+}(y^k+\eta G(\tilde{x}^{k+1}))\,.
            \]
            Further, \(\tilde{z}^k\in\YY\).
            \item[\((iii)\)]
             (\textbf{Primal Lipschitzness}) For any radius \(t>0\) and index \(j\in\indexN\) there is a constant \(\Lxj\geq 0\) such that
            any point \(z \in\cballP{\zs}{t}\cap\YY\) satisfies
             \[
                     g_j(x)-g_j(\xs) \leq \Lxj\|{z}-\zs\|_P\quad\text{and}\quad |g_j(x)-g_j(\wh{x})|\leq \Lxj\|z-\wh{z}\|_P
               \]
               for all \(\wh{z} \in\cballP{\zs}{t}\cap\YY\) such that  \(\wh{z}_{P^\perp}=z_{P^\perp}\).
                \item[\((iv)\)]
             (\textbf{Dual Lipschitzness}) There exists \(\step\in\NN \cup \{0\}\) satisfying that for any radius \(t>0\) and index \(j\in\indexN\cup\indexBs\) there is a constant \(\Lyj\geq 0\) such that for any initial iterates \(z^0, \wh{z}^0 \in\ballP{\zs}{t}\cap\YY\) we have%\hnote{what is $p$ here, power, or supindex for what?}\pil{Done.}
            \[
            y^{\step}_j-\ys_j \leq \Lyj\|z^0-\zs\|_P \quad \text{if }j\in\indexN, \qquad\text{and}\qquad             |y_j^\step-\wh{y}_j^\step|\leq \Lyj\|z^0-\wh{z}^0\|_P \quad \text{if }j\in\indexBs.
            %\end{cases}
                     %\text{for }j\in\indexN,\quad ,\,\quad\text{and for }j\in\indexBs,\quad |y_j^\step-\wh{y}_j^\step|\leq \Lyj\|z^0-\wh{z}^0\|_P
            \]
               Recall that $z^\step$ and $\wh z^\step$ are the $\step$-th iterate of update \eqref{eq:algorithm} when initialized at $z^0$ and $\wh z^0$, respectively.
            %
            % (\textbf{\(P\)-Lipschitzness}) There exists a fixed $\step \in\NN$ such that for any radius \(t>0\) there are  constants \(\sigma_1,\dots ,\sigma_m,L_1,\dots ,L_m> 0\) satisfying that
            % % one of the following conditions hold
            % for all \({z}^{0}=({x}^0,{y}^{0})\) and \(\wh{z}^{0}=(\wh{x}^{0},\wh{y}^{0})\) in\ \(\cballP{\zs}{t}\), and any \(j\) in \([m]\), the following inequalities hold
            %  \[
            %          |g_j({z}^{\step})-g_j(\wh{z}^{\step})| \leq \Lxj\|{z}^0-\wh{z}^0\|_P\quad\text{and}\quad|({y}^\step)_j-(\wh{y}^\step)_j|\leq \Lyj\|{z}^0-\wh{z}^0\|_P\,
            %    \]
            %    where $z^\step$ and $\wh z^\step$ are the $\step$-th iterate of update \eqref{eq:algorithm} when initialized at $z^0$ and $\wh z^0$, respectively.
            \end{itemize}
     \end{customassumption}
     % The reader might ask why Assumption~\ref{assumption:asymptoticIdentificationWeak} \((iv)\) involves \(\step\).
     When $P$ is PD, Assumption~\ref{assumption:asymptoticIdentification} implies this weaker version. When \(P\) is PSD, the sequence generated by update~\eqref{eq:algorithm} might not converge, which makes the analysis more nuanced. Indeed, our assumptions ensure that it converges in the $P$-seminorm, which is equivalent to having the sequence of iterates projected onto \(\range(P)\) converges. Further, the point \(\zs\) might not belong to $\sol$.
     % Nevertheless, the following is true. %it does belong to the closure of $\proj_{\range(P)}\sol$.
     Moreover, %the main technical issue that arises from \(P\) being PSD is that
     the `simpler' \(P\)-Lipschitz condition in  Assumption~\ref{assumption:asymptoticIdentification} does not hold for ADMM. Nevertheless, as we will see, ADMM satisfies the Lipschitz conditions of  Assumption~\ref{assumption:asymptoticIdentificationWeak} with \(\step=1\). %\hnote{Also state which algorithms $p=0$ correspond to here.}\pil{Done.} 
     Also, we will see PPM, PDHG, and EGM satisfy Assumption~\ref{assumption:asymptoticIdentification}, which implies Assumption~\ref{assumption:asymptoticIdentificationWeak} with \(\step=0\).

The main ingredient in our argument is the next proposition, which identifies the generalized resolvent as a firmly nonexpansive mapping. This property immediately yields convergence of the corresponding fixed-point iteration, and in turn establishes the assumption in most cases.
In its proof and throughout the rest of the appendix, we use the following notation for the decomposition of vectors onto the range of a matrix $P\in\mathcal{S}^d$ and its orthogonal complement.
For any \(u\in\RR^{d}\), let 
    \begin{equation}\label{eq:ohthatP}
        u=u_P+u_{P^\perp}\quad\text{where}\quad u_P\in\range(P),\ u_{P^\perp}\in\range(P)^\perp=\ker(P)\,.
      \end{equation}
      The following is a well-known result; we include its proof for the reader's convenience.
\begin{proposition}
    \label{prop:pFirmlyNonExpansive}
    Let \(T\colon \RR^d\to\RR^d\) be a mapping, \(M\colon\RR^d\rightrightarrows\RR^d\) be a monotone set-valued operator,\footnote{An operator $M$ is monotone if $\dotp{x-y, u - v} \geq 0$ for all $x, y \in \RR^d$, $u \in M(x)$ and $v \in M(y).$} and \(P\in\cS^d_+\) be a positive semidefinite matrix such that
    \[
        P(I-T)(u)\in M(T(u))\quad\text{for all }u\in\RR^d\,.
    \]
    Then, \(T\) is firmly non-expansive in the \(P\) seminorm, in the sense that
    \[
        \|T(u)-T(v)\|_P^2+\|(I-T)(u)-(I-T)(v)\|_P^2\leq \|u-v\|_P^2\quad\text{for all }u,v\in\RR^d\,.
    \]
    In particular, if \(\sol_P:=\{u\in\RR^d:T(u)_P=u_P\}\neq\emptyset\), for any \(u\in\RR^d\) there exists \(u^\star\in\sol_P\) such that \(\|T^k(u)-u^\star\|_P\to0\) as \(k\to\infty\).
\end{proposition}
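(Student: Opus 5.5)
The firm nonexpansiveness inequality follows from monotonicity of $M$ applied at the points $T(u)$ and $T(v)$, after which convergence follows from a Fejér-monotonicity argument in the $P$-seminorm. Throughout, fix $u,v$ and write $a=T(u)$, $b=T(v)$, $p=(I-T)(u)$, $q=(I-T)(v)$, so that $u=a+p$ and $v=b+q$.

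\textbf{Firm nonexpansiveness.} By hypothesis $Pp\in M(a)$ and $Pq\in M(b)$, so monotonicity gives $\dotp{a-b,P(p-q)}\ge0$, that is, $\dotp{a-b,p-q}_P\ge 0$. Expanding
\[
\|u-v\|_P^2=\|(a-b)+(p-q)\|_P^2=\|a-b\|_P^2+\|p-q\|_P^2+2\dotp{a-b,p-q}_P
\]
and dropping the nonnegative cross term yields the claimed inequality.

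\textbf{Fejér monotonicity.} Take $u^\dagger\in\sol_P$, so that $(I-T)(u^\dagger)\in\ker P$. The inequality is invariant under adding elements of $\ker P$ to either difference, since $\|w\|_P=\|w_P\|_2$. Applying it with $v=u^\dagger$ and $u=u^k:=T^k(u)$ gives
\[
\|u^{k+1}-T(u^\dagger)\|_P^2+\|(u^k-u^{k+1})-(u^\dagger-T(u^\dagger))\|_P^2\le\|u^k-u^\dagger\|_P^2 .
\]
Because $T(u^\dagger)_P=u^\dagger_P$ and $(u^\dagger-T(u^\dagger))_P=0$, this reads
\[
\|u^{k+1}-u^\dagger\|_P^2+\|u^k-u^{k+1}\|_P^2\le\|u^k-u^\dagger\|_P^2 .
\]
Consequently, $\|u^k-u^\dagger\|_P$ is nonincreasing, the sequence $(u^k_P)$ is bounded in $\range(P)$ (using Lemma~\ref{prop:normEquivalence} to pass between norms), and $\sum_k\|u^k-u^{k+1}\|_P^2<\infty$, so that $\|(I-T)(u^k)\|_P\to0$.

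\textbf{Cluster points are fixed in the $P$-part.} This is the step I expect to be the main obstacle. Take a subsequence with $u^{k_j}_P\to w\in\range(P)$. The natural guess $u^\star=w$ requires $T(w)_P=w$, which needs some continuity of $T$. Firm nonexpansiveness only shows that $T$ is $P$-Lipschitz, $\|T(u)-T(v)\|_P\le\|u-v\|_P$. The difficulty is that $u^{k_j}$ and $w$ may differ in their $\ker P$ components. I would handle this by applying the Lipschitz bound with $v=w$:
\[
\|T(u^{k_j})-T(w)\|_P\le\|u^{k_j}-w\|_P=\|u^{k_j}_P-w\|_P\to0 .
\]
Combined with $\|T(u^{k_j})-u^{k_j}\|_P\to0$, this gives $T(w)_P=w=w_P$, so $w\in\sol_P$.

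\textbf{Conclusion.} Set $u^\star=w$. By Fejér monotonicity with $u^\dagger=u^\star$, the quantity $\|u^k-u^\star\|_P$ is nonincreasing, and along the subsequence it tends to zero. Hence the whole sequence satisfies $\|T^k(u)-u^\star\|_P\to0$. This is the standard Opial-type argument, carried out on $\range(P)$ where $\|\cdot\|_P$ is a genuine norm.
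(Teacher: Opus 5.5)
Your proof is correct and follows the paper's route. The firm-nonexpansiveness step is identical. For the convergence claim, the paper simply observes that $(T^k(u)_P)_k$ is a Krasnosel'ski\u{\i}--Mann iteration on $\range(P)$ and cites its convergence theorem, whereas you unpack that theorem via Fej\'er monotonicity and a cluster-point argument, handling the $\ker P$ components explicitly (the paper leaves implicit that $T(u)_P$ depends only on $u_P$, which itself follows from the firm nonexpansiveness inequality). One small slip: the invariance under $\ker P$ perturbations comes from $\|w\|_P=\|w_P\|_P=\|P^{1/2}w\|_2$, not $\|w\|_P=\|w_P\|_2$, though the conclusion you draw from it is right.
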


\begin{proof}
    Let \(u,v\in\RR^d\). Since \(P(I-T)(u)\in M(T(u))\) and \(P(I-T)(v)\in M(T(v))\), by the monotonicity of \(M\) we obtain
    \begin{equation}
    \label{eq:pFirmlyNonExpansives}
        \langle T(u)-T(v),P(I-T)(u)-P(I-T)(v)\rangle\geq0\,.
    \end{equation}
    Therefore, expanding the square and lower-bounding the crossterm yields
    \begin{align*}
        \|u-v\|_P^2&=\|u-T(u)+T(u)-T(v)+T(v)-v\|_P^2\\
                 &=\|(I-T)(u)-(I-T)(v)\|_P^2+\|T(u)-T(v)\|_P^2
                 \\ &\qquad
+2\langle T(u)-T(v),P(I-T)(u)-P(I-T)(v)\rangle\\
        &\geq \|(I-T)(u)-(I-T)(v)\|_P^2+\|T(u)-T(v)\|_P^2 & \text{(Using~\eqref{eq:pFirmlyNonExpansives})}
    \end{align*}
    Then, \((T^k(u)_P)_k\) is a Krasnosel'skiǐ-Mann iteration over \(\range(P)\) whence there exists a fixed point \(u^\star\in\sol_P\) such that \(\|T^k(u)_P-u^\star\|_P\to 0\)~\cite[Theorem 1]{ryu2022large}.
   %\md{TODO: We have not introduced yet what the subindex $P$ (for a vector) means here. Reshuffle.}
\end{proof}

% If \(P\) is invertible then the condition \(P(I-T)(x)\in M(T(x)))\) is equivalent to \(T(x)\in (I+P^{-1}M)^{-1}x\). Then, Proposition~\ref{prop:pFirmlyNonExpansive} refers to a generalized resolvent map. When \(M=\sub\), the above can be seen as a version of PPM over the geometry induced by \(P\).
%The ADMM and the PDHG method are instances of the fixed point iteration of an operator of the form given by Proposition~\ref{prop:pFirmlyNonExpansive} with \(M=\sub\). 
Armed with Proposition~\ref{prop:pFirmlyNonExpansive}, we are ready to prove Propositions~\ref{prop:ppm},~\ref{prop:pdhg},~\ref{prop:admm}, and~\ref{prop:EGM}.
\subsubsection{Proof of  Proposition~\ref{prop:ppm}}\label{appendix:PPM}
             By construction, the PPM update satisfies \(P(z^{k}-z^{k+1})\in\sub(z^{k+1})\) with \(P=\eta^{-1}I\). Then, Assumptions~\ref{assumption:asymptoticIdentification} \((i)\), ~\ref{assumption:rapidLocalConvergence} \((ii)\), and~\ref{assumption:finiteTimeIdentification} \((ii)\) follow from Proposition~\ref{prop:pFirmlyNonExpansive}. Moreover, \cite[Theorem 1]{lu2022infimal} shows
             \[
                \|z^{k+1}-z^{k}\|_P\leq \dist_{P^\dagger}(0,\sub(z^k))\leq \frac{1}{\sqrt{k}}\dist(z^0,\sol)\,.
             \]
             Therefore, Assumptions~\ref{assumption:rapidLocalConvergence} \((i)\) and~\ref{assumption:finiteTimeIdentification} \((i)\) are satisfied with \(\gamma=1\).  Moreover, note that the updates have the form
            \begin{align*}
                x^{k+1},y^{k+1}&=\arg\min_{x\in\R^n}\max_{y\in\R^m_+}f(x)+\langle y,G(x)\rangle +\frac{1}{2\eta}\left(\|x-x^k\|^2-\|y-y^k\|^2\right)\\
                &=\arg\min_{x\in\R^n}\left\{f(x)+\frac{1}{2\eta}\|x-x^k\|^2+\min_{y\in\R^m_+}\left\{-\langle y,G(x)\rangle +\frac{1}{2\eta}\|y-y^k\|^2\right\}\right\}
            \end{align*}
            Using first order optimality conditions we derive that the solution of the inner problem is $y(x) = \proj_{\RR^m_+}(y^k + \eta G(x)).$ Thus,            \begin{align*}
                x^{k+1}&=\argmin_{x\in\R^n}\left\{f(x)+\frac{1}{2\eta}\|x-x^k\|^2-\langle \proj_{\R_+^m}(y^k+\eta G(x)),G(x)\rangle \right.\\
                &\left.\hspace{2 cm}+\frac{1}{2\eta}\|\proj_{\R_+^m}(y^k+\eta G(x))-y^k\|^2\right\}\\
                y^{k+1}&=\proj_{\R_+^m}\left(y^k+\eta G(x^{k+1})\right)
            \end{align*}
            Then, PPM satisfies Assumption~\ref{assumption:asymptoticIdentification} \((ii)\). To check Assumption~\ref{assumption:asymptoticIdentification} \((iii)\) note that, by Assumption~\ref{assumption:problem} \((i)\), the functions \(g_j\) are locally Lipschitz~\cite[Theorem 10.4]{rockafellar1997convex}. Then, since \(P=\eta^{-1}I\), Assumption~\ref{assumption:asymptoticIdentification} \((iii)\) holds with  \(\Lyj=\eta\) and \(\Lxj\) equal to \(\eta\) times a local Lipschitz constant of \(g_j\) over \(\ballt{\zs}{t}\).
            % Furthermore, Assumption~\ref{assumption:asymptoticIdentification} \((iii)\) is trivially satisfied, since \(P\) is positive definite and the functions \(g_j\) are convex, whence locally Lipschitz.
            %Notice that the subproblem for \(x^{k+1}\) might be non-convex and then more difficult to solve than the original primal problem~\eqref{eq:primalProblem}. It is convex when \(G\) is linear, i.e. \(G(x)=Ax\) for some \(A\in\RR^{m\times n}\).
        
   \subsubsection{Proof of Proposition~\ref{prop:pdhg}}\label{appendix:pdhg}
            From~\cite{lu2023unified}, the PDHG update satisfies \(P(z^{k}-z^{k+1})\in\sub(z^{k+1})\) for \(P\).
            A Schur complement argument reveals that \(P\succ 0\) if, and only if,
            % \(\eta^{-2} I\succ  A^\topA\): if and only if
            \(\eta<\|A\|_{\rm{op}}^{-1}\). Then, from Proposition~\ref{prop:pFirmlyNonExpansive}, Assumption~\ref{assumption:asymptoticIdentification} \((i)\), ~\ref{assumption:rapidLocalConvergence} \((ii)\), and~\ref{assumption:finiteTimeIdentification} \((ii)\) are satisfied. Also, \cite[Theorem 1]{lu2022infimal} shows that
             \[
                \|z^{k+1}-z^{k}\|_P\leq \dist_{P^\dagger}(0,\sub(z^k))\leq k^{-\frac{1}{2}}\dist(z^0,\sol)\,.
             \]
             Note that \(\|z^k-\tilde{z}^k\|_P\leq \|z^k-z^{k-1}\|_P\), thus, Assumptions~\ref{assumption:rapidLocalConvergence} \((i)\) and~\ref{assumption:finiteTimeIdentification} \((i)\) are satisfied with \(\gamma=1\). Moreover, the PDHG dual update trivially satisfies Assumption~\ref{assumption:asymptoticIdentificationWeak} \((ii)\). Finally, by Lemma~\ref{prop:normEquivalence} Assumption~\ref{assumption:asymptoticIdentification} \((iii)\) holds with \(\Lxj\leq \|A\|_{\rm{op}}\lambdamin(P)^{-\frac{1}{2}}\) and \(\Lyj\leq \lambdamin(P)^{-\frac{1}{2}}\).

    \subsubsection{Proof of Proposition~\ref{prop:admm}} \label{appendix:ADMM}
            %\md{Pedro, we don't establish Assumption~\ref{assumption:rapidLocalConvergence} (ii), do we want to assume it holds?}\hnote{Assumption~\ref{assumption:rapidLocalConvergence} (ii) is not proved, right? Is there existing result establishing it? This should be double-checked for ADMM.}\pil{(Done.) Yes, we don't prove that assumption; I added it to the assumption of the proposition. I haven't been able to prove it using standard arguments, and I haven't found any existing results that establish it. There are some results in the literature that imply it, given stronger assumptions, such as \(A\) being full rank or \(\sol\) being compact.} 
            Our argument is based on the auxiliary primal problem
            \begin{align}
            \label{eq:admmPrimalProblem}
                    \min_{x\in\RR^n, u\in\RR^m} &\,\, f(x)+\iota_{\RR^m_+}(u)\\
                    \text{s.t.} &\,\, Ax + u =b\,.
            \end{align}
            % The minimax primal-dual problem related to the latter is
            % \begin{equation}
            % \label{eq:admmMinimaxProblem} \min_{x\in\R^n,y\in\R^m}\max_{y\in\R^m}f(x)+\iota_{\R^m_+}(u)+\langle y,Ax-b\rangle-\iota_{\R^m_+}(y)\,.
            % \end{equation}
            %\hnote{State the relationship between ADMM for solving (20) and the original problem, and why this is helpful intuitively here.}\pil{Done.} 
            This problem is a recast of~\eqref{eq:primalProblem} into the standard form of ADMM~\cite[Section 3.1]{boyd2011distributed}. Explicitly, we add the auxiliary variable \(u\geq 0\), which allows us to write the inequality constraint \(Ax\leq b\) as an equality constraint. We focus on it because we can leverage the results from~\cite{lu2023unified} to prove Assumptions~\ref{assumption:asymptoticIdentificationWeak} \((i)\),~\ref{assumption:rapidLocalConvergence}, and~\ref{assumption:finiteTimeIdentification} for the ADMM iterates for this problem. Subsequently, we can translate the results obtained for the auxiliary problem to the original problem. Thus, the proof follows from the next three steps.
            \begin{itemize}[leftmargin=1em]
                \item[] \emph{Step 1.} We prove that Assumptions~\ref{assumption:asymptoticIdentificationWeak} \((i)\),~\ref{assumption:rapidLocalConvergence} and~\ref{assumption:finiteTimeIdentification} hold for the {\em auxiliary} problem~\eqref{eq:admmPrimalProblem}. That is, they hold for the iterates \(w^k= \tilde{w}^k = (u^k,x^k,y^k)\), where we take $(u,x)$ as primal variables.% , the intermediates iterates \(\tilde{w}^k=w^k\), \ and the saddle subdifferential \(\widehat{\sub}=[\partial_u\widehat{\cL},\partial_x\widehat{\cL},\partial_y(-\widehat{\cL})]^\top\) where \(\widehat{\cL}(u,x,y)=f(x)+\iota_{\RR^{m}_+(u)}+\langle y,Ax+u-b\rangle\) is the Lagrangian function of the auxiliary problem~\eqref{eq:admmPrimalProblem}.
                \item[]\emph{Step 2.} Leveraging Step \(1\), we prove that Assumptions~\ref{assumption:asymptoticIdentificationWeak} \((i)\),~\ref{assumption:rapidLocalConvergence} and~\ref{assumption:finiteTimeIdentification} also hold for the {\em original} problem~\eqref{eq:primalProblem}. In this case, they hold for the iterates \(z^k= \tilde{z}^k = (x^k,y^k)\).%, the intermediates iterates \(\tilde{z}^k=z^k\), and the saddle subdifferential \(\sub=[\partial_x \cL,\partial_y(-\cL)]^\top\)
                % where \(\cL(x,y)=f(x)+\langle y,Ax-b\rangle-\iota_{\RR^m_+}(y)\) is the Lagrangian function of the original problem~\eqref{eq:primalProblem}.
                \item[] \emph{Step 3.} Finally, we show that items \((ii)\), \((iii)\), and \((iv)\) of Assumption~\ref{assumption:asymptoticIdentificationWeak} hold for the original problem~\eqref{eq:admmPrimalProblem} with iterates \(z^k= \tilde{z}^k = (x^k,y^k)\).%, the intermediate iterates \(\tilde{z}^k=z^k\), and the constraints \(G(x)=Ax-b\).
            \end{itemize}
            To establish these steps, we will use the Lagrangian of the original and auxiliary problems, that is,
            \begin{align*}
\cL(x,y)=f(x)+\langle y,Ax-b\rangle-\iota_{\RR^m_+}(y), \quad \text{and} \quad
\widehat{\cL}(u,x,y)=f(x)+\iota_{\RR^{m}_+(u)}+\langle y,Ax+u-b\rangle,
            \end{align*}
            respectively. As well as their corresponding minimax subdifferential
            \begin{equation}\label{eq:admm-subdifferentials}
                  \sub(x,y)=\begin{bmatrix}
                 \partial f(x)+A^\top y\\ b-Ax+N_{\RR^m_+}(y)
                 \end{bmatrix},\quad\text{and}\quad\widehat{\sub}(u,x,y)=\begin{bmatrix}
                 N_{\RR^m_+}(u)+y\\
                 \partial f(x)+A^\top y\\
                 b-Ax-u
                 \end{bmatrix}.
              \end{equation}
             \noindent\textbf{Step 1.} 
             To analyze the iterates \(w^k=\widehat{w}^k = (u^k,x^k,y^k)\) for the {\em auxiliary} problem~\eqref{eq:admmPrimalProblem}, define the matrix \(\widehat{P}\in\mathcal{S}^{m+n+m}\) such that
              \[
                \widehat{P}=\begin{bmatrix}
                    0 & 0 & 0\\
                    0 & \eta A^\top A & A^\top\\
                    0 & A & \frac{1}{\eta}I
                    \end{bmatrix}\,.
             \]
             A Schur-complement argument %if and only if \(\eta A^\topA-\eta A^\topA=0\succeq 0\), which is true unconditionally.
             shows that this matrix is PSD regardless of $\eta$. Invovking~\cite[Corollary 3]{lu2023unified}, we obtain that the ADMM update satisfies \begin{equation}\label{eq:whataninclusion}\widehat{P}(w^k-w^{k+1})\in\widehat{\sub}(w^{k+1})\,.\end{equation} Then, Assumption~\ref{assumption:finiteTimeIdentification} \((ii)\) holds follows from Proposition~\ref{prop:pFirmlyNonExpansive}. Further, there exists \(w^\star=(u^\star,\zs)\in\range(\widehat{P})\) such that \(\|w^k-w^\star\|_{\widehat{P}}\to 0\), whence Assumption~\ref{assumption:asymptoticIdentificationWeak} \((i)\) is also satisfied. Use \(\widehat{\cS}^\star\) to denote the set of primal dual solutions of the auxiliary problem~\eqref{eq:admmPrimalProblem}, i.e., \(\widehat{\cS}^\star=\widehat{\sub}^{-1}(0)\). Using~\cite[Theorem 1]{lu2022infimal} we get
             \[
                \|w^k-w^{k+1}\|_{\widehat{P}}\leq\dist_{\widehat{P}^\dagger}(0,\sub(w^k))\leq k^{-\frac{1}{2}}\dist(w^0,\widehat{\cS}^\star)\,.
             \]
             Therefore, Assumption~\ref{assumption:rapidLocalConvergence} \((i)\) and~\ref{assumption:finiteTimeIdentification} \((i)\) are satisfied with \(\gamma=1\).\\

             \noindent\textbf{Step 2.} To analyze the iterates \(z^k= \tilde{z}^k = (x^k,y^k)\), we now consider the matrix \(P\).
             By the definition of \(\widehat{P}\), for any \(w=(u,z)\) and \(\bar{w}=(\bar{u},\bar{z})\) in \(\mathbb{R}^{m+n+m}\) we have
             \begin{itemize}
             \item[\((a)\)]
                \(\|w-\bar{w}\|_{\widehat{P}}=\|z-\bar{z}\|_P\), and
            \item[\((b)\)] \(w\in\range(\widehat{P})\) if, and only if, \(z\in\range(P)\).
             \end{itemize}
             Therefore, we obtain \(\|z^k-\zs\|_P\to 0\) from \((a)\), and \(\zs\in\text{range}(P)\) from \((b)\). Thus, leveraging Step 1 we conclude that Assumption~\ref{assumption:asymptoticIdentificationWeak} \((i)\) holds. Furthermore, applying \((a)\) together with Step 1
            % Denote \(\wh{T}:\RR^{n+m+n}\to\RR^{n+m+n}\) and \(T:\RR^{n+m}\to\RR^{n+m}\) as the update operators for the iterates \(w^k\) and \(z^k\), respectively. 
            we derive that for any \(k\in\NN\),
            \[
                \|z^{k+1}-\zs\|_P=\|w^{k+1}-w^\star\|_{\wh{P}}\geq\|w^k-w^\star\|_{\wh{P}}=\|z^k-\zs\|_P\,.
            \]
            %  \begin{equation}
            %  \label{eq:admmFejer}
            %  \begin{aligned}
            %     \|z^{k+1}-\zs\|_P& =\|w^{k+1}-w^\star\|_{\wh{P}} & \text{(From \((a)\))}\\
            %     &\geq \|w^k-w^\star\|_{\wh{P}} & \text{(Step \((i)\))}\\
            %     &=\|z^k-\zs\|_P\,. & \text{(From \((a)\))}
            % \end{aligned}
            % \end{equation}
            Taking limits shows that item \((ii)\) of Assumption~\ref{assumption:finiteTimeIdentification} holds.
            
            To show Assumption~\ref{assumption:finiteTimeIdentification} \((i)\), we claim that 
            %By the definition of the original~\eqref{eq:primalProblem} and auxiliary~\eqref{eq:admmPrimalProblem} problems %, for all \((x,y)\in\mathbb{R}^{n+m}\) we have
            \((x,y)\in\sol\) if and only if \((u,x,y)\in\wh{\cS}^\star\) for \(u=Ax-b\in\RR^m_+\). Indeed, if \((x,y)\in\sol\) and \(u=Ax-b\), then immediately \(0=b-Ax-u\) and \(0\in \partial f(x)+A^\top y\), by~\eqref{eq:admm-subdifferentials}. Moreover, by feasibility and complementary slackness we have \(y\geq 0\), \(u\geq 0\), and \(\langle y,u\rangle=0\). It follows that \(-y\in N_{\R^m_+}(u)\), or equivalently, \(0\in N_{\R^m_+}(u)+y\). Thus, according to~\eqref{eq:admm-subdifferentials}, \((u,x,y)\in\wh{\cS}^\star\). On the other hand, if \((u,x,y)\in\sol\), then \(u=b-Ax\),  \(0\in\partial f(x)+A^\top y\), and \(-y\in\N_{\R^m_+}(u)\), by~\eqref{eq:admm-subdifferentials}. Hence, \(y\in\R^m_+\) and \(y_j>0\) implies \(u_j=0\) for any \(j\in[m]\), whence \(-u\in N_{\R^m_+}(y)\). Thus, accordin to~\eqref{eq:admm-subdifferentials}, \((x,y)\in\sol\), establishing the claim. Invoking \((a)\) we obtain
             \begin{equation}
             \label{eq:admmDistSolRelation}
                \dist_{\widehat{P}}(w^0,\widehat{\cS}^\star)=\dist_{P}(z^0,\sol)\,
            \end{equation}
            and \(\|w^k-w^\star\|_{\widehat{P}}=\|z^k-\zs\|_{P}\). Hence, Step 1 implies that Assumption~\ref{assumption:finiteTimeIdentification} \((i)\) holds with \(\gamma=1\). %To establish 
            
            Next we turn to Assumption~\ref{assumption:rapidLocalConvergence}. Combining Step 1 and~\eqref{eq:admmDistSolRelation} we derive that it suffices to show
             \begin{equation}\label{eq:connectingthedots} \dist_{P^\dagger}(0,\sub(z^k)\cap\range(P))\leq\dist_{\widehat{P}^\dagger}(0,\widehat{\sub}(w^k)\cap\range(\widehat{P}))
              \end{equation}
              to establish item $(i)$ of Assumption~\ref{assumption:rapidLocalConvergence}.
            Note that \(\widehat{P}=\widehat{M}\widehat{M}^\top\) and \(P=MM^\top\), where 
             \begin{equation}
             \label{eq:admmMatrices}
                \widehat{M}^\top=\begin{bmatrix}
                    0 &
                    \sqrt{\eta}A^\top &
                    \sqrt{\eta}^{-1}I
                \end{bmatrix}\quad\text{and}\quad M^\top=\begin{bmatrix}
                    \sqrt{\eta}A^\top &
                    \sqrt{\eta}^{-1}I
                \end{bmatrix}\,.
             \end{equation}
             The columns of \(\widehat{M}\) and \(M\) are linearly independent.
            Applying Lemma~\ref{prop:cholesky} \((i)\) we derive
             \begin{equation}
             \label{eq:lookatthoseranges}
             \begin{aligned}
                \range(\widehat{P})& =\range(\widehat{M})=\{(0,\eta A^\top v,v):v\in\RR^{m}\},\quad \text{and}\\ %\quad
               \range(P)&=\range(M)=\{(\eta A^\top,v):v\in\RR^{m}\}\,.
            \end{aligned}
            \end{equation}
             Let \(\wh{v}\in\widehat{\sub}(w^k)\cap\range(\wh{P})\) such that \(\|\wh{v}\|_{P^\dagger}=\dist_{P^\dagger}(0,\wh{\sub}(w^k)\cap\range(\wh{P}))\). Since \(\wh{v}\in\range(\wh{P})\), from~\eqref{eq:lookatthoseranges} there exists \(v\in\RR^m\) such that \(\wh{v}=\wh{M}v\). Moreover, since \(\wh{v}\in\wh{\sub}(w^k)\), the \(y\)-coordinates of \(\wh{M}v\)---recall~\eqref{eq:admmMatrices}---and of~\eqref{eq:whataninclusion} are equal: we obtain \(\sqrt{\eta}^{-1}v=b-Ax^k-u^k\). Then,
             \begin{align*}
                \dist_{P^\dagger}(0,\wh{\sub}(w^k)\cap\range(\wh{P}))&=\|\wh{M}v\|_{P^\dagger} & \text{(Definition of \(\wh{v}\) and \(v\))}\\
            &=\|v\|_2 & \text{(Lemma~\ref{prop:cholesky} \((ii)\))}\\
            &=\sqrt{\eta}\|b-Ax^k-u^k\|_2\,.
             \end{align*}
            %  Invoking \eqref{eq:whataninclusion} we obtain \(\widehat{\sub}(w^k)\cap\range(\widehat{P})\neq\emptyset\), and so by \eqref{eq:admm-subdifferentials} and~\eqref{eq:lookatthoseranges} we get \(-y^k\in N_{\RR^m_+}(u^k)\). Hence, \(y^k\in\RR^m_+\) and \(y^k_j>0\) implies \(u^k_j=0\) for any \(j\in [m]\), whence \(-u^k\in N_{\RR^m_+}(y)\).Furthermore, from \eqref{eq:lookatthoseranges} we obtain that there exists a $v\in \RR^m$ such that
            % \[
            %     \dist_{\widehat{P}^\dagger}(0,\widehat{\sub}(w^k)\cap\range(\widehat{P})) = \sqrt{\eta} \| \widehat{M} v \|_{\widehat{P}^\dagger}= \sqrt{\eta} \|v\|_2= \sqrt{\eta} \|b-Ax^k-u^k\|_2\,,
            % \]
            % where the second equality follows from Lemma~\ref{prop:cholesky} and the last equality follows from \eqref{eq:admm-subdifferentials} and the fact that $\sqrt{\eta} \wh{M} v \in \widehat \sub (w^k)$, which reduces to $v = b - Ax^k - u^k$ in the $y$-coordinate.
            Also, invoking \eqref{eq:whataninclusion} we obtain \(\widehat{\sub}(w^k)\cap\range(\widehat{P})\neq\emptyset\), and so by \eqref{eq:admm-subdifferentials} and~\eqref{eq:lookatthoseranges} we get \(-y^k\in N_{\RR^m_+}(u^k)\). Hence, \(y^k\in\RR^m_+\) and \(y^k_j>0\) implies \(u^k_j=0\) for any \(j\in [m]\), whence \(-u^k\in N_{\RR^m_+}(y^k)\).
            Using an analogous reasoning for $P$ and $\sub$ we derive
            \begin{align*}
              \dist_{P^\dagger}(0,\sub(z^k)\cap\range(P))&=\sqrt{\eta}\,\inf\{\|b-Ax^k+\zeta\|_2:\zeta\in N_{\RR_+^m}(y^k)\} & %\text{(Lemma~\ref{prop:cholesky} \((ii)\))}
              \\
                &\leq \sqrt{\eta}\, \|b-Ax^k-u^k\|_2 &  \text{(\(-u^k\in N_{\RR_+^m}(y^k)\))}\\
                &=  \dist_{\widehat{P}^\dagger}(0,\widehat{\sub}(w^k)\cap\range(\widehat{P}))\,.
            \end{align*}
            Thus, \eqref{eq:connectingthedots} is holds true and so Assumption~\ref{assumption:rapidLocalConvergence} \((i)\) holds with \(\gamma=1\).\\
            
            \noindent\textbf{Step 3.} Next we establish item $(ii)$ of Assumption~\ref{assumption:asymptoticIdentificationWeak}. Equivalentely, we wish to show that the dual update \(y^{k+1}\) can be interpreted as a projected dual ascent update on the {\em original problem}~\eqref{eq:minimaxProblem}, i.e. \(y^{k+1}=\proj_{\R_+^{m}}(y^{k}+\eta(Ax^{k}-b))\). Note that
            \begin{equation}
            \label{eq:admmKer}
                \ker(P)=\{(x,y)\in\RR^{n+m}:y+\eta Ax=0\}\,.
           \end{equation}
           Hence, recalling \eqref{eq:ohthatP} we get
             \begin{equation}
             \label{eq:admmUForm}
                u^{k+1}=\proj_{\RR^m_+}\left(-\eta^{-1}y^k-Ax^{k}+b\right)=\proj_{\RR^m_+}\left(-\eta^{-1}y_P^k-Ax_P^{k}+b\right)\,.
             \end{equation}
             Fix \(j\in[m]\). Then, we have two cases
            \begin{itemize}
                \item[\((a)\)] Suppose \(0> (y^{k}+\eta (Ax^{k}-b))_j\). Then, we derive from~\eqref{eq:admmUForm} that
                \[
                    u^{k+1}_j=\proj_{\R_+}(-\eta^{-1}y^k-(Ax^{k} -b))_j=(-\eta^{-1}y^k-(Ax^{k} -b))_j\,.
                \]
                Thus,
                \(
                    y^{k+1}_j=(y^{k}+\eta^{-1}(Ax^{k}-\eta y^{k}-Ax^{k}))_j=0\,.
                \)
                \item[\((b)\)] Suppose that \(0\leq(y^{k}+\eta(Ax^{k}-b))_j\). Then, we obtain invoking from~\eqref{eq:admmUForm} that
                \[
                    u_j=\proj_{\R_m^+}(-\eta^{-1}y^{k}-(Ax^{k}-b))_j=0\,.
                \]
                Therefore,
                \(
                    y^{k+1}_j=y^{k}_j+\eta (Ax^{k}-b)_j\,.
                \)
            \end{itemize}
            Thus, Assumption~\ref{assumption:asymptoticIdentificationWeak} \((ii)\) holds.

            Next, we will show that items $(iii)$ and $(iv)$ of
            Assumption~\ref{assumption:asymptoticIdentificationWeak} hold. To establish Assumption~\ref{assumption:asymptoticIdentificationWeak} \((iii)\), consider \(\YY=\R^n\times\R^m_+\) and let \(z=(x,y)\) and \(\wh{z}=(\wh{x},\wh{y})\) in \(\YY\). Since \(y_P+y_{P^\perp}\geq 0\), we obtain from~\eqref{eq:admmKer} that \(y_P\geq \eta Ax_{P^\perp}\). Then, for any \(j\in\indexN\) (recall the definition~\eqref{eq:indexSets}) we have
           \begin{align*}
                |A_jx-A_j\xs|&=|A_jx_P+A_jx_{P^\perp}-A_j\xs|\\
                &\leq |A_jx_P+\eta^{-1}(y_P)_j-(A_j\xs+\eta^{-1}\ys_j)| & \text{(\(\ys_j=0\) since \(j\in\indexN\))}\\
                & \leq \|A_j\|_{2}\|x_P-\xs\|_2+\eta^{-1}\|y_P-\ys\|_2 & \text{(Triangle Inequality)}\\
                &\leq (\|A_j\|_{2}+\eta^{-1})\|z_P-\zs\|_2 \\
                &\leq (\|A_j\|_{2}+\eta^{-1})\lambdaminp(P)\|z_P-\zs\|_P\,. & \text{(Proposition~\ref{prop:normEquivalence})}
           \end{align*}
           On the other hand, if \(\wh{z}_{P^\perp}=z_{P^\perp}\) then, from Proposition~\ref{prop:normEquivalence} we obtain
           \begin{align*}
             A_jx-A_j\wh{x}&=A_j(x_P-\wh{x}_P)\leq \|A_j\|_{2}\|x_P-\wh{x}_P\|_2\leq \|A_j\|_{2}\|z_P-\wh{z}_P\|_2\leq\|A_j\|_{2}\lambdaminp(P)\|z-\wh{z}\|_P
           \end{align*}
           Thus, Assumption~\ref{assumption:asymptoticIdentificationWeak} \((iii)\) holds with \(\Lxj\leq (\eta^{-1}+\|A_j\|_{2})\lambdaminp(P)^{-\frac{1}{2}}\).

Finally, we show that Assumption~\ref{assumption:asymptoticIdentificationWeak} $(iv)$ holds with $p = 1.$ Consider any initial iterates \(z^0=(x^0,y^0)\), \(\wh{z}^0=(\wh{x}^0,\wh{y}^0)\), and index \(j\in[m]\). Then,
             \begin{equation}
            \label{eq:admmYBound}
                \begin{aligned}
                    |y^1_j-\wh{y}^1_j|&= |\proj_{\RR_+}(y_j^{0}+\eta(A_jx^{0}-b_j))-\proj_{\RR_+}(\wh{y}^0_j+\eta(A_j\wh{x}^0-b_j))| & \text{(Assumption~\ref{assumption:asymptoticIdentificationWeak} \((ii)\))}\\
                    &= |\proj_{\RR_+}((y^{0}_P)_j+\eta(A_jx^{0}_P-b_j))-\proj_{\RR_+}((\wh{y}^0_P)_j+\eta(A_j\wh{x}^0_P-b))| & \text{(From~\eqref{eq:admmKer})}\\
                    &\leq |(y^{0}_P)_j+\eta(A_jx^{0}_P-b_j)-((\wh{y}^0_P)_j+\eta(A_j\wh{x}^0-b_j))| & \text{(Non-expansiveness)}\\
                    &\leq \|y^{0}_P-\wh{y}^0_P\|_2+\eta\|A_j\|_{\rm{op}}\|x^{0}_P-\wh{x}^0_P\|_2 & \text{(Triangle Inequality)}\\
                    &\leq (1+\eta\|A_j\|_{\rm{op}})\|z^{0}_P-\wh{z}^0_P\|_2\\
                    &\leq (1+\eta\|A_j\|_{\rm{op}})\lambdaminp(P)^{-\frac{1}{2}}\|z^{0}-\wh{z}^0\|_P & \text{(Proposition~\ref{prop:normEquivalence})}.\\
                \end{aligned}
            \end{equation}
            So Assumption~\ref{assumption:asymptoticIdentificationWeak} \((iv)\) holds with \(\step=1\) and \(\Lyj\leq(1+\eta\|A_j\|_{\rm{op}})\lambdaminp(P)^{-\frac{1}{2}}\).

\subsubsection{Proof of Proposition~\ref{prop:EGM}}\label{appendix:EGM}
%Since \(\eta<L^{-1}\),
Invoking \cite[Appendix I, Theorem 9]{cai2022finite}, we derive that for any \(k\in\NN\) the iterates of EGM satisfy
            \begin{equation}
            \label{eq:egdaSublinearRates}
                \max(\eta\dist_2(0,\partial\sub(z^k)),  2^{-1}\|z^k-z^{k+1}\|_2, \|z^k-\tilde{z}^{k}\|_2)\leq \frac{3}{\sqrt{1-(\eta L)^2}}\frac{\dist_2(z^0,\sol)}{\sqrt{k}}\,.
            \end{equation}
            Therefore, EGM satisfies Assumptions~\ref{assumption:rapidLocalConvergence} \((i)\) and~\ref{assumption:finiteTimeIdentification} \((i)\) with \(P=I\) and \(\gamma=3(1-(\eta L)^2)^{-\frac{1}{2}}\).
            To verify Asusmption~\ref{assumption:asymptoticIdentification} \((i)\), first note that the iterates of EGM are bounded~\cite[Appendix G, Corollary 1]{cai2022finite}.  Then, its set of accumulation points, denoted as \(\mathbb{A}\), is nonempty. We will see that \(\mathbb{A}\) is a singleton. By Assumption~\ref{assumption:problem} \((i)\), \(\sub\) has closed graph~\cite[Theorem 24.4]{rockafellar1997convex}. Then, using~\eqref{eq:egdaSublinearRates}, we derive \(0\in\sub(z)\) for all \(z\in\mathbb{A}\) and so
            \(\mathbb{A}\subseteq\sol\). Further, EGM is Fèjer monotone \cite[Appendix G, Lemma 1]{cai2022finite}, that is,
            \begin{equation}
            \label{eq:egdaFejer}
            \|z^{k+1}-z\|_2\leq \|z^k-z\|_2\quad\text{for all}\quad z\in\sol\,.
            \end{equation}
            Next, we show that there is only one accumulation point. Suppose seeking contradiction that there exist \(z, z'\in\mathbb{A}\) such that \(z \neq z'\), and define \(\Delta=\|z-z'\|_2/2\). Since \(z\in\mathbb{A}\) there exists \(k_0\in\NN\) such that \(\|z^{k_0}-z\|_2\leq \Delta\). Furthermore, from~\eqref{eq:egdaFejer} we obtain \(\|z^{k}-\tilde{z}^1\|_2\leq \|z^{k_0}-z\|_2\leq \Delta\) for all \(k\geq k_0\). Then, by the triangular inequality, for all \(k\geq k_0\),
            \[
              \|z' - z^k\|_2\geq \|z'-z\|_2-\|z-z^k\|_2\geq 2\Delta -\Delta=\Delta\,.
            \]
            It follows that \({z}'\) is not an accumulation point, yielding a contradiction. Thus, \(\mathbb{A}\) is a singleton and EGM satisfies Assumptions~\ref{assumption:asymptoticIdentification} \((i)\) and~\ref{assumption:rapidLocalConvergence} \((ii)\). %\hnote{Do we need Assumption 3 or 3* for EGM?}\pil{Yes, we need Assumption 3 not 3*. I changed it from 3* to 3. Done.}
            Further, EGM satisfies Assumption~\ref{assumption:finiteTimeIdentification} \((ii)\) thanks to~\eqref{eq:egdaFejer}. By definition, EGM satisfies Assumption~\ref{assumption:asymptoticIdentification} \((ii)\). To check Assumption~\ref{assumption:asymptoticIdentification} \((iii)\) note that, by Assumption~\ref{assumption:problem} \((i)\), the functions \(g_j\) are locally Lipschitz~\cite[Theorem 10.4]{rockafellar1997convex}. Then, since \(P=I\), Assumption~\ref{assumption:asymptoticIdentification} \((iii)\) holds with  \(\Lyj=1\) and \(\Lxj\) as a local Lipschitz constant of \(g_j\) over \(\ballt{\zs}{t}\).

            % As an instance of the Mirror-Prox method, EGDA is convergent for \(\eta<L^{-1}\)~\cite{nemirovski2004prox}, whence it satisfies Assumption~\ref{assumption:asymptoticIdentificationWeak} \((i)\).
\subsubsection{An auxiliary result}
        We prove an auxiliary result used in the proof of Proposition~\ref{prop:admm}.
        \begin{lemma}
            \label{prop:cholesky}
                Let \(S\in\mathcal{S}_+^{q}\) and \(M\in\RR^{q\times k}\) with linearly independent columns such that \(S=MM^\top\). Then, the following two hold true.
                \begin{itemize}
                    \item[\((i)\)] The ranges of these two matrices are the same \(\range(S)=\range(M)\).
                    \item[\((ii)\)] For any \(v\in\RR^k\) and \(u=Mv\) we have \(\|u\|_{S^\dagger}=\|v\|_2\).
                \end{itemize}
            \end{lemma}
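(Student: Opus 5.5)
The plan is to reduce everything to the compact singular value decomposition of \(M\). Since \(M\in\RR^{q\times k}\) has linearly independent columns, it has rank \(k\), so write \(M=U\Sigma V^\top\) with \(U\in\RR^{q\times k}\) satisfying \(U^\top U=I\), \(\Sigma\in\RR^{k\times k}\) diagonal with strictly positive entries, and \(V\in\RR^{k\times k}\) orthogonal. Then \(S=MM^\top=U\Sigma^2U^\top\), and because \(\Sigma^2\) is invertible, the Moore--Penrose pseudoinverse is \(S^\dagger=U\Sigma^{-2}U^\top\). To justify this formula, I will check the four Penrose identities directly using \(U^\top U=I\); this is routine.

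For item \((i)\), the inclusion \(\range(S)=\range(MM^\top)\subseteq\range(M)\) is immediate. For the reverse inclusion, I will argue in one of two ways. The first is that both ranges equal \(\range(U)\): we have \(\range(M)=\range(U\Sigma V^\top)=\range(U)\) because \(\Sigma V^\top\) is invertible, and \(\range(S)=\range(U\Sigma^2U^\top)=\range(U)\) because \(\Sigma^2U^\top\) is surjective onto \(\RR^k\). Alternatively, I can compare dimensions: \(\operatorname{rank}(MM^\top)=\operatorname{rank}(M)=k\), since \(\ker(MM^\top)=\ker(M^\top)\) (from \(x^\top MM^\top x=\|M^\top x\|^2\)). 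Either argument closes \((i)\).

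For item \((ii)\), let \(u=Mv=U\Sigma V^\top v\) and compute
\[
\|u\|_{S^\dagger}^2=u^\top S^\dagger u=v^\top V\Sigma U^\top U\Sigma^{-2}U^\top U\Sigma V^\top v=v^\top VV^\top v=\|v\|_2^2 .
\]
This uses \(U^\top U=I\) twice and the orthogonality of \(V\). A coordinate-free alternative is to note that \(M^\dagger=(M^\top M)^{-1}M^\top\) and \(S^\dagger=(M^\dagger)^\top M^\dagger\), which give \(u^\top S^\dagger u=\|M^\dagger Mv\|^2=\|v\|^2\) because \(M^\dagger M=I_k\).

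The only step requiring any care is the identification of \(S^\dagger\). The clean way to handle it is to verify \(SS^\dagger S=S\), \(S^\dagger SS^\dagger=S^\dagger\), and the symmetry of \(SS^\dagger=UU^\top\) and \(S^\dagger S=UU^\top\). Otherwise, the proof is a short computation.
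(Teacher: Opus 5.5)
Your proof is correct, and your main route differs from the paper's, although your ``coordinate-free alternative'' is exactly the paper's argument.

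\textbf{Your main route.} You pass through the compact SVD $M=U\Sigma V^\top$ and identify $S^\dagger=U\Sigma^{-2}U^\top$ by checking the four Penrose identities. Then both (i) and (ii) become one-line computations.

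\textbf{The paper's route.} For (i), it observes that $M^\top$ is surjective, so $\range(MM^\top)=M(\range(M^\top))=M(\RR^k)=\range(M)$. This is the same mechanism as your remark that $\Sigma^2U^\top$ maps onto $\RR^k$. For (ii), it combines $M^\dagger=(M^\top M)^{-1}M^\top$ with $(MM^\top)^\dagger=(M^\dagger)^\top M^\dagger$ to get $S^\dagger=M(M^\top M)^{-2}M^\top$. It then computes $u^\top S^\dagger u=\|v\|_2^2$ directly.

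\textbf{Comparison.} The paper's argument is shorter, but it invokes $(MM^\top)^\dagger=(M^\dagger)^\top M^\dagger$ without justification. That identity is a valid special case, though the reverse-order law fails for general products. Your SVD route is slightly longer but self-contained: the Penrose verification pins down $S^\dagger$ directly. It also makes (i) transparent, since both ranges are visibly $\range(U)$.
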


            \begin{proof}
                 Since \(M\) has linearly independent columns, \(M^\top\) is surjective. Hence, \(\range(S)=\range(M)\), whence there exists \(v\in\RR^k\) such that \(u=Mv\). Since \(M\) has linearly independent columns, \(M^\dagger=(M^\top M)^{-1}M^\top\) notice that
                \begin{align*}
                    S^\dagger=(MM^\top)^\dagger=(M^\dagger)^\top M^\dagger=((M^\top M)^{-1}M^\top)^\top(M^\top M)^{-1}M^\top=M(M^\top M)^{-\top}(M^\top M)^{-1}M^\top\,.
                \end{align*}
                Therefore, for any \(v\in\RR^k\) and \(u=Mv\) we have
                \begin{align*}
                    \|u\|_{S^\dagger}^2&=\langle u,P^\dagger u\rangle_2
                    =\langle Mv,  M(M^\top M)^{-\top}(M^\top M)^{-1}M^\top Mv\rangle_2
                    % &=\langle (L^\top L)^{-1}(L^\top L)u,(L^\top L)^{-1}(L^\top L)u\rangle_2\\
                    =\langle v,v\rangle_2
                    = \|v\|_2^2\,.
                \end{align*}
            \end{proof}
\section{Missing proofs from Section~\ref{section:manifoldIdentification}}
\label{appendix:manifoldIdentification}

% \begin{definition}
%         \label{def:fixedPointMap}
%         Let \(T:\RR^{n+m}\to\RR^{n+m}\), \(T_x:\RR^{n+m}\to\RR^n\), and \(T_y:\RR^{n+m}\to\RR^m\) such that \(T(z^k)=(T_x(z^k),T_y(z^k))=(x^{k+1},y^{k+1})=z^{k+1}\) for all \(k\in\NN_0\) and any initial iterate \(z^0\in\RR^{n+m}\).
%         Furthermore, for any \(\step\in\NN\), \(T^\step(z)=(\Tx(z),\Ty(z))\) denotes the \(\step\)-times application of \(T\) over \(z\). The map \(T\) represents the primal-dual update introduced in~\eqref{eq:algorithm}.
%      \end{definition}
In this section, we prove our identification results, that is, Theorem~\ref{theorem:asymptoticIdentification},~\ref{theorem:finiteTimeIdentificationLinear}, and Theorem~\ref{theorem:finiteTimeIdentification}.
To establish these results we derive an algebraic charaterization of the radious of active-set stability.

\subsection{Radius of active-set stability}
\label{appendix:degeneracyMetric}
In this section, we show that the geometric definition of the radius of active-set stability introduced in~\eqref{eq:degeneracy-metric} can be characterized algebraically through the Lipschitz constants of the constraints and the primal-dual solution to which the algorithm converges.
To this end, we define the `optimal' Lipschitz moduli for a $P$-ball of radius $t$ as stated in Assumption~\ref{assumption:asymptoticIdentificationWeak}. Formally, for all $j \in [m]$ we define the {\em Lipschitz modulus} functions \(\Lx:(0,+\infty)\to \RR\) and \(\Ly:(0,+\infty)\to\RR\) given by
\begin{equation}
\label{eq:varphiPsi}
    \Lxt{t}=\sup_{z=(x,y)\in\Bt}\frac{\left(g_j(x)-g_j(\xs)\right)_+}{\|z-\zs\|_P}\quad\text{and}\quad \Lyt{t}=\sup_{z=(x,y)\in\Bt}\frac{(\ys_j-y_j^\step)_+}{\|z-\zs\|_P}\,,%\quad\text{and}\quad\Ly(t)=\supremum{z\in\cballP{\zs}{t}}\frac{|\ys_j-\Ty(z)|}{\|z-\zs\|_P}\,.
\end{equation}
%\hnote{What prevents $\|z-\zs\|_P=0$ for sd $P$?}\pil{I changed the definition of \(\YY_t\) to prevent it. Done.}
where $(\cdot)_+$ extracts the nonnegative part of its input and \(\Bt=(\cballP{\zs}{t}\cap\YY)\backslash(\{\zs\}+\ker(P))\). Recall \(\YY\) is defined as part of Assumption~\ref{assumption:asymptoticIdentificationWeak} \((iii)\).%\hnote{Where is $\cballP{\zs}{t}$ defined?}\pil{Done. I included the definiton of \(\ballP{z}{t}\) and \(\cballP{z}{t}\) in the preliminaries.}  
%The first bound of Assumption~\ref{assumption:asymptoticIdentificationWeak} \((iii)\) guarantees \(\Lxt{t}\) to be finite; the second constraint is technical and it serves to show that \(t\mapsto\Lyt{t}\) is continuous. %The relation of \(\Lyt{t}\) with Assumption~\ref{assumption:asymptoticIdentificationWeak} \((iv)\) is analogous.

Now we extend the definition radius defined in~\eqref{eq:degeneracy-metric} to account for the set \(\YY\) introduced in Assumption~\ref{assumption:asymptoticIdentificationWeak}. Define
\begin{equation}
\label{eq:delta_geo}
  \delta_{G} := \sup \Big\{t\in(0,\infty)\,\,\big|\, \,\text{For all } (x,y)\in\ballP{\zs}{t}\cap\YY 
  \text{ we have }\,G_\indexN(x) < 0, \text{ and } y_{\indexBs}^\step > 0\Big\}\,,
\end{equation}
We consider an alternative characterization of this radius, dubbed $\delta_A$, defined implicitly via
\begin{equation}
\label{eq:delta_alg}
    \delta_{A} = \min\left\{\min_{j\in\indexN}\frac{-g_j(\xs)}{\Lxt{\delta_A}},\min_{j\in\indexBs}\frac{\ys_j}{\Lyt{\delta_A}}\right\}\,,
\end{equation}
where $\Lxt{\cdot}$ and $\Lyt{\cdot}$ are the optimal Lipschitz modulus functions defined in \eqref{eq:varphiPsi}. This gives a well-defined notion of radius. We write \(\Lxjdo\) and \(\Lyjdo\) as shorthands for \(\Lxt{\delta_A}\) and \(\Lyt{\delta_A}\).
%\hnote{Please find ways to simplify the proof. You just need continuity of $\bar{L}$ in $t$ and it is a nonincreasing function to show the uniqueness. The proof is stated in an overly complicated way.}\pil{Done. I included a summary of the main idea of the proof at its beginning. Also, I simplified its last step.}
\begin{lemma}\label{claim:delta-well-defined}
    Suppose that Assumption~\ref{assumption:asymptoticIdentificationWeak} holds and that \(\delta_G\) is finite. %\pil{In a previous version of the paper we had a characterization of the finiteness of \(\delta_G\). Should we include it (not as part of this lemma but as a separate result)?} 
    Then, the function $\Delta\colon (0, + \infty) \to \RR \cup \{+\infty\}$ given by
    \begin{equation}
\label{eq:Delta}
     \Delta(t)=\min\left\{\min_{j\in \indexN}\frac{-g_j(\xs)}{\Lxt{t}},\min_{j\in\indexBs}\frac{\ys_j}{\Lyt{t}}\right\}\,
\end{equation}
has a unique fixed point. Thus, $\delta_A$ is well-defined and finite.
\end{lemma}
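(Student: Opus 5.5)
Since $\Bt$ grows with $t$, each $t\mapsto\Lxt{t}$ and $t\mapsto\Lyt{t}$ is a supremum over an increasing family of sets, hence nondecreasing. Consequently each ratio $-g_j(\xs)/\Lxt{t}$ and $\ys_j/\Lyt{t}$ is nonincreasing in $t$, and so is $\Delta$. We use the convention $c/0=+\infty$; the numerators are strictly positive because $g_j(\xs)<0$ for $j\in\indexN$ and $\ys_j>0$ for $j\in\indexBs$. A fixed point of $\Delta$ is a zero of $\phi(t):=t-\Delta(t)$, and $\phi$ is strictly increasing, being the identity minus a nonincreasing function. Uniqueness of the fixed point is therefore immediate. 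The work lies in existence, which comes from a sign change of $\phi$ together with a continuity (or right/left-limit) argument.

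The first step treats small $t$. Assumption~\ref{assumption:asymptoticIdentificationWeak} $(iii)$ and $(iv)$ give finite constants $\Lxj,\Lyj$ for any radius $t_0$, so $\Lxt{t}\le \Lxj$ and $\Lyt{t}\le \Lyj$ for all $t\le t_0$. Hence $\Delta(t)\ge c>0$ on $(0,t_0]$, and $\phi(t)<0$ for $t<\min\{c,t_0\}$.

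The second step treats large $t$, using finiteness of $\delta_G$. By the definition of $\delta_G$, for every $t>\delta_G$ there is a point $z=(x,y)\in\ballP{\zs}{t}\cap\YY$ with either $g_j(x)\ge 0$ for some $j\in\indexN$, or $y^\step_j\le 0$ for some $j\in\indexBs$. In the first case $\bigl(g_j(x)-g_j(\xs)\bigr)_+\ge -g_j(\xs)$. In the second case $(\ys_j-y^\step_j)_+\ge \ys_j$. Such a $z$ cannot lie in $\{\zs\}+\ker P$: by the $P$-Lipschitz bounds in $(iii)$ and $(iv)$, with $\wh z=\zs$, such points behave like $\zs$. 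So $z\in\Bt$, and we get $\Lxt{t}\ge -g_j(\xs)/\|z-\zs\|_P> -g_j(\xs)/t$, or the analogous dual bound. Either way $\Delta(t)<t$, that is, $\phi(t)>0$ for every $t>\delta_G$. In particular $\Delta$ is finite there.

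The third step, which I expect to be the main obstacle, closes the gap. Set $t^\star:=\sup\{t:\phi(t)<0\}\in(0,\delta_G]$; it suffices to show $\Delta(t^\star)=t^\star$. Monotonicity alone gives only $\lim_{s\uparrow t^\star}\Delta(s)\ge t^\star\ge\lim_{s\downarrow t^\star}\Delta(s)$, so a jump in $\Delta$ must be ruled out. I would first try to prove that $t\mapsto\Lxt{t}$ and $t\mapsto\Lyt{t}$ are continuous. For left-continuity, note that $\Bt$ is the union of the smaller balls up to its boundary sphere, and each boundary point $z$ can be approached by $\zs+s(z-\zs)$ with $s\uparrow1$. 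By convexity of $g_j$ (and of $\YY$), the difference quotient along a ray from $\zs$ is nondecreasing. This monotonicity forces the ray to nearly attain the supremum, and it likewise controls right-continuity: the quotient at radius $t+\epsilon$ is at most the quotient at the point scaled back to radius $t$, up to an $O(\epsilon)$ error from the second Lipschitz condition in $(iii)$.

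For $\Ly$ the map $z\mapsto y^\step$ need not be convex. There I would use the Lipschitz condition $(iv)$ to transfer points across radii with an $O(\epsilon)$ error. If continuity fails in general, the fallback is to argue directly at $t^\star$: pick a point realizing (nearly) the sign change at radius just above $t^\star$, and use the ray-scaling to show $\Delta(t^\star)\le t^\star$; the reverse inequality follows from $\phi<0$ below $t^\star$ together with left-limits. Once $\Delta(t^\star)=t^\star$ is established, finiteness $\delta_A=t^\star\le\delta_G<\infty$ follows.
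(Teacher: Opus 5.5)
Your proposal follows the paper's skeleton. $\Delta$ is nonincreasing, so $t-\Delta(t)$ is strictly increasing, which gives uniqueness. Existence comes from a sign change plus continuity of $\Lx$ and $\Ly$, which is exactly the paper's Step~1.

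Two pieces are handled differently, and more cleanly:
\begin{itemize}
\item For large $t$, you read $\Delta(t)<t$ for all $t>\delta_G$ directly off the definition of $\delta_G$. The paper only uses finiteness of $\delta_G$ to make some modulus eventually positive and then lets $t-\Delta(t)\to\infty$. Your version localizes the fixed point in $(0,\delta_G]$, so it yields $\delta_A\le\delta_G$ for free; the paper obtains this separately from Claim~\ref{prop:LipchitzGuarantee}.
\item For small $t$, your uniform bound $\Delta\ge c$ replaces the paper's discussion of $t_{\mathrm{sup}}$ and the blow-up of $\Delta$ near it.
\end{itemize}

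In the continuity step, which you rightly flag as the crux, one detail must be repaired when $P$ is singular, which is the setting of Assumption~\ref{assumption:asymptoticIdentificationWeak}.
\begin{itemize}
\item The second inequality in $(iii)$ only compares points with the same $\ker(P)$-component. The ray point $\zs+s(z-\zs)$ has $\ker(P)$-component $s\,z_{P^\perp}\neq z_{P^\perp}$ (recall $\zs\in\range(P)$). So $(iii)$ cannot be invoked for that pair as you describe.
\item The transfer for right-continuity should rescale only the $\range(P)$-part: $u=\zs+\lambda(z_P-\zs)+z_{P^\perp}$ with $\lambda=t/\|z-\zs\|_P$. This is the paper's $u^\eps$, combined with a Lipschitz bound for the quotient on a $P$-shell where the denominator stays away from zero.
\item Monotonicity of the quotient along rays is not what gives left-continuity. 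That comes from continuity of $g_j$ (and of $z\mapsto y^\step_j$, via $(iv)$).
\item Ray monotonicity bounds inner points by outer ones. It therefore cannot supply the upper bound needed for right-continuity, which is why your fallback would not avoid the Lipschitz transfer either.
\end{itemize}

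When $P\succ0$ and $\YY=\RR^{n+m}$, your convexity observation does give a slicker proof. In that case $\Lxt{t}=\frac{1}{t}\bigl(\max_{z\in\cballP{\zs}{t}}g_j(x)-g_j(\xs)\bigr)_+$, which is evidently continuous in $t$.
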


In turn, $\delta_A$ and $\delta_G$ are closely related. The following is the main result of this section. 
\begin{proposition} 
\label{prop:deltaEquivalence}
Suppose that Assumption~\ref{assumption:asymptoticIdentificationWeak} holds and that $\delta_G$ is finite. Then, 
$$\delta_A \le \delta_G.$$
Moreover, equality holds if Assumption~\ref{assumption:asymptoticIdentificationWeak} holds with \(\YY=\RR^{n+m}\) and $\step = 0$.
%with \(\step=0\) and that \(\delta_G<\infty\). Then, 
%$$\delta_G =\delta_A.$$ 
\end{proposition}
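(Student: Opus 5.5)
The plan is to prove the inequality $\delta_A\le\delta_G$ first, directly from the definitions \eqref{eq:varphiPsi}, \eqref{eq:delta_geo} and \eqref{eq:delta_alg}, and then prove the reverse inequality under the extra hypotheses $\YY=\RR^{n+m}$ and $\step=0$. By Lemma~\ref{claim:delta-well-defined}, $\delta_A$ is a finite fixed point of $\Delta$, so we may write $\delta_A=\Delta(\delta_A)$ and use the shorthands $\Lxjdo,\Lyjdo$.

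\emph{Step 1: $\delta_A\le\delta_G$.} It suffices to show that $t=\delta_A$ belongs to the set defining $\delta_G$. Take $z=(x,y)\in\ballP{\zs}{\delta_A}\cap\YY$.
\begin{itemize}
\item If $z\in\{\zs\}+\ker(P)$, we must treat it separately. For $j\in\indexN$ we use the first inequality of Assumption~\ref{assumption:asymptoticIdentificationWeak}\,(iii), which gives $g_j(x)-g_j(\xs)\le \Lxj\cdot 0$, so $g_j(x)\le g_j(\xs)<0$. For $j\in\indexBs$ we use (iv) with $\wh z^0=\zs$, which gives $y^\step_j=\ys_j>0$; here we rely on $\zs$ being a fixed point of the iteration in the seminorm sense, which I would check.
\item Otherwise $z\in\cZ_{\delta_A}$. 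By the definition of $\Lxjdo$ we get $g_j(x)\le g_j(\xs)+\Lxjdo\|z-\zs\|_P<g_j(\xs)+\Lxjdo\,\delta_A\le 0$, since $\delta_A\le -g_j(\xs)/\Lxjdo$. The inequality is strict because $\|z-\zs\|_P<\delta_A$ when $\Lxjdo>0$, and it is trivial when $\Lxjdo=0$. Similarly, $y^\step_j\ge \ys_j-\Lyjdo\|z-\zs\|_P>0$ for $j\in\indexBs$.
\end{itemize}
Hence $\delta_A$ is admissible and $\delta_A\le\delta_G$.

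\emph{Step 2: equality when $\YY=\RR^{n+m}$ and $\step=0$.} Here $y^\step=y$, and the ball is a full $P$-ball. Suppose for contradiction that $\delta_A<\delta_G$, and pick $t\in(\delta_A,\delta_G)$. Since $\Delta$ is nonincreasing in $t$ (each sup in \eqref{eq:varphiPsi} is over a growing set) and $\delta_A$ is its unique fixed point, we have $\Delta(t)<t$. So some index attains the minimum with ratio below $t$; say $j\in\indexN$ with $-g_j(\xs)<t\,\Lxt{t}$. By the definition of the supremum there is $z\in\cballP{\zs}{t}$ with $g_j(x)-g_j(\xs)>\frac{-g_j(\xs)}{t}\|z-\zs\|_P$.

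Now move along the segment $z_s=\zs+s(z-\zs)$. By convexity of $g_j$, the ratio $(g_j(x_s)-g_j(\xs))/\|z_s-\zs\|_P$ is nondecreasing in $s$, and this is where $\YY=\RR^{n+m}$ is needed, since segments and extensions stay admissible. Rescaling $z$ outward, with $s\ge1$ so that $\|z_s-\zs\|_P=t'$ for some $t'\in(t,\delta_G)$, we obtain $g_j(x_s)>-g_j(\xs)\,t'/t+g_j(\xs)>0$. This point lies in $\ballP{\zs}{\delta_G}$ and violates $G_\indexN<0$, a contradiction. For $j\in\indexBs$ the argument is easier, because $y\mapsto \ys_j-y_j$ is linear, so the ratio is constant along rays.

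\textbf{Main obstacle.} The main obstacle is the boundary bookkeeping in Step 2. We need a witness with norm strictly inside $\delta_G$ yet exceeding $t$, and the monotonicity-of-difference-quotient argument must work in the $P$-norm. To keep the argument clean, I would first try to replace $t$ by a value slightly below $\delta_G$ and use closedness of the ball.
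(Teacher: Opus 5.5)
Your proof is correct and follows the paper's argument. Step~1 is the qualitative form of Claim~\ref{prop:LipchitzGuarantee}, and Step~2 is the paper's converse: take a radius beyond $\delta_A$ where $\Delta$ drops below the identity, extract a witness from the supremum defining $\Lxt{\cdot}$ or $\Lyt{\cdot}$, and push it radially using convexity of $g_j$ or linearity in $y_j$; the paper phrases this as $\delta_G\le\delta_A+2\eps$ for every $\eps>0$ rather than by contradiction. Your separate treatment of $z\in\{\zs\}+\ker(P)$ covers a case the paper's Claim passes over, and the check you deferred needs no fixed-point property of $\zs$: it is immediate from Assumption~\ref{assumption:asymptoticIdentificationWeak}\,$(iv)$ when $\step=0$, and in general follows by comparing $z$ through $(iv)$ with points $w\in\mathcal{Z}_{\delta_A}$ with $\|w-\zs\|_P\to0$, using $\|z-w\|_P=\|\zs-w\|_P$.
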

We remark that all algorithms satisfy Assumption~\ref{assumption:asymptoticIdentificationWeak} with \(\YY=\RR^{n+m}\) and $\step = 0$, except for ADMM, which instead requires \(\YY=\R^n\times\R^m_+\) and $\step = 1$. 
%Thus, slightly abusing notation, in subsequent sections we will use the symbol $\delta$ to denote the algebraic characterization \(\delta_A\)\pil{This is no longer true. We use \(\delta_A\).}.
The rest of this section is devoted to establishing Proposition~\ref{prop:deltaEquivalence}. We start with a proof of Lemma~\ref{claim:delta-well-defined}, since it contains some of the necessary ingredients for establishing Proposition~\ref{prop:deltaEquivalence}.

\begin{proof}[Proof of Lemma~\ref{claim:delta-well-defined}]

We need to show that there exists \(t^\star\in(0,\infty)\) such that \(\Delta(t^\star)=t^\star\). To prove this, first we show that \(t\mapsto \Delta(t)\) is continuous and {\em non-increasing} in its domain, which we prove is an unbounded open interval. Clearly, \(t\mapsto t\) is also continuous but {\em increasing} without bounds. Then, we show that there exists \(t\in \RR\) such that \(\Delta(t)\geq t\). Thus, by continuity and monotonicity,  \(\Delta\) and the identity function must intersect at some point greater than or equal to \(t\). To formalize this, we divide the proof into four steps. The first three steps derive properties of $\Lx, \Ly,$ and $\Delta$, and the final step uses these properties to derive the conclusion.
    \paragraph{Step 1: Continuity  of \(\Lx\) and \(\Ly\).} 
 We will see that, under Assumption~\ref{assumption:asymptoticIdentificationWeak}, \(\Lx\) is finite and locally Lipschitz-continuous if \(j\in\indexN\). The same applies to \(\Ly\) if \(j\in\indexN\cup\indexBs\) .

Finiteness follows directly from Assumption~\ref{assumption:asymptoticIdentificationWeak} \((iii)\) and \((iv)\). 
To establish the continuity of \(\Lx\) for any fixed \(j\in\indexN\), define the auxiliary function \(h\colon\mathbb{R}^{n+m}\backslash(\{\zs\}+\ker(P))\to\RR\) given by
    \[
        h(z)=\frac{g_j(x)-g_j(\xs)}{\|z-\zs\|_P} \quad \text{for any} \quad \; z=(x,y)\in \mathbb{R}^{n+m}\backslash(\{\zs\}+\ker(P))\;.
    \]
    % From Assumption~\ref{assumption:problem} \((i)\), \(h\) is continuous.
    Also, for any \(s_0<s_1\in(0,\infty)\), define the `{$P$-shell}' 
    \[
        \mathsf{S}(s_0,s_1)=\{z\in\YY:s_0\leq \|z-\zs\|_P\leq s_1\}\,.
    \]
    Fix \(t_0\in(0,\infty)\). First, we will show that \(h\) is locally Lipschitz continuous uniformly along affine sets parallel to \(\range(P)\).
    % From Assumption~\ref{assumption:problem} \((i)\), \(g_j\) is Lipschitz continuous in the compact set \(S(t_0-\zeta,t_0+\zeta)_P\). Then, there exists \(\xi\in(0,\infty)\) such that if \(|t-t_0|\leq\xi\) then \(|h(t)-h(t_0)|\leq 2^{-1}\eps\). 
    %and \(\eps>0\). 
    Define \(s_0\in(0,t_0)\) and \(s_1=2t_0-s_0\); note that \(t_0-s_0=s_1-t_0>0\).
     For any \(z,z'\in\mathsf{S}(s_0,s_1)\) such that \(z_{P^\perp}=z'_{P^\perp}\), define \(\omega=\|z-\zs\|_P\) and \(\zeta=g_j(x)-g_j(\xs)\) and analogously $\omega'$ and $\zeta'$. By the triangle inequality and Assumption~\ref{assumption:asymptoticIdentificationWeak} \((iii)\),
    \begin{align}
    \begin{split}
        \label{eq:varphiBounds}
         &s_0\leq \omega \leq s_1 , \qquad s_0 \leq \omega' \leq s_1,\qquad|\omega - \omega'|\leq\|z-z'\|_P, \\ &\max\{|\zeta|,
         |\zeta'|\}\leq L^x_{s_1 j} s_1,\quad\text{and}\quad|\zeta-\zeta'|\leq L^x_{s_1j}\|z-z'\|\,.
         \end{split}
    \end{align}
    Therefore,
    \begin{equation}
    \label{eq:hBounds}
        \begin{aligned}
            |h(z)-h(z')|
            %&=\left|\frac{\zeta_1}{\Delta_1}-\frac{\zeta_2}{\Delta_2}\right|\\
            &= (\omega\omega')^{-1}|\omega'\zeta-\omega\zeta'| & \text{(Definitions of \(h\), \(\omega\), and \(\zeta\))}\\
            &\leq (\omega\omega')^{-1}(\omega'|\zeta-\zeta'|+\zeta'|\omega-\omega'|) & \text{(Triangle inequality)}\\
            %&\leq s_0^{-2}(s_1\Lxj\|z^1-z^2\|_P+s_1\Lxj\|z^1-z^2\|_P)\\
            &\leq2 L^x_{s_1j} s_1s_0^{-2}\|z-z'\|_P\,. & \text{(From~\eqref{eq:varphiBounds})}
        \end{aligned}
    \end{equation}
    Hence \(h\) is uniformly Lipschitz continuous over %any affine subset of
    \(\mathsf{S}(s_0,s_1)\). %parallel to \(\range(P)\).
    Next, we establish the continuity of \(\Lx(\cdot)\).
    % In particular, for \(\xi=(2s_1\Lxj)^{-1}s_0^2\eps\), if \(\|z^1-z^2\|_P\leq \xi\) then \(|h(z^1)-h(z^1)|\leq \eps\). 
    Let \(t\in(0,\infty)\) such that \(|t-t_0|\leq |s_0-t_0|\). Denote \(\ot=\max\{t,t_0\}\), \(\ut=\min\{t,t_0\}\), and for any \(\eps>0\) define \(z^\eps\in \mathsf{S}(\ut,\ot)\) such that
    \(
        h(z^\eps)+\eps\geq\sup\{h(z):z\in\cS(\ut,\ot)\}\,.
    \)
    Then,
    \begin{equation}
    \label{eq:varphiSliceEps}
        \Lx(\ot)=\max\left\{\Lx(\ut),\sup_{z\in \cS(\ut,\ot)}h(z)\right\}\leq \max\left\{\Lx(\ut),h(z^\eps)\right\}+\eps\,,
    \end{equation}
    where the first equality follows by definition.
    %\md{Consider changing the notation for this (see comment on Page 34)}\pil{Done.} 
    Let \(u^\eps\in\cballP{\zs}{\ut}\) such that
    \(
        u^\eps=\zs+\ut\|z^\eps-\zs\|_P^{-1}(z_P^\eps-\zs)+z^\eps_{P^\perp}\,.
    \)
    Since \(\zs\in\range(P)\),
    \[
       \|z^\eps-u^\eps\|_P=(1-\ut\|z^\eps-\zs\|_P^{-1})\|z^\eps-\zs\|_P=\|z^\eps-\zs\|_P-\ut\leq \ot-\ut\,.
    \]
    By construction,  \(u^\eps_{P^\perp}=z^\eps_{P^\perp}\). Then, from~\eqref{eq:hBounds}, i.e. the Lipschitz continuity of \(h\), we obtain
    \begin{equation}
    \label{eq:hPEpsBound}
        |h(z^\eps)-h(u^\eps)|\leq 2s_1\Lxj s_0^{-2}\|z^\eps-u^\eps\|_P\leq 2\Lxj s_1s_0^{-2}(\ot-\ut)\,.
    \end{equation}
    Also, since \(u^\eps\in\cballP{\zs}{\ut}\), from \(\Lx\) definition we have \(h(u^\eps)\leq \Lx(\ut)\). It follows that
    \begin{align*}
        |\Lx(\ot)-\Lx(\ut)|&=\Lx(\ot)-\Lx(\ut) & \text{(Definition of \(\Lx\), \(\ot\), and \(\ut\))}\\
        &\leq\max\left\{\Lx(\ut), h(z^\eps)\right\}-\Lx(\ut)+\eps& \text{(From~\eqref{eq:varphiSliceEps})}\\
        &\leq \max\left\{0, h(z^\eps)-h(u^\eps)\right\}+\eps\\
        &\leq 2 L^x_{s_1j} s_1s_0^{-2}(\ot-\ut)+\eps\,. & \text{(From~\eqref{eq:hPEpsBound})}
    \end{align*}
    Since \(\eps>0\)  arbitrary, \(|\Lx(t)-\Lx(t_0)|\leq 2L^x_{s_1j} s_1s_0^{-2}|t-t_0|\). Moreover, since \(t_0\in(0,\infty)\), \(\xi\in(0,t_0)\), and \(t\in[t_0-\xi,t_0+\xi]\) are aribitrary, \(\Lx\) is locally Lipschitz continuous. The proof for the functions \(\Ly\) with \(j\in\indexN\cup\indexBs\) is completely analogous.
%\md{Split previous step into two steps: continuity and strict positivity of the Lipschitz functions.}\pil{Done.}
\paragraph{Step 2: Positivity of \(\Lx\) and \(\Ly\).}
The functions \(\Lx\) and \(\Ly\) take non-negative values and are non-decreasing by construction. Furthermore, at least one of them is strictly positive for all \(t\) sufficiently large. This is because, since \(\delta_G\) is finite, there exists \(z^0=(x^0,y^0)\in\RR^{n+m}\) and \(j\in\indexN\cup\indexBs\) such that \(g_j(x^0)>g_j(\xs)\) or \(y^\step_j<\ys_j\). Then, for \(t_{\rm{dom}}:=\|z^0-\zs\|_P\), \(\Lx(t_0)\) or \(\Ly(t_0)\) is strictly positive. Furthermore, by monotonicity, \(\Lx(t)\) or \(\Ly(t)\) is strictly positive for all \(t\geq t_{\rm{dom}}\).

\paragraph{Step 3: Properties of $\Delta$.} The function \(\Delta\) is positive, non-increasing, and locally Lipschitz-continuous on its domain, which is equal to an interval of the form \((t,\infty)\) with \(t\geq0\).
     To prove this, first recall that the functions \(\Lx\) and \(\Ly\) take non-negative real values.%\md{You need these functions to be strictly positive to conclude Lipschitzness. If the functions $g_{j}$ are constant this might fail.}\pil{Clarified. Done.} 
     Moreover, \(-g_j(\xs)>0\) for all \(j\in\indexN\) and \(\ys_j>0\) for all \(j\in\indexBs\). Hence, \(\Delta\) is positive. Also, the functions \(\Lx\) and \(\Ly\) non non-decreasing, whence \(\Delta\) is non-increasing. Define 
    \begin{equation}
    \label{eq:DeltaDomainBound}
        t_{\rm{sup}}:=\sup\{t\in(0,\infty):\Lx(t)=0 \text{ for all }j\in\indexN\text{ and }\Ly(t)=0\text{ for all }j\in\indexBs \}\,.
    \end{equation}
    From Step 2 we obtain that \(t_{\rm{sup}}\leq t_{\rm{dom}}<\infty\). Since \(\Delta\) is non-increasing, \(\Delta(t)<\infty\) for all 
    \(t> t_{\rm{sup}}\). Furthermore, by Step 1 the functions \(\Lx\) and \(\Ly\) are continuous, whence if the supremum is not equal to zero in~\eqref{eq:DeltaDomainBound}, then it is attained and \(\Delta(t_{\rm{sup}})=\infty\). It follows that \(\dom(\Delta)=(t_{\rm{sup}},\infty)\). Moreover, \(\Delta\) is locally Lipschitz-continuous on \(\dom(\Delta)\) since it is the minimum of compositions of locally Lipschitz-continuous functions, that is, \(\Delta= \min\{\min_{j\in\indexN}h^{\indexN}_j\circ\Lxjdo,\min_{j\in\indexBs} h^{\indexBs}_j\circ \Lyjdo\}\) where \(h^{\indexN}_j(t)=-g_j(\xs)/t\) and \(h^{\indexBs}_j(t)=\ys_j/t\).
\paragraph{Step 4: Fixed point existence and uniqueness.}
% \md{Pedro, in Step 3 you reused a bunch of the same notation from previous steps, but you endowed it with new semantic meaning. DO NOT DO THAT. The reader can easily get confused between the definition of the symbols at different steps.}
Let \(F\colon(0,\infty)\to[-\infty,\infty)\) be such that \(F(t)=t-\Delta(t)\). Note that \(\Delta(t^\star)=t^\star\) if and only if \(F(t^\star)=0\). The function \(F\) is increasing unboundedly and it is continuous, because \(\Delta\) is non-increasing and continuous (see Step 3). Then, by the Intermediate Value Theorem, to verify the existence of \(t^\star\in (0,\infty)\) such that  \(F(t^\star)=0\) it suffices to show that exists \(t\in(0,\infty)\) with \(F(t)\leq 0\), or equivalently, \(t\leq \Delta(t)\). 
If \(t_{\rm{sup}}=0\) (see~\eqref{eq:DeltaDomainBound}), then \(t\leq \Delta(t)\) for any \(t\leq \min\{1,\Delta(1)\}\), because \(\Delta\) is non-increasing by Step 3. If \(t_{\rm{sup}}>0\), then \(\Delta(t)\to \infty\) when \(t\to t_{\rm{sup}}\). This is because the supremum is attained in~\eqref{eq:DeltaDomainBound}, as shown in Step 3, and the functions \(\Lx\) and \(\Ly\) are continuous, as established in Step 1. Hence, for any \(\varepsilon>0\) sufficiently small and \(t=t_{\rm{sup}}+\varepsilon\) we obtain \(t\leq \Delta(t)\). Thus, there exists \(t^\star\in(0,\infty)\) such that \(F(t^\star)=0\). Its uniqueness follows from the strict monotonicity of \(F\). 
This concludes the proof of Lemma~\ref{claim:delta-well-defined}.

\end{proof}

Armed with this lemma, we are now ready to establish Proposition~\ref{prop:deltaEquivalence}.
\begin{proof}[Proof of Proposition~\ref{prop:deltaEquivalence}]

    We start with a claim that we will be useful for future arguments. 
    \begin{claim}
\label{prop:LipchitzGuarantee}
Suppose that Assumption~\ref{assumption:asymptoticIdentificationWeak} holds and that \(\delta_G\) is finite. Then, for any \(\theta\in(0,1)\), and primal-dual point \(z^0=(x^0,y^0)\in \cballP{\zs}{\theta\delta_A}\cap\YY\) we have
\[
  -g_j(x^0)\geq (1-\theta)\Lxjdo\delta_A\, \quad\text{for all }j\in\indexN,\quad\text{and}\quad y^\step_j\geq(1-\theta)\Lyjdo\delta_A\quad\text{for all }j\in\indexBs\,.
\]
In particular,  any primal-dual point \(z^0=(x^0,y^0)\in \ballP{\zs}{\delta_A}\cap\YY\) satisfies \(G_\indexN(x^0)<0\) and \(y^\step_\indexB>0\). 
\end{claim}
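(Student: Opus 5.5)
The claim follows directly from the definition of the optimal Lipschitz moduli in \eqref{eq:varphiPsi} and the fixed-point characterization \eqref{eq:delta_alg} of $\delta_A$, which is well defined by Lemma~\ref{claim:delta-well-defined}. Fix $\theta\in(0,1)$ and $z^0=(x^0,y^0)\in\cballP{\zs}{\theta\delta_A}\cap\YY$. If $z^0\in\{\zs\}+\ker(P)$, I would handle it separately. Otherwise $z^0\in\mathcal{Z}_{\delta_A}$, since $\theta\delta_A\le\delta_A$.

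\emph{Primal part.} Take $j\in\indexN$. By definition of $\Lx$ and monotonicity in the radius, $g_j(x^0)-g_j(\xs)\le(g_j(x^0)-g_j(\xs))_+\le \Lxjdo\,\|z^0-\zs\|_P\le \theta\,\Lxjdo\,\delta_A$. The fixed-point equation gives $\delta_A\le -g_j(\xs)/\Lxjdo$, so $-g_j(\xs)\ge \Lxjdo\,\delta_A$ (trivially true if $\Lxjdo=0$). Subtracting,
\[
-g_j(x^0)\ \ge\ -g_j(\xs)-\theta\Lxjdo\delta_A\ \ge\ (1-\theta)\Lxjdo\delta_A .
\]

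\emph{Dual part.} Take $j\in\indexBs$. In the same way, $\ys_j-y_j^\step\le\Lyjdo\,\theta\delta_A$ and $\ys_j\ge\Lyjdo\delta_A$, which give $y_j^\step\ge(1-\theta)\Lyjdo\delta_A$.

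\emph{The case $z^0-\zs\in\ker(P)$.} Here I would use the second Lipschitz bound in Assumption~\ref{assumption:asymptoticIdentificationWeak}~$(iii)$. Take $\wh z=z^0$ and compare with $\zs$: the two points share the same $P^\perp$-component only up to $\zs_{P^\perp}=0$. A cleaner route is the first inequality of $(iii)$, which directly gives $g_j(x^0)-g_j(\xs)\le \Lxj\|z^0-\zs\|_P=0$. So $-g_j(x^0)\ge -g_j(\xs)\ge\Lxjdo\delta_A$. The dual bound follows similarly from $(iv)$, the analogous zero-distance Lipschitz bound, which gives $|y^\step_j-\ys_j|\le0$ when the initial iterates agree in $P$-seminorm (taking $\wh z^0=\zs$, a fixed point). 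I expect this degenerate-kernel bookkeeping to be the only delicate step.

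\emph{The ``in particular'' statement.} Let $z^0\in\ballP{\zs}{\delta_A}\cap\YY$ and choose $\theta=\|z^0-\zs\|_P/\delta_A<1$. The bounds above give strict positivity whenever $\Lxjdo>0$. When $\Lxjdo=0$, use directly $g_j(x^0)\le g_j(\xs)<0$, and likewise $y^\step_j\ge\ys_j>0$. This yields $G_\indexN(x^0)<0$ and the dual positivity on $\indexBs$.
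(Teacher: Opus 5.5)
Your argument is the paper's: the fixed-point identity $\delta_A=\Delta(\delta_A)$ gives $-g_j(\xs)\ge\Lxjdo\delta_A$ and $\ys_j\ge\Lyjdo\delta_A$. The definition of the moduli bounds the deviations from $\zs$ by $\theta\Lxjdo\delta_A$ and $\theta\Lyjdo\delta_A$. The ``in particular'' part then splits on whether the modulus vanishes. As you implicitly do, the dual positivity can only be claimed on $\indexBs$, not on all of $\indexB$.

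Your separate treatment of $z^0\in\{\zs\}+\ker(P)$, which the paper does not address, is sound on the primal side via the first inequality in Assumption~\ref{assumption:asymptoticIdentificationWeak}~$(iii)$. On the dual side, however, nothing in the assumptions makes $\zs$ a fixed point of the $\step$-step map when $\step\ge 1$; the paper even notes that $\zs$ need not lie in $\sol$. Instead, compare $z^0$ with nearby points $\wh z^0\in\mathcal{Z}_{\delta_A}$ through the two-point bound in $(iv)$, and let $\|\wh z^0-z^0\|_P\to 0$. This yields $y^\step_j\ge\ys_j$.
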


\begin{proof}[Proof of Claim~\ref{prop:LipchitzGuarantee}]
By Claim~\ref{claim:delta-well-defined}, \(\delta_A\in(0,\infty)\) is well defined. Let \(z=(x,y)\in\cballP{\zs}{\theta\delta_A}\cap\YY\) and \(j\in\indexN\). From~\eqref{eq:delta_alg}, we derive \(-g_j(\xs)\geq \Lxjdo\delta_A\). Then,
\begin{equation}
\label{eq:LipschtizBound}
    -g_j(x)=-g_j(\xs)-(g_j(x)-g_j(\xs))\geq -g_j(\xs)-\Lxjdo\|z-\zs\|_P\geq \Lxjdo\delta_A -\Lxjdo\theta\delta_A=(1-\theta)\Lxjdo\delta_A\,.
\end{equation}
The result for the dual bound follows analogously. Note that for any \(z^0=(x^0,y^0)\in\ballP{\zs}{\delta_A}\) there exists \(\theta\in(0,1)\) such that \(z^0\in\cballP{\zs}{\theta\delta_A}\). Then, from~\eqref{eq:LipschtizBound}, \(g_j(x)<0\)  if \(\Lxjdo>0\). Otherwise, if \(\Lxjdo=0\), \(g_j(x)\leq g_j(\xs)<0\), because \(j\in\indexN\).
\end{proof}
\noindent An immediate consequence of this claim is that $\delta_A \le \delta_G.$
    
    Next, we show that the converse inequality \(\delta_A\geq \delta_G\) also holds when \(\YY=\R^{n+m}\) and $\step = 0.$ Let \(\eps>0\) and consider the ball \(\ballP{\zs}{\delta_A+2\eps}\). Since \(\delta_A\) is the unique fixed point of \(\Delta\), and \(\Delta\) is non-increasing, then \(\Delta(\delta_A+\eps)<\delta_A+\eps\). Thus, either \((a)\)
    there exists \(j\in \indexN\) such that \(\displaystyle\delta_A+\eps>-g_j(\xs)/\Lx(\delta_A+\eps)\), or \((b)\) there exists \(j\in \indexBs\) such that \(\displaystyle\delta_A+\eps>\ys_j/\Ly(\delta_A+\eps)\). Suppose \((a)\) is true. Then, from the definition of \(\Lx\), there exists \(z\in \cballP{\zs}{\delta_A+\eps}\backslash(\{\zs\}+\ker(P))\) with
        \begin{equation}
        \label{eq:deltaAproofInequality}
            \frac{g_j(x)-g_j(\xs)}{\|z-\zs\|_P}> \frac{-g_j(\xs)}{\delta_A+\eps}\,.
        \end{equation}
        %Thanks to convexity, we can take a point $z$ such that \(\|z-\zs\|_P=\delta_A+\eps\).
        %\md{Pedro, please explain this in more detail}\pil{Done.}
        If \(\|z-\zs\|_P<\delta_A+\eps\), define  and \(z'=\zs + (\delta_A+\eps)(z-\zs)/\|z-\zs\|_P\). Note that \(z=\theta z' + (1-\theta)\zs\) for \(\theta = \|z-\zs\|_P/(\delta_A+\eps)\in(0,1)\). Also, \(\|x-\xs\|_2\neq 0\), otherwise the LHS of~\eqref{eq:deltaAproofInequality} would be equal to zero, contradicting the inequality. Furthermore, by the homogeneity of the seminorm,
        \begin{equation}
        \label{eq:deltaAproofNorms}
            \zeta:=\frac{\|z-\zs\|_P}{\|x-\xs\|_2}=\frac{\|\theta (z'-\zs)\|_P}{\|\theta(x'-\xs)\|_2}=\frac{\|z'-\zs\|_P}{\|x'-\xs\|_2}\,.
        \end{equation}
        On the other hand, Assumption~\ref{assumption:problem} \((i)\) gives us that \(g_j\) is convex. Then, considering that \(z\) is a convex combination of \(z'\) and \(\zs\), and that \(\|z'-\zs\|_P=\delta_A+\eps\), we obtain
        \begin{equation}
        \label{eq:deltaAproofExtending}
        \begin{aligned}
            \frac{g_j(x')-g_j(\xs)}{\delta_A+\eps}&=\frac{g_j(x')-g_j(\xs)}{\zeta\|x'-\xs\|_2} & \text{(From~\eqref{eq:deltaAproofNorms})}\\
            &\geq \frac{g_j(x)-g_j(\xs)}{\zeta\|x-\xs\|_2} & \text{(Convexity of \(g_j\))}\\
            &= \frac{g_j(x)-g_j(\xs)}{\|z-\zs\|_P} & \text{(From~\eqref{eq:deltaAproofNorms})}\\
            &> \frac{-g_j(\xs)}{\delta_A+\eps} & \text{(From~\eqref{eq:deltaAproofInequality})}\,.
        \end{aligned}
        \end{equation}
        Therefore \(g_j(x')-g_j(\xs)> -g_j(\xs)\) and, consequently, \(g_j(x')> 0\). If \((b)\) is true, by an analogous argument we get that there exists \(j\in\indexBs\) and \(z=(x,y)\in\cballP{\zs}{\delta_A+\eps}\) such that \(y_j< 0\). In either case, the constraints \(G_\indexN(x^0)<0\) and \(y>0\) are not satisfied at every point of \(\ballP{\zs}{\delta_A+2\eps}\). Thus, $\delta_{G} \leq \delta_{A}+2\eps$ and since $\eps$ was arbitrary, we obtain $\delta_{G} \leq \delta_{A}.$ This concludes the proof of Proposition~\ref{prop:deltaEquivalence}.

        \end{proof}

\subsection{Proof of Theorems~\ref{theorem:asymptoticIdentification} and~\ref{theorem:finiteTimeIdentificationLinear}}
\label{appendix:proof-asymptoticIdentification}

In this section, we prove generalized versions of Theorems~\ref{theorem:asymptoticIdentification} and~\ref{theorem:finiteTimeIdentificationLinear} using Assumption~\ref{assumption:asymptoticIdentificationWeak}. We transcribe the statement of these results here for completeness. To do this, first we extend the set defined in~\eqref{eq:YN} to account for the set \(\YY\) introduced with Assumption~\ref{assumption:asymptoticIdentificationWeak}. In the context of Assumption~\ref{assumption:asymptoticIdentificationWeak}, we denote
    \[
        \YN=\{(x,y)\in\YY:y_\indexN=0\}.
    \]
Note that this is equal to~\eqref{eq:YN} when \(\YY=\R^{n+m}\).
%\hnote{Add a roadmap of the proof here. Roughly speaking, (i) $z_k$ converges to $z^*$ in $P$ norm, (ii) $g_j$ is in the right direction, thus $y$ decreases by a certain amount whenever positive, (iii) this leads to finite-time identification. (please state it in a formal language). }\pil{Done. Following Mateo's suggestion, I wrote it after the statement of the Theorems, and right before the proof.}
\begin{theorem}[Generalization of Theorem~\ref{theorem:asymptoticIdentification}]
\label{theorem:asymptoticIdentificationG}
      Suppose Assumptions~\ref{assumption:problem} and~\ref{assumption:asymptoticIdentificationWeak} hold. Let $z^k$ be the \(k\)th iterate generated by update \eqref{eq:algorithm}. Then, there exists \(K\in\NN\) such that \(z^k\in \ballP{\zs}{\delta_{A}}\cap\YN\subseteq\cM\) for all \(k\geq K\).\footnote{\label{fn:delta}We emphasize that $\delta_A$ in this statement is the quantity defined in \eqref{eq:delta_alg}; by Proposition~\ref{prop:deltaEquivalence}, it coincides with $\delta_G$ as defined in \eqref{eq:degeneracy-metric} whenever Assumption~\ref{assumption:asymptoticIdentificationWeak} holds with $p=0$.}
\end{theorem}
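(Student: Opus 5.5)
Convergence in the $P$-seminorm (Assumption~\ref{assumption:asymptoticIdentificationWeak} $(i)$) is what drives everything. The first step is to get iterates inside a $P$-ball of radius strictly below $\delta_A$. Fix $\theta\in(0,1/2)$. By convergence, there is $K_0$ such that $\|z^k-\zs\|_P\le\theta\delta_A$ and $\|\tilde z^k-\zs\|_P\le\theta\delta_A$ for all $k\ge K_0$. The iterates $z^k,\tilde z^k$ also lie in $\YY$: $\tilde z^k\in\YY$ by $(ii)$, and $y^k\ge 0$ holds automatically for $k\ge1$ because of the projection. Claim~\ref{prop:LipchitzGuarantee} then gives two bounds for all $k\ge K_0$. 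For each $j\in\indexN$, we get $g_j(\tilde x^{k})\le-(1-\theta)\Lxjdo\delta_A<0$, or $g_j(\tilde x^k)\le g_j(\xs)<0$ when $\Lxjdo=0$. Either way there is a uniform margin $c:=\min_{j\in\indexN}\min\{(1-\theta)\Lxjdo\delta_A,\,-g_j(\xs)\}>0$ with $g_j(\tilde x^k)\le -c$ when $\Lxjdo>0$. I will clean this up by taking $c$ to be the positive constant produced by the claim. Applied with $z^0$ replaced by $z^k$ (the algorithm is time-invariant), the claim also gives $y^{k+\step}_j>0$ for $j\in\indexBs$.

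The second step kills the nonactive duals in finitely many steps. For $j\in\indexN$ and $k\ge K_0$, the dual update $(ii)$ gives
\[
y^{k+1}_j=\max\{0,\,y^k_j+\eta g_j(\tilde x^{k+1})\}\le \max\{0,\,y^k_j-\eta c\}.
\]
Hence $y^k_j$ decreases by at least $\eta c$ per step until it hits zero, and then it stays at zero. Since $y^{K_0}_j$ is finite, we have $y^k_j=0$ for all $k\ge K_1:=K_0+1+\max_{j\in\indexN}\lceil y^{K_0}_j/(\eta c)\rceil$. So $z^k\in\YN$ for $k\ge K_1$. In addition, $\|z^k-\zs\|_P\le\theta\delta_A<\delta_A$, so $z^k\in\ballP{\zs}{\delta_A}\cap\YN$.

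The last step is the inclusion $\ballP{\zs}{\delta_A}\cap\YN\subseteq\cM$. Take $z$ in this set. Claim~\ref{prop:LipchitzGuarantee} gives $G_\indexN(x)<0$. By definition of $\YN$, we have $y_\indexN=0$ and $y\ge0$. The condition $y_{\indexBs}>0$ is immediate when $\step=0$ from the dual part of the claim. For $\step\ge1$ I would instead argue directly along the sequence: for the iterates $z^k$ with $k\ge K_0+\step$, the claim applied at $z^{k-\step}$ gives $y^k_{\indexBs}>0$. So the iterates lie in $\cM$ even if the set inclusion is only stated for $\step=0$ (where $\delta_A=\delta_G$ by Proposition~\ref{prop:deltaEquivalence} and the inclusion is exactly the definition of $\delta$). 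Taking $K=\max\{K_1,K_0+\step\}$ finishes the proof.

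The main obstacle is the semidefinite case. There, convergence of $\|\tilde z^k-\zs\|_P$ does not control $\tilde x^k$ in the $\ker(P)$ directions. The control of $g_j(\tilde x^k)$ must therefore come entirely from the $P$-Lipschitz bound in Assumption~\ref{assumption:asymptoticIdentificationWeak} $(iii)$ restricted to $\YY$, and that bound is packaged in Claim~\ref{prop:LipchitzGuarantee}. I would check carefully that $\tilde z^k\in\YY$ and that $\delta_A$ is finite, or else handle $\delta_G=\infty$ separately using any finite radius.
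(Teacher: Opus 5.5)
Your proof is correct and follows essentially the same route as the paper. The paper likewise enters a $P$-ball of radius $\delta_A/2$ by convergence and uses Claim~\ref{prop:LipchitzGuarantee} to get a uniform negative margin on $g_j(\tilde x^k)$ for $j\in\indexN$, with the same case split $C_j$ when $\Lxjdo=0$ that your formula for $c$ needs but does not yet have. It then drives $y_\indexN$ to zero through the projected dual ascent (Proposition~\ref{prop:identificationFinalStep}) and obtains $y_{\indexBs}>0$ by applying the claim $\step$ steps earlier (Proposition~\ref{prop:manifoldInclusion}).

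The one difference is how the starting dual value is controlled. The paper bounds $y^\step_j$ via the dual Lipschitz condition in Assumption~\ref{assumption:asymptoticIdentificationWeak}~$(iv)$, which gives the explicit count $\step+\lceil\max_{j\in\indexN}\Lyjd/(\eta C_j)\rceil$ reused in the nonasymptotic theorems. You instead use the finite value $y^{K_0}_j$, which suffices for the existence statement here.
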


\begin{theorem}[Generalization of Theorem~\ref{theorem:finiteTimeIdentificationLinear}]
\label{theorem:finiteTimeIdentificationLinearG}
     Suppose Assumptions~\ref{assumption:problem}, ~\ref{assumption:asymptoticIdentificationWeak},~\ref{assumption:rapidLocalConvergence}, and~\ref{assumption:finiteTimeIdentification} hold and that the \(k\)-th iterate \(z^k\) and the \(k\)-th intermediate iterate \(\tilde{z}^k\) of the meta-algorithm defined in~\eqref{eq:algorithm} are equal. Then, \(z^k\in\ballP{\zs}{\delta_{A}/2}\cap\YN\subseteq\cM\) provided
    \[
        k >\left\lceil e\left(\frac{\gamma\lambdamax(P)}{\alpha_G}\right)^{\irate}\right\rceil\left[1+{\irate}\ln\left(\frac{4\lambdamax(P)^{\frac{3}{2}}\dist_2(z^0,\sol)}{\alpha_G\delta_A}\right)\right]+\left\lceil \step+\max_{j\in\indexN}\frac{\Lyjd}{\eta C_j}\right\rceil, %\quad\text{where}\quad \rho=.
    \]
    where
    \[
        C_j=\begin{cases}
            \Lxjdo & \text{if }\Lxjdo>0\\
            -2g_j(\xs)/\delta_A & \text{otherwise.}\footref{fn:delta}
        \end{cases}
    \]
    % equal to \(\Lxjdo\) if \(\Lxjdo>0\) or equal to \(-2g_j(\xs)/\delta_{A}\) otherwise.
\end{theorem}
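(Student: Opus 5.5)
The proof has two phases. First, we use the linear convergence of Proposition~\ref{prop:linearConvergence} to show that after $K_1$ iterations every iterate lies in $\ballP{\zs}{\delta_A/2}$. Second, for $j\in\indexN$, inside that ball the dual coordinate $y_j$ decreases by a fixed amount per step, so it reaches zero after at most $K_2=\lceil \step+\max_j \Lyjd/(\eta C_j)\rceil$ further steps. After that it stays zero, which gives membership in $\YN$. Claim~\ref{prop:LipchitzGuarantee} then yields $\ballP{\zs}{\delta_A}\cap\YN\subseteq\cM$.

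\emph{Phase 1.} Proposition~\ref{prop:linearConvergence} gives $\dist_P(z^k,\sol)\le \sqrt{e}\,\nu\exp(-k/(2\rho))\,\lambdamax(P)^{1/2}\dist_2(z^0,\sol)$, with $\rho=\lceil e\nu^2\rceil$ and $\nu=\gamma\lambdamax(P)/\alpha_G$. To pass from distance to $\sol$ to distance to $\zs$, I use star non-expansiveness, Assumption~\ref{assumption:finiteTimeIdentification}(ii). Let $\zstilde$ be a ($P$-)nearest point of $\sol$ to $z^k$. Then
\[
\|z^k-\zs\|_P\le \|z^k-\zstilde\|_P+\|\zstilde-\zs\|_P\le 2\|z^k-\zstilde\|_P=2\dist_P(z^k,\sol).
\]
This is uniform in all later iterates, because $\dist_P(z^\ell,\sol)$ keeps decaying. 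It therefore suffices that $2\sqrt{e}\,\nu e^{-k/(2\rho)}\lambdamax(P)^{1/2}\dist_2(z^0,\sol)<\delta_A/2$. Solving for $k$ gives $k>\rho[1+2\ln(4\nu\lambdamax(P)^{1/2}\dist_2(z^0,\sol)/\delta_A)]$, and $\nu\lambdamax(P)^{1/2}$ matches the factor $\lambdamax(P)^{3/2}/\alpha_G$ in the statement (up to $\gamma$). Since $\tilde z^k=z^k$, the auxiliary iterates also lie in the ball.

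\emph{Phase 2.} Fix $j\in\indexN$ and $k\ge K_1$. By Assumption~\ref{assumption:asymptoticIdentificationWeak}(ii),
\[
y^{k+1}_j=\max\{0,\;y^k_j+\eta g_j(\tilde x^{k+1})\}.
\]
Applying Claim~\ref{prop:LipchitzGuarantee} with $\theta=1/2$ gives $-g_j(\tilde x^{k+1})\ge \tfrac12\Lxjdo\,\delta_A$ when $\Lxjdo>0$. When $\Lxjdo=0$, the definition gives $g_j(x)\le g_j(\xs)$ on the ball, so $-g_j\ge -g_j(\xs)=\tfrac12 C_j\delta_A$. In both cases $-g_j(\tilde x^{k+1})\ge C_j\delta_A/2$. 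So as long as $y_j$ is positive it decreases by at least $\eta C_j\delta_A/2$ per step, and once it hits zero it stays zero.

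It remains to bound $y_j$ at the start of Phase 2. After the $\step$ delay built into Assumption~\ref{assumption:asymptoticIdentificationWeak}(iv), the dual Lipschitz bound gives $y_j\le \ys_j+\Lyjd\|z-\zs\|_P\le \Lyjd\,\delta_A/2$, since $\ys_j=0$ for $j\in\indexN$. Dividing, at most $\step+\Lyjd/(\eta C_j)$ further steps suffice. Taking the maximum over $j\in\indexN$ gives $y_\indexN=0$ for all $k>K_1+K_2$.

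The main obstacle is the bookkeeping in Phase 2 together with the constants in Phase 1. Phase 2 needs care with the $\step$-step delay and the case $\Lxjdo=0$; under Assumption~\ref{assumption:asymptoticIdentificationWeak} the $y^\step$ Lipschitz bound must be applied with the restarted iterate $z^{k-\step}$ as initial point, and that iterate must also lie in the ball. Phase 1 requires verifying that the factor lost in converting $\dist_P(\cdot,\sol)$ into a bound on $\|\cdot-\zs\|_P$ fits inside the stated $4\lambdamax(P)^{3/2}/(\alpha_G\delta_A)$ logarithm.
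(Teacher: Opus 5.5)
Your proposal is correct and follows the paper's proof essentially step for step. The paper also combines Proposition~\ref{prop:linearConvergence} with star non-expansiveness to get $\|z^k-\zs\|_P\le 2\dist_P(z^k,\sol)$, and it uses an $\varepsilon$-approximate nearest point, which you also need when $P$ is singular. It then applies Proposition~\ref{prop:identificationFinalStep} to the tail sequence, which is exactly your Phase~2, including the restart from $z^{k-\step}$. The factor $\gamma$ you flag inside the logarithm also appears in the paper's own derivation, so the threshold as printed (without $\gamma$) is only obtained when $\gamma\le 1$; otherwise $\gamma$ must be inserted into the log.
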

Let us make a couple of remarks about the statement of this result.
Recall that $\Lxjdo := \sup_{z \in \ballP{\zs}{\delta} \setminus \zs} \tfrac{g_j(x) - g_j(x^\star)}{\|z - \zs\|_P}$. If \(g_j\) is constant in \(\ballP{\zs}{\delta}\), then \(\overline\Lxjd=0\), whence the bound in Theorem~\ref{theorem:finiteTimeIdentificationLinear} becomes unrealizable. The constant \(C_j\) in Theorem~\ref{theorem:finiteTimeIdentificationLinearG} auxiliates this edge case. %To understand its construction, first note that the origin of \(\Lxjdo\) in the bound of Theorem~\ref{theorem:finiteTimeIdentificationLinear} relies on the proof of Proposition~\ref{prop:identificationFinalStep}. Specifically, it arises from the derivation of a strictly negative upper bound for \(g_j(x)\) proportional to \(-\delta_A\) when \(z=(x,y)\in\ballP{\zs}{\delta_A/2}\) and \(j\in\indexN\). If \(g_j\) is not constant in \(\ballP{\zs}{\delta}\), by means of Claim~\ref{prop:LipchitzGuarantee} we obtain \(g_j(x)\leq -\Lxjdo \delta_A/2\). On the other hand, if \(g_j\) is constant in \(\ballP{\zs}{\delta}\), we still have \(g_j(x)=g_j(\xs)=-\bar{C}_j\delta_A\) with \(\bar{C}_j=-g_j(\xs)/\delta_A\); note that \(\bar{C}_j<0\) when \(j\in\indexN\). From the definition~\eqref{eq:delta_alg}, we observe \(\Lxjdo\leq \bar{C}_j\); the bound with \(\bar{C}_j\) for the case  \(\Lxjdo=0\) is formally stronger than the one for the case \(\Lxjdo> 0\). It can be further improved with the two appearing in \(C_j\), which is permitted in the proof of Proposition~\ref{prop:identificationFinalStep} morally from the fact that required bound is for \(g_j(x)\) with \(z=(x,y)\) in \(\ballP{\zs}{\delta_A/2}\), not in \(\ballP{\zs}{\delta_A}\). \pil{I wrote this Remark after a comment of Haihao in the last meeting that asked me to explain what the intuition was behind the form of \(C_j\). This is the story, although I'm dissatisfied with not being able to explain it more succinctly.}
%\hnote{Move this remark to after the statement of Theorem B.5 before the proof. Connect to the algorithms (i.e., PDHG satisfies the $z^k=\tilde{z}^k$.)}\pil{Done.}
%\noindent{\em Remark (\(z^k=\tilde{z}^k\)).} 
It is natural to wonder whether the condition \(\tilde{z}^k= z^k\) for all \(k\in\NN\) is essential for the above result---note that ADMM and PPM satisfy it, but PDHG and EGM do not. This condition is not essential. The same conclusion holds provided that, for all $k\in\NN$,
\(
\dist_{P^\dagger}\!\bigl(0,\sub(\tilde z^{k})\bigr)\ \le\ \frac{\gamma\,\dist_{P}(\tilde z^{0},\sol)}{k^{\rate}}\,.
\)
Under this bound, linear convergence of $\tilde z^{k}$ follows by an analogous argument to the one used for $z^{k}$ (see Propositions~\ref{theorem:linearConvergence} and~\ref{prop:linearConvergence}), and this is sufficient to conclude Theorem~\ref{theorem:finiteTimeIdentificationLinearG}.
% can be proven for  in the same way as it is proven for \(z^k\).

The strategy we use to establish active set identification is straightforward. Recall that the identifiable set is characterized by the (in)equalities \(G_\indexN(x)<0\), \(y_{\indexBs}>0\), and \(y_\indexN=0\). Since the iterates converge to \(\zs\) in the \(P\)-norm, they remain inside \(\ballP{\zs}{\delta_A/2}\) after a sufficient number of steps. Then, Claim~\ref{prop:LipchitzGuarantee} gives us two conditions that any point inside this ball satisfies.
\begin{itemize}
    \item[\((i)\)] The magnitude of the nonactive constraints \(G_\indexN\) is negative and uniformly bounded away from zero, establishing the \(G_\indexN(x)<0\) identification inequality.
    \item[\((ii)\)] The multipliers of the active constraints \(y_{\indexBs}\) are positive and uniformly bounded away from zero, establishing the \(y_{\indexBs}>0\) identification inequality.
\end{itemize}
To prove the remaining equality \(y_\indexN=0\), note that any point inside \(\ballP{\zs}{\delta_A/2}\) also satisfies that the multipliers of the nonactive constraints \(y_\indexN\) are close to zero, since they are close to \(\ys_\indexN=0\). With this and \((i)\), the iterative application of the meta-algorithm's dual update---gradient ascent projected onto \(\RR^m_+\)---ensures that \(y_\indexN=0\) after a sufficient number of steps. Finally, active set identification can be guaranteed in a {\em finite number of steps} leveraging the explicit rates of convergence of the meta-algorithm and the metric subregularity condition of the problem.

% Now we show our first finite time asymptotic identification Theorem~\ref{theorem:finiteTimeIdentificationLinear}, which shows that when the intermediate step is redundant, we obtain finite time identification in \(O(\alpha_G^{-1}\log(\delta_A^{-1}))\) steps. The proof is divided into two parts; each one corresponding to the two summands in the definition of $K$ in Theorem~\ref{theorem:finiteTimeIdentificationLinear}. The first part, associated to the first summand, bounds the number of steps it takes for the iterates to land on the $P$-ball of radius $\delta_A/2$ around $\zs$. The second part corresponds to the additional number of iterations needed to reach $\YN$. This one is a direct application of Proposition~\ref{prop:localSystemFiniteIdentification}.
% We start with auxiliary propositions that will be used in both proofs. The following proposition establishes a sufficient condition for iterates to belong to the union of manifolds \(\cM\).
We begin with a few auxiliary propositions that will be used in both proofs. The next proposition provides a sufficient condition ensuring that the iterates eventually lie in the union of manifolds \(\cM\). 
%Recall that, in the context of Assumption~\ref{assumption:asymptoticIdentificationWeak}, \(\YN=\{(x,y)\in\YY:y_\indexN=0\}\).
% \begin{proposition}
% \label{prop:manifoldInclusion}
%     Suppose Assumption~\ref{assumption:asymptoticIdentificationWeak} hold. Then, \(\ballP{\zs}{\delta_A}\cap\YN\subseteq\cM\).
% \end{proposition}

% \begin{proof}
%      Let \(z=(x,y)\in\cballP{\zs}{\theta \delta_A}\cap\YN\). Since, \(z\in \cballP{\zs}{\delta_A}\), from Claim~\ref{prop:LipchitzGuarantee} we have \(G_\indexN(x)<0\) and \(y_{\indexBs}>0\). Also, since \(z\in\YN\), we have \(y_{\indexBw}\geq 0\) and \(y_\indexN=0\). Thus, \(z\in\cM\).
% \end{proof}
\begin{proposition}
\label{prop:manifoldInclusion}
    Suppose Assumption~\ref{assumption:asymptoticIdentificationWeak} holds. For any initial iterate \(z^0\), if \(z^0\in\ballP{\zs}{\delta_{A}}\) and \(z^{\step}\in\ballP{\zs}{\delta_{A}}\cap\YN\), then \(z^{\step}\in\cM\). In particular, if $\step = 0,$ then $\ballP{\zs}{\delta_A}\cap\YN\subseteq\cM.$\footref{fn:delta} %\md{Can you remind us what $\YN$ is?}\pil{Done. I wrote its `generalized' definition at the beginning of the subsection.}
\end{proposition}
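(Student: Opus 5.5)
The plan is to check the three defining conditions of $\cM$ in \eqref{eq:identifiableSet} directly at $z^\step=(x^\step,y^\step)$, and to combine them with membership in $\YN$. Membership $z^\step\in\YN$ already gives $y^\step_\indexN=0$. It also gives $z^\step\in\YY$, and the nonnegativity $y^\step\ge 0$ is automatic from the dual update in Assumption~\ref{assumption:asymptoticIdentificationWeak}~$(ii)$, which is a projection onto $\RR^m_+$ (for $\step=0$ this is where we use $\YN\subseteq\R^n\times\R^m_+$ in the setting of \eqref{eq:YN}). It therefore remains to show $G_\indexN(x^\step)<0$ and $y^\step_{\indexBs}>0$.

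For the primal inequality I will invoke Claim~\ref{prop:LipchitzGuarantee} at the point $z^\step$ itself. Since $z^\step\in\ballP{\zs}{\delta_A}$, there is $\theta\in(0,1)$ with $z^\step\in\cballP{\zs}{\theta\delta_A}\cap\YY$. The primal part of the claim only uses the modulus $\Lx$ evaluated at the point in question, and gives
\[
-g_j(x^\step)\geq(1-\theta)\Lxjdo\delta_A\geq 0 \qquad\text{for all } j\in\indexN .
\]
The inequality is strict: when $\Lxjdo=0$ we have $g_j(x^\step)\le g_j(\xs)<0$, exactly as in the last lines of that claim's proof.

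For the dual inequality I will instead apply Claim~\ref{prop:LipchitzGuarantee} with the starting point $z^0$ in its role. Here $\Ly$ is defined through the $\step$-th iterate of the map started at $z\in\Bt$, so the hypothesis needed is on the initial point. Because $z^0\in\ballP{\zs}{\delta_A}$ (and $z^0\in\YY$, which I take as part of the standing setup, as it is trivial when $\YY=\RR^{n+m}$), the claim yields $y^\step_j\ge(1-\theta')\Lyjdo\delta_A$ for $j\in\indexBs$. I then need strictness. If $\Lyjdo>0$, strictness is immediate. If $\Lyjdo=0$, the definition \eqref{eq:varphiPsi} gives $(\ys_j-y^\step_j)_+=0$, hence $y^\step_j\ge\ys_j>0$. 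The one degenerate case is $z^0\in\{\zs\}+\ker(P)$, where the ratio is undefined; there I would argue that $\|z^0-\zs\|_P=0$ and use the Lipschitz bound of Assumption~\ref{assumption:asymptoticIdentificationWeak}~$(iv)$ against the iterate started at $\zs$ to get $y^\step_j=\ys_j>0$.

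The final sentence of the proposition then follows by taking $\step=0$. In that case $z^\step=z^0$, so both hypotheses collapse to $z^0\in\ballP{\zs}{\delta_A}\cap\YN$, and we obtain $\ballP{\zs}{\delta_A}\cap\YN\subseteq\cM$.

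The main obstacle I expect is the bookkeeping of which point each Lipschitz modulus refers to. The primal bound must be applied at $z^\step$, while the dual bound must be applied at $z^0$. The case where the $P$-distance vanishes also has to be handled via $(iv)$, since the supremum defining $\Ly$ excludes it.
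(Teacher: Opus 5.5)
Your proposal is correct and follows the paper's proof. The paper likewise applies Claim~\ref{prop:LipchitzGuarantee} at $z^{\step}$ to get $G_\indexN(x^\step)<0$ and at $z^0$ to get $y^\step_{\indexBs}>0$, and reads $y^\step_\indexN=0$ and $y^\step_{\indexBw}\ge 0$ off from $z^\step\in\YN$.

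The extra cases you treat (vanishing moduli, $z^0\in\YY$, and $z^0\in\{\zs\}+\ker(P)$) are skipped silently in the paper. In the last case, for $\step\ge 1$ nothing in the assumptions forces the $\step$-th dual iterate started at $\zs$ to equal $\ys_j$. What you actually need is $y^\step_j\ge\ys_j$, and that does follow: approximate $z^0$ in $\|\cdot\|_P$ by points of $\mathcal{Z}_{\delta_A}$ and apply the Lipschitz bound of Assumption~\ref{assumption:asymptoticIdentificationWeak}~$(iv)$.
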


\begin{proof}
     Since, \(z^0\in \ballP{\zs}{\delta_A}\), from Claim~\ref{prop:LipchitzGuarantee} we have \(y_{\indexBs}^{p}>0\). Similarly, since \(z^{\step}\in \ballP{\zs}{\delta_A}\), we have \(G(x^{\step})>0\). Also, since \(z^{\step}\in\YN\), we have \(y^\step_{\indexBw}\geq 0\) and \(y^\step_\indexN=0.\) Thus, \(z^{\step}\in\cM\).
\end{proof}

 The following Proposition shows that once the iterates enter \(\ballP{\zs}{\delta_{A}/2}\), it takes a few more steps to reach the active set \(\cM\).
\begin{proposition}
    \label{prop:identificationFinalStep}
    Suppose Assumption~\ref{assumption:asymptoticIdentificationWeak} holds and that the iterates satisfy \(z^k,\tilde{z}^k\in \cballP{\zs}{\delta_{A}/2}\cap\YY\) for all \(k\in\NN_0\). Then, \(z^k\in\cM\) for all
    \[
        k\geq\step+\left\lceil \max_{j\in\indexN}\frac{\Lyjd}{\eta C_j}\right\rceil=:K,\quad\text{where}\quad C_j=\begin{cases}
            \Lxjdo &\text{if } \Lxjdo>0\\
            -2g_j(\xs)/\delta_{A} & \text{otherwise}.\footref{fn:delta} 
        \end{cases}
    \]
    % In particular, if \(\theta\in(0,1/2]\) we have \(K\geq \step+\left\lceil \max_{j\in\indexN}\frac{\Lyjdo}{\eta\Lxjdo}\right\rceil\).
    %Furthermore, if \(\theta\leq 1/2\) then \(K\leq\lceil \max_{j\in\indexN}\frac{\Lyjdo}{\eta \Lxjdo}\rceil\).
\end{proposition}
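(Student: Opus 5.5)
Since every $z^k$ and $\tilde z^k$ lies in $\cballP{\zs}{\delta_A/2}\cap\YY$, Claim~\ref{prop:LipchitzGuarantee} with $\theta=1/2$ supplies two uniform margins. For every $j\in\indexN$ and every $k$,
\[
g_j(\tilde x^k)\leq -\tfrac12\Lxjdo\,\delta_A .
\]
When $\Lxjdo=0$ we instead use $g_j(\tilde x^k)\leq g_j(\xs)=-\tfrac12 C_j\delta_A$, which follows because the supremum defining $\Lxjdo$ vanishes on the ball. In either case
\[
g_j(\tilde x^k)\leq -\tfrac12 C_j\delta_A \qquad\text{for all } k,\ j\in\indexN .
\]
The same claim, applied with initial point $z^{k-\step}$, gives $y^k_{\indexBs}>0$ for all $k\ge\step$. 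Together with $G_\indexN(x^k)<0$, which holds because $z^k$ is in the ball, only $y^k_\indexN=0$ remains to be shown. Once it holds, Proposition~\ref{prop:manifoldInclusion}, applied with initial point $z^{k-\step}$, places $z^k$ in $\cM$.

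To obtain $y_\indexN=0$ we use the dual update in Assumption~\ref{assumption:asymptoticIdentificationWeak}$(ii)$. For $j\in\indexN$ it reads
\[
y^{k+1}_j=\bigl(y^k_j+\eta g_j(\tilde x^{k+1})\bigr)_+\leq \bigl(y^k_j-\tfrac12\eta C_j\delta_A\bigr)_+ .
\]
Each step therefore lowers $y_j$ by at least $\tfrac12\eta C_j\delta_A$ until it reaches $0$. Once $y_j=0$ it stays there, since the increment is negative and the projection clips at zero. It remains to bound the starting value. By the dual Lipschitz bound $(iv)$ with $\ys_j=0$, taking the initial point $z^0\in\cballP{\zs}{\delta_A/2}$, we get
\[
y^\step_j\leq \Lyjd\,\|z^0-\zs\|_P\leq \tfrac12\Lyjd\,\delta_A .
\]
The required number of additional steps is then at most
\[
\frac{\tfrac12\Lyjd\,\delta_A}{\tfrac12\eta C_j\delta_A}=\frac{\Lyjd}{\eta C_j},
\]
and rounding up and maximizing over $j\in\indexN$ yields the stated $K=\step+\lceil\max_j \Lyjd/(\eta C_j)\rceil$. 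Note that $\Lyjd$ here refers to the Lipschitz constant on the radius-$\delta_A$ ball, which dominates the one on the half ball.

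The main obstacle is the bookkeeping that lines up the $\tilde x$ index in the update with the $\step$-step Lipschitz condition. One must check that $\tilde z^{k+1}$ lies in the ball, which is guaranteed by hypothesis, so the decrement bound applies at every step from $\step$ onward. The degenerate case $\Lxjdo=0$ also needs care: there the margin comes from $g_j(\xs)$ directly, and the factor $2$ in $C_j$ exactly compensates for the $\tfrac12$. A subtle point is also whether $(iv)$ is stated for initial points in the open ball of radius $t$; we apply it with $t=\delta_A$, for which the closed half ball is contained. For iterates $k>\step$ started later, the same argument with $z^{k-\step}$ as the initial point keeps the bound valid.
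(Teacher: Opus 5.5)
Your proposal is correct and follows essentially the same route as the paper. Both arguments use the same four ingredients: the uniform margins from Claim~\ref{prop:LipchitzGuarantee} with $\theta=1/2$ (using $C_j$ when $\Lxjdo=0$), the bound $y^{\step}_j\le \Lyjd\,\delta_A/2$ from Assumption~\ref{assumption:asymptoticIdentificationWeak}$(iv)$, projected dual ascent driving $y_{\indexN}$ to zero, and Proposition~\ref{prop:manifoldInclusion} to conclude. The only difference is cosmetic: the paper proves the exact identity $y^k_j=\proj_{\RR_+}\bigl(y^{\step}_j+\sum_{\ell}\eta g_j(\tilde x^{\ell})\bigr)$ by induction, whereas you iterate the one-step bound $y^{k+1}_j\le\bigl(y^k_j-\tfrac12\eta C_j\delta_A\bigr)_+$, indexing $\tilde x^{k+1}$ consistently with the assumption.
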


\begin{proof}
     Fix \(j\in\indexN\). First, for any \(k\geq p\) we aim to prove
    \begin{equation}
    \label{eq:induction}
        y^{k}_j=\proj_{\RR_+}\left(y^\step_j+\sum_{\ell=\step}^{k-1}\eta g_j(\tilde{x}^\ell)\right)\,.
    \end{equation}
    We proceed by induction. For \(k=\step\), the result is trivial. Now, assume~\eqref{eq:induction} holds for some \(k\geq\step\). 
    To exploit the induction hypothesis we claim that for any \(a\in\RR\) and \(b\leq 0\) we have
    \begin{equation}
    \label{eq:claimProj}
            \proj_{\RR_+}((\proj_{\RR_+}a)+b)=\proj_{\RR_+}(a+b)\,.
    \end{equation}
    To prove this, suppose \(a\geq 0\), then \(\proj_{\RR_+}a=a\) whence the result follows trivially. Now suppose \(a\leq 0\), then \((\proj_{\RR_+}a)+b=b\leq 0\) and \(a+b\leq 0\), whence
    \(
            \proj_{\RR_+}((\proj_{\RR_+}a)+b)=0=\proj_{\RR_+}(a+b)\,.
    \)
    Since \(\tilde{z}^{k}\in \cballP{\zs}{\delta_{A}/2}\cap\YY\), invoking Claim~\ref{prop:LipchitzGuarantee} we derive \(g_j(\tilde{x}^k)\leq -\Lxjdo\delta_{A}/2\leq 0\). Thus,
    \begin{align*}
        y^{k+1}&=\proj_{\RR_+}(y_j^k+\eta g_j(\tilde{x}^k)) & \text{(Assumption~\ref{assumption:asymptoticIdentificationWeak} \((ii)\))}\\
        &=\proj_{\RR_+}\left(\proj_{\RR_+}\left(\sum_{\ell=\step}^{k-1}y^\step_j+\eta g_j(\tilde{x}^\ell)\right)+\eta g_j(\tilde{x}^k)\right) & \text{(Induction hypothesis)}\\
        &=\proj_{\RR_+}\left(y^\step_j+\sum_{\ell=\step}^{k}\eta g_j(\tilde{x}^\ell)\right) & \text{(Using~\eqref{eq:claimProj})}.
    \end{align*}
    Thus,~\eqref{eq:induction} holds for any \(k\geq\step\). Now, let \(k\geq K\). Combining \(z^\step\in\cballP{\zs}{\delta_{A}/2}\cap\YY\) and Assumption~\ref{assumption:asymptoticIdentificationWeak} \((iv)\) we obtain
    \[
        y^\step_j=y^\step_j-\ys_j\leq \Lyjd\|z^0-\zs\|_P\leq 
        \Lyjd \delta_{A}/2\,.
    \]
    Moreover, using Claim~\ref{prop:LipchitzGuarantee} in tandem with the fact that \(\tilde{z}^\ell\in \cballP{\zs}{\delta_{A}/2}\) for all \(\ell\in\{0,\dots ,k-1\}\), we derive \(g_j(\tilde{x}^\ell)\leq -\Lxjdo\delta_{A}/2\) for all \(\ell\in\{0,\dots,k-1\}\). If \(\Lxjdo=0\), then \(g_j(\tilde{x}^\ell)=g_j(\xs)<0\), whence, \(g_j(\tilde{x}^\ell)\leq -C_j\delta_{A}/2<0\). Consequently,
    \[
        y_j^\step+\sum_{\ell=\step}^{k-1}\eta g_j(\tilde{x}^\ell)\leq \Lyjd\delta_A/2 -(k-\step)\eta C_j\delta_A/2\leq 0\,.
    \]
    The last inequality holds because \(k\geq K\). Thus \(y_j^{k}=0\), from~\eqref{eq:induction}. Since \(j\in\indexN\) is arbitrary, \(y^k_\indexN=0\). Moreover, from Assumption~\ref{assumption:asymptoticIdentificationWeak} \((ii)\) we have \(y^k_{\indexB}\geq 0\). Thus, \(z^{k-\step}\in\ballP{\zs}{\delta_A}\) and \(z^k\in\ballP{\zs}{\delta_A}\cap\YN\), whence \(z^k\in\cM\), from Proposition~\ref{prop:manifoldInclusion}. 
    % The last statement follows directly from the following inequality:
    % any \(a,b\in\RR_+\) and \(\theta\in[0,1/2]\) hold
    % \[
    %     \frac{\theta a}{(1-\theta)b}\leq \frac{a}{b}\,.
    % \]
    % If \(a=0\) or \(b=0\) the inequality holds directly. Then, suppose \(a,b>0\) and let \(h:[0,1/2]\to\RR\) such that 
    % \(
    %     h(\theta)= \theta a((1-\theta)b)^{-1}\,.
    % \)
    % The function \(h\) is differentiable; for any \(\theta\in[0,1/2]\) we have
    % \(
    %     h'(\theta)=a((1-\theta)^2b)^{-1}>0\,.
    % \)
    % Therefore \(h\) is strictly increasing, whence \(h(\theta)\leq h(1/2)=ab^{-1}\).
\end{proof}
% \noindent\textit{Remark.} If \(\Lxjdo=0\) for all \(j\in\indexN\) then \(K=\infty\) in Proposition~\ref{prop:identificationFinalStep}. However, in that case \(g(x)_j\leq g_j(\xs)\) for all \(x\in\cballP{\zs}{\delta_A}\cap\YY\). Therefore, to get a valid bound, \(\Lxjdo\) can be replaced by \(-\delta_A^{-1}g_j(\xs)>0\). Moreover, in general the bound obtained with this replacemenent is not worse than the one offered in Proposition~\ref{prop:identificationFinalStep}: from~\eqref{eq:delta_alg}, \(\Lxjdo\leq -\delta_A^{-1}g_j(\xs)\) for any \(j\in\indexN\).

Now we have the main ingredients for the proofs of Theorems~\ref{theorem:asymptoticIdentificationG} and~\ref{theorem:finiteTimeIdentificationLinearG}.

\begin{proof}[Proof of Theorem~\ref{theorem:asymptoticIdentificationG}]
    From Assumption~\ref{assumption:asymptoticIdentificationWeak} \((i)\) there exists \(M\in\NN\) such that \(z^k,\tilde{z}^k\in \ballP{\zs}{\delta_{A}/2}\) for all \(k\geq M\). From Proposition~\ref{prop:identificationFinalStep} it suffices to take \(K\geq M+\step+\lceil\max\{\Lyjd(\eta C_j)^{-1}:j\in\indexN\}\rceil\).
\end{proof}

\begin{proof}[Proof of Theorem~\ref{theorem:finiteTimeIdentificationLinearG}]
     Let \(\eps>0\) and \(\zstilde\in\sol\) such that \(\dist_P(z^k,\sol)\geq\|z^k-\zstilde\|_P+\eps\). Denote \(\nu=\gamma\lambdamax(P)/\alpha_G\).
     \(\rho = \lceil e\gamma \lambdamax(P)/\alpha_G\rceil\). Then,
    \begin{align*}
        \|z^k-\zs\|_P
        &\leq \|z^k-\zstilde\|_P+\|\zstilde-\zs\|_P & \text{(Triangle inequality)}\\
        &\leq 2\|z^k-\zstilde\|_P & \text{(Assumption~\ref{assumption:finiteTimeIdentification} \((ii)\))}\\
        &\leq 2\dist_P(z^k,\sol)+\eps & \text{(Defintion of \(\zstilde\))}\\
        % &\leq 2e^{-\frac{k\rate}{\rho}}\nu\dist_{P}(z^0,\sol)+\eps& \text{(Proposition~\ref{prop:linearConvergence})}\\
        &\leq 2\lambdamax(P)^\frac{1}{2}\sqrt{e}\nu\exp\left(-\frac{1}{2}\frac{k}{\lceil e\nu^2\rceil}\right)\dist_{2}(z^0,\sol)+\eps & \text{(Proposition~\ref{prop:linearConvergence}).}
        % &=\frac{2\gamma \lambdamax(P)^{\frac{3}{2}}\sqrt{e}}{\alpha_G}\exp\left(-\frac{1}{2}\frac{k}{\lceil e\nu^2\rceil}\right)\dist_{2}(z^0,\sol)+\eps
    \end{align*}
    Since \(\eps>0\) is arbitrary, \(\|z^k-\zs\|_P\leq  2\gamma \lambdamax(P)^{\frac{3}{2}}\sqrt{e}/\alpha_G\cdot \exp(-k/2\lceil e(\gamma\lambdamax(P)/\alpha_G)^2\rceil)\cdot \dist_{2}(z^0,\sol)\). Thus, \(\tilde{z}^k=z^k\in\ballP{\zs}{\delta_A/2}\) whenever 
    \[
        k>\left\lceil e\left(\frac{\gamma\lambdamax(P)}{\alpha_G}\right)^2\right\rceil \left[1+2\ln\left(\frac{4\gamma \lambdamax(P)^{\frac{3}{2}}\dist_2(z^0,\sol)}{\alpha_G\delta_A}\right)\right]\,.
    \]
    By Proposition~\ref{prop:identificationFinalStep} it suffices at most \(\lceil\step+\max\{\Lyjd(\eta C_j)^{-1}:j\in\indexN\}\rceil\) more steps to identify \(\cM\).
\end{proof}

\subsection{Proof of Theorem~\ref{theorem:finiteTimeIdentification}}\label{appendix:proof-finiteTimeIdentification}

%Concerning the finite time identification in \(O(\alpha_L^{-1}\delta_A^{-\rate})\) steps, the first observation we make is that any point \(z\in\sols\) sufficiently close to \(\zs\) is also an optimal solution, as shown in Proposition~\ref{prop:reducedSystemIdentification}. 

% In this section we prove our stronger identification result, Theorem~\ref{theorem:finiteTimeIdentification}. The overall structure of the proof considers two parts, similarly to the one of   Theorem~\ref{theorem:finiteTimeIdentificationLinear}; first we bound the number of steps required for the iterates to land on \(\ballP{\zs}{\delta_A/2}\) and then we show it takes a few more steps to them to arrive to \(\cM\). The main contribution of Theorem~\ref{theorem:finiteTimeIdentification} is that, in constrast with Theorem~\ref{theorem:finiteTimeIdentificationLinear}, the bound of the first part does not depend on the global constant \(\alpha_G\), but only on the local one \(\alpha_L\).

In this section, we prove a generalized version of Theorem~\ref{theorem:finiteTimeIdentification} using Assumption~\ref{assumption:asymptoticIdentificationWeak}. We transcribe the statement of this result here for completeness.

\begin{theorem}[Generalization of Theorem~\ref{theorem:finiteTimeIdentification}]
\label{theorem:finiteTimeIdentificationG}
    Under Assumptions~\ref{assumption:problem},~\ref{assumption:asymptoticIdentificationWeak},~\ref{assumption:rapidLocalConvergence}, and ~\ref{assumption:finiteTimeIdentification}, the \(k\)-th iterate satisfy \(z^k\in\cM\) whenever
    \[
        k> \left(\max\left\{1,\frac{1}{\alpha_L}\right\}\frac{8\lambdamax(P)^{\frac{3}{2}}\dist_P(z^0,\sol)}{\delta_{A}}\right)^{\irate}+\left\lceil\step+ \max_{j\in\indexN}\frac{\Lyjd}{\eta C_j} \right\rceil\,,
    \]
    where
    \[
        C_j=\begin{cases}
            \Lxjdo & \text{if }\Lxjdo>0\\
            -2g_j(\xs)/\delta_A & \text{otherwise.}\footref{fn:delta}
        \end{cases}
    \]
\end{theorem}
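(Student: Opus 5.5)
The plan is to split the bound into two phases, mirroring the proof of Theorem~\ref{theorem:finiteTimeIdentificationLinearG}. Phase one shows that after $K_1:=\big(\max\{1,1/\alpha_L\}\,8\lambdamax(P)^{3/2}\dist_P(z^0,\sol)/\delta_A\big)^{2}$ iterations every later $z^k$ (and $\tilde z^k$) lies in $\cballP{\zs}{\delta_A/2}\cap\YY$. Phase two then applies Proposition~\ref{prop:identificationFinalStep} verbatim to the tail sequence, which costs the additive $\lceil \step+\max_{j\in\indexN}\Lyjd/(\eta C_j)\rceil$ steps. So all the work is in phase one, and there the linear-rate argument based on $\alpha_G$ is replaced by a purely sublinear one based on $\alpha_L$.

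For phase one I would use four ingredients in order.

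\textbf{(a)} Assumption~\ref{assumption:rapidLocalConvergence}$(i)$ gives $\dist_{P^\dagger}(0,\sub(z^k)\cap\range(P))\le \gamma\dist_P(z^0,\sol)/\sqrt{k}$.

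\textbf{(b)} Lemma~\ref{prop:metricSubregularityPnorm}, applied with $S=\sols$ and modulus $\alpha_L$ (valid since $z^k\in\cD$ by Assumption~\ref{assumption:rapidLocalConvergence}$(ii)$), converts this into
\[
\dist_P(z^k,\sols)\le \lambdamax(P)\,\alpha_L^{-1}\gamma\,\dist_P(z^0,\sol)/\sqrt{k}.
\]

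\textbf{(c)} A key structural lemma: if $w\in\sols$ and $\|w-\zs\|_P<\delta_A$, then $w\in\sol$. The reason is that the dropped constraints $G_\indexN(x)\le 0$ hold automatically in that ball by Claim~\ref{prop:LipchitzGuarantee}, so $w$ satisfies the full system \eqref{eq:primalDualSolutions}.

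\textbf{(d)} The triangle argument from the proof of Theorem~\ref{theorem:finiteTimeIdentificationLinearG}: for $\zstilde\in\sol$ nearly closest to $z^k$, Assumption~\ref{assumption:finiteTimeIdentification}$(ii)$ gives
\[
\|z^k-\zs\|_P\le \|z^k-\zstilde\|_P+\|\zstilde-\zs\|_P\le 2\|z^k-\zstilde\|_P,
\]
hence $\|z^k-\zs\|_P\le 2\dist_P(z^k,\sol)$.

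Chaining these, once the nearest point $w^k$ of $\sols$ to $z^k$ lies in $\ballP{\zs}{\delta_A}$, we get $\dist_P(z^k,\sol)\le\|z^k-w^k\|_P$, and so $\|z^k-\zs\|_P\le 2\|z^k-w^k\|_P$. The bound from (b) is at most $\delta_A/4$ once $k>K_1$, using $\gamma\le$ the absorbed constants and the Lemma~\ref{prop:normEquivalence} conversions; the factors $8$ and $\lambdamax(P)^{3/2}$ come from these conversions. This yields $z^k\in\cballP{\zs}{\delta_A/2}$. The $\max\{1,\cdot\}$ handles the regime $\alpha_L\ge 1$, where the raw sublinear rate of $\|z^{k+1}-z^k\|_P$ from Assumption~\ref{assumption:finiteTimeIdentification}$(i)$ is used instead. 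That same assumption also controls $\|z^k-\tilde z^k\|_P$, which places $\tilde z^k$ in the ball as well.

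The main obstacle is the circularity in (c)--(d): to invoke (c) I need $w^k$ already within $\delta_A$ of $\zs$, but the only a priori control I have is $\|w^k-\zs\|_P\le \|w^k-z^k\|_P+\|z^k-\zs\|_P$. My first attempt would be a continuity/induction argument along the tail. Consecutive iterates move by at most $\gamma\dist_P(z^0,\sol)/\sqrt{k}\le\delta_A/8$ (Assumption~\ref{assumption:finiteTimeIdentification}$(i)$), and $\|z^k-\zs\|_P\to 0$. So for $k>K_1$, if some $z^k$ left $\cballP{\zs}{\delta_A/2}$, there would be a last index at which it sat in the annulus $\delta_A/2<\|z^k-\zs\|_P<\delta_A\cdot\tfrac34$. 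For that index the nearest point $w^k$ is within $\tfrac34\delta_A+\delta_A/4\le\delta_A$ of $\zs$, so (c) and (d) apply and force $\|z^k-\zs\|_P\le\delta_A/2$, a contradiction. If this bookkeeping fails, the fallback is to exploit the convexity of $\sols$: the segment from $\zs$ to $w^k$ stays in $\sols$ and meets the $\delta_A$-ball, which should let me replace $w^k$ by a point of $\sol$ at comparable distance.
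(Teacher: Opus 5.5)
Your proof is correct, and it departs from the paper's exactly at the circularity you identify. Ingredients (a)--(d) are the paper's Steps 1--2: the sublinear bounds on $\|z^{k+1}-z^k\|_P$ and $\dist_P(z^k,\sols)$, Lemma~\ref{prop:reducedSystemIdentification}, and $\|z^k-\zs\|_P\le 2\dist_P(z^k,\sol)$ from star non-expansiveness. Your phase two is the paper's Step 6.

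To break the circularity, the paper (Step 3) also looks at the last index $\ell$ with $\|z^\ell-\zs\|_P>\xi/2$, where $\xi=\delta_A/2$. It then runs a separation argument:
\begin{enumerate}
\item it places $\bar z$ on the $P$-sphere of radius $\xi/2$ about $\zs$, in the direction of the closure-projection $\cPo(z^\ell)$;
\item it shows $\bar z$ is a limit of points of $\sol$, using convexity of $\sols$, $\zs\in\cl(\sol_P)$, and Lemma~\ref{prop:reducedSystemIdentification};
\item it puts $z^\ell$ in a halfspace $\cH_+$ by optimality of the projection, and $z^{\ell+1}$ in a parallel halfspace $\cH_-$ by star non-expansiveness with respect to $\bar z$;
\item since the two halfspaces are $\xi/4$ apart, this contradicts the step bound.
\end{enumerate}

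You instead use the step bound only to localize $z^\ell$ within $\tfrac{5}{8}\delta_A$ of $\zs$. After that, \emph{any} $w\in\sols$ with $\|z^\ell-w\|_P<d$ lies in $\ballP{\zs}{\delta_A}$, hence in $\sol$, and star non-expansiveness gives $\|z^\ell-\zs\|_P<2d$. This is genuinely shorter, for three reasons:
\begin{itemize}
\item no nearest point is needed, so none of the closure-/almost-projection machinery of Lemma~\ref{lemma:pprojection} is required;
\item it never uses $\zs\in\cl(\sol_P)$, which the paper derives from Proposition~\ref{prop:linearConvergence}, i.e.\ from Assumption~\ref{assumption:metric-subregularity}, which is not among the theorem's hypotheses;
\item it tolerates a weaker step bound: for target radius $r$, any step bound $s$ with $r+s+d<\delta_A$ works.
\end{itemize}
The paper's separation argument yields a neater geometric fact (crossing the $\xi/2$-sphere costs a step of length at least $\xi/4$), but it buys nothing extra for this theorem.

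Two bookkeeping corrections are needed.

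First, aim for $z^k\in\ballP{\zs}{\delta_A/4}$, not $\cballP{\zs}{\delta_A/2}$. Proposition~\ref{prop:identificationFinalStep} also needs $\tilde z^k\in\cballP{\zs}{\delta_A/2}$, and as you have it, $\|\tilde z^k-z^k\|_P\le\delta_A/8$ only puts $\tilde z^k$ within $\tfrac{5}{8}\delta_A$ of $\zs$. Your argument runs unchanged with $r=\delta_A/4$ and $d=s=\delta_A/8$: then $\|z^\ell-\zs\|_P<\tfrac{3}{8}\delta_A$ and $\|w-\zs\|_P<\delta_A/2$. These are precisely the bounds of the paper's Step 1.

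Second, ``absorbing'' $\gamma$ is not legitimate. Both of your bounds carry a factor $\gamma$, and the powers of $\lambdamax(P)$ match the stated threshold only under normalizations such as $\gamma\le 1\le\lambdamax(P)$. The paper's Step 1 makes the same slip, so this is a defect of the stated constant rather than of your method; carry $\gamma$ explicitly.
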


We start with some notation and auxiliary results that will help us handle taking projections despite the lack of invertibility of $P$.
Recall that, for any point \(u\in\RR^{n+m}\), we let 
    \[
        u=u_P+u_{P^\perp}\quad\text{where}\quad u_P\in\range(P),\ u_{P^\perp}\in\range(P)^\perp=\ker(P)\,.
    \]
    Similarly, for any set \(S\subseteq \RR^{n+m}\), we denote
    \[
        S_P=\{u_P:u\in S\}\quad\text{and}\quad S_{P^\perp}=\{u_{P^\perp}:u\in S\}\,.
    \]
    If \(S\) is a nonempty closed and convex set, \(S_P\) is also a nonempty and convex set that might not be closed. Thus, the \(P\)-projection of a point \(z\in\RR^n\) over \(S\), \(\argmin\{\|u-z\|_P:u\in S\}\), is not well defined in general. 
% \begin{definition}
%     For any positive semidefinite matrix \(P\in\RR^{n\times n}\), nonempty closed convex set \(S\subseteq\RR^n\) and point \(x\in\RR^n\), define the \(P\)-projection of \(x\) over \(S\) as
%     \[
%         \pproj_Sx=\argmin_{u\in S}\|x-u\|_P\,.
%     \]
% \end{definition}
% \begin{lemma}
%     \label{lemma:pprojectionWellDefined}
%     Let \(P\in\RR^{n\times n}\) be positive semidefinite matrix, \(S\subseteq\RR^n\) a nonempty closed convex set, and \(x\in\RR^n\). Then, if \(S_P\) is closed,  \(\pproj_Sx\) is nonempty and \((\pproj_Sx)_P\) is a singleton.
% \end{lemma}
% \begin{proof}
%     Note that \(\|z\|_P=\|z_P\|_P\) for any \(z\in\RR^n\). Therefore,
%     \[
%         \min_{u\in S}\|x-u\|_P=\min_{u\in S_P}\|x_P-u\|_P\,.
%     \]
%     Also, \(S\) is nonempty, convex, and closed, whence \(S_P\) is nonempty, convex, and closed (by hypothesis). Then, since \(\|\cdot\|_P\) is a norm in \(\range(P)\), there exists a unique \(\bar{w}\in S_P\) such that 
%     \[
%         \bar{w}=\argmin_{u\in S_P}\|x_P-u\|_P\,.
%     \]
%     Furthermore, since \(\bar{w}\in S_P\), by the construction of \(S_P\) there exists \(\bar{u}\in S\) such that \(\bar{u}_P=\bar{w}\). Thus, \(\bar{u}\in \pproj_Sx\), whence \(\pproj_Sx\) is nonempty. Furthermore, for any \(\hat{u}\in\pproj_Sx\) we have that \(\hat{u}_P\in\argmin_{u\in S_P}\|x_P-u\|_P=\{\bar{w}\}\), whence \((\pproj_Sx)_P\) is a singleton.
% \end{proof}
    To overcome this difficulty, for any nonempty, convex \(S\subseteq\RR^n\) set, and $\eps > 0$ we define the \(P\)-closure-projection and the \((P,\eps)\)-almost-projection as the point and set-valued map given by %\md{Define $\bar z$}\pil{\(\bar{z}_S^P?\). Done}
    \[
        \cPo^P_S(z)=\argmin_{u\in\cl(S_P)}\|u-z\|_P\quad\text{and}\quad\cP^P_{S,\eps} (z)=S\cap\cballP{\cPo_S^P(z)}{\eps},\quad\text{respectively}.
    \]
%    \hnote{Do $\peproj_S (z)$ here and later?}\pil{Done.}
    %The vector \(\bar{z}\) \md{should this be a $\bar z$?}\pil{Yes. Done.}
    The \(P\)-closure-projection is well defined as a point instead of a set since \(\cl(S_P)\) is nonempty, convex, and closed, and $\| \cdot \|_P$ is a norm when restricted to \(\range(P)\). We will use the following auxiliary result. %equipped with the \(P\)-seminorm and \(P\)-inner-product is a Hilbert space, whence the argmin is a singleton.
    %Also, note that \(\peproj_Sx\) is nonempty: since \(\bar{u}\in\cl(S_P)\), for any \(\eps>0\) there exists \(u\in S_P\) such that \(\|u-\bar{u}\|_P\leq\eps\), whence there exists \(u\in S\) such that \(\|u-\bar{u}\|_P\leq\eps\). 
    % \[
    %     \peproj_Sx=\{u\in S:\|x-u\|_P\leq p^\star+\eps\},\quad\text{where}\quad p^\star=\inf_{u\in S}\|x-u\|_P\,.
    % \]
    % Clearly, \(\peproj_Sx\) is nonempty for any \(\eps>0\). 

\begin{lemma}
    \label{lemma:pprojection}
     %Given any $u$, let $\bar u \in \peproj_S u$ denote any element in the almost-projection. Then, the following hold true.
     %Let \(P\in\RR^{n\times n}\) be positive semidefinite matrix, 
     Let \(S\subseteq\RR^{n + m}\) a nonempty convex set, and \(z,w\in\RR^n\). Then, for any  \(u\in\cP_{S,\varepsilon}^P(z)\) %=S\cap\cballP{\cPo^P_{S}(z)}{\eps}\)
     and \(v\in\cP_{S,\varepsilon}^P(w)\), %=S\cap\cballP{\cPo^P_S(w)}{\eps}\).
     the following hold true. 
     \begin{itemize}
            \item[\((i)\)] (Convergence) Any sequence \(z^k\in\cP_{S,\eps_k}^{P}(z)\) with \(\eps_k\to 0\) satisfies \(\|z^k-\cPo^P_S(z)\|_P\to 0\).
        \item[\((ii)\)] (Non-expansiveness) \(\|\cPo_S^P(z)-\cPo_S^P(w)\|_P\leq \|z-w\|_P\) and \(\|u-v\|_P\leq \|z-w\|_P+2\eps\,.\)
        \item[\((iii)\)] (Fixed-points) For any \(z\in S\) we have \(z = \cPo_S^P(z)\in \cP_{S,\eps}^P(z)\).
        \item[\((iv)\)] (Distance to \(S\)) For any $\eps > 0$ and $u \in \cP_{S,\eps}^{P}(z)$ we have \[\dist_P(z,S)=\|z-\cPo_S^P(z)\|_P\leq\|z-u\|_P\leq \dist_P(z,S)+\eps\,.\]
     \end{itemize}
\end{lemma}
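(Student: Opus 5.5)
The plan is to reduce everything to an honest Hilbert-space projection on $\range(P)$. First I will record the two elementary facts behind the construction. For any $u,z$ we have $\|u-z\|_P=\|u_P-z_P\|_P$, since $\ker(P)$ is annihilated by the seminorm. Moreover, $\dotp{\cdot,\cdot}_P$ restricted to $\range(P)$ is a genuine inner product, and $\range(P)$ is complete under it by Lemma~\ref{prop:normEquivalence}. Hence $\cPo^P_S(z)$ is exactly the Hilbert-space projection of $z_P$ onto the nonempty closed convex set $\cl(S_P)\subseteq\range(P)$ in the $P$-inner product. It is therefore unique and characterized by the variational inequality
\[
\dotp{z_P-\cPo^P_S(z),\,q-\cPo^P_S(z)}_P\le 0\qquad\text{for all } q\in\cl(S_P).
\]

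Next I will check that $\cP^P_{S,\eps}(z)$ is nonempty, so that the statements are not vacuous. Since $\cPo^P_S(z)\in\cl(S_P)$, there is $s\in S$ with $\|s_P-\cPo^P_S(z)\|_P\le\eps$. Because $\cPo^P_S(z)\in\range(P)$, this quantity equals $\|s-\cPo^P_S(z)\|_P$, so $s\in\cP^P_{S,\eps}(z)$.

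With this in hand the four items follow in order.
\begin{itemize}
\item[$(i)$] By definition $\|z^k-\cPo^P_S(z)\|_P\le\eps_k\to0$.
\item[$(ii)$] Apply the variational inequality at $z$ with $q=\cPo^P_S(w)$, and at $w$ with $q=\cPo^P_S(z)$, and add the two. This gives $\|\cPo^P_S(z)-\cPo^P_S(w)\|_P^2\le\dotp{z_P-w_P,\cPo^P_S(z)-\cPo^P_S(w)}_P$. Cauchy--Schwarz in the $P$-inner product then yields the first bound, using $\|z_P-w_P\|_P=\|z-w\|_P$. The second bound follows from the triangle inequality: $\|u-v\|_P\le\|u-\cPo^P_S(z)\|_P+\|\cPo^P_S(z)-\cPo^P_S(w)\|_P+\|\cPo^P_S(w)-v\|_P$.
\item[$(iii)$] If $z\in S$ then $z_P\in S_P$, so the minimal value is $0$ and uniqueness forces $\cPo^P_S(z)=z_P$. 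Hence $\|z-\cPo^P_S(z)\|_P=0$, and $z\in S\cap\cballP{\cPo^P_S(z)}{\eps}$. Here the identity $z=\cPo^P_S(z)$ is to be read in the $P$-seminorm, and it is literal when $z\in\range(P)$.
\item[$(iv)$] By the first fact, $\dist_P(z,S)=\inf_{q\in S_P}\|z_P-q\|_P$. Continuity of the norm lets this infimum pass to $\cl(S_P)$, where it is attained at $\cPo^P_S(z)$. Since $u\in S$, $\|z-u\|_P\ge\dist_P(z,S)$. Finally, $\|z-u\|_P\le\|z-\cPo^P_S(z)\|_P+\|\cPo^P_S(z)-u\|_P\le\dist_P(z,S)+\eps$.
\end{itemize}

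The only real subtlety is the bookkeeping between $z$ and $z_P$ caused by the kernel of $P$. In particular, one must note that $\cPo^P_S(z)$ lies in $\range(P)$ while $S$ need not. This is why $(iii)$ holds only modulo $\ker(P)$, and why nonemptiness of $\cP^P_{S,\eps}$ needs the closure argument above rather than an attained minimizer in $S$. Once those two points are settled, every step is the standard Hilbert-projection argument.
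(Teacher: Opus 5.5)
Your proof is correct and takes the same route as the paper: view $\cPo^P_S(z)$ as the projection of $z_P$ onto $\cl(S_P)$ under the inner product $\dotp{\cdot,\cdot}_P$ on $\range(P)$, use that this projection is nonexpansive, and finish (ii) and (iv) with the triangle inequality. The paper cites nonexpansiveness where you derive it from the variational inequality. Your caveat on (iii) is also right: in general one only gets $\cPo^P_S(z)=z_P$ and $z\in\cP^P_{S,\eps}(z)$, and the literal equality $z=\cPo^P_S(z)$ needs $z\in\range(P)$.
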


\begin{proof}
    Items \((i)\) and \((iii)\) follow directly by definition. Item $(iv)$ follows easily from the Triangle Inequality. 
    %By the continuity of \(\|\cdot\|_P\),
    % \[
    %     \|x-\bar{u}\|_P=\inf_{w\in S}\|x-w\|_P\quad\text{and}\quad \|y-\bar{v}\|_P=\inf_{w\in S}\|y-w\|_P\,.
    % \]
    % By the Triangle Inequality, it follows that
    % \begin{align*}
    %     &\|x-\bar{u}\|_P\leq\|x-u\|_P\leq \|x-\bar{u}\|_P+\|u-\bar{u}\|_P\leq \|x-\bar{u}\|_P+\eps,\quad\text{and similarly,}\\
    %     &\|y-\bar{v}\|_P\leq\|y-v\|_P\leq \|y-\bar{v}\|_P+\|v-\bar{v}\|_P\leq\|y-\bar{v}\|_P+\eps\,.
    % \end{align*}
    % Therefore \((iv)\) holds. 
    Finally, since \(\|\cdot\|_P\) is a norm in \(\range(P)\) and (norm)-projections over nonempty closed convex sets are nonexpansive, and so
    \(
        \|\cPo^P_S(z)-\cPo^P_S(w)\|_P=\|{u}_P-{v}_P\|_P\leq \|x_P-y_P\|_P=\|x-y\|_P.
    \)
    Therefore, by the triangle inequality, we recover \((ii)\)
    \[
        \|u-v\|_P\leq\|\cPo_S^P(z)-\cPo_S^P(w)\|_P+\|u-\cPo_S^P(z)\|_P+\|v-\cPo_S^P(w)\|_P\leq \|x-y\|_P+2\eps.
    \]
\end{proof}

% \begin{proposition}
%     \label{prop:finiteTimeBall}
%     Under Assumptions~\ref{assumption:problem},~\ref{assumption:asymptoticIdentificationWeak},~\ref{assumption:rapidLocalConvergence}, and ~\ref{assumption:finiteTimeIdentification}, for any \(\theta\in (0,1]\), the \(k\)-th iterate and the \(k\)-th intermediate iterate satisfy \(z^k,\tilde{z}^k\in\ballP{\zs}{\theta\delta_A}\) whenever
%     \[
%         k > K:=\left(\max\left\{1,\frac{\lambdamax(P)}{\alpha_L}\right\}\frac{4\gamma \lambdamax(P)^{\frac{1}{2}}\dist_2(z^0,\sol)}{\theta\delta_A}\right)^{\irate}\,.
%     \]
% \end{proposition}

Now we have all the ingredients for the proof of Theorem~\ref{theorem:finiteTimeIdentificationG}.
\vspace{0.32 cm}
\noindent\textit{Proof of Theorem~\ref{theorem:finiteTimeIdentificationG}.}
% \pil{I faked the Proof environment to be able to wrap a figure inside.}
% \md{Please change $\delta_A$ to $\delta_{A}$ in all instances of this proof. Also write $\delta_{A}/2$ as opposed to $2^{-1}\delta_{A}.$}\pil{Done.}
Denote \(\xi=\delta_A/2\) and 
    \[
        K:= \left(\max\left\{1,\frac{1}{\alpha_L}\right\}\frac{8\lambdamax(P)^{\frac{3}{2}}\dist_P(z^0,\sol)}{\delta_A}\right)^{\irate}\,.
    \]
    For any \(z\in\RR^{n+m}\) and \(\eps>0\), we write \(\cPo (z):=\argmin\{\|u-z\|_P:u\in\cl((\sols)_P)\}\),
    % \hnote{$\cPo(z)$ here and later please, same for $\cPe (z)$ and other projections.}\pil{Done.} 
    and \(\cPe (z)\) to denote some fixed element of \(\cP^P_{\sols,\eps} (z)=\sols\cap\cballP{\cPo (z)}{\eps}\)
    % \md{Why not introducing this notation from before? Also in page 28 you define similar notation, with different meaning. Please change it there (right after (34)).}\pil{Done.} 
    Any time we write \(\cPe (z)\), we refer to the same element of \(\cP^P_{\sols,\eps} (z)\).  The proof consists of six steps. 
    % \md{As general rule of thumb, only use powers of $2$ for $2^{7}$ or higher.}\pil{Fixed. Done.}
    \begin{itemize}
        \item[\(1.\)] We show that for any \(k> K\), we have
        \begin{equation}\label{eq:always-label}
            \|z^k-z^{k+1}\|_P< \xi/4\quad\text{and}\quad\dist_P(z^k,\sols)< \xi/4\quad\text{for all}\quad k>K\,.
        \end{equation}
        %The proof follows from the sublinear rates given by Assumptions~\ref{assumption:rapidLocalConvergence} \((i)\) and~\ref{assumption:finiteTimeIdentification} \((i)\). This result implies that, for sufficiently large iterations, each step $z^{k+1}-z^k$ makes only small progress and the iterates $z^k$ remain close to $\sols$.
        \item[\(2.\)] We prove that for any \(k> K\), {\em if} \(\cPo (z^k)\in\cballP{\zs}{\xi/2}\) {\em then} \(z^k\in\ballP{\zs}{\xi/2}\). %The proof follows from step 1 and star non-expansiveness (Assumption~\ref{assumption:finiteTimeIdentification} \((ii)\)). This result means that, for sufficiently large iterations, the iterates $z^k$ must be close to the optimal solution $\zs$ whenever their projections $\cPo (z^k)$ are.
        \item[\(3.\)] We establish that projections $\cPo (z^k)$ are indeed close to the optimal solution $\zs$ for large enough iterations. Formally, for any \(k> K\) we have that \(\cPo (z^{k})\in\cballP{\zs}{\xi/2}\). %We prove this result by contradiction: if \(\cPo z^{k}\not\in\cballP{\zs}{\xi/2}\), then there exists \(n\geq k\) such that \(z^{n}\) and \(z^{\ell+1}\) belong to two parallel halfspaces that are at distance \(\xi/4\) (see Fig.~\ref{fig:proof}), contradicting step \(1\). The fundamental cause is star non-expansiveness (Assumption~\ref{assumption:finiteTimeIdentification} \((ii)\)).
        \item[\(4.\)] Combining steps \(2\) and \(3\), we conclude that any \(k> K\) holds \(z^k\in\ballP{\zs}{\xi/2}\).
        \item[\(5.\)] We use Step 4 to show that for any \(k> K\) we have \(\tilde{z}^k\in\ballP{\zs}{\xi}\).
        \item[\(6.\)] Leveraging Proposition~\ref{prop:identificationFinalStep} we conclude that it takes \(\lceil\step+\max\{\Lyjd(\eta C_j)^{-1}:j\in\indexN\}\rceil\) additional steps for the iterates to reach \(\cM\).
    \end{itemize}
    %\hnote{The above is great!}
    Next we execute these steps in detail.
\paragraph{Step 1.} For the first inequality in~\eqref{eq:always-label}, we have 
    \begin{align*}
       \|z^{k+1}-z^k\|_{P} &\leq \frac{\gamma}{k^{\rate}}\dist_P(z^0,\sol)%/k^{\rate} 
       &\text{(Assumption~\ref{assumption:finiteTimeIdentification} \((i)\))}\\
       &\leq \frac{\gamma }{k^{\rate}}\lambdamax(P)^{\frac{1}{2}}\dist_2(z^0,\sol) & \text{(Proposition~\ref{prop:normEquivalence})}\,.
    \end{align*}
    Therefore, \(\|z^{k+1}-z^k\|_P < \xi/4\) provided that \(k>K\geq (4\gamma\lambdamax(P)^{\frac{1}{2}}\dist_P(z^0,\sol)/\xi)^{2}\).
    For the second inequality~\eqref{eq:always-label},
    \begin{align*}
        \dist_P(z^k, \sols) &\leq \frac{1}{\alpha_L}\lambdamax(P) \dist_{P^\dagger}(0, \sub(z^k)\cap\range(P)) & \text{(Definition~\eqref{eq:alphaL} \& Proposition~\ref{prop:metricSubregularityPnorm})}\\
        &\leq \frac{\gamma}{\alpha_L k^{\rate}} \lambdamax(P)\dist_P(z^0,\sol)  & \text{(Assumption~\ref{assumption:rapidLocalConvergence} \((i)\))}\\
        &\leq \frac{\gamma}{\alpha_L k^{\rate}}\lambdamax(P)^{\frac{3}{2}}\dist_2(z^0,\sol)  & \text{(Proposition~\ref{prop:normEquivalence})}.
    \end{align*}
    Therefore \(\dist_{P}(z^k, \sols) < \xi/4\) whenever \(k > K\geq(4\gamma\lambdamax(P)^{\frac{3}{2}}\dist_P(z^0,\sol)/\alpha_L\xi)^{2}\).
    \paragraph{Step 2.}  We make use of the following simple lemma.
    \begin{lemma}
    \label{prop:reducedSystemIdentification}
    Suppose Assumption~\ref{assumption:asymptoticIdentificationWeak} holds. Then, \(\sols\cap \ballP{\zs}{\delta_A}\subseteq\sol\). 
\end{lemma}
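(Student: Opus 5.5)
The plan is to verify directly that every point $z=(x,y)\in\sols\cap\ballP{\zs}{\delta_A}$ satisfies the three conditions in \eqref{eq:primalDualSolutions}. Two of them are inherited from the reduced system \eqref{eq:reducedSystemIdentification}. Since $z\in\sols$, we already have $f(x)-h(y)\le 0$, $G(x)_{\indexB}\le 0$, and $y\ge 0$. The reduced system differs from the full one only in the missing inequalities $G(x)_{\indexN}\le 0$, so the whole argument reduces to proving that the nonactive constraints hold on the ball.

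First, I will place $z$ in the set where the earlier estimates apply. Because $y\ge 0$, we have $z\in\R^n\times\R^m_+\subseteq\YY$ by Assumption~\ref{assumption:asymptoticIdentificationWeak}. Since the ball is open, there is some $\theta\in(0,1)$ with $\|z-\zs\|_P\le\theta\delta_A$, so $z\in\cballP{\zs}{\theta\delta_A}\cap\YY$.

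Second, I will invoke Claim~\ref{prop:LipchitzGuarantee} with $z^0=z$. Its primal part concerns $x^0$ itself and not the $\step$-th iterate, so it yields $-g_j(x)\ge(1-\theta)\Lxjdo\,\delta_A\ge 0$ for every $j\in\indexN$. I would rather quote the ``in particular'' clause of that claim, which gives the strict inequality $G_\indexN(x)<0$ on $\ballP{\zs}{\delta_A}\cap\YY$. This clause also covers the degenerate case $\Lxjdo=0$, where it relies on $g_j(x)\le g_j(\xs)<0$. Combining $G_\indexN(x)<0$ with $G_\indexB(x)\le 0$ gives $G(x)\le 0$, hence $z\in\sol$.

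The only delicate point is a hypothesis of Claim~\ref{prop:LipchitzGuarantee}: $\delta_A$ is defined (Lemma~\ref{claim:delta-well-defined}) only when $\delta_G<\infty$. If $\delta_G=\infty$, I would argue directly from the definition \eqref{eq:delta_geo}. In that case every point of $\ballP{\zs}{t}\cap\YY$ satisfies $G_\indexN(x)<0$ for every $t>0$, so the required inequality holds on all of $\YY$ and the inclusion follows by the same final step. With these checks done, there is no real obstacle: the lemma is essentially a restatement of the fact that, on $\ballP{\zs}{\delta_A}$, the constraints indexed by $\indexN$ are automatically strictly satisfied.
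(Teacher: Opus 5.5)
Your proposal is correct and matches the paper's proof essentially step for step: you inherit $f(x)-h(y)\le 0$, $G(x)_{\indexB}\le 0$ and $y\ge 0$ from $z\in\sols$, observe that $\sols\subseteq\YY$, and invoke Claim~\ref{prop:LipchitzGuarantee} to obtain $G(x)_{\indexN}<0$ on $\ballP{\zs}{\delta_A}\cap\YY$. Your separate treatment of $\delta_G=\infty$, where Lemma~\ref{claim:delta-well-defined} does not define $\delta_A$, is valid and handles an edge case the paper leaves implicit.
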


\begin{proof}[Proof of Lemma~\ref{prop:reducedSystemIdentification}]
    Let \(z\in\sols\cap\ballP{\zs}{\delta_A}\). Note that \(\sols\subseteq\YY\). Since \(z\in \ballP{\zs}{\delta_A}\cap\YY\), from Claim~\ref{prop:LipchitzGuarantee} we obtain
    \(
        G(x)_\indexN\leq 0.
    \)
    Since \(z\in\sols\), \(G(x)_\indexB\leq 0\), \(y\geq 0\), and \(f(x)-h(y)\leq 0\).
    % \(z\) satisfies the system of inequalities~\eqref{eq:reducedSystemIdentification}. 
    Hence,
    %\(z\) satisfies the system of inequalities~\eqref{eq:primalDualSolutions}. That is,
    \(z\in\sol\). 
\end{proof}
    Let \(k\geq K\) and suppose \(\|\cPo (z^k)-\zs\|_P\leq \xi/2\). Then, there exists \(\bar{\eps}>0\) such that \(\cPe (z^k)\in\cballP{\zs}{\xi}\subseteq\ballP{\zs}{\delta_A}\) for all \(\eps\in(0,\bar{\eps}]\). 
    By Lemma~\ref{prop:reducedSystemIdentification}, \(\cPe (z^k)\in\sol\) for any \(\eps\in(0,\bar{\eps}]\), whence
    \begin{align*}
        \|z^k-\zs\|_{P}
        &\leq \|z^k-\cPe (z^k)\|_{P}+\|\zs-\cPe (z^k)\|_{P} & \text{(Triangle inequality)}\\
        &\leq 2\|z^k-\cPe (z^k)\|_{P}& \text{(Assumption~\ref{assumption:finiteTimeIdentification} $(ii)$)}\\
        &\leq 2\dist_P(z^k,\sols)+2\eps & \text{(Lemma~\ref{lemma:pprojection})}
    \end{align*}    
    Since \(\eps\in(0,\bar{\eps}]\) is arbitrary, %\md{What is the definition of the upper bound?}\pil{Done. I moved the Lemma to the beginning of Step 2 to clarify it.}
    \( \|z^k-\zs\|_{P}\leq 2\dist_P(z^k,\sols)\). Then, from Step \(1\) we get \(z^k\in\ballP{\zs}{\xi/2}\).

    \begin{wrapfigure}{r}{0.42\textwidth}
    \vspace{-0.5 cm}
    \centering
    \includegraphics[width=0.42\textwidth]{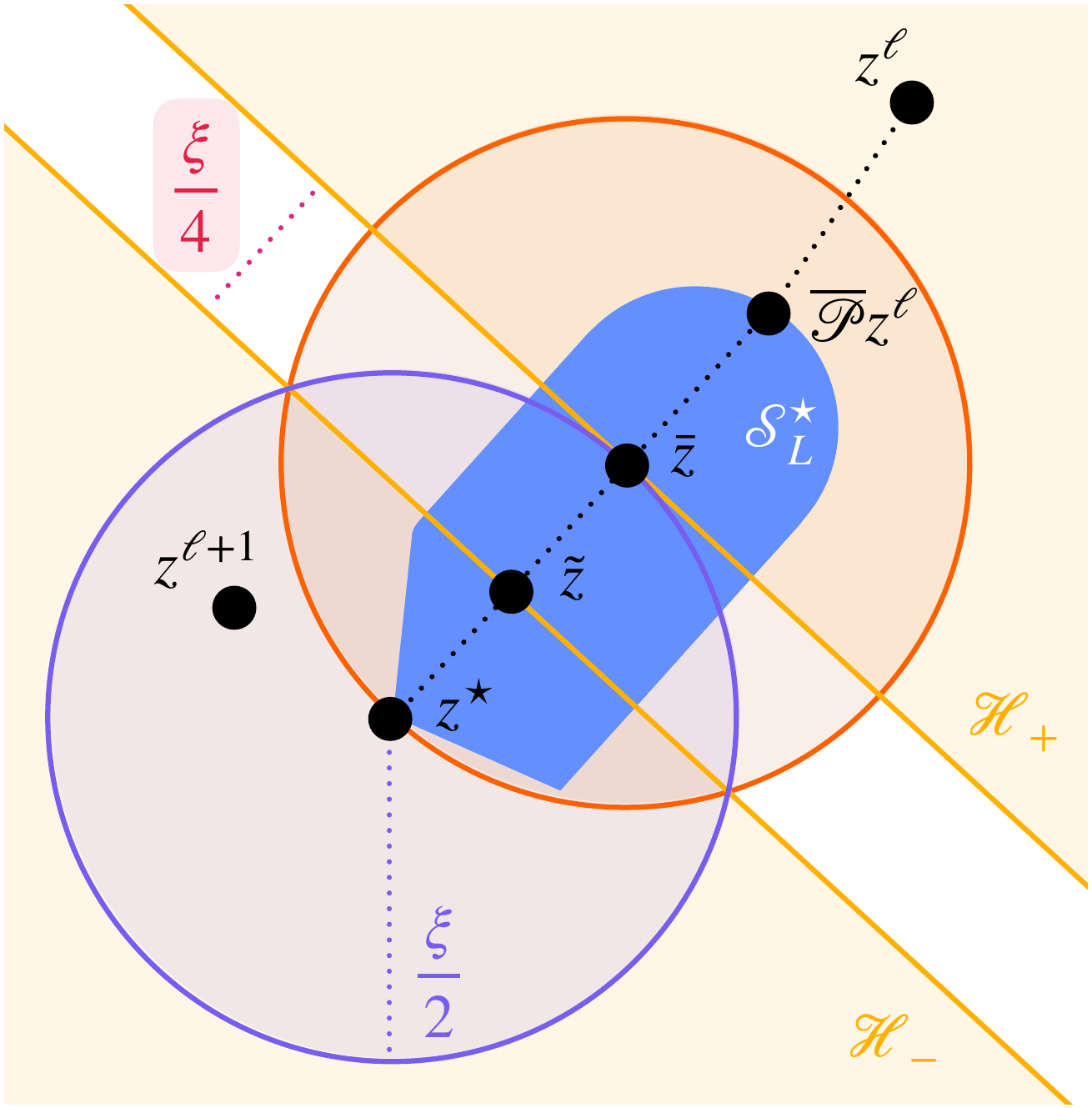}
    \caption{Visualization of Step \(3\). The orange ball \(\cballP{\bar{z}}{\xi/2}\) depicts a forbidden region for the iterates, given by star non-expansiveness (Assumption~\ref{assumption:finiteTimeIdentification} \((ii)\)). The blue solid region depicts the set \(\sols\).}
    \vspace{-1 cm}
     \label{fig:proof}
    \end{wrapfigure}
   \paragraph{Step 3.} 
  %\hnote{State this is proved by contradiction. Try to give a geometric viewpoint before diving into details for this step with Figure 8 (Figure 8 is great with clear intuition), and link different part with Figure 8. }\pil{Done.}
  We aim to prove that \(\cPo(z^k)\) belongs to \(\ballP{\zs}{\xi/2}\) for any \(k>K\) by contradiction.
   % Then, there exists \(\bar{\eps},\omega_k>0\) such that 
   % \(
   %  \|\zs-\cPe z^{k}\|_P\geq \omega_k>\xi/2\text{ for all }\eps\in(0,\bar{\eps}]\,.
   % \)
   Let \(k> K\) and suppose that \(\|\cPo (z^k)-\zs\|_P>\xi/2\). By Assumption~\ref{assumption:asymptoticIdentificationWeak} \((i)\), there exists a maximum \(\ell\in\NN\) such that \(\|\zs-z^\ell\|_P>\xi/2\). Our goal is to show that \((a)\) \(\ell>K\), and \((b)\) \(z^\ell\) and \(z^{\ell+1}\) belong to two parallel halfspaces that are at distance \(\xi/4\). These together contradict the bound on consecutive iterates derived in Step 1.  
   
   To prove that \(\ell>K\), we claim that \(\zs\in\cl(\sol_P)\). To prove this, fix \(\eps>0\). From Assumption~\ref{assumption:asymptoticIdentificationWeak} \((i)\) and Proposition~\ref{prop:linearConvergence} there exists \(\bar{k}\in\NN\) such that \(\|z^{\bar{k}}-\zs\|_P\leq \eps/3\) and \(\dist_P(z^{\bar{k}},\sol)\leq \eps/3\), respectively. Let \(\zstilde\in\sol\) such that \(\dist_P(z^{\bar{k}},\sol)\geq \|z^{\bar{k}}-\zstilde\|_P+\eps/3\). Then,
   \begin{align*}
        \dist_P(\zs,\sol)&\leq \|\zs -\zstilde\|_P\\
        &\leq \|\zs - z^{\bar{k}}\|_P+\|z^{\bar{k}}- \zstilde\|\\
        &\leq   \|\zs - z^{\bar{k}}\|_P+\dist_{P}(z^{\bar{k}},\sol)+\eps/3\\
        &\leq \eps\,.
   \end{align*}
    % % \begin{wrapfigure}{r}{0.5\textwidth}
    % % \vspace{-1.1 cm}
    % \begin{figure}
    % \begin{center}
    % \includegraphics[width=0.42\textwidth]{figures/proof_sketch.pdf}
    % \end{center}
    % % \vspace{-0.6 cm}
    % \caption{Visualization of Step \(3\) of the proof of Theorem~\ref{theorem:finiteTimeIdentificationG}. The orange ball \(\cballP{\bar{z}}{\xi/2}\) depicts a forbidden region for the iterates, given by star non-expansiveness (Assumption~\ref{assumption:finiteTimeIdentification} \((ii)\)). The blue solid region depicts the set \(\sols\).\pil{TODO: Wrap this figure.}}
    %  \label{fig:proof}

    % % \end{wrapfigure}
    % \end{figure}
   \clearpage
   \noindent Since \(\eps>0\) is arbitrary, \(\dist_P(\zs,\sol)=0\), whence \(\zs\in\cl(\sol_P)\). Then, since \(\cl(\sol_P)\subseteq\cl((\sols)_P)\), from Lemma~\ref{lemma:pprojection} (ii) we obtain
    \begin{align*}
        \|\zs-z^{k}\|_P &\geq\|\zs-\cPo (z^{k})\|_P>\xi/2\,.
    \end{align*}
   % Then, for any \(\eps\in(0,2^{-1}(\omega_k-\xi/2))\),
   %  \begin{align*}
   %      \|\zs-z^{k}\|_P &\geq\|\zs-\cPe z^{k}\|_P-2\eps & \text{(Lemma~\ref{lemma:pprojection})}\\
   %      &\geq\omega_k-2\eps\\
   %      &>\xi/2\,. &\text{(\(\eps<2^{-1}(\omega_k-\xi/2)\))}
   %  \end{align*}
    Therefore  \(\ell\geq k> K\), by maximality.
    
    Now we delve into the proof of \(\|z^\ell-z^{\ell+1}\|_P\geq \xi/4\). The strategy is to construct a point that is both in \(\sols\) and in the boundary of \(\ballP{\zs}{\xi/2}\) that faces \(z^\ell\); in particular,  by Lemma~\ref{prop:reducedSystemIdentification} it has to be also in \(\sol\). From star non-expansiveness we obtain that \(z^{\ell+1}\) has to be far away from that point, and from convexity we get that it has to be also far from \(z^{\ell}\)---see Figure~\ref{fig:proof} for an illustration of the geometry of the argument. To formalize this, first note that, since \(\|\zs-z^\ell\|_P>\xi/2\), from Step 2 we obtain \(\|\zs- \cPo (z^\ell)\|_P>\xi/2\). Then, there exists \(\bar{\eps}>0\) such that 
    \begin{equation}
    \label{eq:projDistanceII}
        \inf_{\eps\in(0,\bar{\eps}]}\|\zs-\cPe  (z^\ell)\|_P>\xi/2\,.
    \end{equation}
    Also, since \(\zs\in\cl(\sol_P)\), for all \(\eps>0\) there exists \(z^{\star,\eps}\in\sol\) such that \(\|\zs-z^{\star,\eps}\|_P\leq\eps\). Then, define
    \begin{equation}
    \label{eq:barZdef}
        \bar{z}=\zs+\frac{\xi}{2}\frac{\cPo (z^\ell)-\zs}{\|\cPo (z^\ell)-\zs\|_P}\quad\text{and}\quad \bar{z}^\eps=z^{\star,\eps}+\frac{\xi}{2}\frac{\cPe (z^\ell)-z^{\star,\eps}}{\|\cPe (z^\ell)-z^{\star,\eps}\|_P}\quad\text{for any \(\eps\in (0,\bar{\eps}]\)}\,.
    \end{equation}
    From~\eqref{eq:projDistanceII}, \(\bar{z}^\eps\) is a convex combination of \(z^{\star,\eps}\in\sol\subseteq\sols\) and \(\cPe (z^{\ell+1})\in\sols\), and from Assumption~\ref{assumption:problem} \((i)\), \(\sols\) is convex, whence \(\bar{z}^\eps\in\sols\). Then, since \(\bar{z}^\eps\in\cballP{\zs}{\xi/2}\), from Lemma~\ref{prop:reducedSystemIdentification} we obtain \(\bar{z}^\eps\in\sol\). Denoting \(\delta_A=\|\cPo (z^\ell)-\zs\|_P\) and \(\Delta_\eps=\|\cPe (z^\ell)-\zs\|_P\), by the triangle inequality we have \(|\delta_A-\Delta_\eps|\leq \|\cPo (z^\ell)-\cPe (z^\ell)\|_P\leq\eps\) and \(\delta_A-\eps\leq \Delta_\eps\leq \delta_A+\eps\). Hence, for any \(\eps\in(0,\min\{\bar{\eps},1,\delta_A/2\})\),
    \begin{equation}
    \label{eq:zbarzeps}
        \begin{aligned}
            \|\bar{z}-\bar{z}^\eps\|_P&\stackrel{\text{(I)}}{\leq} \xi/2 \|\Delta_\eps^{-1}(\cPe (z^\ell)-z^{\star,\eps})-\delta_A^{-1}(\cPo (z^\ell)-\zs)\|_P+\|\zs-z^{\star,\eps}\|_P\\
            &\stackrel{\text{(II)}}{\leq} \xi/2 \|\Delta_\eps^{-1}(\cPe (z^\ell)-z^{\star,\eps})-\delta_A^{-1}(\cPo (z^\ell)-\zs)\|_P+\eps\\
            &=\xi(2\delta_A\Delta_\eps)^{-1}\|\delta_A(\cPe (z^\ell)-z^{\star,\eps})-\Delta_\eps(\cPo (z^\ell)-\zs)\|_P+\eps\\
            &=\xi(2\delta_A\Delta_\eps)^{-1}\|\delta_A(\cPe (z^\ell)-\cPo (z^\ell))+(\delta_A-\Delta_\eps)\cPo (z^\ell)+\delta_A (\zs-z^{\star,\eps})+(\Delta_\eps-\delta_A)\zs\|_P\\
            &\hspace{2.9 cm}+\eps\\
            &\stackrel{\text{(III)}}{\leq} \xi (2\delta_A\Delta_\eps)^{-1} (\delta_A \|\cPe (z^\ell)-\cPo (z^\ell)\|_P+|\Delta_\eps-\delta_A|\|\cPo (z^\ell)\|_P+\delta_A\|\zs-z^{\star,\eps}\|_P\\
            &\hspace{2.9 cm}+|\delta_A-\Delta_\eps|\|\zs\|_P)+\eps\\
            &\stackrel{\text{(IV)}}{\leq} \xi (2\delta_A\Delta_\eps)^{-1} \eps(2\delta_A+\|\cPo (z^\ell)\|_P+\|\zs\|_P)+\eps\\
            &\stackrel{\text{(V)}}{\leq} \eps(1+\xi \delta_A^{-2}(2\delta_A+\|\cPo (z^\ell)\|_P+\|\zs\|_P))\\
            &\to 0\quad\text{as}\quad \eps\to 0\,,
        \end{aligned}
    \end{equation}
    where (I) follows from the triangle inequality and~\eqref{eq:barZdef}; (II) from the definition of \(z^{\star,\eps}\); (III) from the Triangle Inequality and homogeneity; (IV) from the definitions of \(z^{\star,\eps}\), \(\Delta_\eps\), and \(\cP_\eps z^\ell\); and (V) from the inequalities \(\Delta_\eps\geq \delta_A-\eps\geq\delta_A/2\). In particular, \(\bar{z}_P\in\cl(\sol_P)\). 
    %By the construction of \(\bar{z}\), \(\cPo (z^\ell)-\zs\in \cN_{\cballP{\zs}{\xi/2}}(\bar{z})\).
    Then, consider the halfspace
    \begin{align*}
        \cH_+&=\{z\in\RR^{n+m}:\langle z-\bar{z},\bar{z}-\zs\rangle_P \geq0\}\,.
    \end{align*}
    % \textit{Claim:} The point \(z^\ell\) belongs to \(\cH\). \\
    % \noindent\textit{Proof of claim:}
    We claim that $z^\ell\in \cH_+$. Suppose to the contrary that \(z^\ell\not\in\cH_+\). Note that, by construction
    \(
        \cPo (z^\ell)-\zs=2\delta_A/\xi\cdot(\bar{z}-\zs)
    \), whence \(\cPo (z^\ell)-\bar{z}=(\cPo (z^\ell)-\zs)-(\bar{z}-\zs)=(2\delta_A\xi^{-1}-1)(\bar{z}-\zs)\), and then \(\langle z^\ell-\bar{z},\cPo (z^\ell)-\bar{z}\rangle_P < 0\) since \(z^\ell\not\in\cH^+\) and \(2\delta_A/\xi>1\). It follows that
    \begin{align*}
        \|z^\ell- \cPo (z^\ell)\|_P^2&= \|z^\ell-\bar{z}-(\cPo (z^\ell)-\bar{z})\|_P^2\\
        &=\|z^\ell-\bar{z}\|_P^2-2\langle z^\ell-\bar{z},\cPo (z^\ell)-\bar{z}\rangle_P+ \|\cPo (z^\ell)-\bar{z}\|_P^2\\
        &>\|z^\ell-\bar{z}\|_P^2\,.
    \end{align*}
    This is contradiction to the definition of \(\cPo (z^\ell)\), since \(\bar{z}_P\in\cl(\sol_P)\subseteq\cl((\sols)_P)\). Therefore, \(z^\ell\in \cH_+\). Now, let \(\tilde{z}=(\bar{z}+\zs)/2\), and consider the following halfspace
    \[
        \cH_-=\{z\in\RR^{n+m}:\langle z-\tilde{z},\tilde{z}-\zs\rangle_P \leq 0\}\,.
    \]
    We will show that \(z^{\ell+1}\in\cH_-\). From the definition of \(n\), \(z^{\ell+1}\in \cballP{\zs}{\xi/2}\). Furthermore,
    \begin{align*}
        \|z^{\ell+1}-\bar{z}\|_P&\geq \|z^{\ell+1}-\bar{z}^\eps\|_P-\|\bar{z}-\bar{z}^\eps\|_P & \text{(Triangle inequality)}\\
    &\geq \|\zs-\bar{z}^\eps\|_P-\|\bar{z}-\bar{z}^\eps\|_P & \text{(Assumption~\ref{assumption:finiteTimeIdentification} \((ii)\))}\\
    &\geq \|\zs-\bar{z}\|_P-2\|\bar{z}-\bar{z}^\eps\|_P & \text{(Triangle inequality)}\\
    &\geq \xi/2 -2\|\bar{z}-\bar{z}^\eps\|_P & \text{(Definition of \(\bar{z}\))}\\
    &\to\xi/2\quad\text{as}\quad\eps\to 0 & \text{(From~\eqref{eq:zbarzeps})}\,.
    \end{align*}
    Since \(\eps\) is arbitrarily small, \(z^{\ell+1}\not\in\ballP{\bar{z}}{\xi/2}\). Then, to conclude that \(z^{\ell+1}\in\cH_-\) it suffices to show that \(\cballP{\zs}{\xi/2}\cap\cH_-^c\subseteq\ballP{\bar{z}}{\xi/2}\). 
    % \begin{lemma}
    % \label{lemma:ballIntersection}
    %     Let \(P\in\RR^{n\times n}\) be  a positive semidefinite matrix and \(x,y\in\RR^n\) points such that \(\|x-y\|_P=\xi>0\). Define the halfspace
    %     \[
    %         \cH=\{z\in\RR^n:\langle z-2^{-1}(x+y),2^{-1}(x-y)\rangle_P\leq 0\}\,.
    %     \]
    %     Then, \(\cballP{y}{\xi}\cap\cH^c\subseteq\ballP{x}{\xi}\).
    % \end{lemma}
    To prove this, let \(u=2/\xi\cdot (\bar{z}-\zs)\). Note that \(\tilde{z}-\zs=(\bar{z}-\zs)/2=\xi/4\cdot  u\) and \(z-\tilde{z}=z-(\bar{z}+\zs)/2=(z-\zs)-(\bar{z}-\zs)/2=\xi/2\cdot(2/\xi\cdot (z-\zs) - u/2)\). Hence, \(\langle z,-\tilde{z},\tilde{z}-\zs\rangle_P\leq 0\) if and only if \(\langle 2/\xi\cdot(z-\zs)-u/2,u\rangle_P\leq 0\). It follows that 
    \begin{align*}
        \cH_-&=\zs+\xi/2\cdot\{z\in\RR^n:\langle z-u/2,u\rangle_P \leq 0\}\\
        \cballP{\zs}{\xi/2}&=\zs+\xi/2\cdot \cballP{0}{1},\quad\text{and}\\
        \ballP{\bar{z}}{\xi/2}&=\zs+\xi/2\cdot \ballP{u}{1}\,.
    \end{align*}
    Then, by the possibility of translating by \(-\zs\) and then scaling by \(2/\xi\), assume without loss of generality \(\xi/2=1\) and \(\zs=0\), implying \(u=\bar{z}\) in the above representation.
    Let \(z\in \cballP{0}{1}\cap\cH_-^c\). 
    % Since \(z\in\cballP{0}{1}\), \(\|x-z\|_P\leq \|x\|_P+\|z\|_P\leq 2\). 
    Since \(z\in\cH_-^c\), \(2\langle z,\bar{z}\rangle_P > \|\bar{z}\|_P^2=1\). Then,
    \(
            \|\bar{z}-z\|_P^2=\|\bar{z}\|_P^2-2\langle \bar{z},z\rangle_P+\|z\|_P^2< 2-1=1\,.
    \)
    Therefore, \(z\in\ballP{\bar{z}}{1}\)\,. Since \(z\in\cballP{0}{1}\cap\cH_-^c\) is arbitrary, \(\cballP{0}{1}\cap\cH_-^c\subseteq\ballP{\bar{z}}{1}\). Thus, \(z^{\ell+1}\in\cH_-\). Now, we claim that \(
        \dist_P(\cH_+,\cH_-)=\xi/4\,.
    \)
%     \begin{lemma}
% \label{lemma:halfspaceSeparation}
%     Let \(P\in\RR^{n\times n}\) be  a positive semidefinite matrix and \(x,y\in\RR^n\) points such that \(\|x-y\|_P=\xi>0\). Define the halfspaces
%     \[
%         \cH_-=\{z\in\RR^n:\langle z-2^{-1}(x+y), 2^{-1}(x-y)\rangle_P\leq 0\}\quad\text{and}\quad \cH_+=\{z\in\RR^n:\langle z-x, x-y\rangle_P\geq 0\}\,.
%     \]
%     Then, \(\dist_P(\cH_-,\cH_+)=\xi/2\).
% \end{lemma}
    Similarly as before, for \(u=2/\xi\cdot(\bar{z}-\zs)\) we have
    \begin{align*}
        % \cH_-&=\zs+\xi/2\cdot\{z\in\RR^n:\langle z-2^{-1}u,u\rangle_P \leq 0\}\,.
        % ,\quad\text{and}\\
        \cH_+=\zs+\xi/2\cdot\{z\in\RR^n:\langle z-u,u\rangle_P\geq0\}\,.
    \end{align*}
    Hence, assume without loss of generality \(\xi/2=1\) and \(\zs=0\).
    Let \(z^-=\bar{z}/2\in\cH_-\) and \(z^+=\bar{z}\in\cH_+\). Clearly, \(\|z^--z^+\|_P=\|\bar{z}\|_P/2=1/2\). Therefore \(\dist_P(\cH_-,\cH_+)\leq 1/2\). For the reverse inequality, let \(z^-\in\cH_-\) and \(z^+\in\cH_+\).  Denote
    \[
        z^\pm=z^\pm_{\bar{z}}+z^\pm_{\bar{z}^\perp}\quad\text{where}\quad z^\pm_{\bar{z}}=\bar{z}\left\langle z^\pm,\|\bar{z}\|_P^{-1}\bar{z}\right\rangle_P\quad\text{and}\quad z^\pm_{\bar{z}^\perp}=z^\pm-z^\pm\,,
    \]
    where $\pm$ represents \(+\) or \(-\). Then, by orthogonality, we have
    \begin{align*}
        \|z^+-z^-\|_P^2&=\|z^+_{\bar{z}}-z^-_{\bar{z}}\|_P^2+\|z^+_{\bar{z}^\perp}-z^-_{\bar{z}^\perp}\|_P^2\\
        &\geq \|z^+_{\bar{z}}-z^-_{\bar{z}}\|_P^2\\
        &=\left|\langle z^+-z^-,\|\bar{z}\|_P^{-1}\bar{z}\rangle_P\right|\|\bar{z}\|_P\\
        &\geq|\langle z^+,\bar{z}\rangle_P|-|\langle z^-,\bar{z}\rangle_P|\\
        &\geq \|\bar{z}\|_P^2(1-1/2)\\
        &=1/2\,.
    \end{align*}
    Therefore, \(\dist_P(\cH_-,\cH_+)\geq 1/2=\xi/4\). It follows that \(\|z^{\ell+1}-z^\ell\|\geq \xi/4\). But, from  Step \(1\) we have \(\|z^\ell-z^{\ell+1}\|_P<\xi/4\), a contradiction.
    \paragraph{Step 4.}  Combining Steps 2 and 3, it holds for any \(k> K\) that \(z^k\in\ballP{\zs}{\xi/2}\).
    \paragraph{Step 5.} From Assumption~\ref{assumption:finiteTimeIdentification} \((i)\) we have
    \begin{align*}
        \|\tilde{z}^k-\zs\|_P\leq \gamma\dist_P(z^0,\sol)/k^{\rate}\,.
    \end{align*}
    Then,  whenever \(k> K\geq (\gamma\dist_P(z^0,\sol)/\xi)^{\irate}\) we obtain \(\|\tilde{z}^k-z^k\|_2< \xi/2\), whence
    % \[
    %     \|\tilde{z}^k-z^k\|_2\leq \frac{c\dist_P(z^0,\sol)}{k^\rate}
    % \]
    \begin{align*}
        \|\tilde{z}^k-\zs\|_P\leq \|\tilde{z}^k-z^k\|_P+\|z^k-\zs\|_P< \xi/2+\xi/2=\xi\,.
    \end{align*}
    \paragraph{Step 6.}   Using Steps \(4\) and \(5\) we have that \(z^{k},\tilde{z}^k\in\ballP{\zs}{\xi}\) for all \(k>K\). Then, by Proposition~\ref{prop:identificationFinalStep} the iterates need at most \(\lceil \step+\max\{\Lyjd(\eta C_j)^{-1}:j\in\indexN\}\rceil\) more steps to reach \(\cM\).\hfill \qed

    % In this section, we provide a short proof of Theorem~\ref{theorem:twoStageConvergence}. The proof of Theorem~\ref{theorem:linearConvergenceFast} is completely analogous to that of Theorem~\ref{theorem:linearConvergence}, whence it is omitted. 
    \subsection{Proof of Theorem~\ref{theorem:twoStageConvergence}}
    \label{appendix:proof-twoStageConvergence}
    From Proposition~\ref{theorem:finiteTimeIdentification}, the \(k\)th iterate generated by update~\eqref{eq:algorithm} satifies \(z^k\in\ballP{\zs}{\delta_A/2}\cap\cM\) whenever
    \[
           k >K:=\left(\max\left\{1,\frac{1}{\alpha_L}\right\}\frac{8\lambdamax(P)^{\frac{3}{2}}\dist_2(z^0,\sol)}{\delta_A}\right)^{\irate}+\left\lceil \max_{j\in\indexN}\frac{\Lyjd}{\eta \overline \Lxjd} \right\rceil\,.\footref{fn:delta} 
    \]
    Then, from Proposition~\ref{prop:linearConvergence}, for all \(k>K\) we have
    \[
        \dist_2(z^k,\sol)\leq \sqrt{e}\nu \exp\left(-\frac{1}{2}\frac{k}{\lceil e\nu^2 \rceil}\right)\frac{\delta_A}{\lambdamax(P)^{\frac{1}{2}}}\quad\text{where}\quad \nu =\frac{\gamma\lambdamax(P)}{\alpha_G}\,.
    \]
    Rearranging, we recover the result of Theorem~\ref{theorem:twoStageConvergence}.

\subsection{Proof of Proposition~\ref{prop:constantComparison}}
        \paragraph{\((\alpha_\indexM\geq\alpha_G)\)} Clearly \(\cD_{\delta_A}\cap\cM\subseteq\cD\),\footref{fn:delta} whence 
        \[
             \alpha_G=\inf_{z\in\cD}\frac{\dist_2(0,\sub(z))}{\dist_2(z,\sol)}\leq \inf_{z\in\cD_{\delta_A}\cap\cM}\frac{\dist_2(0,\sub(z))}{\dist_2(z,\sols)}=\alpha_{\indexM}\,.
        \]
        \paragraph{\((\alpha_L\geq\alpha_G)\)} By construction, \(\sol\subseteq\sols\), and then \(\dist_2(z,\sols)\leq \dist_2(z,\sol)\) for all \(z\in\RR^{n+m}\). Hence,
        \[
            \alpha_G=\inf_{z\in\cD}\frac{\dist_2(0,\sub(z))}{\dist_2(z,\sol)}\leq \inf_{z\in\cD}\frac{\dist_2(0,\sub(z))}{\dist_2(z,\sols)}=\alpha_L\,.
        \]
        \paragraph{\((\alpha_\indexM\geq\alpha_L)\)} Let \(z\in\ballP{\zs}{\delta_A/2}\). Let \(\zstilde\in\sols\) such that \(\|z-\zstilde\|_2=\dist_2(z,\sols)\). Suppose to the contrary that \(\zstilde\not\in\ballP{\zs}{\delta_A}\). In particular,  \(\zstilde\not\in\cballP{\zs}{\zeta}\) for \(\zeta=\|z-\zs\|_P\). Let \(\zeta_P=\lambdamin(P)^{-\frac{1}{2}}\zeta/2\). 
        %For any \(z\in\RR^{n+m}\) denote \(\cP z:=\proj_{\cballt{\zs}{\zeta_P}}z\). 
        From Proposition~\ref{prop:normEquivalence} and since \(\kappa(P)\leq 4\),
        \[
            \|\zstilde-\zs\|_2\geq\lambdamax(P)^{-\frac{1}{2}}\|\zstilde-\zs\|_P>\lambdamax(P)^{-\frac{1}{2}}\zeta=\kappa(P)^{-\frac{1}{2}}\lambdamin(P)^{-\frac{1}{2}}\zeta\geq \lambdamin(P)^{-\frac{1}{2}}\zeta/2=\zeta_P\,.
        \]
        Thus, \(\zstilde\not\in\cballt{\zs}{\zeta_P}\), whence \(\proj_{\cballt{\zs}{\zeta_P}}\zstilde\neq \zstilde\). Conversely, from Proposition~\ref{prop:normEquivalence} we obtain
        \[
            \|z-\zs\|_2\leq \lambdamin(P)^{-\frac{1}{2}}\|z-\zs\|_P\leq \lambdamin^{-\frac{1}{2}}\zeta/2=\zeta_P\,.
        \]
        Then, \(z\in\cballt{\zs}{\zeta_P}\), whence \(\proj_{\cballt{\zs}{\zeta_P}} z=z\). From the firm-non-expansiveness of the projection, we obtain
        \begin{align*}
            \|\zstilde-z\|_2^2&\geq \|\proj_{\cballt{\zs}{\zeta_P}}\zstilde-\proj_{\cballt{\zs}{\zeta_P}}z\|_2^2+\|(\zstilde-\proj_{\cballt{\zs}{\zeta_P}}\zstilde)-(z-\proj_{\cballt{\zs}{\zeta_P}} z)\|_2^2\\
            &=\|z-\proj_{\cballt{\zs}{\zeta_P}}\zstilde\|_2^2+\|\zstilde-\proj_{\cballt{\zs}{\zeta_P}}\zstilde\|_2^2\\
            &>\|z-\proj_{\cballt{\zs}{\zeta_P}}\zstilde\|_2^2\,.
        \end{align*}
        This contradicts that, by construction, \(\zstilde=\argmin\{\|z-z'\|_2:z'\in\sols\}\).
        Therefore,  \(\zstilde\in\ballP{\zs}{\delta_A}\). Moreover, from Lemma~\ref{prop:reducedSystemIdentification}, \(\zstilde\in\sol\). Hence, \( \dist_2(z,\sol)\leq\dist_2(z,\sols)\) for any \(z\in\ballP{\zs}{\delta_A/2}\). Then, since \(\cD_{\delta_A}\subseteq\ballP{\zs}{\delta_A/2}\), we obtain
        \[
            \alpha_{L}\leq\inf_{z\in\cD_{\delta_A}}\frac{\dist_2(0,\sub(z))}{\dist_2(z,\sols)}\leq \inf_{z\in\cD_{\delta_A}}\frac{\dist_2(0,\sub(z))}{\dist_2(z,\sol)}\leq\inf_{z\in\cD_{\delta_A}\cap\cM}\frac{\dist_2(0,\sub(z))}{\dist_2(z,\sol)}=\alpha_{\indexM}\,.
        \]
        Completing the proof of Proposition~\ref{prop:constantComparison}. \hfill \qed

    \subsection{Derivation of the results from Example~\ref{ex:rotatedHouse}}
    \label{appendix:example}

    %\hnote{Please make a formal statement here that we show for that example $$\alpha_G\leq\min\{c_1,c_2\},\quad0.037\leq\alpha_L\leq 0.44,\quad\text{and}\quad\alpha_{\indexM}=1.$$
    %What is stated below is more than what is needed to support the main paper. It is unclear to me whether they are relavant. My suggestion is to keep only the relavant proof showing 
    %$$\alpha_G\leq\min\{c_1,c_2\},\quad0.037\leq\alpha_L\leq 0.44,\quad\text{and}\quad\alpha_{\indexM}=1.$$
    %}
    In this section, we prove that the problem of Example~\ref{ex:rotatedHouse} satisfies
    \begin{equation}
    \label{eq:exampleBounds}
        \alpha_G\leq\min\{c_1,c_2\},\quad0.037\leq\alpha_L\leq 0.44,\quad\text{and}\quad\alpha_{\indexM}=1.
    \end{equation}
    To do this, rewrite the problem as
    \[
        \min_{x\in\R^2} \langle c,x\rangle\quad
        \text{s.t.}\quad Ax\leq  b,\quad\text{where}\quad A=-\begin{bmatrix}
            c_1 & c_2\\
            1 & 0\\
            0 & 1
        \end{bmatrix},\quad b=-\begin{bmatrix}
            \|c\|_1\\
            0\\
            0
        \end{bmatrix}\,.
    \]
    By inspection, it is easy to see that
    \[
        \sol=\sol_x\times\sol_y\quad\text{where}\quad\sol_x=\{x\in\RR^{2}_+:\langle c,x\rangle =\|c\|_1\}\quad\text{and}\quad\sol_y=\{(1,0,0)\}\,.
    \]
    We now show an explicit form for the distance to zero of the saddle subdifferential. Let \(z=(x,y)\in\RR^n\times \RR^m_+\) and \(I=\{j\in[m]:y_j=0\}\). Using Proposition~\ref{prop:saddleSubForm}, we obtain
    \begin{equation}
        \label{eq:subDistance}
        \begin{aligned}
        \dist_2^2(0,\sub(z))&=\|c+A^\top y\|_2^2+\|\max(0, (Ax-b)_I)\|_2^2+\|(Ax-b)_{I^c}\|_2^2\,.\\
        &=\|(1-y_1)c-y_2e_2-y_3e_3\|_2^2+\left\|\left(\begin{bmatrix}\langle c,x\rangle-\|c\|_1\\
        x_1\\
        x_2
        \end{bmatrix}_I\right)_-\right\|_2^2+\left\|\begin{bmatrix}\langle c,x\rangle-\|c\|_1\\
        x_1\\
        x_2
        \end{bmatrix}_{I^c}\right\|_2^2
        \end{aligned}
    \end{equation}
    where \(e_i\) is the \(i\)-th canonical vector.
    
    The following analysis holds for \(P=I\) and any solution \(\zs\in\relint(\sol)\).
    \paragraph{Estimation of \(\boldsymbol{\alpha_{\indexM}}\).} Let \(z=(x,y)\in\ballt{\zs}{\delta}\cap\cM\). Note that
    \[
        \cM=\{(x,y)\in\mathbb{R}^{n+m}:x_1,x_2>0,\,y_1>0,\,y_{2},y_3=0\}\,.
    \]
    Then \(I=\{2,3\}\) and~\eqref{eq:subDistance} becomes
    \[
        \dist_2^2(0,\sub(z))=\|c\|_2^2(1-y_1)^2+\left(\langle c,x\rangle -\|c\|_1\right)^2.
    \]
    Furthermore, since \(z\in\ballt{\zs}{\delta}\cap\cM\),
    \[
        \dist_2^2(z,\sol)=\dist_2^2(y,\sol_y)+\dist_2^2(x,\sol_x)=(1-y_1)^2+\left(\frac{\langle c,x\rangle-\|c\|_1}{\|c\|_2}\right)^2\,.
    \]
    But \(\|c\|_2=1\), hence
    \[
        \alpha_{\indexM}=\inf_{z\in\cD_{\delta}\cap\cM}\frac{\dist_2(0,\sub(z))}{\dist_2(z,\sol)}=1\,.
    \]
    \paragraph{Estimation of \(\boldsymbol{\alpha_G}\).} We provide upper bound estimates for \(\alpha_G\) in terms of \(\tau\) and \(\min\{c_1,c_2\}\).
    {\em Dependence on \(\min\{c_1,c_2\}\).} Assume without loss of generality that \(c_1\geq c_2\). For any \(\eps\in(0,\tau)\) let \(\bar{z}=(\bar{x},\bar{y})\in\mathbb{R}^{n+m}\) such that \(\bar{x}=(0,(1+\frac{c_1}{c_2})+\eps)\) and \(\bar{y}=(1,0,0)\). Note that \(\bar{z}\in\cD\). Moreover \(c+A^\top \bar{y}=0\) and \(I=\{2,3\}\), whence~\eqref{eq:subDistance} becomes
    \[
        \dist_2^2(0,\sub(\bar{z}))=|(A\bar{x}-b)_1|^2=((c_2+c_1)+c_2\eps-\|c\|_1)^2=(c_2\eps)^2
    \]
    On the other hand, we have
    \(
        \dist_2(\bar{z},\sol)=\eps
    \). Gathering up, we obtain that
    \[
        \alpha_G=\inf_{z\in\cD}\frac{\dist_2(0,\sub(z))}{\dist_2(z,\sol)}\leq \frac{\dist_2(0,\sub(\bar{z}))}{\dist_2(\bar{z},\sol)}=\min\{c_1,c_2\}\,.
    \]
    {\em Dependence on \(\tau\).} For any \(\eps>0\) let \(\tau_\eps=\tau-\eps\) and \(\bar{z}^\eps =(\bar{x},\bar{y})\in\RR^{n+m}\) such that
    \(
        \bar{x}^\eps=\xs+\tau_\eps c
    \) and \(\bar{y}=0\). Note that \(\bar{z}\in\cD\).  Moreover, in view of~\eqref{eq:subDistance}, we have \(I=\{1,2,3\}\). Hence,
    \[
        \dist_2^2(0,\sub(\bar{z}))=\|c\|_2^2=1\,.
    \]
    Furthermore, we have
    \[
        \dist_2^2(\bar{z},\sol)=\dist_2^2(\bar{x},\sol_x)+\dist_2^2(\bar{y},\sol_y)=\left(\frac{\langle c,\bar{x}\rangle-\|c\|_1}{\|c\|_2}\right)^2+1=1+\tau_\eps^2\,.
    \]
    Gathering up, we obtain that \(\alpha_G=O(\tau^{-1})\):
    \[
        \alpha_G=\inf_{z\in\cD}\frac{\dist_2(0,\sub(z))}{\dist_2(z,\sol)}\leq \frac{\dist_2(0,\sub(\bar{z}))}{\dist_2(\bar{z},\sol)}=\frac{1}{\sqrt{1+\tau_\eps^2}}\xrightarrow[]{\eps\to0} \frac{1}{\sqrt{1+\tau^2}}\,.
    \]
    \paragraph{Estimation of \(\boldsymbol{\alpha_L}\).}
    % Note that, in this case, the system of equations~\eqref{eq:reducedSystemIdentification} becomes
    % \[
    %     \begin{cases}
    %         \langle c,x\rangle+\langle b,y\rangle&\leq 0\\
    %         c+A^\top y&=0\\
    %         (Ax-b)_1&\leq 0\\
    %         y_2,\, y_3&\geq 0
    %     \end{cases}  \quad\Leftrightarrow\quad \begin{cases}
    %        \langle c,x\rangle & \leq \|c\|_1\delta_A y_1\\
    %         \begin{bmatrix}
    %             c_1(1-y_1)-y_2\\
    %             c_2(1-y_1)-y_3
    %         \end{bmatrix}&=0\\
    %         \langle c,x\rangle &\geq \|c\|_1\delta_A \\
    %         y_2,\, y_3&\geq 0
    %     \end{cases}
    % \]
    % From the RHS equations, we deduce the following: the first and the third equations imply \(y_1\geq 1\). This and the second equation give \(y_1,y_2\leq 0\). Then, we obtain \(y_2=y_3=0\) from the fourth equation. Therefore, \(y_1=1\) from the second equation. Furthermore, this and the first and third equations imply \(\langle c,x\rangle=\|c\|_1\delta_A\). 
    Invoking Proposition~\ref{prop:localSystemFiniteIdentification} we obtain
    \[
        \sols=\mathcal{S}_{\zs,x}\times\mathcal{S}_{\zs,y\quad}\text{where}\quad\mathcal{S}_{\zs,x}=\{x\in\RR^{2}:\langle c,x\rangle=\|c\|_1\}\quad\text{and}\quad \mathcal{S}_{\zs,y}=\{(1,0,0)\}\,.
    \]
    Hence, following the analysis of \(\alpha_G\), we derive 
    \(
        \alpha_L\leq 1/\sqrt{1+\tau^2}\,.
    \) In particular, when \(\tau=2\) we obtain \(\alpha_L\leq 0.44\)
    However, we can not follow our previous analysis for \(\alpha_G\) to obtain \(\alpha_L\leq \min\{c_1,c_2\}\), as the \(\bar{z}\) chosen in that case satisfies 
    \[
        \dist_2^2(\bar{z},\sols)=\left(\frac{\langle c,\bar{x}\rangle-\|c\|_1}{\|c\|_2}\right)^2=((c_2+c_1)+c_2\eps-\|c\|_1)^2=(c_2\eps)^2=\dist_2^2(0,\sub(\bar{z}))\,.
    \]
    Nevertheless, in the following we will show that \(\alpha_L\gg\min\{c_1,c_2\}\) when \(\min\{c_1,c_2\}\) is small.
    \paragraph{Numerical lower bound for \(\boldsymbol{\alpha_L}\).} 
    % In our derivation of the upper bound \(\alpha_G\leq \min\{c_1,c_2\}\), we have used points that are far from the solution point \(\zs\) in instance \((a)\), requiring setting \(R\geq \delta_A(1+\frac{c_1}{c_2})+1\gg 1\). To decouple the effects of \(\min\{c_1,c_2\}\) being small and \(R\) being big in the numerical experiments, we numerically evaluate instance \((b)\), which does not suffer from the aforementioned problem.
    To check that \(\alpha_L\gg \min\{c_1,c_2\}\) when \(\min\{c_1,c_2\}\) is small, let  \[
        G=\{z\in\RR^{n+m}:Hz\leq 0\},\quad\text{where}\quad H=\begin{bmatrix}
            A_1 & 0_{1\times 3}\\
            0_{2\times 2} & A^\top  \\
            0_{2\times 2} & -A^\top  \\
            0_{2\times 2} & I_3\\
            \frac{1}{R}c^\top  & \frac{1}{R}b^\top 
        \end{bmatrix}\quad\text{and}\quad I_3=\begin{bmatrix}
            0 & -1 & 0\\
            0 & 0 & -1
        \end{bmatrix}\,.
    \]
 From
    %~\cite{Nearly optimal linear convergence of stochastic primal-dual methods for linear programming}
    ~\cite[Lemma 3 (c)]{lu2024geometry} and the invariance to translations of the sharpness constants, for any \(\zstilde\in\sol\) we have
    \[
        \inf_{z\in\cballt{\zstilde}{\tau}}%\cap\YY}
        \frac{\dist_2(0,\sub(z))}{\dist_2(z,\sol)}\geq\inf_{z\in\RR^{n+m}}\frac{\dist_2(Hz,\RR^{m}_-)}{\dist_{2}(z,G)}=:\cH\,,
    \]
    where \(\cH\) is the {\em sharpness constant} (the inverse of the {\em Hoffman constant}) of the homogeneous linear system \(Hz\leq 0\). Then, taking into account that \(\cD=\sol+\tau\ball\),
    %(\sol+\tau\ball)\cap\YY\)
    we obtain
    \[
        \alpha_L=\inf_{z\in\cD}\frac{\dist_2(0,\sub(z))}{\dist_2(z,\sol)}\geq \inf_{\zstilde\in\sol}\inf_{z\in\cballt{\zstilde}{\tau}}%\cap\YY}
        \frac{\dist_2(0,\sub(z))}{\dist_2(z,\sol)}\geq \cH\,.
    \]
    Furthermore, using the procedure proposed by Peña~\cite{pena2024easily}, we can numerically lower bound \(\mathcal{H}\). This way, when \(\min\{c_1,c_2\}=10^{-7}\) we obtain numerically \(
        \alpha_L\geq \cH\geq 0.036.
    \)
    \paragraph{Auxiliary results.} We prove two auxiliary results used in the derivation of Example~\ref{ex:rotatedHouse}. The first gives an exact form for the distance to zero of the saddle subdifferential \(\sub\) of the minimax problem~\eqref{eq:minimaxProblem}. The second relates the set \(\sols\) defined in~\eqref{eq:reducedSystemIdentification} with the solution set of a reduced optimization problem.
\begin{proposition}
    \label{prop:saddleSubForm}
    For any \(z=(x,y)\in\RR^{n+m}\), the distance to zero of the saddle subdifferential of the minimax problem~\eqref{eq:minimaxProblem} has the following form
    \[
        \dist_2^2(0,\sub(z))=\begin{cases}
            \dist_2^2(0,\partial f(x)+ J_G^\top (x)y)+\|(G(x)_{I})_+\|_2^2+\|G(x)_{[m]\backslash I}\|_2^2 & \text{if } y\geq0\\
            \infty & \text{otherwise}
            \end{cases}\,.
    \]
    where \(I=\{j\in[m]:y_j=0\}\) and \(J_G(x)_j=\partial g_j(x)\) for all \(j\in[m]\).
\end{proposition}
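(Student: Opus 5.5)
The plan is to compute $\sub(z)$ coordinatewise from the definition \eqref{eq:saddle-subdifferential} with $\cL(x,y)=f(x)-\iota_{\RR^m_+}(y)+\langle y,G(x)\rangle$, and then exploit that the squared Euclidean distance to a product set splits into the sum of squared distances of the blocks.

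\emph{Primal block.} For fixed $y\geq 0$, the function $x\mapsto f(x)+\sum_j y_j g_j(x)$ is a nonnegative combination of finite convex functions. By the sum rule (valid since all functions are finite everywhere), its subdifferential is $\partial f(x)+\sum_j y_j\partial g_j(x)$, which is exactly $\partial f(x)+J_G^\top(x)y$. If $y\not\geq 0$, then $\cL(\cdot,y)\equiv-\infty$; more to the point, the dual block below is empty. In either case, by the usual convention we have $\sub(z)=\emptyset$ and the distance is $+\infty$. This gives the second case of the formula.

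\emph{Dual block.} For $y\geq 0$, the function $-\cL(x,\cdot)=\iota_{\RR^m_+}(\cdot)-\langle\cdot,G(x)\rangle-f(x)$ has subdifferential $-G(x)+N_{\RR^m_+}(y)$. Again the sum rule applies, since the linear part is finite everywhere. The normal cone to the orthant is a product: its $j$th factor is $\{0\}$ when $y_j>0$ and $(-\infty,0]$ when $y_j=0$. Hence the dual block equals the product over $j$ of the sets $\{-g_j(x)\}$ for $j\notin I$ and $-g_j(x)+(-\infty,0]$ for $j\in I$.

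\emph{Distance computation.} Since $\sub(z)$ is the Cartesian product of the primal block and the $m$ scalar dual sets, $\dist_2^2(0,\sub(z))$ is the sum of the squared distances of $0$ to each factor.
\begin{itemize}
\item The primal factor contributes $\dist_2^2(0,\partial f(x)+J_G^\top(x)y)$.
\item For $j\notin I$, the factor is a singleton, contributing $g_j(x)^2$.
\item For $j\in I$, the distance from $0$ to $(-\infty,-g_j(x)]$ is $0$ when $-g_j(x)\geq 0$ and $g_j(x)$ otherwise, i.e.\ it equals $(g_j(x))_+$.
\end{itemize}
Summing these contributions yields the stated formula.

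The only delicate point is justifying the subdifferential sum rules and the product structure of $\sub$, which is a definitional split over the $x$ and $y$ blocks; I do not expect any real obstacle.
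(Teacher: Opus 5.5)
Your proposal is correct and follows essentially the same route as the paper: compute $\partial_x\cL$ and $\partial_y(-\cL)$ by subdifferential calculus, use that $N_{\RR^m_+}(y)$ is a product of scalar normal cones (the dual block being empty when $y\not\geq 0$), and split the squared distance coordinatewise over the product, which produces $(g_j(x))_+$ exactly for $j\in I$. Your explicit justification of the sum rule via finiteness of $f$ and the $g_j$ is a small detail the paper leaves implicit.
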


\begin{proof}
    Let \(z=(x,y)\in\RR^{n+m}\). Recall that
    \[
        \sub(x,y)=\begin{bmatrix}\partial_x \cL(x,y),\\
        \partial_y (-\cL)(x,y)\end{bmatrix}
        \quad\text{where}\quad \cL(x,y)=f(x)+\langle y,G(x)\rangle-\iota_{\RR^m_+}(y)\,.
    \]
    By subdifferential calculus, we obtain
    \begin{align*}
        \partial_x \cL(x,y)&=\partial f(x)+ J_G^\top (x)y\quad\text{and}\quad
        \partial_y (-\cL)(x,y)=\begin{cases}
         -G(x)+N_{\RR^m_+}(y) & \text{if }y\geq 0\\
         \emptyset & \text{otherwise}
         \end{cases}\,.
    \end{align*}
    Therefore, if \(y_j<0\) for some \(j\in[m]\) then \(\sub(z)=\emptyset\), whence \(\dist_2(0,\sub(z))=\infty\). Then, suppose \(y\geq 0\) and
    fix \(j\in[m]\). Note that  \(N_{\RR_+^m(y)}=\bigtimes_{j=1}^mN_{\RR_+(y_j)}\).
    Moreover,
    \[
        \min_{t\in N_{\RR_+}(y_j)}|G(x)_j-t|^2=\begin{cases}
            \max\{0, G(x)_j\}^2 & \text{if }y_j=0\\
            |G(x)_j|^2 & \text{otherwise}
        \end{cases}\,.
    \]
    It follows that
    \begin{align*}
        \dist_2^2(0,\partial_y (-\cL)(x,y))&=\min_{z\in-N_{\RR^m_+}(y)}\sum_{j=1}^m|G(x)_j+z_j|^2\\
        &=\sum_{j=1}^m\min_{z_j\in-N_{\RR_+}(y_j)}|G(x)_j+z_j|^2\\
        &=\|(G(x)_{I})_+\|_2^2+\|G(x)_{[m]\backslash I}\|_2^2\,.
    \end{align*}
    Gathering up, we obtain
    \begin{align*}
        \dist_2^2(0,\sub(z))&=\dist_2^2(0,\partial_x \cL(x,y))+\dist_2^2(0,\partial_y (-\cL)(x,y))\\
        &=\dist_2^2(0,\partial f(x)+ J_G^\top (x)y)+\|(G(x)_{I})_+\|_2^2+\|G(x)_{[m]\backslash I}\|_2^2\,.
    \end{align*}
\end{proof}

 Consider the simplified primal problem that only incorporates the constraints indexed by \(\indexB\),
  \begin{equation}
    \label{eq:reducedPrimalProblem}
        p^\star_{\indexB} = \begin{cases}\displaystyle\min_{x\in\R^n} & f(x)\\
        \text{s.t.} &  g_j(x)\leq 0 \quad \text{for all }j \in\indexB\, .
        \end{cases}
    \end{equation}
Denote  \(\sol_\indexB\subseteq\RR^{n+q}\), where \(q=|\indexB|\), as the set of primal-dual solutions to the minimax problem associated to~\eqref{eq:reducedPrimalProblem}
\[ 
    \min_{x\in\RR^n}\max_{y\in\RR^q_+}f(x)+\langle y,G_\indexB(x)\rangle\,.
\]
 The following proposition establishes that the set \(\sols\) is equal to \(\sol_\indexB\times \{0_\indexN\}\) up to dual coordinates reordering.
 
\begin{proposition}
\label{prop:localSystemFiniteIdentification}
        Assume that \(\indexB=\{1,\dots, q\}\). Then, \(\sols=\sol_\indexB\times\{0_\indexN\}\).
    \end{proposition}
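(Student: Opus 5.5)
The plan is to compare the two characterizations directly. On one side, $\sols$ is the solution set of the reduced system \eqref{eq:reducedSystemIdentification}, i.e.\ $f(x)-h(y)\le 0$, $G(x)_\indexB\le 0$, $y\ge 0$, with $h$ the dual function of the full problem. On the other side, $\sol_\indexB$ is the saddle-point set of the reduced Lagrangian $f(x)+\langle y_\indexB, G_\indexB(x)\rangle$, so $\sol_\indexB=\{(x,y_\indexB): G_\indexB(x)\le 0,\ y_\indexB\ge0,\ f(x)-h_\indexB(y_\indexB)\le 0\}$, where $h_\indexB(y_\indexB)=\min_x f(x)+\langle y_\indexB,G_\indexB(x)\rangle$. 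This uses the same zero-duality-gap characterization \eqref{eq:primalDualSolutions}, applied to the reduced problem \eqref{eq:reducedPrimalProblem} through the saddle point theorem. Set $\sol_\indexB\times\{0_\indexN\}$ by placing $y_\indexN=0$; the identity $h(y_\indexB,0_\indexN)=h_\indexB(y_\indexB)$ is immediate.

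For the inclusion $\sol_\indexB\times\{0_\indexN\}\subseteq\sols$, I take $(x,y_\indexB)\in\sol_\indexB$ and set $y=(y_\indexB,0)$. Then $y\ge0$ and $G_\indexB(x)\le0$ hold directly. The gap condition also holds, since $f(x)-h(y)=f(x)-h_\indexB(y_\indexB)\le0$.

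For the reverse inclusion, I take $(x,y)\in\sols$. The key step is to show that $y_\indexN=0$. By weak duality for the reduced problem, we have $h(y)\le f(x)+\langle y,G(x)\rangle$, and this expands to $f(x)+\langle y_\indexB,G_\indexB(x)\rangle+\langle y_\indexN,G_\indexN(x)\rangle$. The difficulty is that $G_\indexN(x)$ has no sign constraint in \eqref{eq:reducedSystemIdentification}, so the gap inequality alone does not force $y_\indexN=0$. I therefore plan to use global optimality information instead. Since $h(y)\ge f(x)$ and $x$ is feasible for the reduced problem, we get $h(y)\ge f(x)\ge p^\star_\indexB$. Moreover, $h(y)\le \min_x\{f(x)+\langle y_\indexB,G_\indexB(x)\rangle+\langle y_\indexN,G_\indexN(x)\rangle\}$. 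I then evaluate this minimum at $\xs$. There $G_\indexB(\xs)=0$ and $G_\indexN(\xs)<0$, which gives
\[
h(y)\le f(\xs)+\langle y_\indexN,G_\indexN(\xs)\rangle .
\]

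Next I need $p^\star_\indexB=f(\xs)$, i.e.\ that $\xs$ remains optimal for the reduced problem. This holds because $(\xs,\ys)$ satisfies the KKT conditions of the full problem with $\ys_\indexN=0$, so $(\xs,\ys_\indexB)$ is a saddle point of the reduced Lagrangian. Combining the two bounds yields $f(\xs)\le f(\xs)+\langle y_\indexN,G_\indexN(\xs)\rangle$, that is, $\langle y_\indexN,G_\indexN(\xs)\rangle\ge0$. Since $y_\indexN\ge0$ and $G_\indexN(\xs)<0$ componentwise, this forces $y_\indexN=0$. With $y_\indexN=0$ established, $h(y)=h_\indexB(y_\indexB)$, and the remaining conditions show $(x,y_\indexB)\in\sol_\indexB$.

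I expect the main obstacle to be this step of showing $y_\indexN=0$. Its delicate points are the existence of a solution $\zs$ and the fact that the reduced problem has the same optimal value. If the intended $\sols$ is anchored to the specific $\zs$ (as the notation $\mathcal S_{\zs,x}$ in the example suggests), this argument is what uses that anchoring, through the sets $\indexN,\indexB$ defined from $\zs$.
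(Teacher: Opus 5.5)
Your proof is correct, but it reaches the key fact $y_\indexN=0$ by a different route than the paper. The paper splits $z\in\sols$ according to whether $z\in\cballP{\zs}{\delta_A/2}$.
\begin{itemize}
\item Near $\zs$, it invokes the local inclusion $\sols\cap\ballP{\zs}{\delta_A}\subseteq\sol$ (Lemma~\ref{prop:reducedSystemIdentification}), the bound $G_\indexN(x)<0$ on that ball (Claim~\ref{prop:LipchitzGuarantee}), and complementary slackness.
\item Far from $\zs$, it pulls $z$ into the ball by a convex combination with $\zs$. This uses convexity of $\sols$ and the fact that $y_\indexN$ is affine along the segment with $\ys_\indexN=0$.
\end{itemize}

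You instead use the global sandwich $f(\xs)=p^\star_\indexB\le f(x)\le h(y)\le f(\xs)+\langle y_\indexN,G_\indexN(\xs)\rangle\le f(\xs)$. This shows $h(y)=f(\xs)=p^\star$, so $y$ is dual optimal for the full problem, and complementary slackness against the primal optimal $\xs$ forces $y_\indexN=0$.

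What your route buys: it is shorter and needs only $\zs\in\sol$, which gives $G_\indexB(\xs)=0$, $G_\indexN(\xs)<0$ and $\ys_\indexN=0$. It avoids several ingredients the paper's proof imports without stating them in the proposition: Assumption~\ref{assumption:asymptoticIdentificationWeak}, finiteness of $\delta_G$ (needed for $\delta_A$ to be well defined), the $P$-seminorm geometry, and convexity of $\sols$. What the paper's route buys: it reuses the identification machinery already in place and shows the conclusion as a local fact propagated by convexity.

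Both proofs use the zero-gap-plus-feasibility description of $\sol_\indexB$ in the same way (the paper's ``by construction''). Both also need $\zs\in\sol$; this holds under Assumption~\ref{assumption:asymptoticIdentification}, and the paper's far case needs it too ($\zs\in\sols$). So the caveat you raise at the end is not an additional gap relative to the paper.
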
 

\begin{proof}
 Let \(\bar{\cS}_B\subseteq\RR^{n+m}\) be the solution to the following system
        \begin{align*}
            f(x)-h_{\indexB}(y)\leq 0,\quad
            G_{\indexB}(x) \leq 0,\quad
            y_{\indexB} \geq 0\,\quad\text{and}\quad y_\indexN=0\,,
        \end{align*}
        where  \(h_\indexB:\RR^m_+\to\RR\cup\{+\infty\}\) is such that \(h_{\indexB}(y)=\min_{x\in\RR^n}f(x)+\sum_{j\in \indexB}y_jg_j(x)\) is, modulo \(y_\indexN\), the dual function of the reduced problem~\eqref{eq:reducedPrimalProblem}. By construction, \(\bar{S}_\indexB=\sol_\indexB\times\{0_\indexN\}\).
 Let \(z=(x,y)\in\sols\). If \(z\in\cballP{\zs}{\delta_A/2}\),  from Propositions~\ref{prop:reducedSystemIdentification} and~\ref{prop:LipchitzGuarantee} we obtain \(z\in\sol\) and \(G_N(x)<0\), respectively. By complementary slackness, \(y_\indexN=0\). If \(z\not\in\cballP{\zs}{\delta_A/2}\), let \(\theta=\delta_A/(2\|z-\zs\|_P)\in(0,1)\) and
    \[
        \bar{z}=(\bar{x},\bar{y}):=(1-\theta)\zs+\theta z=\zs+\frac{\delta_A}{2}\frac{z-\zs}{\|z-\zs\|_P}\in\cballP{\zs}{\delta_A}\,.
    \] 
    Since \(\sols\) is convex, \(\bar{z}\in\sols\). Therefore, as argued in the first case, \(\bar{y}_\indexN=0\). Hence, since \(\theta\neq 0\),
    \(
        y_\indexN=\theta^{-1}(\bar{y}_\indexN-(1-\theta)\ys_\indexN)=0\,.
    \)
    Furthermore,
    \[
        h_B(y)=\min_{x\in\RR^n} f(x)+\sum_{j\in\indexB}y_jg_j(x)=\min_{x\in\RR^n}f(x)+\sum_{j\in[m]}y_jg_j(x)=h(y)\,.
    \]
    Then, \(z\in\bar{\cS}_\indexB\), whence \(\sols\subseteq\bar{\cS}_\indexB\). On the other hand, any \(z=(x,y)\in\bar{\cS}_\indexB\) satisfies \(y_\indexN=0\), whence \(h(y)=h_\indexB(y)\) and then \(z\in\sols\). Therefore \(\bar{\cS}_\indexB\subseteq\sols\).
\end{proof}

\end{document}